\newtheorem{theorem}{Theorem}[section]
\newtheorem{lemma}[theorem]{Lemma}
\newtheorem{proposition}[theorem]{Proposition}
\newtheorem{corollary}[theorem]{Corollary}
\newtheorem{conjecture}[theorem]{Conjecture}
\theoremstyle{definition}
\newtheorem*{ack}{Acknowledgements}
\newtheorem*{con}{Conventions}
\newtheorem{remark}[theorem]{Remark}
\newtheorem{definition}[theorem]{Definition}
\numberwithin{equation}{section} \numberwithin{figure}{section}
 \DeclareMathOperator{\NS}{NS}
\DeclareMathOperator{\Aut}{Aut} 
\DeclareMathOperator{\Bir}{Bir}
\DeclareMathOperator{\Spec}{Spec}
\DeclareMathOperator{\an}{an}
\DeclareMathOperator{\Hom}{Hom}
\newcommand{\Qbar}{\overline{\QQ}}
\newcommand\ZZ{\mathbb{Z}}
\newcommand\QQ{\mathbb{Q}}
\newcommand\RR{\mathbb{R}}
\newcommand\CC{\mathbb{C}}
\title[Finiteness properties of pseudo-hyperbolic varieties]{Finiteness properties of pseudo-hyperbolic varieties}
\author{Ariyan Javanpeykar}
\address{Ariyan Javanpeykar \\
Institut f\"{u}r Mathematik\\
Johannes Gutenberg-Universit\"{a}t Mainz\\
Staudingerweg 9, 55099 Mainz\\
Germany.}
\email{peykar@uni-mainz.de}
\author{Junyi Xie}
\address{Junyi Xie \\
IRMAR \\
Campus de Beaulieu \\
b\^atiments 22 et 23 \\
263 avenue du G\'en\'eral Leclerc, CS 74205 \\
35042  Rennes C\'edex \\
France.}
\email{junyi.xie@univ-rennes1.fr}
\subjclass[2010]
{14G99 
(11G35,  
14G05,  
32Q45)} 
\keywords{Integral points, hyperbolicity, birational automorphisms, quasi-minimal models, dynamical systems, algebraic groups.}
\begin{document}

\begin{abstract}
Motivated by Lang-Vojta's conjecture, we show that the  set of   dominant  rational self-maps of an algebraic variety over a number field with only finitely many rational points in any given number field    is finite by combining Amerik's theorem for dynamical systems of infinite order with  properties of  Prokhorov-Shramov's notion of quasi-minimal models. We also prove a similar result in the geometric setting by using again  Amerik's theorem and Prokhorov-Shramov's notion of quasi-minimal model, but also  Weil's regularization theorem for birational self-maps and properties of   dynamical degrees. Furthermore, in the geometric setting, we   obtain an analogue of Kobayashi-Ochiai's finiteness result for varieties of general type, and thereby generalize Noguchi's theorem (formerly Lang's conjecture).  
\end{abstract}

\maketitle
 \tableofcontents

\thispagestyle{empty}

\section{Introduction}   
 Lang-Vojta's conjectures predict that  a variety  over the field of rational numbers with only finitely many rational points over any given number field is of general type, and should therefore have only finitely many birational automorphisms. In this paper we verify this prediction by building on the first-named author's earlier work \cite{JAut}. We also use the results of the first-named author with van Bommel and Kamenova \cite{vBJK, JKa} to obtain similar and stronger results in the ``geometric'' setting.
 
 More precisely, we    show that the   set of   dominant  rational self-maps of an algebraic variety over a number field with only finitely many rational points in any given number field    is finite by combining Amerik's theorem for dynamical systems of infinite order with  properties of  Prokhorov-Shramov's notion of quasi-minimal models. We also prove a similar result in the geometric setting by using again  Amerik's theorem and Prokhorov-Shramov's notion of quasi-minimal model, but also  Weil's regularization theorem for birational self-maps and properties of    dynamical degrees. Furthermore, in the geometric setting, we   obtain an analogue of Kobayashi-Ochiai's finiteness result for varieties of general type, and thereby generalize Noguchi's theorem (formerly Lang's conjecture).    Our proof here relies on  a deformation-theoretic result for surjective maps of normal   varieties due to Hwang-Kebekus-Peternell.

\subsection{The arithmetic Lang-Vojta conjecture} Throughout this paper, we let $k$ be an algebraically closed field of characteristic zero.  A variety over $k$ is a finite type separated scheme over $k$. If $X$ is a variety over $k$ and   $A\subset k$ is a subring, then a \emph{model for $X$ over $A$} is a pair $(\mathcal{X},\phi)$ with $\mathcal{X}\to \Spec A$ a finite type separated scheme and $\phi:\mathcal{X}\otimes_A k \to X$ an isomorphism of schemes over $k$. We will often omit $\phi$  from our notation.

 A proper variety $X$ over $k$ is \emph{pseudo-Mordellic  over $k$} if there is a proper closed subset $\Delta\subsetneq X$ such that, for every finitely generated subfield $K\subset k$ and every model $\mathcal{X}$ for $X$ over $K$,  the set   $\mathcal{X}(K)\setminus \Delta$   is finite. In words, a proper variety over $k$ is pseudo-Mordellic if it has only finitely many rational points outside some ``exceptional'' subset. The ``correct'' definition of pseudo-Mordellicity for non-proper varieties is slightly more involved, and was   first given by Vojta \cite{VojtaLangExc} (see also Section \ref{section:mordell}).

The property of being pseudo-Mordellic for a proper variety $X$ is conjecturally related to the positivity of the canonical bundle  on some desingularization of $X$. The precise conjecture is due to Lang and Vojta. To state this conjecture, recall that a proper   variety $X$ over $k$ is of general type if, for every irreducible component $Y$ of $X$ and every resolution of singularities $\widetilde{Y}\to Y$, the canonical bundle $\omega_{\widetilde{Y}}$ of $\widetilde{Y}$ is big (see \cite{Lazzie1,Lazzie2}). For example, a smooth projective connected curve over $k$ is of general type if and only if it has genus at least two.

\begin{conjecture}[Lang-Vojta, I]\label{conj:lang}
A proper integral variety $X$ over $k$   is of general type if and only if $X$ is pseudo-Mordellic over $k$.
\end{conjecture}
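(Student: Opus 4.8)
The plan, insofar as one can sketch it for what is a famous and wide-open conjecture, is to reduce to the smooth projective case and then treat the two implications separately; I should say at the outset that I expect no honest proof of the harder implication, and the point of a ``plan'' here is to locate precisely where it breaks down. For the reduction: given a proper integral $X$, choose a projective resolution of singularities $\widetilde{X}\to X$. Being of general type is unchanged, since it is defined through such a resolution and Kodaira dimension is a birational invariant of smooth projective varieties. Pseudo-Mordellicity is also unchanged: over the open locus $U\subseteq X$ on which $\widetilde{X}\to X$ is an isomorphism the $K$-points agree, and both $X\setminus U$ and its preimage are proper closed, hence can be absorbed into (or removed from) the exceptional subset. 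So one may assume $X$ smooth, projective and integral.

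\emph{Pseudo-Mordellic $\Rightarrow$ general type.} I would argue the contrapositive: if $\kappa(X)<\dim X$, produce a proper closed $\Delta\subsetneq X$ and a number field $K$ with $X(K)\setminus \Delta$ Zariski dense, which already contradicts pseudo-Mordellicity (Zariski density survives the removal of any proper closed subset when $\dim X\ge 1$; the case $\dim X=0$ is trivial). The strategy is to run the minimal model program over $k$ and invoke the Iitaka fibration together with abundance, so that a non-general-type $X$ is, up to birational modification and finite \'etale covers, assembled by fibrations from abelian varieties, Calabi--Yau varieties and rationally connected varieties. Each such building block is expected to have \emph{potentially dense} rational points --- a theorem for abelian varieties and for rationally connected varieties in low dimension, known in many further cases after Bogomolov--Tschinkel, but open in general; granting it, one propagates density up the fibration, taking $\Delta$ to be the union of the bad fibres. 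So this implication is provable only \emph{conditionally}, on the potential-density conjecture together with the MMP/abundance package.

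\emph{General type $\Rightarrow$ pseudo-Mordellic.} This is the genuine content of Lang--Vojta and is open: for $\dim X=1$ it is Mordell's conjecture (Faltings), and it already contains Faltings' theorem on subvarieties of abelian varieties. A proof would presumably pass through a Vojta-type height inequality --- for a big line bundle $L$ with $[L]$ arbitrarily close to $[\omega_X]$ and an ample $A$, an estimate $h_{\omega_X}(P)\le \varepsilon\, h_A(P)+O(1)$ valid for rational points $P$ of bounded degree outside a proper closed $\Delta$ --- which then bounds such points and yields pseudo-Mordellicity with exceptional set $\Delta$. Establishing inequalities of this shape is exactly the open problem; the available methods (Arakelov theory, the Faltings--Vojta product argument with Mumford's gap principle, Corvaja--Zannier's deployment of Schmidt's subspace theorem for special surface classes) have only reached curves and rather particular higher-dimensional geometries.

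Thus the main obstacle is entirely contained in the implication ``general type $\Rightarrow$ pseudo-Mordellic'': there is at present no approach to the Lang--Vojta height inequality in dimension $\ge 2$ that escapes very special situations, so I would not attempt Conjecture~\ref{conj:lang} directly. What one can do unconditionally is to extract from the conjecture its prediction about birational geometry --- that a pseudo-Mordellic variety, having few rational points, ought to have only finitely many dominant rational self-maps --- and prove \emph{that}; this is the path pursued in the remainder of the paper.
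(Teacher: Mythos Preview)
Your proposal is appropriate in the most important respect: you correctly recognize that Conjecture~\ref{conj:lang} is \emph{open}, and that the paper does not attempt to prove it. Indeed, the paper records only the known cases (curves, via Faltings; closed subvarieties of abelian varieties, via Faltings--Kawamata--Ueno) and then, exactly as you say in your final paragraph, turns to extracting and proving predictions of the conjecture rather than the conjecture itself. There is therefore no ``paper's own proof'' to compare against.

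Your informal reduction to the smooth projective case and your diagnosis of the two implications are broadly accurate as a survey. One small caution on the direction ``pseudo-Mordellic $\Rightarrow$ general type'': your argument is conditional not only on potential density for the building blocks but also on the full MMP and abundance in arbitrary dimension, and on the assertion that potential density propagates along the Iitaka fibration; none of these is a triviality, and the paper does not claim this implication either (it only verifies it unconditionally for surfaces, via the classification and the pseudo-grouplessness route of Lemma~\ref{lem:groupless_surfaces}). For the direction ``general type $\Rightarrow$ pseudo-Mordellic'' your summary of the Vojta-inequality strategy and its limitations is a fair description of the state of the art.

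In short: there is no gap to name, because there is no proof to give. Your closing sentence is exactly the right reading of the paper's strategy.
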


 This conjecture  first appeared in   \cite{Lang2}; see also   \cite[Conjecture~XV.4.3]{CornellSilverman},  \cite[\S0.3]{Abr} and \cite{JBook}. Faltings's theorem (\emph{formerly} Mordell's conjecture)  shows that this conjecture holds for one-dimensional proper varieties \cite{Faltings2, FaltingsComplements}. More generally, 
 Conjecture \ref{conj:lang} is known to hold for closed subvarieties of abelian varieties by work of Faltings, Kawamata and   Ueno  \cite{FaltingsLang, Kawamata, Ueno}.    
 
  The above version of Lang-Vojta's conjecture predicts arithmetic finiteness properties for varieties of general type, and is only a ``small'' part of the more general conjecture. We refer the reader to \cite[\S12]{JBook} (and especially \cite[Conjecture~12.1]{JBook}) for a   complete statement of Lang-Vojta's conjectures for projective varieties. 
  
 In \cite[Conj.~4.3]{Vojta3} Vojta extended this conjecture to quasi-projective varieties (see also \cite{VojtaLangExc}). Furthermore, in \cite{Campana, CampanaOr} Campana expanded upon Lang-Vojta's conjectures and conjectured that a smooth projective connected variety $X$ over $k$ is special (as defined in \emph{loc. cit.}) if and only if there is a finitely generated subfield $K\subset k$ and a model $\mathcal{X}$ for $X$ over $k$ such that $\mathcal{X}(K)$ is dense in $X$. 
 
 The  first striking consequence of  Lang-Vojta's conjecture was obtained by Caporaso-Harris-Mazur \cite{CHM} (see  \cite{AscherTurchet} for generalizations); we refer the reader to \cite{JL, JLFano} for other consequences of Lang-Vojta's conjecture.

 \subsection{Finiteness of   dominant rational self-maps: arithmetic setting}
 It is well-known that a  curve of genus at least two has only finitely many automorphisms.  A vast generalization of this finiteness statement for higher-dimensional varieties is provided by Matsumura's finiteness theorem for varieties of general type \cite[\S 11]{Iitaka}.  
 
 \begin{theorem}[Matsumura] \label{thm:mat} If $X$ is a proper integral variety of general type over $k$, then the set of dominant rational self-maps $X\dashrightarrow X$ over $k$ is finite.
 \end{theorem}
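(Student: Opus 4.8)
The plan is to reduce the statement to the finiteness of a group of automorphisms of a suitable birational model. Fix a resolution of singularities $\pi\colon \widetilde X\to X$; then $\widetilde X$ is a smooth proper integral variety with $\omega_{\widetilde X}$ big, the spaces $H^0(\widetilde X,\omega_{\widetilde X}^{\otimes m})$ are finite-dimensional and birationally invariant, and---because $\omega_{\widetilde X}$ is big---we may fix $m_0$ so that for every $m\geq m_0$ the pluricanonical map $\phi_m\colon \widetilde X\dashrightarrow \PP\big(H^0(\widetilde X,\omega_{\widetilde X}^{\otimes m})^\vee\big)$ is birational onto its image $Y_m$. The set of dominant rational self-maps of $X$ is identified with that of $\widetilde X$, so it suffices to work with $\widetilde X$.

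\emph{Step 1 (dominance implies birationality).} Let $f\colon \widetilde X\dashrightarrow\widetilde X$ be dominant. Resolving the indeterminacy of $f$, write $f=q\circ p^{-1}$ with $p,q\colon Z\to \widetilde X$ morphisms from a smooth proper $Z$, with $p$ birational and $q$ surjective generically finite. Since $q$ is a dominant morphism of smooth proper varieties of equal dimension, the natural map $q^\ast\omega_{\widetilde X}^{\otimes m}\to \omega_Z^{\otimes m}$ is injective, so $q^\ast$ carries $H^0(\widetilde X,\omega_{\widetilde X}^{\otimes m})$ into $H^0(Z,\omega_Z^{\otimes m})\cong H^0(\widetilde X,\omega_{\widetilde X}^{\otimes m})$. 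Hence $f^\ast$ preserves each finite-dimensional space $H^0(\widetilde X,\omega_{\widetilde X}^{\otimes m})$ and, being injective there, is bijective. Therefore $f^\ast$ is surjective onto the subfield of $k(\widetilde X)$ generated by the ratios $s/s'$ of sections of $\omega_{\widetilde X}^{\otimes m}$; for $m\geq m_0$ this subfield is all of $k(\widetilde X)$ since $\phi_m$ is birational onto $Y_m$. Thus $f^\ast$ is an automorphism of $k(\widetilde X)$, i.e.\ $f$ is birational. Consequently the set in question is the group $\Bir(\widetilde X)$.

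\emph{Step 2 (finiteness of $\Bir(\widetilde X)$).} Each $g\in\Bir(\widetilde X)$ acts linearly and invertibly on $H^0(\widetilde X,\omega_{\widetilde X}^{\otimes m})$, giving $\rho_m\colon \Bir(\widetilde X)\to\PGL\big(H^0(\widetilde X,\omega_{\widetilde X}^{\otimes m})\big)$ with image inside the linear algebraic group $G_m$ of projective transformations of $\PP^{N_m}$ stabilizing $Y_m$. For $m\geq m_0$ the map $\rho_m$ is injective, because $g^\ast$ acting by a scalar forces $\phi_m\circ g=\phi_m$, hence $g=\mathrm{id}$. So $\Bir(\widetilde X)$ is a subgroup of the finite-type group $G_m$, and it suffices to show that the identity component $G_m^{\circ}$ is trivial. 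If it were not, $G_m^{\circ}$ would be a positive-dimensional connected algebraic group acting faithfully on $Y_m$, which is of general type (being birational to $\widetilde X$); by faithfulness a general orbit is positive-dimensional, and being a homogeneous space it has Kodaira dimension $\leq 0$, so the Rosenlicht quotient $Y_m\dashrightarrow W$ has $\dim W<\dim Y_m$ and general fibre $F$ with $\kappa(F)\leq 0$, whence the easy addition inequality gives $\kappa(Y_m)\leq\dim W+\kappa(F)\leq\dim W<\dim Y_m$, contradicting general type. Hence $G_m^{\circ}=1$ and $\Bir(\widetilde X)$ is finite.

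The main obstacle is the last point of Step 2: the non-existence of a faithful action of a positive-dimensional connected algebraic group on a variety of general type---equivalently, the vanishing of $H^0$ of the tangent sheaf on a smooth model---which rests on Chevalley's structure theorem together with the fact that homogeneous spaces have non-positive Kodaira dimension. Everything else is either formal or a routine consequence of bigness and resolution of singularities. (One could instead invoke finite generation of the canonical ring to replace $Y_m$ by the canonical model $X_{\mathrm{can}}$, a projective variety with ample canonical class and $\Bir(X)\cong\Aut(X_{\mathrm{can}})$, but the non-existence of vector fields remains the essential input.)
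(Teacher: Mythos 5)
Your argument is correct, but there is nothing in the paper to compare it with: Theorem \ref{thm:mat} is quoted as Matsumura's theorem with a citation to Iitaka (\S 11) and is not proved in the text. What you write is essentially the classical proof from that source: pass to a smooth model, use the pluricanonical representation to show every dominant self-map is birational and to embed $\Bir_k(X)$ into the algebraic subgroup of $\PGL$ stabilizing a pluricanonical image $Y_m$, and conclude from the fact that a variety of general type admits no faithful action of a positive-dimensional connected algebraic group (your Rosenlicht-quotient plus easy-addition argument is the standard proof of that input). The only detail worth making explicit is that faithfulness of the stabilizer action on $Y_m$ uses the non-degeneracy of $Y_m$ in $\PP(H^0(\widetilde{X},\omega_{\widetilde{X}}^{\otimes m})^{\vee})$, which holds because $\phi_m$ is given by a complete linear system; with that noted, the proof is complete modulo the standard facts you cite.
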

 
In light of Matsumura's finiteness result (Theorem \ref{thm:mat}) and Lang-Vojta's conjecture (Conjecture \ref{conj:lang}), one expects the finiteness of the set of dominant rational self-maps of a proper pseudo-Mordellic variety. Our first result verifies this expectation, and reads as follows.
 
 \begin{theorem}[Main Result, I] \label{thm:birs} 
 If $X$ is a proper pseudo-Mordellic integral variety over $k$, then  the set of dominant rational self-maps  $X\dashrightarrow X$ over $k$ is finite.
 \end{theorem}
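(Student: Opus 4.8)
The strategy is to reduce the finiteness of dominant rational self-maps to a dichotomy: either such a self-map has a ``dynamically interesting'' orbit structure (forcing infinitely many rational points by Amerik's theorem, contradicting pseudo-Mordellicity), or it is essentially an automorphism of a distinguished birational model, where the group of automorphisms can be controlled. The first step is to recall from Prokhorov–Shramov's theory that a proper integral variety $X$ of dimension $n$ admits a \emph{quasi-minimal model} $X'$ (a proper variety, birational to $X$, such that $K_{X'}$ is nef on a suitably general curve class, or more precisely satisfying the technical quasi-minimality condition), together with the key property that the birational map class of $X$ determines $X'$ up to finitely many choices, and that $\Bir(X)=\Bir(X')$ acts on $X'$ through a group whose connected component is controlled. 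Since pseudo-Mordellicity is a birational invariant for proper varieties (outside the exceptional locus), we may replace $X$ by its quasi-minimal model and assume $X=X'$.

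Next, I would analyze a single dominant rational self-map $f\colon X\dashrightarrow X$. Spreading out, $f$ is defined over a finitely generated subfield $K\subset k$, and $X$ has a model $\mathcal{X}$ over $K$ with $\mathcal{X}(K)\setminus\Delta$ finite. The crucial input is Amerik's theorem: if $f$ has infinite order (as a dynamical system, i.e. the iterates $f^{\circ m}$ are pairwise distinct) then there is a point $x\in \mathcal{X}(\overline{K})$, in fact over some finite extension $K'/K$, whose forward $f$-orbit is infinite and avoids any prescribed proper closed subset such as $\Delta$ and the indeterminacy loci. Enlarging $K$ to $K'$ (legitimate, since pseudo-Mordellicity quantifies over \emph{all} finitely generated subfields), this orbit lies in $\mathcal{X}(K')\setminus\Delta$, which is finite — a contradiction. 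Hence every dominant rational self-map of $X$ has finite order. It remains to bound the finite-order self-maps: a finite-order dominant rational self-map of a quasi-minimal model is a \emph{biregular} automorphism of $X'$ (this is where the quasi-minimal model is essential — indeterminacy cannot occur for a periodic map in this setting, by the Prokhorov–Shramov rigidity), so the set of dominant rational self-maps injects into $\Aut(X')\rtimes(\text{finite set of birational identifications})$, and $\Aut(X')$ has finitely many components with the identity component being an algebraic group $G$ acting on $X'$; any torsion element lies in a maximal compact-type subgroup, and one shows $G$ must be finite (or at least that its torsion is finite) using again that $X$ is of log-general-type-like by pseudo-Mordellicity — concretely, positive-dimensional $G$ would produce rational curves or abelian subvarieties sweeping out $X$, violating pseudo-Mordellicity via the known cases of Lang–Vojta.

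I expect the main obstacle to be the passage from ``finite order'' to ``finite set.'' Ruling out infinite order via Amerik's theorem is clean, but controlling the finite-order (hence automorphism) locus requires genuinely using that $\Aut$ of a quasi-minimal model of a pseudo-Mordellic variety is finite; this in turn should follow because a positive-dimensional automorphism group action would exhibit $X$ as covered by images of a nontrivial connected algebraic group, and such varieties have dense rational points after a finite field extension (by Chevalley's structure theorem plus the density of rational points on tori, unipotent groups, and abelian varieties up to isogeny), contradicting pseudo-Mordellicity. Care is needed to ensure the self-maps are \emph{dominant rational} rather than merely birational at the final step: one must check that a dominant rational self-map of positive \emph{non-invertible} type (degree $>1$) is likewise excluded — but this again falls to Amerik, since a non-invertible dominant self-map of infinite order is covered by the infinite-order case, and a non-invertible map cannot have finite order. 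Assembling these, $\{f\colon X\dashrightarrow X \text{ dominant rational}\}$ is finite.
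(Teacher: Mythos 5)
Your first half is exactly the paper's argument and is fine: by Amerik's theorem applied to the open set $U=X\setminus\Delta$, an infinite-order dominant rational self-map would have a point with infinite forward orbit avoiding $\Delta$ and the indeterminacy loci; spreading out over a finitely generated subfield (and enlarging it, which the definition of pseudo-Mordellicity permits) this orbit lands in $\mathcal{X}(K')\setminus\Delta$, contradicting finiteness. Hence every dominant rational self-map has finite order and in particular is birational, so $\Bir_k(X)$ is torsion. This is precisely Theorem \ref{thm:amerik2} in the paper.

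The genuine gap is in your passage from ``torsion'' to ``finite.'' Two of your claims are unjustified. First, Prokhorov--Shramov's result for a quasi-minimal model $Y$ is only that $\Bir(Y)=\Aut_1(Y)$, i.e.\ every birational self-map is an isomorphism in codimension one; it is \emph{not} true (or at least not available) that a finite-order element is automatically a biregular automorphism of $Y$, and averaging an ample class over the finite cyclic group only produces a $\sigma$-invariant class that need not be ample, so Lemma \ref{lem:pseudo_is_aut} does not apply directly to a single torsion element. Second, and more importantly, you assert that $\Aut(X')$ has finitely many components and try to dispose of the identity component via potential density on algebraic groups; but in characteristic zero a positive-dimensional $\Aut^0$ is already excluded by torsionness alone, and the real difficulty is the \emph{discrete} part: an infinite torsion group of automorphisms (or of elements of $\Aut_1$) must be ruled out, and ``maximal compact-type subgroup'' is not an argument. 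The paper closes this gap (Theorem \ref{thm:tor} via Lemmas \ref{lem:pseudo_is_aut}--\ref{lem:aut1} and Corollary \ref{cor:aut1_tor_is_fin}) as follows: since $\Aut_1(Y)$ is torsion and $\NS(Y)$ is finitely generated, the kernel $\Gamma$ of $\Aut_1(Y)\to\Aut(\NS(Y))$ has finite index; elements of $\Gamma$ fix an ample class in $\NS(Y)_{\QQ}$ and are therefore biregular by the negativity-lemma argument; and finally the torsion group $\Aut(Y)$ is finite by the criterion of \cite{JAut}. Without some version of this N\'eron--Severi argument (or an equivalent input), your final step does not go through; you should also note explicitly that $X$ is non-uniruled (via pseudo-groupless, Theorem \ref{thm:ar_is_gr} and Remark \ref{rem:gr_is_nonuni}) before invoking the existence of a quasi-minimal model.
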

 
 A more concrete way of stating Theorem \ref{thm:birs} is as follows. Let $X$ be a variety over a finitely generated field $K$ of characteristic zero, and let $K\to \overline{K}$ be an algebraic closure of $K$. Suppose that, for every finite field extension $L$ of $K$, the set of $L$-rational points on $X$ is finite. Then, the set of   dominant rational self-maps $X_{\overline{K}}\dashrightarrow X_{\overline{K}}$ of $X_{\overline{K}}$ over   $\overline{K}$ is finite. Indeed, the finiteness hypothesis on the sets of $L$-rational points on $X$ implies that any  compactification of $X_{\overline{K}}$ is pseudo-Mordellic over  $\overline{K}$.
 
 If $X$ is Mordellic, then it is shown in   \cite{JAut} that $\Aut(X)$ is finite. One novel aspect of  this paper is the use of quasi-minimal models, as introduced by Prokhorov-Shramov  \cite{ProShramov2014}.  This will allow us to leverage some of the techniques used to handle automorphisms in \emph{loc. cit.} to   handle birational self-maps.

\subsection{Finiteness of dominant rational self-maps: geometric setting}
The arithmetic Lang-Vojta conjecture (Conjecture \ref{conj:lang}) relates varieties of general type to varieties with only finitely many rational points. However, 
 another part of Lang-Vojta's conjectures   relates varieties of general type to varieties satisfying certain boundedness properties (see Conjecture \ref{conj:lang20} below). Such boundedness properties   are studied systematically in \cite{vBJK, Demailly, JKa}, and come in many \emph{a priori different} guises (see Sections \ref{section:alg} and \ref{section:geomhyp}). 
 
 We follow \cite{vBJK, JKa} and say that a projective variety $X$  over $k$ is \emph{pseudo-$1$-bounded} over $k$ if there
   is a proper closed subset $\Delta\subsetneq X$ of $X$ such that, for every ample line bundle $L$ on $X$ and every smooth projective connected curve $C$ over $k$, there is a real number $\alpha(X,\Delta, L, C)$ depending only on $X, \Delta, L$, and $C$, such that, for every morphism $f:C\to X$, the inequality 
 \[
 \deg_C f^\ast L\leq \alpha(X,\Delta,L,C)
 \] holds. In words,  a projective variety over $k$ is pseudo-$1$-bounded if the moduli spaces of curves mapping to $X$ and not landing in some ``exceptional'' subset are of finite type, i.e.,  the ``height'' of  a curve in $X$ outside some ``exceptional'' subset is   bounded. 
 
 The following version of Lang-Vojta's conjecture predicts that projective varieties of general type are pseudo-$1$-bounded.
 
  \begin{conjecture}[Lang-Vojta, II]\label{conj:lang20}
 A  projective integral variety $X$ over $k$ is of general type if and only if $X$ is pseudo-$1$-bounded over $k$.
 \end{conjecture}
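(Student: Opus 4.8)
The displayed statement being a conjecture, my plan is not to give a complete proof but to reduce the two implications to standard conjectures (the minimal model program together with abundance; Demailly-style pseudo-algebraic hyperbolicity of varieties of general type; and the existence of enough rational curves on Calabi--Yau varieties), and to isolate the unconditional cases. I would treat the two implications separately.

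For ``pseudo-$1$-bounded $\Rightarrow$ general type'' I would argue by contraposition. Assume $X$ is not of general type, fix a resolution $\pi\colon\widetilde X\to X$ (so $\widetilde X$ is not of general type), and aim to show: for every proper closed $\Delta\subsetneq X$ there are a smooth projective connected curve $C$, an ample line bundle $L$ on $X$, and morphisms $f_n\colon C\to X$ with $f_n(C)\not\subset\Delta$ and $\deg_C f_n^\ast L\to\infty$. If $\widetilde X$ is uniruled, choose a covering family of rational curves $\{C_t\}_{t\in T}$; since neither $\pi^{-1}(\Delta)$ nor the exceptional locus of $\pi$ can contain such a family, the general $C_t$ is not contracted by $\pi$ and satisfies $\pi(C_t)\not\subset\Delta$ (because $\pi(C_t)\subseteq\Delta$ is equivalent to $C_t\subseteq\pi^{-1}(\Delta)$), and precomposing the normalization $\PP^1\to\widetilde X$ of $C_t$ with degree-$n$ self-maps of $\PP^1$ and then with $\pi$ produces morphisms $\PP^1\to X$ of image $\pi(C_t)$ whose degree against any ample $L$ grows linearly in $n$; this case is unconditional. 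If instead $0\le\kappa(\widetilde X)<\dim\widetilde X$, I would pass to the Iitaka fibration and replace $\widetilde X$ by a general fibre $F$, which has Kodaira dimension $0$ and meets $\pi^{-1}(\Delta)$ in a proper closed subset; invoking abundance and the Beauville--Bogomolov-type structure theorem, $F$ becomes, up to \'etale cover and birational modification, a product of an abelian variety with Calabi--Yau and hyperk\"ahler factors, from which I would manufacture curves of unbounded degree using the multiplication-by-$n$ isogenies on the abelian factor and the (conjecturally abundant) rational curves on the remaining factors.

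For the converse, ``general type $\Rightarrow$ pseudo-$1$-bounded'', the natural strategy runs through Demailly's pseudo-algebraic hyperbolicity: produce a proper closed $\Delta\subsetneq X$ and a constant $\varepsilon>0$ such that $\deg_C f^\ast L\le\varepsilon^{-1}(2g(C)-2)$ for every non-constant $f\colon C\to X$ with $f(C)\not\subset\Delta$. Such a bound depends on $C$ only through its genus, so it immediately gives pseudo-$1$-boundedness with $\alpha(X,\Delta,L,C)=\varepsilon^{-1}(2g(C)-2)$. In the cases where this can be carried out one either pulls back along a finite cover to reduce to closed subvarieties of abelian varieties (Kawamata--Ueno, plus the algebraic hyperbolicity of such subvarieties away from their special locus) or, in low dimension, builds symmetric or jet differentials vanishing along an ample divisor and applies the vanishing theorems in the Green--Griffiths--Demailly circle.

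The main obstacle will be exactly this second implication: it is open even for surfaces of general type, and in general it seems to demand new positivity statements for the cotangent sheaf (or its jet analogues) of an arbitrary variety of general type. Even the first implication, ``soft'' in the uniruled case, is blocked in the $\kappa=0$ case beyond low dimensions by our ignorance about rational curves on Calabi--Yau varieties. What I would realistically write down is the chain of reductions sketched above, with an explicit list of the unconditional cases (closed subvarieties of abelian varieties, products of curves, surfaces with sufficiently positive cotangent bundle) and of the conjectures on which the general statement rests.
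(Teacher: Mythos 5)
There is nothing in the paper to compare your proposal against: the statement you were given is Conjecture \ref{conj:lang20}, which the paper states as an open form of the Lang--Vojta conjecture and does not prove. The paper only records the known cases (closed subvarieties of abelian varieties and, more generally, varieties of maximal Albanese dimension, by Yamanoi; surfaces with $c_1^2>c_2$, by Bogomolov and McQuillan) and then uses the conjecture purely as motivation for Theorem \ref{thm:birs2}. Your write-up is consistent with this status: you correctly recognize that neither implication is a theorem, and you do not claim a proof.

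As for the content of your sketch, the unconditional fragments are sound. In the uniruled case, a covering family of rational curves cannot lie in $\pi^{-1}(\Delta)$ or in the exceptional locus, and precomposing the normalization $\PP^1\to\widetilde X$ of a general member with degree-$n$ self-maps of $\PP^1$ does defeat pseudo-$1$-boundedness for every choice of $\Delta$; and your reduction of the converse to pseudo-algebraic hyperbolicity is exactly the paper's own hierarchy (algebraic hyperbolicity modulo $\Delta$ implies boundedness modulo $\Delta$, Proposition \ref{prop:trivs}, so Conjecture \ref{conj:lang2} implies Conjecture \ref{conj:lang20} in the forward direction), with the bound $\varepsilon^{-1}(2g(C)-2)$ legitimately serving as $\alpha(X,\Delta,L,C)$ since the constant is allowed to depend on $C$. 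The genuinely open points are exactly where you place them: the intermediate and zero Kodaira dimension cases (abundance, structure of $\kappa=0$ fibres, rational or elliptic curves on Calabi--Yau and hyperk\"ahler factors --- note that for K3 surfaces the paper's Proposition \ref{prop:k3}-type argument via dominating families of elliptic curves does what you want, but nothing analogous is known for general Calabi--Yau threefolds), and the implication from general type to any pseudo-hyperbolicity notion, which is open already for surfaces of general type outside the $c_1^2>c_2$ range. So your proposal is an honest conditional reduction rather than a proof, which is the most that can be said for a statement the paper itself leaves as a conjecture.
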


 This ``geometric'' version of Lang-Vojta's conjecture is known for closed subvarieties of abelian varieties and, more generally, varieties with maximal Albanese dimension (see \cite{Yamanoi1}). Moreover, this conjecture holds   for surfaces   satisfying $c_1^2 > c_2$  by work of Bogomolov and McQuillan \cite{DeschampsBogomolov, McQuillan}.

In light of Matsumura's theorem (Theorem \ref{thm:mat}) and the second version of Lang-Vojta's conjecture (Conjecture \ref{conj:lang20}), one expects  that  a pseudo-$1$-bounded projective variety $X$ admits only finitely many dominant rational self-maps. Our second result verifies this expectation, and reads as follows.
 
\begin{theorem}[Main Result, II]\label{thm:birs2}
If $X$ is a pseudo-$1$-bounded   integral projective variety  over $k$, then the set of dominant rational self-maps $X\dashrightarrow X$ is finite.
\end{theorem}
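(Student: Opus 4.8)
The plan is to assume that the set $M$ of dominant rational self-maps $X\dashrightarrow X$ is infinite, and to derive a contradiction by exhibiting, for some fixed smooth projective curve $C$ and some ample line bundle $L$, a sequence of morphisms $C\to X$ of unbounded $L$-degree whose images are not contained in any prescribed proper closed subset. The first step is to reduce to the presence of a single self-map of infinite order. A finite-order element of $M$ is automatically birational, and if $M=\Bir(X)$ were an infinite torsion group one contradicts pseudo-$1$-boundedness exactly as in the pseudo-Mordellic case (Theorem~\ref{thm:birs}): a pseudo-$1$-bounded projective variety is not uniruled, hence is birational to a quasi-minimal model $X'$ in the sense of Prokhorov--Shramov, on which $\Bir(X')$ consists of pseudo-automorphisms and therefore acts on $\NS(X')$; the image in $\GL(\NS(X'))$ is then a torsion subgroup of $\GL_{\rho}(\ZZ)$, hence finite, while the kernel is a torsion subgroup of a linear algebraic group in characteristic zero, hence finite. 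Since any element of $M\setminus\Bir(X)$ has topological degree $\geq 2$ and so infinite order, we may from now on fix $f\in M$ of infinite order.

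Next I would run a dichotomy on the growth of $\deg_L\bigl((f^n)^{*}L\bigr)$, using the dynamical degrees $1=\lambda_0(f)\le\lambda_1(f)\le\cdots\le\lambda_{\dim X}(f)=\deg f$ and their log-concavity; in particular $\deg f\geq 2$ forces $\lambda_1(f)>1$, so when $\lambda_1(f)=1$ the map $f$ is birational and, after replacing $X$ by a quasi-minimal model, a pseudo-automorphism, so that $f^{*}$ acts on $\NS(X)$ preserving the movable cone. \emph{Case 1: the degrees are unbounded} (this holds whenever $\lambda_1(f)>1$, and also in the polynomial-growth regime). By Amerik's theorem $f$ admits a non-preperiodic algebraic point; spreading it out over a suitable finitely generated base produces a smooth projective curve $C$ and a very general non-constant morphism $\gamma\colon C\to X$ whose forward iterates $f^{n}\circ\gamma$ extend to morphisms $g_n\colon C\to X$, with $g_n(C)\not\subseteq\Delta$ and $f^{n}$ generically injective along $\gamma(C)$ (a very general $\gamma$ avoids the countably many proper closed subsets $\bigcup_n I_{f^n}$, $\bigcup_n (f^n)^{-1}(\Delta)$, and the non-positivity loci of the limiting classes). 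Then $\deg_C g_n^{*}L=[\gamma(C)]\cdot (f^{n})^{*}L$, and since a very general curve class lies in the interior of the dual of the pseudo-effective cone while a Perron--Frobenius eigenvector of $f^{*}$ on the quasi-minimal model is a nonzero movable, hence pseudo-effective, class, the properties of dynamical degrees force $\deg_C g_n^{*}L\to\infty$ — contradicting pseudo-$1$-boundedness applied with the fixed curve $C$.

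\emph{Case 2: the degrees are bounded.} Then $f$ is birational and its iterates form a bounded family of birational transformations, so by Weil's regularization theorem there is a birational map $\mu\colon X\dashrightarrow Y$ onto a projective variety $Y$ with $g:=\mu f\mu^{-1}\in\Aut(Y)$, and $\langle g\rangle$ still has bounded degree, so its Zariski closure $G\subseteq\Aut(Y)$ is a positive-dimensional algebraic subgroup (as $g$ has infinite order); in particular $\Aut^0(Y)$ is a nontrivial connected algebraic group acting faithfully. By Chevalley's structure theorem either $\Aut^0(Y)$ has a nontrivial affine part — then $\Ga$ or $\Gm$ acts nontrivially and general orbit closures are rational curves, so $Y$ is uniruled — or $\Aut^0(Y)$ is a nonzero abelian variety, whose general orbit is a positive-dimensional abelian variety $B$ covering $Y$. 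In the first case $Y$ is not pseudo-$1$-bounded, since a single rational curve $R\not\subseteq\Delta$ already receives maps $\PP^1\to R\subseteq Y$ of unbounded $L$-degree and rational curves pass through a dense set of points; in the second, taking $C:=B$ and composing the multiplication-by-$m$ maps $B\to B$ with embeddings of $B$ as general orbits yields maps $C\to Y$ of $L$-degree $\asymp m^{2\dim B}$ with images not in $\Delta$. As pseudo-$1$-boundedness is a birational invariant of projective varieties, either possibility contradicts the hypothesis on $X$, completing the argument.

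I expect the main obstacle to be Case~1: one must keep the iterated curves of \emph{bounded} genus (so that pseudo-$1$-boundedness is invoked with a fixed $C$) and certify that their degrees actually diverge. This is precisely where Amerik's theorem is essential — its $p$-adic construction of non-preperiodic points does not require an algebraically stable model, which may fail to exist once $\dim X\ge 3$ — and where the passage to a quasi-minimal model is needed, so that the limiting dynamical class is movable, hence strictly positive against a general curve class, rather than merely pseudo-effective; the quantitative growth is then governed by the standard inequalities for dynamical degrees. A secondary point requiring care is the torsion case in the first step, which one must cross-reference carefully with the treatment of $\Bir(X)$ for pseudo-Mordellic varieties.
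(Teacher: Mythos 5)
Your overall strategy -- Amerik's theorem plus log-concavity of dynamical degrees to kill maps of unbounded degree, Weil regularization (Theorem \ref{thm:weil}) in the bounded case, and a quasi-minimal-model/N\'eron--Severi argument for the torsion case -- is the same as the paper's, but several steps fail as written. The most serious is your disposal of the case where $\Bir(X)$ is an infinite torsion group: the kernel of $\Bir(X')\to \GL(\NS(X'))$ on a quasi-minimal model $X'$ is not a priori contained in a linear algebraic group, and even if it were, a torsion subgroup of a linear algebraic group over a field of characteristic zero need not be finite -- the roots of unity in $\Gm(k)$ already give an infinite torsion subgroup. The missing ingredient is the negativity-lemma step (Lemma \ref{lem:pseudo_is_aut}): an element of $\Aut_1(X')$ fixing an ample class is a genuine automorphism, so the kernel lands in $\Aut(X')$, and one must then invoke the nontrivial fact that a torsion automorphism group of a projective variety is finite (which rests on the stabilizer of an ample class being of finite type and on positive-dimensional connected groups having elements of infinite order). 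This is exactly Theorem \ref{thm:tor}; citing it would repair the step, but your own justification is false.

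Two further steps do not go through. In your Case 1, you need one curve through the Amerik point avoiding $\bigcup_n \mathrm{In}(f^n)$ so that $\deg_C g_n^\ast L$ can be compared with $[\gamma(C)]\cdot (f^n)^\ast L$ and with a ``Perron--Frobenius eigenvector''; over a countable field such as $\Qbar$ a very general choice avoiding countably many closed subsets need not exist among $k$-points (this is precisely why Amerik's theorem is needed at all), ``spreading out'' a point does not produce a curve, and $(f^n)^\ast\neq (f^\ast)^n$ on $\NS$ for a rational map that is not algebraically stable, so the eigenvector argument is shaky. The paper avoids all of this: it takes $C$ a complete intersection of divisors in $|L|$ through the Amerik point $x$, notes that $f^m(x)\notin\Delta$ already forces $f^m(C)\not\subset\Delta$ so that $1$-boundedness bounds $\deg_C (f^m|_C)^\ast L$, and only uses $\lambda_1>1$ (from log-concavity and $\deg f>1$) to force $L^{d-1}\cdot (f^m)^\ast L\to\infty$ (Proposition \ref{prop:bir_is_finite}). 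Finally, in the abelian subcase of your Case 2 you apply pseudo-$1$-boundedness to maps from ``$C:=B$'' with $B$ an abelian variety of possibly dimension $\geq 2$; $1$-boundedness constrains morphisms from curves only. This is repairable -- restrict the multiplication maps $[m]$ to a curve through the origin as in Proposition \ref{prop:geo_is_grp}, or simply cite that pseudo-$1$-bounded implies pseudo-groupless (Corollary \ref{cor:1b_is_gr}), whence $\Aut^0_{Y/k}$ is trivial (Corollary \ref{cor:hkp}) and the regularized automorphism is torsion as in Lemma \ref{lem:new} -- but as stated the step is a misapplication of the definition.
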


 \subsection{Ingredients of the proofs}
 In our proofs of Theorems \ref{thm:birs} and \ref{thm:birs2},  we will use a criterion for the finiteness of the group $\mathrm{Bir}_k(X)$ of birational self-maps $X\dashrightarrow X$ of a proper non-uniruled variety $X$ over $k$. Namely, we will use that, for a proper non-uniruled integral variety $X$ over $k$,  the group $\mathrm{Bir}_k(X)$ is finite if and only if it is torsion. 
 
 \begin{theorem}[Main Result III]  \label{thm:tor}   Let $X$ be a proper integral non-uniruled  variety over $k$. If $\Bir_k(X)$ is torsion, then $\Bir_k(X)$ is finite. 
 \end{theorem}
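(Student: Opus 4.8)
The plan is to leverage Prokhorov--Shramov's theory of quasi-minimal models together with a compactness/finiteness argument for the automorphism group of such a model. First I would replace $X$ by a quasi-minimal model $X_{\min}$ in the sense of \cite{ProShramov2014}: since $X$ is proper and non-uniruled, it admits a quasi-minimal model, and crucially the group $\Bir_k(X)$ is isomorphic to $\Bir_k(X_{\min})$, while on $X_{\min}$ one has the key property that $\Bir_k(X_{\min})$ acts on $X_{\min}$ by \emph{pseudo-automorphisms} (birational maps that are isomorphisms in codimension one) — indeed, any birational self-map of a quasi-minimal model is a pseudo-automorphism because it cannot extract or contract divisors. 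So without loss of generality we may assume every element of $G := \Bir_k(X)$ is a pseudo-automorphism of $X$.

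Next I would use the torsion hypothesis to upgrade pseudo-automorphisms to honest automorphisms of some model. The Prokhorov--Shramov machinery shows that a quasi-minimal model carries a canonically attached divisor (a suitable multiple of the canonical class, or the movable/effective cone structure) whose associated ``canonical'' ample model — or rather, the action of $G$ on the finite-dimensional space $\NS(X)_{\RR}$ or on the space of sections of an appropriate line bundle — is constrained. A torsion group of pseudo-automorphisms has bounded order elements, and one then argues that such a group is residually finite / linear and, being torsion, is locally finite (Burnside-type input is \emph{not} available in general, so instead one uses that $G$ embeds into $\GL_N(\QQ)$ or into the automorphism group of a projective variety via the canonical construction). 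Concretely: pass to a $G$-equivariant resolution $\widetilde{X}\to X$ so that $G$ acts by birational maps on $\widetilde{X}$, take the Prokhorov--Shramov quasi-minimal model equivariantly, and use that on it the induced action linearizes on $\HH^0(\widetilde X, \omega_{\widetilde X}^{\otimes m})$ for suitable $m$ (after resolving, the canonical ring is the relevant $G$-module). Since $G$ is torsion and lands in $\GL$ of a finite-dimensional $\QQ$-vector space, $G$ is finite by a theorem of Schur/Jordan on torsion linear groups over fields of characteristic zero.

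The main obstacle is the non-uniruled, possibly non-general-type case where the canonical bundle gives no direct handle: for such $X$ the linearization of the $G$-action is subtle, and this is precisely where the quasi-minimal model is essential — it replaces ``general type'' by the existence of a $G$-stable polarized structure (the effective/movable cone data of Prokhorov--Shramov) on which $G$ acts faithfully through a finite-dimensional linear representation. Making this faithfulness precise — i.e.\ showing that a pseudo-automorphism acting trivially on the relevant cohomology or Néron--Severi data is the identity — requires the non-uniruledness to rule out the wild behaviour one sees for, e.g., abelian varieties (where translations act trivially on $\NS$ but form an infinite torsion-free... wait, torsion subgroups of an abelian variety over $k$ \emph{are} infinite, e.g.\ all of $A[\ell^\infty]$). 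This shows the faithfulness must be checked on a richer invariant than $\NS$; the resolution is that $\Bir_k(X)=\Aut_k(X_{\min})$-type statements hold only up to the connected component, and one must separately show the torsion hypothesis forces the connected component $\Aut^0$ to be trivial. I would therefore split the argument: (i) $\Bir_k(X)$ acts on a quasi-minimal model $X_{\min}$ with $\Bir_k(X)/(\text{finite}) \hookrightarrow \Aut_k(X_{\min})/\Aut^0_k(X_{\min})$, the latter being an \emph{arithmetic}, hence residually-finite and virtually-torsion-free, group — so a torsion subgroup is finite; (ii) rule out a positive-dimensional $\Aut^0$ contribution, which would produce a non-torsion element (an algebraic group over $k$ of positive dimension always contains $\Gm$ or $\Ga$ or an abelian variety, each of which has elements of infinite order over an algebraically closed field of characteristic zero), contradicting the torsion hypothesis. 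Combining (i) and (ii) yields finiteness of $\Bir_k(X)$.
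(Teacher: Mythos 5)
Your opening move is the same as the paper's: pass to a Prokhorov--Shramov quasi-minimal model $Y$ of $X$, so that $\Bir_k(X)=\Aut_1(Y)$, the group of birational self-maps that are isomorphisms in codimension one. But after that the argument has a genuine gap at exactly the point where the real work happens: you need to get from ``torsion group of pseudo-automorphisms'' to ``finite'', and your route --- (i) an embedding of $\Bir_k(X)$ modulo a finite subgroup into $\Aut_k(Y)/\Aut^0_k(Y)$, which you assert is arithmetic, hence virtually torsion-free, so its torsion subgroups are finite --- rests on two unproved claims. First, the embedding itself is never established: why should a finite-index subgroup of the pseudo-automorphism group consist of \emph{biregular} automorphisms? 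That is precisely the hard step, and your discussion of faithfulness of the $\NS$-action is a red herring (faithfulness is neither needed nor true in general). Second, $\Aut_k(Y)/\Aut^0_k(Y)$ is \emph{not} known to be arithmetic, and in general it is not even finitely generated (Lesieutre and Dinh--Oguiso have constructed smooth projective varieties with discrete, non-finitely-generated automorphism group), so the ``arithmetic, hence virtually torsion-free'' input is unavailable. Your step (ii), ruling out a positive-dimensional $\Aut^0$ via the torsion hypothesis, is fine but does not repair (i).

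The missing idea is the negativity-lemma argument. Fix an ample line bundle $L$ on $Y$. Since $\Aut_1(Y)$ is torsion and $\NS(Y)$ is a finitely generated abelian group, the image of $\Aut_1(Y)\to \mathrm{Aut}(\NS(Y))$ is a torsion linear group over $\QQ$, hence finite (Minkowski/Schur --- this is where your Schur/Jordan input genuinely belongs), so the kernel $\Gamma$ has finite index. Each $\sigma\in\Gamma$ fixes $[L]$ in $\NS(Y)_{\QQ}$, and a birational self-map that is an isomorphism in codimension one and fixes an ample class is an automorphism: on the normalized graph $\Gamma_\sigma$ with projections $\pi_1,\pi_2$, the divisor $D=\pi_1^\ast M_1-\pi_2^\ast M_2$ (with $M_2\sim_{\QQ} L$, $M_1=\sigma^\ast M_2$) has $\pi_{i,\ast}D=0$ and is $\pi_2$-nef, so by the negativity lemma $D=0$, whence $\pi_1^\ast L$ is ample and $\pi_1$ is an isomorphism. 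Thus $\Gamma\subset\Aut_k(Y)$, so $\Aut_k(Y)$ has finite index in $\Aut_1(Y)$; finally one quotes the result of \cite{JAut} that a torsion automorphism group of a projective variety is finite (do not try to re-derive it from arithmeticity of $\Aut/\Aut^0$, for the reason above). This is the chain the paper uses, and without the ample-class/negativity step your proposal does not close.
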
 

The proof of this finiteness criterion (Theorem \ref{thm:tor}) crucially relies on  Prokhorov-Shramov's aforementioned theory of quasi-minimal models. In fact, we use their theory to show that Theorem \ref{thm:tor} follows from    the fact that the automorphism group of a projective variety over $k$ is finite if and only if it is torsion (see \cite{JAut}).

Our proofs of Theorem \ref{thm:birs} and \ref{thm:birs2} both use Theorem \ref{thm:tor}. Another common ingredient of both proofs is Amerik's theorem on algebraic dynamics. To state Amerik's theorem, for $f:X\dashrightarrow X$ a dominant rational self-map of a variety $X$ over $k$, we let $\mathrm{In}(f)$ denote its locus of indeterminacy (see Section \ref{section:ends} for a detailed discussion of Amerik's theorem).

 \begin{theorem} [Amerik]\label{thm:Amerik} Let $k$ be  an algebraically closed field of characteristic zero, let $X$ be a variety over $k$, and let $f:X\dashrightarrow X$ be a  dominant rational self-map of infinite order. Then, for every non-empty open subset $U\subset X$, there is a point $x$ in $U\setminus \mathrm{In}(f)$ such that the orbit of $x$ is infinite and is contained in  $U\setminus \mathrm{In}(f)$.
\end{theorem}

Amerik's theorem   allows us to show that every dominant rational self-map of a proper pseudo-Mordellic   variety is in fact a birational automorphism of finite order, so that the ``arithmetic'' finiteness result (Theorem \ref{thm:birs}) follows immediately from Theorem \ref{thm:tor}. 
On the other hand, 
the application of Amerik's theorem in  our proof of the ``geometric'' finiteness result (Theorem \ref{thm:birs2}) is not as immediate. Indeed, the proof of the ``geometric'' finiteness result uses, in addition to Amerik's theorem, also   Weil's regularization theorem (see Theorem \ref{thm:weil}) and the notion of dynamical degrees.

 Interestingly, the ``(complex-analytic) Brody'' analogue of the above   results is currently not known (see \cite[Remark~15.7]{JBook} for a related discussion).    More precisely, define a variety $X$ over $\mathbb{C}$ to be \emph{pseudo-Brody hyperbolic} if there is a proper closed subset $\Delta\subsetneq X$ such that every non-constant holomorphic map $\CC\to X^{\an}$ factors over $\Delta$. Then, the Green-Griffiths-Lang conjecture says that a proper variety  $X$ over $\CC$ is of general type if and only if $X$ is pseudo-Brody hyperbolic. Note that this conjecture  predicts the finiteness of the group $\Aut(X)$ of automorphisms of $X$ when $X$ is a proper pseudo-Brody hyperbolic variety over $\CC$. This finiteness is however not known to hold for all pseudo-Brody hyperbolic proper varieties over $\CC$ (of dimension at least three).

\subsection{The moduli of surjective morphisms} Theorem  \ref{thm:birs} and Theorem \ref{thm:birs2} verify the finiteness of the group of birational self-maps for a pseudo-Mordellic  and pseudo-$1$-bounded projective variety, respectively. However,  it is natural to expect  \emph{stronger} finiteness results.  Indeed, a classical result of de Franchis-Severi says that, given two curves $X$ and $Y$ of genus  at least two, the set of dominant morphisms   from $Y$ to $ X$ is finite.   
Kobayashi and Ochiai proved a vast generalization of this finiteness statement for higher-dimensional varieties of general type; see \cite[\S7]{KobayashiBook}.

\begin{theorem}[Kobayashi-Ochiai]
If $X$ is a projective     variety over $k$ of general type and $Y$ is a   projective integral variety   over $k$, then the set of surjective morphisms    $f\colon Y\to X$ is finite.
\end{theorem}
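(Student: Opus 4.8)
The plan is to exhibit the set $\mathrm{Sur}(Y,X)$ of surjective morphisms $Y\to X$ as the set of $k$-points of an open subscheme of the Grothendieck $\Hom$-scheme $\Hom(Y,X)$, and then to show that this subscheme is both \emph{of finite type} (a boundedness statement, where general type enters quantitatively) and \emph{discrete}, i.e.\ all its points are isolated (a rigidity statement, where general type enters qualitatively). A finite type scheme over $k$ all of whose points are isolated is Artinian, hence has finitely many points, which is exactly the assertion. A preliminary reduction: as $Y$ is integral and $f$ surjective, $X$ is integral; if $\nu_Y\colon Y^{\nu}\to Y$ and $\nu_X\colon X^{\nu}\to X$ denote the normalizations, then by the universal property of normalization every surjective $f$ fits into a unique factorization $f\circ\nu_Y=\nu_X\circ\tilde f$ with $\tilde f\colon Y^{\nu}\to X^{\nu}$ surjective, and $f\mapsto\tilde f$ is injective. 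Since $X^{\nu}$ is again projective, integral, and of general type (it is birational to $X$), I may assume $Y$ and $X$ are normal.

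The locus of surjective morphisms is open in $\Hom(Y,X)$ because in a family the dimension of the image of $f_t$ is lower semicontinuous, and (as everything is proper and $X$ irreducible) it equals $\dim X$ precisely when $f_t$ is surjective. For finiteness of type the relevant input is the classical fact that surjective morphisms onto a variety of general type have bounded degree. When $\dim Y=\dim X$ this is the volume estimate from the ramification formula: $\vol(\omega_Y)\geq (\deg f)\cdot\vol(\omega_X)$, and $\vol(\omega_X)>0$ bounds $\deg f$; the case $\dim Y>\dim X$ is reduced to this by restricting $f$ to general complete intersection subvarieties of $Y$ of dimension $\dim X$. The resulting boundedness of $\mathrm{Sur}(Y,X)$ --- i.e.\ that the graphs $\Gamma_f\subset Y\times X$ lie in only finitely many components of the Chow variety, so that $\mathrm{Sur}(Y,X)$ meets only finitely many of the quasi-projective pieces of $\Hom(Y,X)$ --- is in essence Kobayashi--Ochiai's boundedness input, which I would quote rather than reprove.

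For rigidity I would invoke the Hwang--Kebekus--Peternell deformation theorem for surjective morphisms onto varieties of non-negative Kodaira dimension: it implies that every infinitesimal deformation of a surjective morphism $f\colon Y\to X$ of normal projective varieties with $\kappa(X)\geq 0$ is induced by post-composition with an element of $\Aut^{0}(X)$. Now $X$ is projective, integral, and of general type, so $\Bir_k(X)$ is finite by Matsumura's theorem (Theorem~\ref{thm:mat}); hence $\Aut(X)\subseteq\Bir_k(X)$ is finite and $\Aut^{0}(X)$ is trivial. Therefore every surjective $f\colon Y\to X$ is an isolated point of $\Hom(Y,X)$, and, combined with the finiteness of type established above, this shows that $\mathrm{Sur}(Y,X)$ is a finite type scheme over $k$ with only isolated points --- hence finite.

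I expect the rigidity step to be the main obstacle. The heuristic is transparent: a non-trivial algebraic family $(f_t)_{t\in T}$ of surjections $Y\to X$ assembles into a surjective morphism $Y\times T\to X$ genuinely using the $T$-direction, and this ought to be incompatible with $\kappa(X)\geq 0$ when $\Aut^{0}(X)=1$; but converting this into a proof requires understanding $H^{0}(Y,f^{\ast}T_X)$ and showing that every global section descends to $X$, which is precisely the non-elementary content of the Hwang--Kebekus--Peternell theorem. The secondary, more routine, technical point is the boundedness when $\dim Y>\dim X$, handled by the complete intersection reduction (or simply quoted from the literature).
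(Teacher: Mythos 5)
The paper does not actually prove this statement: it is quoted as Kobayashi--Ochiai's theorem with a reference to \cite[\S 7]{KobayashiBook}, so there is no internal proof to compare yours with. Your decomposition (finite type of the surjection scheme plus rigidity of each surjective morphism, then ``zero-dimensional and of finite type implies finite'') is exactly the scheme the paper runs for its pseudo-bounded analogues (Theorem \ref{thm:hkp0} and Corollary \ref{cor:hkp} for rigidity, Section \ref{section:fin} for boundedness), so the outline is the natural one. Two points in your write-up are, however, not correct as stated. First, the rigidity step misquotes the Hwang--Kebekus--Peternell theorem: \cite[Theorem~1.2]{HKP} does \emph{not} say that deformations of $f$ are induced by $\Aut^0(X)$; it produces a factorization $Y\to Z\to X$ with $Z\to X$ finite surjective and a surjection from the abelian variety $\Aut^0_{Z/k}$ onto the connected component of $f$ in $\underline{\Hom}_k(Y,X)$. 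So triviality of $\Aut^0(X)$ is not what you need; you must kill $\Aut^0(Z)$. The repair is immediate: $Z$ is generically finite over the general type variety $X$, hence itself of general type, so $\Bir_k(Z)$ is finite by Matsumura (Theorem \ref{thm:mat}) and $\Aut^0_{Z/k}$ is trivial; but as written the argument invokes the wrong statement.

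Second, and more seriously, the boundedness step contains a genuine non sequitur: a bound on the generic degree of $f$ (which the volume inequality does give in the equidimensional case) does \emph{not} imply that the graphs $\Gamma_f\subset Y\times X$ lie in a bounded family. One must bound all the intersection numbers $H^{\dim Y-j}\cdot f^\ast L^{j}$, and bounded generic degree alone does not do this: automorphisms of an abelian variety have degree one and graphs of unbounded degree. So the finite-type statement for general-type targets is genuinely the hard half of Kobayashi--Ochiai and cannot be extracted from the degree bound plus ``finitely many Chow components''; it requires the pluricanonical-system (or volume-form) arguments of Kobayashi--Ochiai themselves, or their algebraic counterparts (Deschamps--Menegaux, Maehara). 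You do say you would quote this input, which is legitimate as a citation of an independent boundedness theorem, but then your proof consists of quoting the substantial half of the result and supplying only the rigidity half. A minor further remark: surjectivity is indeed open (in fact open and closed) in $\underline{\Hom}_k(Y,X)$, but the right reason is that in a connected family the intersection numbers of $f_t^\ast L$ against powers of an ample class on $Y$ are locally constant, so the image dimension is locally constant; ``lower semicontinuity of the image dimension'' is not the correct mechanism (the fibre dimension of the closed image is upper semicontinuous), though the conclusion you need is true.
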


In light of Lang-Vojta's conjectures and Kobayashi-Ochiai's theorem, any ``pseudo-hyperbolic'' variety should satisfy a similar finiteness property. 

First, let us note that the analogue of Kobayashi-Ochiai's theorem for pseudo-Mordellic projective varieties is not known. That is, given a projective pseudo-Mordellic variety $X$ over $k$ and a projective integral variety $Y$ over $k$, it is not known whether the set of surjective morphisms $Y\to X$ is finite. In this paper  we ``only'' show that every  surjective morphisms $Y\to X$ is rigid. i.e., the scheme parametrizing morphisms $Y\to X$ is zero-dimensional; see Corollary \ref{cor:sur_is_rigid} for a precise statement. We  thereby provide a first step towards proving this expected finiteness property for pseudo-Mordellic proper varieties.

However, in the case of pseudo-$1$-bounded  projective varieties, we   prove   the expected analogous finiteness statement by combining our earlier finiteness result (Theorem \ref{thm:birs2}) with a deformation-theoretic result of Hwang-Kebekus-Peternell \cite{HKP}.
 The precise statement of our result is as follows. 

 \begin{theorem}[Main Result, IV]\label{thm:sur_intro1} Assume $k$ is \textbf{uncountable}.
If $X$ is a projective   pseudo-$1$-bounded   variety over $k$ and $Y$ is a   projective integral variety   over $k$, then the set of surjective morphisms    $f\colon Y\to X$ is finite.
 \end{theorem}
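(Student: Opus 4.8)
The plan is to reduce the finiteness of the set $\mathrm{Sur}(Y,X)$ of surjective morphisms $Y \to X$ to Theorem~\ref{thm:birs2} (finiteness of dominant rational self-maps of $X$) by factoring through a fixed ``maximal'' target, and then to use the Hwang--Kebekus--Peternell deformation result to control the finitely many fibers of this reduction. First I would recall that the scheme $\mathrm{Mor}(Y,X)$ parametrizing morphisms is a countable union of quasi-projective schemes over $k$ (it is locally of finite type), and $\mathrm{Sur}(Y,X)$ is an open subset. Since $k$ is uncountable, it suffices to show that $\mathrm{Sur}(Y,X)$ has only finitely many connected components and that each component is a point; equivalently, every surjective $f\colon Y\to X$ is rigid and there are only finitely many ``deformation classes.'' The Hwang--Kebekus--Peternell theorem (applied, after passing to resolutions, to surjective maps of normal varieties) is precisely what says that a connected family of surjective morphisms $Y\to X$ with $X$ not uniruled and of general-type-like positivity is essentially constant up to postcomposition with a connected algebraic group of automorphisms; since $X$ is pseudo-$1$-bounded it is non-uniruled (curves of high degree would otherwise violate boundedness) and $\mathrm{Aut}^0(X)$ is trivial, so each connected component of $\mathrm{Sur}(Y,X)$ is a single point. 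Hence $\mathrm{Sur}(Y,X)$ is a discrete, hence at most countable, set, and I must upgrade ``countable'' to ``finite.''

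The key step for finiteness is a Stein-type factorization argument. Given $f\colon Y\to X$ surjective, consider the induced inclusion of function fields $k(X)\hookrightarrow k(Y)$; among all such subfields of $k(Y)$ that arise, I would like to compare any two maps $f_1, f_2\colon Y\to X$ via the correspondence $f_2\circ f_1^{-1}\colon X\dashrightarrow X$, which is a dominant rational self-map of $X$. If I can show that infinitely many $f_i$ would produce infinitely many \emph{distinct} dominant rational self-maps $X\dashrightarrow X$, then Theorem~\ref{thm:birs2} gives the contradiction. The subtlety is that $f_2\circ f_1^{-1}$ need not be well-defined as a correspondence unless $f_1$ and $f_2$ are ``compatible'' (e.g., generically finite of the same degree, or Stein-factoring through a common intermediate variety). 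To handle this I would fix the generic fiber dimension $d = \dim Y - \dim X$ (finitely many values, so work one $d$ at a time) and, within a fixed connected... — rather, within the countable set $\mathrm{Sur}(Y,X)$, use that the Hilbert polynomial of the graph $\Gamma_f \subset Y\times X$ with respect to a fixed polarization is bounded: indeed $\deg f$ (in the generically finite case) or more generally the class of $\Gamma_f$ is controlled because $f^*$ of an ample bundle on $X$ pulls back to a bundle whose top self-intersection against a curve is bounded, using pseudo-$1$-boundedness of $X$ applied to the images of curves in $Y$. Boundedness of these classes means $\mathrm{Sur}(Y,X)$ lies in finitely many pieces of a Hilbert scheme, each of finite type, and combined with the rigidity from Hwang--Kebekus--Peternell this forces finiteness.

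Concretely, the steps in order: (1) Show $X$ is non-uniruled and $\mathrm{Aut}^0(X)=1$ using pseudo-$1$-boundedness. (2) Embed $\mathrm{Sur}(Y,X)\subset\mathrm{Mor}(Y,X)$ and observe it is locally of finite type; via graphs, identify it with a locally closed subset of the Hilbert scheme of $Y\times X$. (3) Prove a boundedness statement: for a fixed ample $L$ on $X$ and ample $H$ on $Y$, the Hilbert polynomial of $\Gamma_f$ with respect to $H\boxtimes L$ is bounded independently of $f\in\mathrm{Sur}(Y,X)$ --- here pseudo-$1$-boundedness of $X$ enters, by bounding $\deg_C f^*L$ for $C$ ranging over a family of curves covering $Y$ (so $f(C)\not\subset\Delta$ for a covering family, as $Y$ is not covered by curves landing in $\Delta$; one may need $Y$ irreducible and a standard Bertini/covering-curve argument). (4) Conclude $\mathrm{Sur}(Y,X)$ is of finite type over $k$, hence has finitely many connected components. (5) Apply Hwang--Kebekus--Peternell to a resolution to deduce each component is zero-dimensional, giving finiteness. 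The main obstacle I anticipate is Step (3): ensuring the pseudo-$1$-bounded inequality can be leveraged uniformly over all surjective $f$, which requires choosing the covering family of curves in $Y$ carefully so that their images avoid the exceptional locus $\Delta\subset X$ --- since $\Delta$ is a fixed proper closed subset, a general member of a covering family of curves in the irreducible $Y$ will dominate $X$ under $f$ up to the finitely many $f$ whose image of the family lies in $\Delta$, but those $f$ cannot be surjective, so this is consistent; still, making this uniform (the constant $\alpha$ depends on the curve $C$, and $C$ varies in a family) is the delicate point and likely requires working with a single sufficiently positive complete intersection curve $C\subset Y$ and noting $f(C)$ sweeps out enough of $X$.
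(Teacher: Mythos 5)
Your overall skeleton --- put $\mathrm{Sur}_k(Y,X)$ in a finite type scheme, prove zero-dimensionality via Hwang--Kebekus--Peternell, and conclude finiteness --- is the same as the paper's, which obtains finite-typeness from Proposition \ref{prop:1b_bir_b} (pseudo-$1$-bounded over an uncountable field implies pseudo-bounded, citing \cite{vBJK}) and then applies Lemma \ref{lem:fin_1} via Corollary \ref{cor:1b_is_rigid}. The genuine gap is your Step (3), which is exactly the point where uncountability has to do real work, and your sketch never actually deploys it. The uniformity problem you flag is real and your proposed fixes do not address it: the constant in pseudo-$1$-boundedness is attached to a \emph{fixed} curve $C$, so letting ``$C$ range over a covering family'' gives no uniform bound; and if you instead fix a single complete-intersection curve $C\subset Y$, nothing prevents a surjective $f$ from satisfying $C\subset f^{-1}(\Delta)$ (the preimage of the exceptional locus varies with $f$), in which case the boundedness inequality simply does not apply to $f|_C$ --- ``$f(C)$ sweeps out enough of $X$'' is not an argument. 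The available repair is to run your Step (5) \emph{first}: rigidity makes $\underline{\mathrm{Sur}}_k(Y,X)$ zero-dimensional, hence $\mathrm{Sur}_k(Y,X)$ is at most countable, and then, since $k$ is uncountable, a very general complete-intersection curve $C\subset Y$ lies in none of the countably many proper closed subsets $f^{-1}(\Delta)$, so $\deg_C f^\ast L\le \alpha(X,\Delta,L,C)$ holds uniformly in $f$. Relatedly, your opening reduction (``it suffices to show finitely many components, each a point'') is not a reduction: a priori $\underline{\mathrm{Sur}}_k(Y,X)$ has countably many components, and ruling that out is the whole problem.

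Even granting a uniform bound on $\deg_C f^\ast L = H^{m-1}\cdot f^\ast L$ (with $m=\dim Y$, $C$ a complete intersection in $|H|$), your claim that the Hilbert polynomial, or even the degree, of $\Gamma_f$ with respect to $H\boxtimes L$ is then bounded is unproved: the degree of the graph is $\sum_i \binom{m}{i}\, H^{m-i}\cdot (f^\ast L)^i$, and you must control all mixed terms, not just $i=1$. This can be done because $f^\ast L$ is nef, via the Khovanskii--Teissier (log-concavity) inequalities, after which bounded degree places the graphs in a finite type piece of the Hilbert scheme; but this input is absent, and supplying it together with the very-general-curve choice amounts to reproving the boundedness upgrade that the paper simply cites. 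Two smaller points: for the rigidity step, ``$X$ non-uniruled and $\mathrm{Aut}^0(X)=1$'' is not enough --- Hwang--Kebekus--Peternell produces a factorization $Y\to Z\to X$ with $Z\to X$ finite and requires $\mathrm{Aut}^0_{Z/k}$ trivial, which is why the paper routes through grouplessness modulo $\Delta$ (Corollary \ref{cor:1b_is_gr}, Proposition \ref{prop:geo_is_grp}, Corollary \ref{cor:hkp}); and your first paragraph's plan to compare $f_1,f_2$ via $f_2\circ f_1^{-1}$ and invoke Theorem \ref{thm:birs2} is a dead end you rightly abandoned --- Theorem \ref{thm:birs2} plays no role in the paper's proof of this statement either.
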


 Since Brody hyperbolic projective varieties over $\CC$ are $1$-bounded, this theorem generalizes Horst-Noguchi's theorem (formerly Lang's conjecture) and also the generalization of Horst-Noguchi proven in \cite{JKa}. We refer the reader to Remark \ref{remark:noguchi} for a more detailed discussion.
 
\subsection{Pseudo-algebraic hyperbolicity} The uncountability hypothesis  in Theorem \ref{thm:sur_intro1} can be removed under a slightly stronger (but conjecturally equivalent) boundedness condition.  Let $X$ be a projective variety over $k$ and let $\Delta\subset X$ be a closed subset. Following Demailly \cite{Demailly} (see also \cite{vBJK, JKa,  PaRou, ARW, Rou1, Rou2}), we  say that $X$ is \emph{algebraically hyperbolic over $k$ modulo $\Delta$} if, for every ample line bundle $L$ on $X$, there is a real number $\alpha(X,\Delta,L)$ depending only on $X$, $\Delta$ and $L$ such that, for every smooth projective connected curve $C$ over $k$ and every morphism $f:C\to X$ with $f(C)\not \subset \Delta$, the inequality
 \[
 \deg_C f^\ast L \leq \alpha(X,\Delta, L) \cdot \mathrm{genus}(C)
 \] holds.
Furthermore,
 a projective variety  $X$ over $k$ is \emph{pseudo-algebraically hyperbolic (over $k$)} if there is a proper closed subset $\Delta\subsetneq X$ such that $X$ is algebraically hyperbolic modulo $\Delta$.   
 
The Lang-Vojta conjectures predict that a projective variety of general type over $k$ is not only pseudo-$1$-bounded, but even pseudo-algebraically hyperbolic. That is, roughly speaking, one does not only expect the boundedness of the degrees of  morphisms $f:C\to X$ with $f(C)\not\subset \Delta$ (see Conjecture \ref{conj:lang20}), but even more that the degree of such a morphism is bounded linearly in the genus of $C$.
 
 \begin{conjecture}[Lang-Vojta, III]\label{conj:lang2}
 A projective integral variety $X$ over $k$ is of general type if and only if $X$ is pseudo-algebraically hyperbolic over $k$.
 \end{conjecture}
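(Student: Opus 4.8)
The plan is to prove the two implications separately; at the outset I should flag that the ``only if'' direction is essentially the open core of the geometric Lang--Vojta problem, so what follows is really a reduction to two clean (but hard) sub-problems rather than a complete argument.

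\textbf{Pseudo-algebraic hyperbolicity implies general type.} Suppose $X$ is algebraically hyperbolic modulo a proper closed subset $\Delta\subsetneq X$. First I would observe that $X$ is not uniruled: otherwise rational curves cover $X$, so some $f\colon\PP^1\to X$ has image not contained in $\Delta$, and the defining inequality applied with $\mathrm{genus}(\PP^1)=0$ forces $\deg_{\PP^1}f^\ast L\le 0$, contradicting the ampleness of $L$ and the non-constancy of $f$. Granting the minimal model program and the abundance conjecture (hence unconditionally in dimension $\le 3$), a non-uniruled $X$ is birational to a minimal model $X'$ with $K_{X'}$ semiample; its Iitaka fibration $\pi\colon X'\to Z$ has general fibre $F$ with $K_F$ numerically trivial and $\dim F=\dim X-\kappa(X)$. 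If $X$ is not of general type then $\dim F>0$, and these fibres, transported to $X$ along a common resolution, sweep out $X$, so a general one gives a positive-dimensional subvariety birational to $F$ and not contained in $\Delta$. It therefore suffices to prove that a positive-dimensional smooth projective variety with numerically trivial canonical class is \emph{not} algebraically hyperbolic modulo any proper closed subset --- equivalently, that it carries a family of curves of bounded genus and unbounded $L$-degree whose union is Zariski dense.

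This last point is where the argument stalls, and it is the first main obstacle. When $F$ has maximal Albanese dimension, and in particular when $F$ is an \'etale quotient of an abelian variety, it follows from the work of Faltings, Kawamata and Ueno and of Yamanoi cited above: translating a curve that generates an abelian subvariety and composing with the multiplication-by-$n$ isogenies yields curves of fixed genus and $L$-degree growing like $n^2$, whose union is dense. For $F$ a $K3$ surface one can instead invoke the now-established infinitude of rational curves on $K3$ surfaces, which form a dense set. But for a general Calabi--Yau fibre one needs a sufficiently rich supply of low-genus curves of unbounded degree with dense union, and in higher dimension this is itself a hard open problem; so this implication can currently be established only where the geometry of $K$-trivial varieties is understood, and even there only modulo MMP and abundance in high dimension.

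\textbf{General type implies pseudo-algebraic hyperbolicity.} This is the genuinely open direction, which I would not expect to settle, and it is the second (and deeper) obstacle. The natural strategy is the Bogomolov--Demailly method via symmetric (or jet) differentials: on a smooth projective model $\widetilde X$ of general type, Kodaira's lemma gives $\omega_{\widetilde X}\cong A\otimes\OO_{\widetilde X}(E)$ with $A$ ample and $E$ effective, and one tries to produce enough global sections of $S^m\Omega^1_{\widetilde X}\otimes A^{-1}$, or of higher jet bundles, so that, taking $\Delta$ to be the base locus of these sections, every $f\colon C\to\widetilde X$ with $f(C)\not\subset\Delta$ pulls a section back to a nonzero twisted differential on $C$, from which one reads off a bound $\deg_C f^\ast L\le\alpha\cdot(2\,\mathrm{genus}(C)-2)$. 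This is exactly how the conjecture is known for smooth surfaces with $c_1^2>c_2$ (Bogomolov, McQuillan) and, via Bloch--Ochiai-type arguments together with the Ueno fibration, for closed subvarieties of abelian varieties. The main obstacle is that a general variety of general type need carry \emph{no} symmetric or jet differentials at all, so the construction of $\Delta$ and of a bound linear in the genus, valid uniformly over all curves $C$, is precisely the unresolved heart of the Lang--Vojta conjectures and would require fundamentally new positivity inputs.
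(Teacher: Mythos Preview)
The statement you are attempting to prove is labelled \emph{Conjecture} in the paper, not \emph{Theorem}: it is the geometric Lang--Vojta conjecture, and the paper offers no proof of it whatsoever. There is therefore nothing in the paper to compare your proposal against. The paper merely records the conjecture, notes that it is known for closed subvarieties of abelian varieties (and more generally for varieties of maximal Albanese dimension, by Yamanoi) and for surfaces with $c_1^2>c_2$ (Bogomolov--McQuillan), and then uses the conjecture as motivation for the unconditional results that follow.

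Your write-up is, to your credit, honest about this: you flag at the outset that the ``only if'' direction is the open core of the problem, and you say plainly that the ``if'' direction is ``the genuinely open direction, which I would not expect to settle.'' What you have produced is a competent survey of the standard reductions and of where they stall (non-uniruledness, MMP plus abundance, the need for dense low-genus curves of unbounded degree on $K$-trivial varieties in one direction; the Bogomolov--Demailly jet-differential strategy and its failure for general-type varieties without symmetric differentials in the other). But it is not a proof, and you should not present it as one. The correct response here is simply that the statement is an open conjecture and admits no proof in the paper.
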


 Our next result establishes precisely the analogue of Kobayashi-Ochiai's finiteness result for pseudo-algebraically hyperbolic projective varieties, as   predicted by Conjecture \ref{conj:lang2}.
 
 \begin{theorem}[Main Result, V]\label{thm:sur_intro}
If $X$ is a projective   pseudo-algebraically hyperbolic  variety over $k$ and $Y$ is a   projective integral variety   over $k$, then the set of surjective morphisms    $f\colon Y\to X$ is finite.
 \end{theorem}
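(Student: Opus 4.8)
The plan is to reduce the finiteness of the set of surjective morphisms $f\colon Y\to X$ to the finiteness of the set of dominant rational self-maps of $X$ (Theorem \ref{thm:birs2}, which applies since a pseudo-algebraically hyperbolic variety is in particular pseudo-$1$-bounded). First I would observe that it suffices to treat the case where $Y$ is normal: replacing $Y$ by its normalization does not change the set of surjective morphisms to $X$ up to the finite map $Y^\nu\to Y$, and it allows us to invoke Stein factorization. Now fix one surjective morphism $f_0\colon Y\to X$. The key step is to bound the moduli: I would show that the scheme $\Hom^{\mathrm{sur}}(Y,X)$ parametrizing surjective morphisms is of finite type. This is where pseudo-algebraic hyperbolicity enters. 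Given a surjective $f\colon Y\to X$, choose ample line bundles $L$ on $X$ and $H$ on $Y$, and consider the restriction of $f$ to a general complete-intersection curve $C\subset Y$ of the polarization $H$, cut out so that $f(C)\not\subset\Delta$ (possible since $f$ is surjective and $\Delta\subsetneq X$ is proper; for $\dim Y>\dim X$ one slices $Y$ down to a curve dominating $X$). Then $\deg_C f^\ast L\le \alpha(X,\Delta,L)\cdot\mathrm{genus}(C)$, and since $\mathrm{genus}(C)$ depends only on $(Y,H)$ and not on $f$, this bounds the degree of the graph $\Gamma_f\subset Y\times X$ with respect to a fixed polarization. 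Hence all such graphs lie in a bounded family, and $\Hom^{\mathrm{sur}}(Y,X)$ is of finite type.

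Once the Hom scheme is of finite type, it has finitely many connected (indeed irreducible) components, so it suffices to show that each irreducible component $T$ of $\Hom^{\mathrm{sur}}(Y,X)$ is a point. Suppose not; then we have a positive-dimensional family, and by restricting to a curve in $T$ and normalizing we obtain a morphism $F\colon C^\circ\times Y\dashrightarrow X$, defined on a dense open, with $C^\circ$ a smooth curve, such that for $t\in C^\circ$ the restriction $F_t=f_t\colon Y\to X$ is surjective and the $f_t$ are pairwise distinct. Composing with the inverse of $f_{t_0}$ for a fixed base point $t_0$—here I would use that $X$ is non-uniruled (a consequence of being pseudo-$1$-bounded, since a uniruled variety would contain a free curve of unbounded degree through the general point) to make sense of the composition as a dominant \emph{rational} map, after passing to Stein factorizations and using that a surjective morphism between normal projective varieties of the same dimension with $X$ non-uniruled is generically finite—we would produce a non-trivial algebraic family of dominant rational self-maps $g_t=f_t\circ f_{t_0}^{-1}\colon X\dashrightarrow X$ of $X$. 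The precise mechanism for passing from the family $\{f_t\}$ to a family of self-maps of $X$ is exactly the deformation-theoretic input of Hwang-Kebekus-Peternell \cite{HKP} used in the proof of Theorem \ref{thm:sur_intro1}: one first handles $\dim Y=\dim X$ and then reduces the general case to this one by a slicing argument, and the point is that the members of a connected family of surjective maps $Y\to X$ factor through a common "quasi-Albanese-type" reduction, forcing the induced self-maps of $X$ to vary in a bounded—indeed, eventually constant—way.

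The conclusion then runs as follows: the family $t\mapsto g_t$ gives an injection of the positive-dimensional base (or at least of an infinite subset of $k$-points, using that the $f_t$ are distinct and $f_{t_0}$ is fixed) into the set of dominant rational self-maps $X\dashrightarrow X$, contradicting Theorem \ref{thm:birs2}. Therefore every component of $\Hom^{\mathrm{sur}}(Y,X)$ is zero-dimensional, and being also of finite type, the whole scheme is finite; hence the set of surjective morphisms $f\colon Y\to X$ is finite. Compared with Theorem \ref{thm:sur_intro1}, the improvement is that the finite-type statement for $\Hom^{\mathrm{sur}}(Y,X)$ now comes directly from the linear-in-genus bound rather than from a specialization/uncountability argument, so no uncountability hypothesis on $k$ is needed; this is really the only place where the passage from pseudo-$1$-boundedness to pseudo-algebraic hyperbolicity is used.

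I expect the main obstacle to be the step producing a genuine algebraic family of dominant rational self-maps of $X$ from a family of surjective morphisms $Y\to X$—that is, controlling the rational map $f_{t_0}^{-1}$ uniformly in $t$ and dealing with the case $\dim Y>\dim X$. This is precisely where the Hwang-Kebekus-Peternell deformation result must be invoked with care, since a priori the indeterminacy loci of the $g_t$ could move, and one needs that the construction is sufficiently algebraic that distinctness of the $f_t$ implies distinctness of the $g_t$.
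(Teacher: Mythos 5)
Your overall skeleton (bound the Hom scheme, then prove each component is a point) is the right one, and the first half is essentially the paper's route: the linear-in-genus bound applied to a general complete-intersection curve controls $(H^{\dim Y-1}\cdot f^\ast L)$, and -- after a Khovanskii--Teissier/log-concavity step you pass over silently, which is needed to bound the remaining mixed degrees of the graph of $f$ -- one gets that $X$ is \emph{bounded} modulo $\Delta$ for targets of every dimension. This is exactly Proposition \ref{prop:trivs} (delegated in the paper to \cite[\S 9]{vBJK}), and you correctly identify it as the reason no uncountability hypothesis is needed, in contrast with Theorem \ref{thm:sur_intro1}. In the paper the theorem is then an immediate consequence of Theorem \ref{thm:final} (take $B=Y$, $A=X$), whose two inputs are precisely this boundedness and the rigidity statement of Corollaries \ref{cor:hkp} and \ref{cor:1b_is_rigid}.

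The genuine gap is in your rigidity step. You propose to convert a positive-dimensional family $f_t\colon Y\to X$ of surjections into a family of dominant rational self-maps $g_t=f_t\circ f_{t_0}^{-1}$ of $X$ and contradict Theorem \ref{thm:birs2}. But even when $\dim Y=\dim X$, the map $f_{t_0}$ is in general only generically finite of degree greater than one, so $f_{t_0}^{-1}$ is a correspondence rather than a rational map, and Theorem \ref{thm:birs2} says nothing about correspondences; slicing and Stein factorization do not repair this. Nor does Hwang--Kebekus--Peternell produce self-maps of $X$: what \cite[Theorem~1.2]{HKP} yields is a factorization $Y\to Z\to X$ with $Z\to X$ finite surjective and a surjection from the abelian variety $\Aut^0_{Z/k}$ onto the Hom-component, the deformations of $f$ being $h\circ a\circ g$ with $a\in\Aut^0_{Z/k}$; there is no induced family of rational self-maps of $X$, so your intended contradiction target is the wrong one. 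The correct endpoint (and the paper's) is pseudo-grouplessness rather than finiteness of $\Bir_k(X)$: algebraic hyperbolicity modulo $\Delta$ implies $1$-boundedness, hence geometric hyperbolicity, hence grouplessness modulo $\Delta$ (Corollary \ref{cor:1b_is_gr}); then every map from an abelian variety to $Z$ factors over $h^{-1}(\Delta)$, so $\Aut^0_{Z/k}$ is trivial (Lemma \ref{lem:aut0}, Theorem \ref{thm:hkp0}) and the component is a reduced point. With that replacement your argument closes; as written, the passage from the family $\{f_t\}$ to self-maps of $X$, and hence the appeal to Theorem \ref{thm:birs2}, does not go through -- indeed the paper's proof of this theorem does not use Theorem \ref{thm:birs2} at all.
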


 The above results are motivated by Lang-Vojta's conjecture, and verify predictions made by this conjecture. This is to be contrasted with the  following result for pseudo-algebraically hyperbolic varieties, as this result can not be deduced from the Lang-Vojta conjectures and known properties of varieties of general type. To state this result, recall that for $X$ and $Y$ projective varieties over $k$, we let $\underline{\Hom}_k(Y,X)$ denote the locally finite type scheme parametrizing morphisms from $Y$ to $X$.
 \begin{theorem}[Main Result, VI]\label{thm:final}
  Let $X$ be  a projective variety over $k$ and let $\Delta\subsetneq X$ be a proper closed subset such that $X$ is algebraically hyperbolic modulo $\Delta$. Then the following statements hold.
  \begin{enumerate}
  \item For a projective integral scheme $Y$, the scheme $$\underline{\mathrm{Hom}}_k(Y,X)\setminus \underline{\Hom}_k(Y,\Delta)$$ is quasi-projective over $k$. 
  \item For a projective integral scheme $Y$ over $k$, a non-empty integral closed subscheme $B\subset Y$, and a non-empty integral closed subscheme $A\subset X$ not contained in $\Delta$, the set 
  \[ 
  \Hom_k((Y,B),(X,A)) := \{f:Y\to X \ | \ f(B) = A\}
  \] is finite.
  \end{enumerate}
 \end{theorem}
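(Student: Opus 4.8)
Throughout, fix an ample line bundle $L$ on $X$. For \textbf{(1)}, the plan is to show that the morphisms $f\colon Y\to X$ with $f(Y)\not\subseteq\Delta$ have only finitely many Hilbert polynomials (of their graphs $\Gamma_f\subseteq Y\times X$, with respect to a fixed polarization), so that they all lie in a single quasi-projective open-and-closed subscheme $\mathcal H_0$ of $\underline{\Hom}_k(Y,X)=\bigsqcup_P\underline{\Hom}_k^P(Y,X)$; then, since $\underline{\Hom}_k(Y,\Delta)\hookrightarrow\underline{\Hom}_k(Y,X)$ is a closed immersion, $\underline{\Hom}_k(Y,X)\setminus\underline{\Hom}_k(Y,\Delta)=\mathcal H_0\setminus\underline{\Hom}_k(Y,\Delta)$ is an open subscheme of the quasi-projective $\mathcal H_0$, hence quasi-projective. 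Fix a very ample $H$ on $Y$, set $d=\dim Y$, and note that $f^\ast L$ is nef. Given $f$ with $f(Y)\not\subseteq\Delta$, the closed subset $f^{-1}(\Delta)\subsetneq Y$ is proper, so by Bertini (we are in characteristic zero) a general complete intersection $C=H_1\cap\cdots\cap H_{d-1}$ of members of $\lvert H\rvert$ is a smooth connected curve not contained in $f^{-1}(\Delta)$; hence $f(C)\not\subseteq\Delta$ and algebraic hyperbolicity modulo $\Delta$ gives $(f^\ast L)\cdot H^{d-1}=\deg_C f^\ast L\leq\alpha(X,\Delta,L)\cdot\operatorname{genus}(C)$, with $\operatorname{genus}(C)$ a fixed integer determined by $(Y,H)$ via adjunction. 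As in \cite{JKa} — using that $f^\ast L$ is nef and $H$ ample, so that the Khovanskii--Teissier inequalities (see \cite{Lazzie1}) together with Matsusaka's theorem bound the Hilbert polynomial of $\Gamma_f$ in terms of the single number $(f^\ast L)\cdot H^{d-1}$ — this forces that polynomial into a finite set, which proves (1). The one delicate point is exactly this last implication, where nefness of $f^\ast L$ and the Hodge-type inequalities are used.

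For \textbf{(2)}, write $S:=\Hom_k((Y,B),(X,A))$. Since $A$ is integral and $A\not\subseteq\Delta$, the set $A\cap\Delta$ is proper in $A$, and composing curves with $A\hookrightarrow X$ shows that $A$ is algebraically hyperbolic over $k$ modulo $A\cap\Delta$; in particular $A$ is pseudo-algebraically hyperbolic over $k$. Each $f\in S$ has $f(Y)\supseteq f(B)=A\not\subseteq\Delta$, so by (1) the set $S$ is contained in the finite type scheme $\mathcal H:=\underline{\Hom}_k(Y,X)\setminus\underline{\Hom}_k(Y,\Delta)$; moreover $S$ is a constructible subset of $\mathcal H$, since $\{f:f(B)\subseteq A\}$ is closed (pull back the closed immersion $\underline{\Hom}_k(B,A)\hookrightarrow\underline{\Hom}_k(B,X)$ along the restriction morphism $\mathcal H\to\underline{\Hom}_k(B,X)$) and, over that locus, the condition that $f|_B\colon B\to A$ be surjective is constructible by Chevalley's theorem. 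Hence it suffices to prove that $S$ is zero-dimensional, i.e. that every morphism $T\to\mathcal H$ from a smooth connected curve whose image meets $S$ densely is constant. Such a morphism is a family $F\colon Y\times T\to X$ with $F_t(B)=A$ for $t$ in a dense open of $T$; as the locus $\{t:F_t(B)\subseteq A\}$ is closed it is all of $T$, and $F$ restricts to a family $B\times T\to A$ of generically surjective morphisms. By Theorem \ref{thm:sur_intro}, applied to the pseudo-algebraically hyperbolic variety $A$ and the projective integral scheme $B$, there are only finitely many surjective morphisms $B\to A$, so $t\mapsto F_t|_B$ is constant, equal to some surjection $g\colon B\to A$.

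It remains to see that the family $F$, which restricts on $B\times T$ to the constant family $g$, is itself constant; equivalently, $\{f\colon Y\to X\mid f|_B=g\}$ is finite. Choose $b_0\in B$ with $g(b_0)=x_0\notin\Delta$, possible since $g(B)=A\not\subseteq\Delta$. For every integral curve $C\subseteq Y$ through $b_0$, precomposing $F$ with the normalization $\widetilde C\to C\subseteq Y$ gives a family $\widetilde C\times T\to X$ of morphisms sending a fixed point over $b_0$ to $x_0\notin\Delta$; by (1) applied to $\widetilde C$ this family lies in a finite type scheme, and a de Franchis--Severi type rigidity argument — bounding degrees against genera via algebraic hyperbolicity modulo $\Delta$, and using that, once both the source point over $b_0$ and its image $x_0$ are pinned, the family cannot move even when $\widetilde C$ maps to an elliptic curve through $x_0$ — shows this family is constant. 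Since the curves $C\subseteq Y$ through $b_0$ cover $Y$, the family $F$ is constant, so $S$ is zero-dimensional and hence finite. I expect the main obstacle to be precisely this last rigidity statement for pointed maps out of a fixed curve into a variety that is algebraically hyperbolic modulo $\Delta$: this is where the hyperbolicity hypothesis genuinely enters, and the reductions above (Part (1), constructibility, and Theorem \ref{thm:sur_intro}) serve to isolate it.
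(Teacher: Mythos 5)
Part (1) of your proposal is fine, but note that it is essentially a re-proof of a result the paper simply quotes: algebraic hyperbolicity modulo $\Delta$ implies boundedness modulo $\Delta$ (Proposition \ref{prop:trivs}, i.e.\ \cite[\S 9]{vBJK}), and the paper's proof of Theorem \ref{thm:final}(1) is just this citation plus the observation that a finite type open piece of $\underline{\Hom}_k(Y,X)$ is quasi-projective. Your outline of that implication (Bertini section, $(f^\ast L)\cdot H^{d-1}\leq\alpha\cdot\mathrm{genus}$, then nefness of $f^\ast L$ and Khovanskii--Teissier to bound the Hilbert polynomial of the graph) is the standard argument; the only adjustment needed is to normalize the general complete-intersection curve when $Y$ is singular and use its arithmetic genus as the uniform bound.

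The genuine gap is in the last step of (2), and it is exactly the step you flag as unresolved. After fixing the surjection $g\colon B\to A$ and a point $b_0$ with $x_0=g(b_0)\notin\Delta$, you must show that morphisms $f\colon Y\to X$ with $f|_B=g$ cannot move in a positive-dimensional family; your ``de Franchis--Severi type rigidity argument'' for pointed maps $(\widetilde C,c)\to(X,x_0)$ is asserted, not proved. This is precisely the content of the paper's Proposition \ref{prop:1b_is_geomhyp2}: if $X$ is bounded modulo $\Delta$ and $x\in X\setminus\Delta$, then $\Hom_k((Y,y),(X,x))$ is finite. The missing idea is Mori's bend-and-break \cite[Proposition~3.1]{Debarrebook1}: an infinite family of pointed maps of bounded degree with $c\mapsto x_0$ fixed produces a rational curve through $x_0$; on the other hand, algebraic hyperbolicity modulo $\Delta$ forces every rational curve of $X$ into $\Delta$ (a genus-zero source makes the degree bound zero), contradicting $x_0\notin\Delta$. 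Your concern about elliptic images is a symptom of lacking this tool: pointed maps of bounded degree to a fixed elliptic curve are indeed finite (translations do not preserve the marked image point), but the real danger is the image curve itself deforming inside $X$, and that is what bend-and-break excludes. Finally, a repairable circularity: you invoke Theorem \ref{thm:sur_intro}, which the paper deduces \emph{from} Theorem \ref{thm:final}; replace it by Theorem \ref{thm:surr} together with Proposition \ref{prop:trivs} (pseudo-algebraically hyperbolic implies pseudo-bounded), which is how the paper's Corollary \ref{cor:b_is_all} gets the finiteness of surjections $B\to A$.
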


 This theorem  generalizes certain finiteness results  for Brody hyperbolic projective varieties proven by Kaup, Kiernan, Kobayashi, Urata, and many others; see \cite{KiernanKobayashi, KobayashiBook, Urata}.

 \subsection{Overview} We finish this introduction with a brief overview of some of our related work on Lang-Vojta's conjectures; see \cite{vBJK, JBook, JKuch, JLevin, JSZ, JAut, JKa, JLitt,   JVez}.
 
 First, the conjectures of Lang-Vojta for projective varieties first appeared in Lang's paper \cite{Lang2}. They were expanded upon in the survey paper \cite{JBook}; a complete version of the Lang-Vojta conjectures for projective varieties is stated in \cite[\S 12]{JBook}.  
 
In Section \ref{section:groupless} we study the notion of ``pseudo-groupless'' variety. The term ``groupless'' was first introduced in \cite{JKa, JVez}, and pseudo-groupless varieties are those which are ``groupless modulo a closed subset''. The notion of a groupless variety  is closely related to Lang's notion of ``algebraically(-Lang) hyperbolic'' variety (which is not to be confused with Demailly's notion of algebraic hyperbolicity). The main result of Section \ref{section:groupless} is  that moduli spaces of surjective maps to a pseudo-groupless variety are rigid. This  result has not appeared in the literature before, although a weaker version was proven in \cite{JKa}. A survey of our work on groupless varieties can be found in \cite[\S4 and \S6]{JBook}. 

In this paper we do not study how grouplessness (or any related notion of ``hyperbolicity'') varies in families; the reader interested in such questions is referred to \cite{vBJK, JVez}.   A brief survey of results on hyperbolicity in families is provided in \cite[\S18]{JBook}. We note that the Zariski-countable  openness of the locus of groupless varieties in a projective family of varieties   is proven in \cite{JVez} using non-archimedean extensions of Lang's conjectures.

The main results of this paper on birational automorphisms of pseudo-hyperbolic varieties (Theorems \ref{thm:birs} and \ref{thm:birs2}) build  on our earlier work on automorphisms \cite{JAut} of Mordellic varieties, as well as on our earlier work with Kamenova on automorphisms of algebraically hyperbolic varieties \cite{JKa}. They verify the expected finiteness results for pseudo-Mordellic and pseudo-algebraically hyperbolic varieties; see \cite[\S15]{JBook} for a survey of these results. However,  
 our results however still leave some open questions, and  we refer the reader to \cite[Remark~15.7]{JBook} for a collection of open problems on the dynamics of pseudo-hyperbolic varieties.

Another aspect of our work on Lang-Vojta's conjectures is related to the Persistence conjecture \cite[Conjecture~1.20]{vBJK}: if $L/k$ is an extension of algebraically closed fields of characteristic zero and  $X$ is a Mordellic variety over $k$, then  $X_L$ is Mordellic. This conjecture is resolved in several  cases   in \cite{vBJK,   JLevin, JSZ, JAut,  JLitt}; see \cite[\S17]{JBook} for a survey. Although the Persistence Conjecture for Mordellicity does not play an explicit role in this paper, it seems interesting  to note that if the Persistence Conjecture  were true, then one could have avoided Amerik's theorem in our proof of Theorem \ref{thm:birs}.

\begin{ack}  
The first-named author   acknowledges support from SFB/Transregio 45.
\end{ack}

   \begin{con} Throughout this paper $k$ will be an algebraically closed field of characteristic zero.
 A \emph{variety over  $k$} is a finite type separated     $k$-scheme.  A scheme is \emph{integral} if it is irreducible and reduced.
 \end{con}

   \section{On the finiteness of the group of birational self-maps} 
   In this section we prove that the group of birational automorphisms of a projective non-uniruled variety is finite if and only if it is torsion (Theorem \ref{thm:tor}). Our proof uses  a   finiteness criterion for the automorphism group of a projective variety proven in \cite{JAut} and properties of  quasi-minimal  models of projective non-uniruled varieties.  Our approach is inspired by the work of Jinsong Xu \cite{JinsongXu}.

 If $X$ is a projective variety over $k$,  we let $\mathrm{Bir}_k(X)$ be the group of birational self-maps $X\dashrightarrow X$. Thus, if $X$ is birational to a projective variety $Y$ over $k$, then $\mathrm{Bir}_k(X) = \mathrm{Bir}_k(Y)$.
 Moreover, we let $\mathrm{Aut}_1(X)\subset \mathrm{Bir}_k(X)$  be the subgroup of birational self-maps $X\dashrightarrow X$ over $k$ which are regular in codimension one. That is, $\Aut_1(X)$ is the group of  birational self-maps $X\dashrightarrow X$ which are an automorphism outside a subset of codimension at least two. We refer the reader to \cite{Jelonek}, \cite[\S4]{ProShramov2014} and \cite{JinsongXu} for a discussion of  such birational self-maps.
  
  For $X$ a projective variety over $k$,  recall that $\NS(X)$ denotes the N\'eron-Severi group of $X$.   If $L$ is a line bundle on $X$, we let $[L]$ denote the class of $L$ in $\NS(X)_{\QQ} = \NS(X) \otimes_{\ZZ} \QQ$.
The following  lemma  is well-known (see for  instance   \cite[Lemma~2.4]{JinsongXu}).   We include a short proof for the reader's convenience. 

\begin{lemma}\label{lem:pseudo_is_aut} Let $X$ be a projective variety over $k$ and let $\sigma \in \Aut_1(X)$.  If there is an ample line bundle $L$ on $X$ such that $\sigma^\ast [L] = [L]$ in $\mathrm{NS}(X)_{\mathbb{Q}}$, then $\sigma$ is an automorphism.
\end{lemma}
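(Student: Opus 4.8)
The plan is to reduce the statement to the standard fact that a birational map between normal projective varieties which pulls back an ample class to an ample class, and is an isomorphism in codimension one, is automatically a morphism (hence an isomorphism). First I would recall the graph construction: let $\Gamma \subset X \times X$ be the closure of the graph of $\sigma$, with the two projections $p, q \colon \Gamma \to X$, both of which are birational and projective. Since $\sigma \in \Aut_1(X)$, the indeterminacy loci of $\sigma$ and $\sigma^{-1}$ have codimension at least two in $X$; equivalently, the exceptional loci of $p$ and $q$ have codimension at least two in their images. The key point is then to show that $p$ and $q$ are both small contractions, and to use the ample class $L$ to rigidify.

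The main step is the following. Pick a resolution (or simply a normal projective model) $\pi \colon \widetilde{X} \to \Gamma$ and set $\widetilde{p} = p \circ \pi$, $\widetilde q = q\circ \pi$. Consider the line bundle $M = \widetilde p^{\ast} L \otimes \widetilde q^{\ast} L^{-1}$ on $\widetilde X$. By hypothesis $\sigma^{\ast}[L] = [L]$ in $\NS(X)_{\QQ}$, and since $\sigma$ is an isomorphism in codimension one, the pullback of divisor classes along $\sigma$ and its comparison with $\widetilde p, \widetilde q$ shows that $M$ is numerically trivial on the generic fibre of $\widetilde p$ — indeed, away from the (codimension $\geq 2$) exceptional locus, $\widetilde p$ and $\widetilde q$ agree with $\sigma$ up to the $\Aut_1$ identification, so $\widetilde q^{\ast} L$ and $\widetilde p^{\ast} L$ differ by a $\widetilde p$-exceptional divisor whose class is forced to be numerically $\widetilde p$-trivial. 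A negativity-of-contraction argument (the Hodge index / negativity lemma for exceptional divisors of a birational morphism between normal varieties) then gives that this exceptional divisor is effective and anti-effective simultaneously, hence zero, so $\widetilde p^{\ast} L \cong \widetilde q^{\ast} L$ up to the ambiguity of the $\NS$ versus $\Pic$ distinction; more robustly, one argues that $\widetilde q^{\ast} L$ is $\widetilde p$-nef and $\widetilde p$-trivial, so it is pulled back from $X$ along $\widetilde p$, i.e. $\widetilde q^{\ast} L \cong \widetilde p^{\ast} L'$ for some line bundle $L'$ on $X$ with $L' \equiv L$. Since $L$ is ample and $\widetilde q$ is surjective with connected fibres onto the normal variety $X$, $L'$ is semiample along the fibres of $\widetilde p$ and defines the morphism $\widetilde q$; concretely, sections of high powers of $\widetilde q^{\ast} L$ separate points of $\widetilde X$ exactly as sections of $L$ on $X$ do, so the fibres of $\widetilde q$ are contracted by the morphism defined by $\widetilde p^{\ast} L'$. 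Because $\widetilde p$ has the same fibres "in codimension one" but $L'$ is ample, one deduces that every fibre of $\widetilde p$ is a point, i.e. $\widetilde p$ is finite and birational, hence an isomorphism by normality (Zariski's main theorem). The same then applies to $q$, and composing gives that $\sigma = q \circ p^{-1}$ is an automorphism of $X$.

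A cleaner packaging of the same idea, which I would probably adopt to keep the proof short: let $\Gamma$ be the normalization of the graph, $p, q\colon \Gamma \to X$ the projections. The divisor class $D := q^{\ast}[L] - p^{\ast}[L] \in \NS(\Gamma)_{\QQ}$ is supported on the $p$-exceptional locus (because $q = \sigma \circ p$ holds over the big open set where $\sigma$ is an isomorphism, and $\sigma^{\ast}[L] = [L]$), and likewise $-D$ is supported on the $q$-exceptional locus. But $\Aut_1$ forces both exceptional loci to have codimension $\geq 2$, so $D = 0$. Then $q^{\ast}L$ is numerically equivalent to the pullback of the ample bundle $L$ under $p$; since $q^{\ast}L$ is $q$-ample and $q$ is birational, $q$ is finite, hence an isomorphism. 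Therefore $p = \sigma^{-1} \circ q$ extends to a morphism on all of $X$, and by symmetry so does $\sigma$; being a birational morphism from the normal (indeed projective) variety $X$ to itself that is an isomorphism in codimension one, it is an isomorphism.

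The step I expect to be the main obstacle is the passage "$D$ is supported on the exceptional locus of $p$" — making precise the comparison between $q^{\ast}[L]$ and $p^{\ast}[L]$ over the locus where $\sigma$ is an isomorphism, and controlling what happens over the boundary, since a priori the closure of the graph need not be normal and $L$ lives in $\NS$ rather than $\Pic$. The cleanest fix is to work on a common resolution and to invoke the negativity lemma for birational contractions of normal varieties: an $\RR$-divisor that is both effective and $p$-exceptional and whose negative is $q$-exceptional must vanish once both exceptional loci have codimension $\geq 2$. Everything else — the graph construction, Zariski's main theorem, and the fact that $q$-ample plus birational implies finite — is standard.
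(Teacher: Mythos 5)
Your proposal is correct and follows essentially the same route as the paper: form the closure of the graph with its two projections $\pi_1,\pi_2$, note that the difference $D=\pi_1^\ast \sigma^\ast L-\pi_2^\ast L$ pushes forward to zero under both projections because $\sigma$ is an isomorphism in codimension one, apply the negativity lemma in both directions to conclude $D=0$, and then use ampleness (the paper writes $2\pi_1^\ast L=\pi_1^\ast L+\pi_2^\ast L$ is ample, which is your ``pulls back ample to ample, hence finite, hence isomorphism'' step) to conclude that the projection, and so $\sigma$, is an automorphism. One small caution: in your ``cleaner packaging'' the phrase ``both exceptional loci have codimension $\geq 2$, so $D=0$'' is too quick, since the exceptional loci \emph{in the graph} may well be divisorial and only their images in $X$ have codimension $\geq 2$, but your final paragraph already identifies the correct repair -- the negativity lemma on the graph or a common resolution -- which is exactly the paper's argument.
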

\begin{proof} 
Let $L$ be an ample line bundle such that $\sigma^\ast [L] = [L]$ in $\NS(X)_{\mathbb{Q}}$.  Let $M_2$ be a $\mathbb{Q}$-divisor in the same class of $L$. Set $M_1:=\sigma^\ast M_2.$ Then  $M_1$ and $M_2$ are numerically equivalent.

 Let $\Gamma$ be the closure of the graph of $\sigma$ in $X\times X$. Let $\pi_1:\Gamma\to X$ be the first projection, and let $\pi_2:\Gamma\to X$ be the second projection. Consider the divisor $D= \pi_1^\ast M_1 - \pi_2^\ast M_2$. Note that $\pi_{1,\ast} D = \pi_{2,\ast} D = 0$.  Since $D$ is $\pi_2$-nef, it follows from the negativity lemma (see \cite[Lemma~3.39]{KollarMori}) that  $-D$ is effective. Similarly, since $-D$ is $\pi_1$-nef, it follows from the negativity lemma that $D$ is effective. Therefore, $D =0$, so that \[
  \pi_1^\ast M_1 = \pi_2^\ast M_2.\]
  In particular, since $\pi_1^\ast L + \pi_2^\ast L$ is ample, we see that  $2\pi_1^\ast L = \pi_1^\ast L + \pi_2^\ast L $ is ample. This implies that $\pi_1^\ast L$ is ample. We conclude that $\pi_1$ is an isomorphism, so that $\sigma$ is an automorphism, as required.
  \end{proof}

  \begin{lemma}\label{lem:aut1} Let $X$ be a projective variety over $k$. If $\mathrm{Aut}_1(X)$ is torsion, then $\mathrm{Aut}(X)\subset \mathrm{Aut}_1(X)$ is of finite index.
  \end{lemma}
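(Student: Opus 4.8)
The plan is to exploit the action of $\Aut_1(X)$ on the Néron–Severi group $\NS(X)$, together with Lemma \ref{lem:pseudo_is_aut}, to identify $\Aut(X)$ as precisely the stabilizer of an ample class. First I would recall that every $\sigma \in \Aut_1(X)$ is regular in codimension one, so pullback of divisor classes gives a well-defined action of $\Aut_1(X)$ on $\NS(X)$ (equivalently, on $\NS(X)_{\QQ}$), because a divisor class is determined by its restriction to the complement of a codimension-two subset. This gives a representation $\rho \colon \Aut_1(X) \to \GL(\NS(X))$. Since $\NS(X)$ is a finitely generated abelian group, $\GL(\NS(X))$ is a finitely generated linear group over $\ZZ$; by a classical theorem of Minkowski (or Selberg's lemma), any finitely generated torsion subgroup of $\GL_n(\ZZ)$ — indeed any torsion subgroup — is finite. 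Applying this to the image $\rho(\Aut_1(X))$, which is torsion by hypothesis, we conclude that $\rho(\Aut_1(X))$ is a finite group.

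Next I would analyze the kernel $N := \ker\rho$, i.e., the subgroup of $\sigma \in \Aut_1(X)$ acting trivially on $\NS(X)$. For any such $\sigma$ and any ample line bundle $L$ on $X$, we have $\sigma^\ast[L] = [L]$ in $\NS(X)_{\QQ}$, so Lemma \ref{lem:pseudo_is_aut} applies and shows $\sigma \in \Aut(X)$. Hence $N \subseteq \Aut(X)$. Combining the two observations: $\Aut(X) \subseteq \Aut_1(X)$ contains $N = \ker\rho$, and $\Aut_1(X)/N \cong \rho(\Aut_1(X))$ is finite, so $N$ has finite index in $\Aut_1(X)$; a fortiori $\Aut(X)$ has finite index in $\Aut_1(X)$, which is the claim.

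The one point that deserves care — and which I expect to be the main (if modest) obstacle — is verifying that the finiteness input on torsion linear groups is being applied legitimately: one needs the image $\rho(\Aut_1(X))$ to sit inside $\GL_n(\ZZ)$ for $n = \rank \NS(X)$ plus the torsion part, and then invoke that a torsion subgroup of $\GL_n(\ZZ)$ is finite (this is Minkowski's theorem on finite subgroups of $\GL_n(\ZZ)$, valid without assuming the subgroup is finitely generated, since $\GL_n(\ZZ)$ is residually finite and every torsion subgroup of $\GL_n$ over a field of characteristic zero has bounded exponent and hence is finite by Burnside–Schur). Alternatively, and perhaps more cleanly, one argues: $\rho(\Aut_1(X))$ is a torsion subgroup of $\GL(\NS(X))$, which embeds into $\GL_N(\QQ)$ for some $N$; by Burnside–Schur a torsion linear group of finite exponent is locally finite, and a torsion subgroup of $\GL_N(\ZZ)$ that is locally finite is finite because it is residually finite with finite residue quotients of bounded size. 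Once this finiteness of $\rho(\Aut_1(X))$ is in hand, the rest of the argument is formal. (Note that we do not even need $\Aut_1(X)$ to be finitely generated as a group; only the target $\NS(X)$ is finitely generated, which is what makes the linear-algebra finiteness available.)
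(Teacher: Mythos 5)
Your proposal is correct and is essentially the paper's own argument: the paper also passes to the kernel $\Gamma$ of $\Aut_1(X)\to\Aut(\NS(X))$, notes it has finite index because a torsion group acting on a finitely generated abelian group has finite image (cited there to \cite{JAut}, which is the Minkowski-type fact you spell out), and then uses Lemma \ref{lem:pseudo_is_aut} on a fixed ample class to conclude $\Gamma\subset\Aut(X)$. The only difference is that you make the linear-group finiteness explicit rather than citing it.
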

  \begin{proof} We adapt the arguments in the proof of \cite{JAut}. 
  Let $\Gamma$ be the kernel of $\Aut_1(X)\to \mathrm{Aut}(\NS(X))$. Since $\Aut_1(X)$ is torsion and $\NS(X)$ is a finitely generated abelian group, the subgroup $\Gamma\subset \Aut_1(X)$  is of finite index  \cite{JAut}. Let $L$ be an ample line bundle, and note that $\Gamma$ fixes the class of $L$ in $\mathrm{NS}(X)$. Therefore, by Lemma \ref{lem:pseudo_is_aut},  every $g$ in $\Gamma$ is an automorphism, so that  $\Gamma\subset \Aut(X)$. Since $\Gamma$ is of finite index in $\Aut_1(X)$ and $\Gamma\subset \Aut(X)$, we conclude  that $\Aut(X)\subset \mathrm{Aut}_1(X)$ is of finite index. 
  \end{proof}
  
  \begin{corollary}\label{cor:aut1_tor_is_fin} Let $X$ be a projective variety over $k$.
  If $\Aut_1(X)$ is torsion, then $\Aut_1(X)$ is finite.
  \end{corollary}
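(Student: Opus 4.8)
The plan is to deduce this immediately from Lemma~\ref{lem:aut1} together with the finiteness criterion for automorphism groups established in \cite{JAut}, namely that for a projective variety $X$ over $k$ the group $\Aut(X)$ is finite if and only if it is torsion. So I would begin by assuming $\Aut_1(X)$ is torsion, and then run the argument in two short steps.

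First, since every subgroup of a torsion group is torsion, the subgroup $\Aut(X)\subset \Aut_1(X)$ is torsion; invoking the criterion from \cite{JAut} recalled above, this already forces $\Aut(X)$ to be finite. Second, the hypothesis that $\Aut_1(X)$ is torsion is exactly the hypothesis of Lemma~\ref{lem:aut1}, so that lemma applies and shows that $\Aut(X)$ has finite index in $\Aut_1(X)$. A group that contains a finite subgroup of finite index is finite — its cardinality is at most the product of the order of the subgroup and the index — and therefore $\Aut_1(X)$ is finite, as required.

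There is essentially no obstacle here: the statement is a formal consequence of Lemma~\ref{lem:aut1} and the cited result from \cite{JAut}. The only point worth emphasizing is that the geometric input is concentrated entirely in the theorem that a torsion automorphism group of a projective variety is finite — a priori a torsion group need not be finite — but this has already been proven in \cite{JAut} and is used here as a black box.
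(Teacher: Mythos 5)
Your proposal is correct and follows exactly the paper's own argument: note that $\Aut(X)\subset\Aut_1(X)$ is torsion, hence finite by the criterion of \cite{JAut}, and then use Lemma~\ref{lem:aut1} to get that $\Aut(X)$ has finite index in $\Aut_1(X)$, so $\Aut_1(X)$ is finite. No differences worth noting.
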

  \begin{proof}
  Assume that  $\Aut_1(X)$ is torsion, so that  $\Aut(X)$ is torsion. Then, it follows from   \cite{JAut} that $\Aut(X)$ is finite. Since $\Aut_1(X)$ is torsion, we also have that $\Aut(X)\subset \Aut_1(X)$ is of finite index (Lemma \ref{lem:aut1}). We conclude that $\Aut_1(X)$ is finite.
  \end{proof}

  \begin{theorem}[Prokhorov-Shramov]\label{thm:qcmodel} Let $X$ be a proper non-uniruled integral variety over $k$. Then  there is a projective integral variety $Y$ and a birational map $X\dashrightarrow Y$  such that $\Bir(X) = \Aut_1(Y)$. 
  \end{theorem}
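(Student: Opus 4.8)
The plan is to reduce this to Prokhorov–Shramov's construction of a \emph{quasi-minimal model} for $X$, which is the technical heart of \cite{ProShramov2014}. First I would pass to a smooth projective model: by resolution of singularities we may replace $X$ by a smooth projective integral variety $X'$ birational to it, noting that $\Bir_k(X) = \Bir_k(X')$ and that $X'$ is again non-uniruled (uniruledness is a birational invariant of smooth projective varieties). Running a minimal model program is not available in the generality we want, but Prokhorov–Shramov show that a non-uniruled variety admits a quasi-minimal model: a projective (or at least $\QQ$-factorial terminal proper) variety $Y$, birational to $X'$, on which every birational self-map is an isomorphism in codimension one, i.e. lies in $\Aut_1(Y)$. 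The key geometric input is that non-uniruledness guarantees the pseudo-effectivity of $K_{X'}$ (by Boucksom–Demailly–Paun–Peternell), hence the existence of a ``quasi-minimal'' birational contraction after which no further $K$-negative extremal contractions or flips exist, and such a model has the property that the indeterminacy locus of any birational self-map has codimension $\geq 2$ on both sides.

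The core step is therefore: \emph{cite and apply} Prokhorov–Shramov's theorem that a proper non-uniruled integral variety $X$ over $k$ is birational to a projective variety $Y$ which is a quasi-minimal model, together with their observation that for a quasi-minimal model $Y$ one has $\Bir_k(Y) = \Aut_1(Y)$. Concretely, given $g \in \Bir_k(Y)$, one must check that $g$ and $g^{-1}$ do not contract any divisor; this follows because contracting a divisor would, after resolving, produce a $K$-negative direction contradicting quasi-minimality (the discrepancy/negativity considerations that underlie the negativity lemma used in Lemma \ref{lem:pseudo_is_aut}). Since $\Bir_k$ is a birational invariant, $\Bir_k(X) = \Bir_k(Y) = \Aut_1(Y)$, which is the assertion.

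I would structure the write-up as: (i) reduce to $X$ smooth projective; (ii) invoke Prokhorov–Shramov to obtain a quasi-minimal model $Y$ birational to $X$, recording that $Y$ is projective and integral; (iii) invoke their lemma that $\Bir_k(Y) = \Aut_1(Y)$ for such $Y$; (iv) combine with $\Bir_k(X) = \Bir_k(Y)$ to conclude. The main obstacle, or rather the main thing requiring care, is that ``quasi-minimal model'' in the sense of Prokhorov–Shramov is not literally the output of a terminated MMP (which is not known in all dimensions) but a slightly weaker object defined via their lattice-theoretic/valuative construction; one must make sure the cited statement genuinely yields a \emph{projective} $Y$ with $\Bir_k(Y)=\Aut_1(Y)$, rather than merely a proper algebraic space or a model with weaker properties. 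Assuming one quotes the correct statement from \cite{ProShramov2014} (their treatment of non-uniruled varieties, which is exactly tailored to make $\Bir = \Aut_1$ on the model), the proof is short; the substantive content is entirely imported from that paper, and the role of Theorem \ref{thm:qcmodel} here is to package it in the form needed for Theorem \ref{thm:tor}.
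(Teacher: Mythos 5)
Your proposal is correct and is essentially the paper's own argument: pass to a (smooth) projective birational model, cite Prokhorov--Shramov for the existence of a quasi-minimal model $Y$ (their Lemma~4.4) and for $\Bir_k(Y)=\Aut_1(Y)$ (their Corollary~4.7), then conclude via the birational invariance of $\Bir_k$. The extra discussion of pseudo-effectivity and discrepancies is background to the cited results rather than a different route.
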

  \begin{proof} Let $Y$ be a quasi-minimal model for $X$ over $k$; see \cite[\S4]{ProShramov2014}. The existence of such a model follows from \cite[Lemma~4.4]{ProShramov2014}. By  \cite[Corollary~4.7]{ProShramov2014}, we have that  $\Bir(Y) = \Aut_1(Y)$. Since $X$ and $Y$ are birational, it follows that $\mathrm{Bir}(X) = \mathrm{Bir}(Y)$. This shows that $\mathrm{Bir}(X) = \Aut_1(Y)$, as required. 
  \end{proof}

  \begin{proof}[Proof of Theorem \ref{thm:tor}] Since $X$ is non-uniruled, by Theorem \ref{thm:qcmodel}, there is a projective integral variety $Y$ over $k$ and a birational map $X\dashrightarrow Y$ such that $\Bir_k(X) = \Aut_1(Y)$. Therefore, since $\Bir_k(X)$ is torsion, the group $\Aut_1(Y)$ is torsion. Now, as $ \Aut_1(Y)$ is torsion, it follows from Corollary \ref{cor:aut1_tor_is_fin} that   $\Aut_1(Y)= \Bir_k(X)$ is finite.
  \end{proof}

 \section{Pseudo-grouplessness}\label{section:groupless}
 
Roughly speaking, an algebraic variety is groupless if it admits no non-trivial morphisms from an algebraic group; see \cite{JAut, JKa, JVez}. In this paper we introduce the more general notion of pseudo-grouplessness; this property also appears in the work of Lang \cite{Lang2} and Vojta      \cite{VojtaLangExc}.

 \begin{definition}[Pseudo-grouplessness] Let $X$ be a variety over $k$ and let $\Delta\subset X$ be a closed subscheme.
 We say that  $X$  is \emph{groupless modulo $\Delta$ (over $k$)} if,  for every  finite type connected group scheme $G$ over $k$ and every dense open subscheme $U\subset G$ with $\mathrm{codim}(G\setminus U)\geq 2$, every non-constant morphism $U\to X$ factors over $\Delta$. 
\end{definition}

Note that, as $k$ is of characteristic zero, a finite type connected group scheme over $k$ is smooth and quasi-projective over $k$ (see for instance \cite{ConradChevalley}).

\begin{definition}
 A variety $X$ is \emph{pseudo-groupless (over $k$)} if there is a proper closed subset $\Delta\subsetneq X$ such that $X$ is groupless modulo $\Delta$.
\end{definition}

 \begin{remark}\label{remark:properness}
 Let $X$ be a proper variety over $k$ and let $\Delta\subset X$ be a closed subscheme. It follows from the valuative criterion of properness that  $X$ is groupless modulo $\Delta$ if and only if, for every finite type connected group scheme $G$ over $k$ and every dense open subscheme $U\subset G$, every non-constant morphism $U\to X$ factors over $\Delta$.
 \end{remark}

  \begin{remark}
  Let $X$ be a proper variety over $k$ with no rational curves and let $\Delta\subset X$ be a closed subscheme. Then $X$ is groupless modulo $\Delta$ if and only if, for every (smooth) finite type connected group scheme $G$ over $k$, every non-constant morphism $G\to X$ factors over $\Delta$. Indeed, for every dense open $U\subset G$,  as $G$ is smooth over $k$, every morphism $U\to X$ extends to a morphism $G\to X$; see for instance \cite[Corollary~1.44]{Debarrebook1}.
  \end{remark}

\begin{remark}
 If $X$ is an affine variety over $k$, then $X$ is groupless modulo $\Delta$ if and only if, for every finite type connected group scheme $G$ over $k$, every non-constant morphism $G\to X$ factors over $\Delta$. Indeed, if $U\subset G$ is a dense open subset with $\mathrm{codim}(G\setminus U)\geq 2$, it follows from Hartog's lemma and the  smoothness of $G$ over $k$ that  every morphism $U\to X$ extends (uniquely) to a morphism $G\to X$.
\end{remark}

 The notion of ``grouplessness''  (as defined in \cite[Definition~2.1]{JKa}) is well-studied, and sometimes referred to as ``algebraic hyperbolicity'' or ``algebraic Lang hyperbolicity''; see \cite{ShangZhang}, \cite[Remark~3.2.24]{KobayashiBook}, or \cite[Definition~3.4]{KovacsSubs}. As in \cite{vBJK, JKa}, we follow Demailly \cite{Demailly} and reserve ``algebraic hypebolicity'' for his notion of hyperbolicity. 
  
 \begin{remark}\label{rem:gr_is_nonuni}
 A projective pseudo-groupless variety could have rational curves. For example, the blow-up of the product of two smooth proper curves of genus two in a point is pseudo-groupless and contains precisely one rational curve.  However, a pseudo-groupless proper variety is not  covered by rational curves, i.e., a pseudo-groupless proper variety is non-uniruled.   
 \end{remark}

 It is natural to expect that  pseudo-grouplessness persists over field extensions. The following proposition confirms this expectation.
 \begin{proposition}Let $k\subset L$ be an extension of algebraically closed fields of characteristic zero. Let $X$ be a variety over $k$ and let $\Delta\subset X$ be a closed subscheme. If $X$ is groupless modulo $\Delta$ over $k$, then $X_L$ is groupless modulo $\Delta_L$ over $L$.
 \end{proposition}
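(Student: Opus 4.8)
The plan is to argue by contradiction, via a spreading‑out argument. Suppose $X_L$ is not groupless modulo $\Delta_L$ over $L$. Then there are a finite type connected group scheme $G$ over $L$, a dense open subscheme $U\subseteq G$ with $\mathrm{codim}(G\setminus U)\geq 2$, and a non‑constant morphism $f\colon U\to X_L$ which does not factor over $\Delta_L$. The goal is to manufacture from this a similar counterexample over $k$, contradicting the hypothesis.

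First I would descend the data $(G,U,f)$ to a finitely generated base. Writing $L$ as the filtered union of its finitely generated $k$‑subalgebras and applying the standard limit/approximation results (spreading out over a finitely generated base), one obtains a finitely generated $k$‑subalgebra $R\subseteq L$ --- necessarily a domain --- a group scheme $\mathcal G$ over $\Spec R$, an open subscheme $\mathcal U\subseteq\mathcal G$ with closed complement $\mathcal Z:=\mathcal G\setminus\mathcal U$, and an $R$‑morphism $f\colon\mathcal U\to X_R:=X\times_k\Spec R$, whose base change along $R\to L$ recovers $(G,U,f)$, with $\mathcal Z\times_R L=G\setminus U$. Put $K=\mathrm{Frac}(R)\subseteq L$; by base change, $f_K\colon\mathcal U_K\to X_K$ is still non‑constant and still does not factor over $\Delta_K$. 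Since $U$ is irreducible (as $G$ is smooth and connected over the algebraically closed field $L$), after shrinking $\Spec R$ we may also assume $\mathcal U$, and hence $\mathcal G$, is irreducible.

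Next I would shrink $\Spec R$ further --- that is, localize $R$ at finitely many elements, which keeps it a finitely generated $k$‑domain --- so that, for \emph{every} point $s\in\Spec R$: (a) $\mathcal G_s$ is a connected (smooth, finite type) group scheme over $\kappa(s)$; (b) $\mathrm{codim}_{\mathcal G_s}(\mathcal Z_s)\geq 2$, and in particular $\mathcal U_s$ is dense in $\mathcal G_s$; (c) $f_s\colon\mathcal U_s\to X_{\kappa(s)}$ is non‑constant; and (d) $f_s$ does not factor over $\Delta$. Property (a) uses that smoothness and geometric connectedness of the generic fiber spread out, the latter via the relative identity component. Properties (b), (c) and (d) are each reduced to a statement about the dimensions of the fibers of a finite type morphism over $\Spec R$ --- for (b) comparing $\mathcal Z$ with $\mathcal G$; for (c) and (d) using $Y:=\overline{f(\mathcal U)}\subseteq X_R$, a dense open of $Y$ contained in the constructible image $f(\mathcal U)$, and $Y\cap\Delta_R$ --- and then invoking generic flatness together with the fact that the dimension of the fibers of a finite type morphism is constant on a dense open of an integral base. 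The relevant inputs at the generic point are exactly the properties of $f_K$ recorded above, together with the invariance of the dimension of a finite type scheme under the field extension $K\subseteq L$.

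Finally, since $k$ is algebraically closed and $R$ is a finitely generated $k$‑domain, the Nullstellensatz provides a closed point $s_0\in\Spec R$ with $\kappa(s_0)=k$. Specializing the descended data at $s_0$ produces a finite type connected group scheme $\mathcal G_{s_0}$ over $k$, a dense open $\mathcal U_{s_0}\subseteq\mathcal G_{s_0}$ with $\mathrm{codim}(\mathcal G_{s_0}\setminus\mathcal U_{s_0})\geq 2$, and a non‑constant morphism $\mathcal U_{s_0}\to X$ that does not factor over $\Delta$; this contradicts the grouplessness of $X$ modulo $\Delta$ over $k$, and the proposition follows. I expect the main obstacle to be this third step: checking that all three defining features of a counterexample --- the codimension bound on the complement, non‑constancy, and non‑factorization through $\Delta$ --- are preserved when one passes from the generic fiber over $K$ to a general, and then to a closed, fiber. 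Each reduces to a fiber‑dimension computation, but some care is needed to treat possibly reducible total spaces component by component and to keep track of which components dominate $\Spec R$.
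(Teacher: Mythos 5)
Your argument is correct and is essentially the paper's own proof: both spread the data $(G,U,f)$ out over a finitely generated $k$-subalgebra of $L$, shrink the base so the relevant properties hold fiberwise, and then specialize at $k$-points to invoke grouplessness of $X$ modulo $\Delta$ over $k$. The only difference is organizational --- you argue by contradiction and track non-factorization through $\Delta$ at a single closed fiber, whereas the paper argues directly that, since every fiber over a $k$-point factors through $\Delta$ and these fibers are dense, the whole spread-out morphism lands in $\Delta\times S$ and hence $f$ factors through $\Delta_L$; your version is if anything slightly more careful about preserving the codimension-two condition on the complement.
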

 \begin{proof}(We follow the proof of \cite[Lemma~2.2]{JKa}.) Let $G$ be a finite type connected group scheme over $L$, let $U\subset G$ be a dense open with $\mathrm{codim}(G\setminus U)\geq 2$, and let $f:G\to X_L$ be  a non-constant morphism.   Now, by  standard spreading out arguments, we may  choose a finite type affine $k$-scheme $S = \Spec A$ with $A\subset L$, a finite type group scheme $\mathcal{G}\to S$ with geometrically connected fibres, a morphism $F:\mathcal{G}\to X\times S$ of $S$-schemes, and an isomorphism $\mathcal{G}_L \cong G$ of group schemes over $L$  such that $F_L:\mathcal{G}_L \cong G\to  X_L$ coincides with $f$. Replacing $S$ by a dense open subscheme if necessary, we may also arrange that $F_s:\mathcal{G}_s\to X\times \{s\} \cong X$ is non-constant for every $s$ in $S(k)$. Now, since $X$ is groupless modulo $\Delta$, for every $s$ in $S(k)$, the morphism $F_s: \mathcal{G}_s \to X$ factors over $\Delta$. This implies that the image of $F$ is contained in $\Delta \times S$, so that  $f$ factors over $\Delta_L$, as required.
 \end{proof}

\subsection{Birational invariance}  It is clear that the notion of grouplessness is not a birational invariant. Indeed, if $C$ is a smooth projective connected curve of genus $2$ and $X$ is the blow-up of $C\times C$ in a point, then $X$ is not groupless, although $C\times C$ is  groupless. However, the ``non-grouplessness'' of the surface $X$ is concentrated in the exceptional locus of $X\to C\times C$. In this section we  will show  that pseudo-grouplessness is a property which is stable under proper birational modifications.

\begin{lemma}\label{lem:birinv1_psgr}
Let $f:X\dashrightarrow Y$ be a  dominant  birational   map of proper varieties over $k$. 
Let $U$ be a dense open set of $X$ with complement $\Delta$  such that    $X$ is groupless modulo $\Delta $, 
 the rational map $f$ is well defined on $U$, and $f|_U:U\to Y$ is an open immersion.
Then $Y$ is groupless modulo $Y\setminus f(U)$.
\end{lemma}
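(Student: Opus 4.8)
The plan is to pull back morphisms from group schemes along the birational map and use the grouplessness of $X$ modulo $\Delta$. The key point is that a morphism $V \to Y$ from (a big open of) a group scheme lands, after restriction to the preimage of $f(U)$, inside the open $f(U) \cong U \subset X$, so it may be regarded as a morphism to $X$; then the hypothesis on $X$ forces it into $\Delta$, whose intersection with $U$ is empty, so the original morphism to $Y$ cannot meet $f(U)$ unless it is constant.

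First I would set up notation: let $g = f^{-1}|_{f(U)} : f(U) \to U$ be the inverse isomorphism. Let $G$ be a finite type connected group scheme over $k$ and let $V \subset G$ be a dense open with $\operatorname{codim}(G \setminus V) \geq 2$, and let $h : V \to Y$ be a non-constant morphism; we must show $h$ factors through $Y \setminus f(U)$. Consider $W := h^{-1}(f(U)) \subset V$, an open subset of $V$ (and of $G$). If $W = \emptyset$ we are done, so assume $W \neq \emptyset$. Since $G$ is connected and $W$ is a non-empty open, $W$ is dense in $G$, hence $G \setminus W$ has codimension $\geq 1$; but I will need codimension $\geq 2$ to invoke the hypothesis on $X$ directly, so here is where care is needed. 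The natural fix is: the composite $g \circ h|_W : W \to U \subset X$ is a morphism from the big open $V$ minus possibly a divisor, so instead I would argue as follows — shrink and use properness. Because $Y$ is proper and $X \dashrightarrow Y$ is birational with $X$ proper, and $g$ is defined on $f(U)$, consider rather the rational map $g \circ h : V \dashrightarrow X$: it is defined on $W$, which is a dense open of $G$. Now $W$ itself need not have complement of codimension $\geq 2$ in $G$, but I can replace $G$ by the group scheme $G$ and restrict attention to $V \cap W$ and note that $V \cap W = W$ contains $V \setminus (\text{closure of } G\setminus W)$. The cleanest route: take the open $V' := W$ inside $G$; it is a dense open of the connected group scheme $G$, so I would instead invoke the version of grouplessness that allows \emph{arbitrary} dense opens — but that version is only available for \emph{proper} $X$ (Remark~\ref{remark:properness}), and $X$ here is indeed proper. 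So: since $X$ is proper and groupless modulo $\Delta$, by Remark~\ref{remark:properness} every non-constant morphism from a dense open of a finite type connected group scheme to $X$ factors through $\Delta$. Wait — $g \circ h|_W$ is a morphism $W \to U \subset X$; by properness of $X$ it may fail to extend, but that is irrelevant since Remark~\ref{remark:properness} applies to the morphism $W \to X$ as is. Thus either $g \circ h|_W$ is constant, in which case $h|_W$ is constant (as $g$ is injective) and, $W$ being dense in the connected $G$ with $h$ a morphism on all of $V$, $h$ is constant on $V$, contradiction; or $g \circ h|_W$ factors through $\Delta$, i.e. $h(W) \subset f(\Delta \cap U) = f(\emptyset) = \emptyset$, again a contradiction. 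Hence $W = \emptyset$, i.e. $h(V) \subset Y \setminus f(U)$, which is what we wanted.

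Assembling: for every finite type connected group scheme $G$ over $k$, every dense open $V \subset G$ with complement of codimension $\geq 2$ (in fact any dense open, using properness of $Y$), and every non-constant $h : V \to Y$, we have shown $h(V) \subset Y \setminus f(U)$; by definition (and Remark~\ref{remark:properness}, using that $Y$ is proper), $Y$ is groupless modulo $Y \setminus f(U)$.

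**Main obstacle.** The delicate point is the codimension bookkeeping: the preimage $W = h^{-1}(f(U))$ of the open $f(U)$ under $h$ need not have complement of codimension $\geq 2$ in $G$, so one cannot naively feed it into the codimension-$\geq 2$ form of the definition. The resolution is to use that $X$ (and $Y$) are \emph{proper}, so that Remark~\ref{remark:properness} lets us work with arbitrary dense opens of $G$; this is exactly why the lemma is stated for proper varieties. A secondary subtlety is verifying that non-constancy of $h$ on $V$ is not lost when passing to the dense open $W$ — this uses that $h$ is a genuine morphism on the connected scheme $V$ and a dense open determines it.
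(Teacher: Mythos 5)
Your proof is correct. The paper's own proof is a terser, generic-point packaging of the same idea: for a variety $V$ it introduces the set $S(V)$ of scheme-theoretic points whose residue field is transcendental over $k$ and embeds into the function field of some connected finite type group scheme; grouplessness of the proper variety $X$ modulo $\Delta$ forces $S(U)=\emptyset$, and since $S(\cdot)$ depends only on the abstract scheme $U\cong f(U)$, one gets $S(f(U))=\emptyset$, which is exactly the assertion that $Y$ is groupless modulo $Y\setminus f(U)$. Your argument unwinds this at the level of actual morphisms: given $h\colon V\to Y$ with $V$ a big open of $G$, you restrict to $W=h^{-1}(f(U))$, transport to $X$ via $(f|_U)^{-1}$, and invoke Remark \ref{remark:properness} to cope with the fact that $W$ is merely dense rather than big in $G$ --- the same properness input that is implicit in the paper's claim $S(U)=\emptyset$ (there it appears as the fact that a rational map from the smooth $G$ to a proper variety is defined outside codimension two). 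Your handling of the two cases (if the transported map is non-constant it must land in $\Delta$, impossible since its image lies in $U$; if it is constant, density of $W$ in $V$ makes $h$ constant, contradicting non-constancy) is sound. In short, both proofs rest on the observation that, for proper targets, grouplessness off $\Delta$ is a property of the open $U$ alone; yours is the explicit morphism-level version, while the paper's $S(\cdot)$ device makes the invariance under replacing $U$ by the isomorphic open $f(U)$ immediate.
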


\begin{proof}  
For a variety $V$ over $k$, we let  $S(V)$ denote the set of scheme-theoretic points $x\in V$ whose residue field $\kappa(x)$ is transcendental over $k$ and such that there is a connected finite type group scheme $G$ over $k$ such that $\kappa(x)$ is a subextension of $k\subset K(G)$, where $K(G)$ is the function field of $G$.   
Since $X$ is groupless modulo $X\setminus U$, it follows that $S(U)=\emptyset$.   Therefore, we have that $S(Y)\cap U=S(f(U))=\emptyset$, so that  $Y$ is groupless modulo $Y\setminus f(U)$.
\end{proof}

 \begin{lemma} \label{lem:birational_invariance}
Let $X$ and $Y$ be proper integral varieties over $k$. Assume that $X$ is birational to $Y$. Then $X$ is pseudo-groupless over $k$ if and only if $Y$ is pseudo-groupless over $k$.
 \end{lemma}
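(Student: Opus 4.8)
The plan is to reduce the birational invariance of pseudo-grouplessness (Lemma \ref{lem:birational_invariance}) to the one-sided statement already established in Lemma \ref{lem:birinv1_psgr}, by exhibiting a common dense open subset on which a given birational map is an isomorphism. First I would fix a birational map $\phi:X\dashrightarrow Y$ between the proper integral varieties $X$ and $Y$. Standard facts about birational maps of integral varieties give a dense open subset $U\subset X$ and a dense open subset $V\subset Y$ such that $\phi$ restricts to an isomorphism $U\xrightarrow{\sim}V$; moreover, after shrinking, one may take $U$ (and hence $V$) to be contained in any prescribed dense open, which will be useful below.

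Next I would run the forward implication. Suppose $X$ is pseudo-groupless over $k$, so there is a proper closed subset $\Delta\subsetneq X$ with $X$ groupless modulo $\Delta$. After shrinking $U$ I may assume $U\subset X\setminus\Delta$, so that $X$ is groupless modulo the complement $X\setminus U\supseteq\Delta$ (grouplessness modulo a closed set is obviously preserved under enlarging that closed set, since the factorization condition only becomes weaker). Now $\phi|_U:U\to Y$ is an open immersion with image $V$, so Lemma \ref{lem:birinv1_psgr} applies with this choice of $U$ and $\Delta_X := X\setminus U$, yielding that $Y$ is groupless modulo $Y\setminus V$. Since $V$ is a nonempty open subset of the integral variety $Y$, the complement $Y\setminus V$ is a proper closed subset, so $Y$ is pseudo-groupless. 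The reverse implication is symmetric: apply the same argument to $\phi^{-1}:Y\dashrightarrow X$.

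There is essentially no hard part here — the content has already been isolated in Lemma \ref{lem:birinv1_psgr}, whose proof uses the ``transcendental-point'' characterization $S(U)=\emptyset$. The only points requiring a word of care are (i) that a birational map of integral varieties is an isomorphism over a common dense open, which is classical, and (ii) that one is free to shrink that open set so as to land inside $X\setminus\Delta$ — again automatic since any two dense opens of an irreducible scheme meet, and restricting an open immersion to a smaller open is still an open immersion. So I would write the proof in two short symmetric paragraphs, invoking Lemma \ref{lem:birinv1_psgr} once in each direction, with the brief remark that grouplessness modulo $\Delta$ implies grouplessness modulo any closed $\Delta'\supseteq\Delta$.
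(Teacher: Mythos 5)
Your proof is correct and follows essentially the same route as the paper, which likewise deduces the statement directly from Lemma \ref{lem:birinv1_psgr}; you have simply spelled out the routine reduction (choosing common dense opens where the birational map is an isomorphism, shrinking into $X\setminus\Delta$, and using that grouplessness modulo a closed set is preserved under enlarging it). No gaps.
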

 \begin{proof} This follows from Lemma \ref{lem:birinv1_psgr}. 
 \end{proof}

 \subsection{A rigidity criterion}
 To study the notion of pseudo-grouplessness, we will use a  rigidity criterion for morphisms of varieties. The precise statement we need is given by Proposition \ref{propfaithfullyflatfactor} below. This result is presumably well-known to experts; due to lack of reference we include a proof.

\begin{lemma}\label{lemfracfunfun}Let  $C\hookrightarrow D$ be a faithfully flat morphism between integral domains. Then the following statement hold.
\begin{enumerate}
\item The sequence  of abelian groups
$$0\to C\to D\to D\otimes_C D, $$ where the morphism $d:D\to D\otimes_C D$ is defined by $x\mapsto x\otimes_C1-1\otimes_Cx,$ is exact 
\item We have that $C=D\cap \mathrm{Frac}(C)$ in $ \mathrm{Frac}(D)$.  
\end{enumerate}
\end{lemma}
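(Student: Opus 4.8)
The statement is a standard piece of faithfully flat descent, and the plan is to prove (1) directly and then deduce (2) from (1). For (1), one works with the Amitsur-style complex attached to the faithfully flat extension $C\to D$. Concretely, I would first prove the claim after a faithfully flat base change: tensoring the sequence $0\to C\to D\to D\otimes_C D$ with $D$ over $C$ yields $0\to D\to D\otimes_C D\to D\otimes_C D\otimes_C D$, and this sequence is split exact because the base-changed map $D\to D\otimes_C D$ now admits a retraction (multiplication $D\otimes_C D\to D$) together with the usual contracting homotopy $s_n$ of the Amitsur complex. Since $C\to D$ is faithfully flat, exactness of the original sequence can be checked after $-\otimes_C D$, so exactness of the split sequence gives exactness of $0\to C\to D\to D\otimes_C D$. (Equivalently one can phrase this as: $C\to D$ faithfully flat implies $C=\mathrm{eq}(D\rightrightarrows D\otimes_C D)$, the equalizer of the two coprojections, which is exactly the content of (1).)

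\textbf{Deduction of (2).} Given (1), I would argue as follows. Since $C\hookrightarrow D$ is injective (faithful flatness forces injectivity on $C$, as $C$ is a domain and the map is nonzero, or more cleanly: a faithfully flat module is in particular faithful, so $\ker(C\to D)$, being killed after $\otimes_C D$, must vanish), both rings sit inside $\mathrm{Frac}(D)$, and $\mathrm{Frac}(C)\subset\mathrm{Frac}(D)$. The inclusion $C\subset D\cap\mathrm{Frac}(C)$ is clear. For the reverse inclusion, take $x\in D\cap\mathrm{Frac}(C)$, so $x=a/b$ with $a,b\in C$, $b\neq 0$, and $x\in D$. I want to show $x\in C$; by (1) it suffices to show $x\otimes_C 1=1\otimes_C x$ in $D\otimes_C D$. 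But $b\cdot x=a$ holds in $D$ with $a,b\in C$, hence $b(x\otimes 1)=a\otimes 1=1\otimes a=b(1\otimes x)$ in $D\otimes_C D$; since $D\otimes_C D$ is again flat (indeed faithfully flat) over the domain $C$ — actually the cleanest route is to note $D\otimes_C D$ is a domain is \emph{not} automatic, so instead multiply-cancel using that $b$ is a nonzerodivisor on $D\otimes_C D$: $D$ is $C$-flat so $b$ is a nonzerodivisor on $D$, and $D\otimes_C D$ is $D$-flat (via either factor), so $b$ is a nonzerodivisor on $D\otimes_C D$ as well. Cancelling $b$ gives $x\otimes 1=1\otimes x$, and (1) yields $x\in C$.

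\textbf{Anticipated obstacle.} There is no deep difficulty here; the only thing requiring a little care is the bookkeeping of the Amitsur complex and its contracting homotopy after base change, together with the standard "descent of exactness along faithfully flat maps" lemma — I would simply cite the relevant statement (e.g.\ in the Stacks Project or Bourbaki) rather than reprove it. The mildly delicate point in (2) is justifying the cancellation of $b$ in $D\otimes_C D$, for which the key observation is that flatness of $D$ over $C$ propagates to make $b$ a nonzerodivisor on every $C$-flat, in particular on $D\otimes_C D$; once that is in place the argument is immediate. I would write the proof to be as short as possible, since this lemma is purely auxiliary to the rigidity criterion (Proposition~\ref{propfaithfullyflatfactor}) that follows.
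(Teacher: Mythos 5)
Your proposal is correct and takes essentially the same route as the paper: part (1) by base-changing along the faithfully flat map and using the standard retraction/contracting homotopy of the Amitsur complex (the paper's map $h$ is exactly this homotopy), then descending exactness, and part (2) by reducing to $x\otimes_C 1=1\otimes_C x$ in $D\otimes_C D$ and invoking (1). The only cosmetic difference is in (2): you cancel the nonzerodivisor $b$ directly in $D\otimes_C D$ (justified by flatness of $D$ over $C$), whereas the paper uses the same flatness input to embed $D\otimes_C D\hookrightarrow D\otimes_C \mathrm{Frac}(D)$ and computes $d(g)=0$ there.
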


\begin{proof}
To prove $(1)$, since $D$ is faithfully flat over $C$, we only need to prove that the following complex is exact:
$$0\to C\otimes_CD=D\to D\otimes_CD\to D\otimes_CD\otimes_CD.$$

We note that there exists a natural morphism $h:D\otimes_CD\otimes_C D\to D\otimes_C D$ sending $x\otimes_Cy\otimes_C z\mapsto y\otimes_C xz$.
Then $d\otimes_C {\rm id} (\sum x_i\otimes_C y_i)=0$ implies that $$\sum x_i\otimes_C1\otimes_C y_i-\sum 1\otimes_C x_i\otimes_C y_i=0.$$
Applying $h$, we get $$0=h(\sum x_i\otimes_C1\otimes_C y_i-\sum 1\otimes_C x_i\otimes_C y_i)$$
$$=\sum 1\otimes_C x_iy_i-\sum x_i\otimes_Cy_i.$$
It follows that $\sum x_i\otimes_C y_i\in C\otimes_CD$ which concludes the proof of $(1)$.

We now prove that $C= D\cap {\rm Frac}(C)$. First, note that it is clear that $C\subseteq D\cap {\rm Frac}(C)$. Thus, to conclude the proof, 
for every $g\in  D\cap {\rm Frac}(C)$, we need to show that $g\in C.$
To do so, write $g=s/t$ where $s,t\in C$ and $t\neq 0.$ Let $d:D\to D\otimes_CD$ be the morphism considered above.
It suffices to show that $d(g)=0$ in $D\otimes_CD.$
Since $D$ is flat over $C$ and $D\hookrightarrow {\rm Frac}(D)$ is an inclusion,
we get an inclusion $D\otimes_CD\hookrightarrow D\otimes_C  {\rm Frac}(D).$
Now we only need to show that $d(g)=0$ in $D\otimes_C  {\rm Frac}(D).$
We have
$$d(g)=s/t\otimes_C 1-1\otimes_C s/t=s/t\otimes_C t/t-1\otimes_C s/t$$
$$=ts/t\otimes_C 1/t-s\otimes_C 1/t=s\otimes t^{-1}-s\otimes t^{-1}=0.$$
This concludes the proof.
 \end{proof}

 \begin{lemma}\label{lemlocfactor} Let $W_1=\Spec R, W_2=\Spec S$, and $W_3=\Spec T$ be   affine varieties over $k$. 
Let $\pi:W_2\to W_1$ a faithfully flat morphism. Let $f: W_2\to W_3$ be a morphism. 
Assume that $W_1$ and $W_2$ are irreducible and normal, and that there exists a Zariski dense subset $E\subseteq W_1(k)$ such that the restriction of  $f$ on the fiber $\pi^{-1}(x)$ is constant for every $x\in E$.  Then there exists a unique morphism $h:W_1\to W_3$ such that $f=h\circ \pi.$
\end{lemma}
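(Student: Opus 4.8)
The plan is to translate the statement into commutative algebra and then feed it into Lemma~\ref{lemfracfunfun}. Write $W_1=\Spec R$, $W_2=\Spec S$, $W_3=\Spec T$. Since $\pi$ is faithfully flat, $R\to S$ is injective and I regard $R\subseteq S$; the morphism $f$ corresponds to a $k$-algebra homomorphism $\phi\colon T\to S$. A morphism $h\colon W_1\to W_3$ with $f=h\circ\pi$ exists precisely when $\phi(T)\subseteq R$ in $S$, and it is then unique because $R\to S$ is injective (corestricting $\phi$ to $R$ determines $h^\ast$). So I fix $t\in T$, set $s:=\phi(t)=f^\ast t\in S$, and the goal becomes $s\in R$. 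By Lemma~\ref{lemfracfunfun}(1) it suffices to show
\[
d(s)=s\otimes_R 1-1\otimes_R s=0\quad\text{in }S\otimes_R S.
\]

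The first step is to observe that $d(s)$ vanishes along the fibres over $E$. Put $Z:=W_2\times_{W_1}W_2=\Spec(S\otimes_R S)$ with projections $p_1,p_2\colon Z\to W_2$ and structure morphism $q\colon Z\to W_1$; then $d(s)$ is the regular function $p_1^\ast f^\ast t-p_2^\ast f^\ast t$ on $Z$. For $x\in E$ one has $\kappa(x)=k$ and $q^{-1}(x)=\pi^{-1}(x)\times_k\pi^{-1}(x)$, and the hypothesis that $f$ is constant on $\pi^{-1}(x)$ says exactly that $f^\ast t$ restricts to a scalar $c_x\in k$ on $\pi^{-1}(x)$. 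Hence both $p_1^\ast f^\ast t$ and $p_2^\ast f^\ast t$ restrict to $c_x$ on $q^{-1}(x)$, so $d(s)$ lies in the ideal of the subscheme $q^{-1}(x)$; in particular $d(s)$ vanishes set-theoretically on $q^{-1}(E)=\bigcup_{x\in E}q^{-1}(x)$.

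The second step is that $q^{-1}(E)$ is Zariski dense in $Z$. Since $q$ is the composition of the first projection $p_1\colon Z\to W_2$, which is a base change of $\pi$, with $\pi$ itself, $q$ is faithfully flat and of finite type, hence flat and of finite presentation (as $W_1$ is Noetherian), hence open. Now let $Z_i$ be an irreducible component of $Z$ and $U\subseteq Z_i$ a nonempty open subset; choosing an open $\widetilde U\subseteq Z$ with $\widetilde U\cap Z_i=U$ and setting $V:=\widetilde U\setminus\bigcup_{j\ne i}Z_j$, one gets an open subset $V\subseteq Z$ with $V\subseteq U$ which is nonempty because it contains the generic point of $Z_i$. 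Then $q(V)$ is a nonempty open subset of $W_1$, so it meets the dense set $E$, and any point of $V$ lying over $E$ lies in $q^{-1}(E)\cap U$. Thus $q^{-1}(E)$ meets every nonempty open subset of every irreducible component of $Z$, so it is dense. Since $d(s)$ vanishes on the dense subset $q^{-1}(E)$, it lies in every prime ideal of $S\otimes_R S$, i.e. $d(s)$ is nilpotent.

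Finally I would upgrade "nilpotent" to "zero". As $S\otimes_R S$ is flat over the domain $R$, it is $R$-torsion free, hence embeds into $(S\otimes_R S)\otimes_R K=S_K\otimes_K S_K$, where $K:=\mathrm{Frac}(R)$ and $S_K:=S\otimes_R K$. Now $S_K$ is a localization of the domain $S$, hence a domain of finite type over $K$, and since $K$ has characteristic zero it is perfect, so $S_K$ is geometrically reduced over $K$ and therefore $S_K\otimes_K S_K$ is reduced. The image of the nilpotent element $d(s)$ in this reduced ring is $0$, and by injectivity $d(s)=0$ in $S\otimes_R S$. By Lemma~\ref{lemfracfunfun}(1) this gives $s\in R$; as $t\in T$ was arbitrary, $\phi(T)\subseteq R$, which yields the desired (unique) morphism $h$. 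The main obstacle I anticipate is not any single computation but these two reduction steps: controlling the irreducible components of the possibly reducible scheme $Z=W_2\times_{W_1}W_2$ well enough to get density of $q^{-1}(E)$, and passing from "$d(s)$ nilpotent" to "$d(s)=0$", which genuinely needs the $R$-torsion-freeness of $S\otimes_R S$ together with the reducedness of $S_K\otimes_K S_K$ rather than a naive assertion that $S\otimes_R S$ itself is reduced.
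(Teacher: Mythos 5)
Your proof is correct, but it takes a genuinely different route from the paper's. The paper proves the containment $f^*(T)\subseteq\pi^*(R)$ in two stages: it first shows $f^*(T)\subseteq\mathrm{Frac}(\pi^*(R))$ by choosing a generically finite multisection $Y\subseteq W_2$, taking the (normalized) trace of $g|_{Y_1}$ down to $W_1$ — this is where normality of $W_1$ and $W_2$ is used — and comparing the pullback with $g$ on the dense set $\pi^{-1}(E)$; it then concludes with part~(2) of Lemma~\ref{lemfracfunfun}, i.e.\ $R=S\cap\mathrm{Frac}(R)$. You instead verify the descent condition $d(s)=s\otimes 1-1\otimes s=0$ directly on $W_2\times_{W_1}W_2$: set-theoretic vanishing on the fibres over $E$, density of $q^{-1}(E)$ via openness of the flat finite-type map $q$ (handled component by component, which is needed since the fibre product may be reducible), and then the upgrade from nilpotent to zero using $R$-torsion-freeness of $S\otimes_R S$ and reducedness of $S_K\otimes_K S_K$ in characteristic zero; you finish with part~(1) of Lemma~\ref{lemfracfunfun}, the faithfully flat exact sequence. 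Your argument is the ``pure fppf descent'' version: it avoids the choice of multisection and the trace, and in fact never uses normality of $W_1$ or $W_2$ (only integrality), at the cost of invoking perfectness of $K$ to kill nilpotents on the fibre product — harmless here since $\mathrm{char}\,k=0$. One small imprecision: when a fibre $\pi^{-1}(x)$ is non-reduced, ``$f$ constant on $\pi^{-1}(x)$'' only gives that $f^*t$ is the scalar $c_x$ modulo nilpotents, so $d(s)$ need not lie in the full ideal of the subscheme $q^{-1}(x)$; but you only use the set-theoretic vanishing of $d(s)$ on $q^{-1}(E)$, which does hold, so the proof stands.
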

\begin{proof} The map $\pi$ induces an inclusion $\pi^*:R\hookrightarrow S$ and $f$ induces a morphism $f^*:T\to S.$  We only need to show that $f^*(T)\subseteq \pi^*(R).$

We note that $\pi^*$ extends to an embedding $\pi^*: {\rm Frac}(R)\hookrightarrow {\rm Frac}(S).$ We claim that $f^*(T) \subseteq  {\rm Frac(\pi^*(R))}.$  
Pick $g\in f^*(T)$.  
Pick an irreducible closed subvariety $Y\subseteq W_2$, such that $\pi|_Y: Y\to W_1$ is generically finite. 
Then there exists a nonempty affine open set $W_1'=\Spec R_1\subseteq W_1$ such that  $\pi|_Y$ is finite over $W_1'.$ Set $Y_1:=(\pi|_Y)^{-1}(W_1').$  Write $Y_1=\Spec Q.$
Then $\pi|_{Y_1}^*: R_1\hookrightarrow Q$ is finite, say of degree $d$. Let $\mathrm{tr}:Q\to R_1$ be the trace of $\pi|_{Y_1}^*$, and note that this is well-defined by the normality of $W_2$ and $W_1$. 
Note that  $$\pi^*(d^{-1}\mathrm{tr}(g|_{Y_1}))|_{Y_1}=g|_{Y_1}.$$
Since $g$ and $\pi^*(d^{-1}\mathrm{tr}(g|_{Y_1}))$ are both constant on $\pi^{-1}(x)$ for all $x\in E\cap W_1'$, it follows that 
$$\pi^*(d^{-1}\mathrm{tr}(g|_{Y_1}))|_{\pi^{-1}(E)}=g|_{\pi^{-1}(E)}.$$
Since $\pi^{-1}(E)$ is dense in $W_2$, we get $g=\pi^*(d^{-1}\mathrm{tr}(g|_{Y_1})).$ Since 
$$d^{-1}\mathrm{tr}(g|_{Y_1})\in R_1\subseteq {\rm Frac}(R),$$
we get $g\in \pi^*({\rm Frac}(R))={\rm Frac(\pi^*(R))}.$ This proves the claim.

It now follows from the claim and Lemma \ref{lemfracfunfun}.$(2)$ that  $f^*(T)\subseteq \pi^*(R).$ This concludes the proof of the lemma.
\end{proof}

 \begin{proposition}\label{propfaithfullyflatfactor} Let $\pi:Y\to B$  be a surjective and flat morphism between normal irreducible varieties over $k$. Let $Z$ be a variety over $k$ and let $f:Y\to Z$ be morphism. Let $E\subseteq B(k)$ be a Zariski dense subset.
Assume that for all point $x\in E$, the restriction $f|_{\pi^{-1}(x)}$ is constant.  Then there exists a morphism $h:B\to Z$ such that $f=h\circ \pi.$
\end{proposition}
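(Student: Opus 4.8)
The statement is the global (not-necessarily-affine) version of Lemma \ref{lemlocfactor}, and the natural strategy is to reduce the global assertion to the affine case already proven. First I would reduce to the case where $Z$ is affine: since factorization through $\pi$ is a local condition on $Z$ downstairs (one checks that $f$ contracts each fibre $\pi^{-1}(x)$, $x \in E$, to a point, hence set-theoretically $f$ factors through $\pi$ as a map of topological spaces, and the resulting continuous map $B \to Z$ lands, on the dense subset $E$, in any given affine open; normality/irreducibility of $B$ then lets one work affine-locally on $Z$ after shrinking). Actually it is cleaner to first produce the underlying \emph{continuous} map $h_0 : B \to Z$: because $\pi$ is surjective, open-ness is not automatic, so instead I would use that $\pi$ is flat of finite type, hence open, to see that $\pi$ is a quotient map of topological spaces; the fact that $f$ is constant on $\pi^{-1}(x)$ for all $x$ in the dense set $E$, together with continuity of $f$ and the fact that the generic fibre is irreducible (or at least connected — and here one must be a little careful, which I flag below), forces $f$ to be constant on \emph{every} fibre, so $f$ descends to a continuous $h_0$. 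Once $h_0$ exists, the question of whether $h_0$ is a morphism of schemes is Zariski-local on both $B$ and $Z$, and one chooses affine opens $\Spec R \subset B$, $\Spec T \subset Z$ with $h_0(\Spec R) \subset \Spec T$; then $W_2 := \pi^{-1}(\Spec R)$ need not be affine, but it is covered by affine opens, and the desired equality of ring maps can be checked on such a cover — this is exactly where Lemma \ref{lemlocfactor} applies, after replacing $B$ by $\Spec R$ and $Y$ by a suitable affine open of $\pi^{-1}(\Spec R)$ mapping faithfully flat-ly onto $\Spec R$.

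More precisely, for the scheme-structure step I would argue as follows. Fix $b \in B$ and an affine open neighbourhood $\Spec R \ni b$ small enough that $h_0(\Spec R)$ lies in an affine open $\Spec T \subset Z$. Since $\pi$ is flat and of finite type, after shrinking $\Spec R$ we may pick an affine open $\Spec S \subset \pi^{-1}(\Spec R)$ whose image contains $b$; but faithful flatness onto $\Spec R$ requires surjectivity, which a single affine open of the fibre-bundle need not have. The fix is to choose finitely many affine opens $\Spec S_j \subset \pi^{-1}(\Spec R)$ whose union surjects onto $\Spec R$ and set $\Spec S := \coprod_j \Spec S_j$ (a disjoint union of affines is affine) with the induced faithfully flat map $\Spec S \to \Spec R$; the composite $\Spec S \to \pi^{-1}(\Spec R) \xrightarrow{f} \Spec T$ is still constant on the fibres over $E \cap \Spec R$ (which is still dense). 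Now $\Spec S$ is irreducible only if $\pi^{-1}(\Spec R)$ was, which it need not be — so rather than invoking Lemma \ref{lemlocfactor} verbatim I would apply its \emph{proof}, or equivalently apply Lemma \ref{lemfracfunfun}(2) componentwise: on each irreducible component of each $\Spec S_j$, the argument of Lemma \ref{lemlocfactor} gives that $f^\ast(T)$ lands in $\mathrm{Frac}(\pi^\ast R)$, and then the faithfully-flat-descent Lemma \ref{lemfracfunfun}(2) gives $f^\ast(T) \subset \pi^\ast(R)$ after checking the relevant ring maps agree, since $R \hookrightarrow S$ is faithfully flat. This produces the ring map $T \to R$, i.e. the morphism $\Spec R \to \Spec T$; these glue (by uniqueness, which is immediate from $\pi$ being an epimorphism of schemes — $\pi^\ast$ is injective) to the desired $h : B \to Z$.

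\textbf{Main obstacle.} The genuinely delicate point is the very first one: producing the set-theoretic/continuous descent $h_0 : B \to Z$. Knowing $f$ is constant on $\pi^{-1}(x)$ only for $x$ in a \emph{dense} subset $E$ does not by itself say $f$ is constant on every fibre — a limit of constant maps on fibres need not be constant if the fibres jump or fail to be connected. One needs to exploit the normality and irreducibility of $Y$ and $B$ and the flatness of $\pi$: flatness plus finite type gives $\pi$ open, hence equidimensional behaviour on a dense open $B^\circ \subset B$ over which $\pi$ is nicer, and one first constructs $h_0$ over $B^\circ$ using that the generic fibre is irreducible (so $f$ of the generic fibre is a point, and by continuity + density of $E$ this propagates); then one extends $h_0$ across $B \setminus B^\circ$ using normality of $B$ (Hartogs-type / the valuative criterion is not available since $Z$ need not be proper, so one instead argues that the graph of $f$, i.e. the image of $(\pi, f) : Y \to B \times Z$, is a closed subscheme finite — indeed birational — over $B$ on $B^\circ$, and its closure is finite over $B$ by properness of the closure in $B \times Z$ over $B$ combined with normality of $B$ to conclude it is the graph of a morphism). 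In short, I expect the topological descent and its extension over the bad locus — rather than the ring-theoretic faithfully-flat-descent, which Lemmas \ref{lemfracfunfun} and \ref{lemlocfactor} already package — to be where the real work lies.
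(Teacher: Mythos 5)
There is a genuine gap, and it sits exactly where you flag ``the real work'': the set-theoretic/continuous descent $h_0:B\to Z$ on \emph{all} of $B$. Your second step needs, for every point $b\in B$ (including points of the bad locus), a neighbourhood $\Spec R\ni b$ with $f(\pi^{-1}(\Spec R))$ contained in a single affine chart $\Spec T$ of $Z$; this already presupposes that $f$ is constant on every fibre, not just on the fibres over the dense set $E$. Your sketch for this does not go through as written. Density of $E$ plus continuity does not propagate fibrewise constancy to all fibres, and the patch you propose breaks at two points: (i) the closure of the image of $(\pi,f)$ in $B\times Z$ is \emph{not} proper (let alone finite) over $B$ in general, because $Z$ is not assumed proper, so you cannot conclude finiteness of the graph closure over $B$ this way; (ii) normality of $B$ alone does not allow you to extend the morphism $h_0$ you construct on a dense open $B^{\circ}$ across $B\setminus B^{\circ}$ when that locus has codimension one (think of $t\mapsto 1/t$ on $\mathbb{A}^1\setminus\{0\}$). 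So the extension across the bad locus, which your local step cannot start without, is left unproved. A smaller issue: on $\Spec S=\coprod_j\Spec S_j$ you can no longer quote Lemma \ref{lemfracfunfun}(2) verbatim (it is stated for an inclusion of domains, and a single component $\Spec S_j\to\Spec R$ need not be surjective), so the ``componentwise'' application needs an actual argument (e.g. via height-one localizations of the normal ring $R$); this is fixable but is not automatic.

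For comparison, the paper's proof never establishes the set-theoretic descent first and thereby avoids the obstacle entirely: it covers $Y$ \emph{upstairs} by affine opens $U_i$ small enough that $f(U_i)$ lies in an affine chart $Z_i$, notes that $V_i:=\pi(U_i)$ is open (flatness plus finite presentation) and that the $V_i$ cover $B$ (surjectivity), and then applies the affine Lemma \ref{lemlocfactor} to the faithfully flat maps $U_i\to V_i$ (after shrinking to make $V_i$ affine and using that $U_i\to V_i$ is an affine morphism, $B$ being separated), finally gluing the resulting $h_i$ on overlaps via density of $E$ and separatedness of $Z$. In that route the ``extension across the bad locus'' is done for free by the faithfully flat descent identity $R=S\cap\mathrm{Frac}(R)$ of Lemma \ref{lemfracfunfun}(2), rather than by any topological or properness argument. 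If you try to repair your step 2 by allowing each $\Spec S_j$ its own affine chart of $Z$ and its own image $\pi(\Spec S_j)$, your argument essentially collapses into the paper's proof; as it stands, though, the proposal has a missing step that the rest of the argument depends on.
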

 
\begin{proof}
Let $U_i$ be a finite affine open covering of $Y$.  After shrinking $U_i$, we may assume that every $f(U_i)$ is contained in some affine open subset $Z_i$ of $Z$/
Since $\pi$ is a finitely presented surjective and flat morphism, the subsets $V_i:=\pi(U_i)$ are a finite open covering of $B.$
By fppf descent,  replacing $Y, B,$ and $Z$ by $U_i, V_i$, and $Z_i$ respectively, we may and do assume that $Y$ and $Z$ are affine.    Let $V_i$ be a finite affine open covering of $B$. As $Y$ is affine and $B$ is separated,  it follows from  \cite[Tag~01SG]{stacks-project}    that the morphism $Y\to B$ is affine.  In particular,  the open subsets $W_i:=\pi^{-1}(V_i)$ are affine.

Thus, it suffices to treat the case where   $B$, $Y$ and $Z$ are affine in which case the result follows from Lemma \ref{lemlocfactor}.
\end{proof}

\subsection{Testing pseudo-grouplessness}

The notion of being pseudo-groupless is tested on maps from ``big'' opens from \emph{all} algebraic groups. In this section, we show that one can content oneself to testing the notion of pseudo-grouplessness for proper varieties on maps from ``big'' opens from abelian varieties; see Corollary \ref{cor:groupless2} for a precise statement.

We will make use of the following properties of linear algebraic groups.

\begin{lemma}\label{lem:groupschemes1}
If $G$ is a finite type connected affine group scheme over $k$ of dimension at least two, then $G$ has a connected proper non-trivial subgroup $H\subset G$.
\end{lemma}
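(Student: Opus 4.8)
The plan is to exploit the structure theory of linear algebraic groups over a field of characteristic zero. First I would recall that a finite type connected affine group scheme $G$ over $k$ is smooth (since $\characteristic k = 0$), hence a connected linear algebraic group, and that such a group has a well-defined unipotent radical $R_u(G)$, with $G/R_u(G)$ reductive. The argument then splits into cases according to where the dimension ``comes from''.

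\emph{Case 1: $G$ is not unipotent.} Then $G$ contains a non-trivial torus, and in particular a copy of $\Gm$. If $\dim G \geq 2$, one can find a connected subgroup of dimension exactly one or, better, argue directly: either $G$ has a non-trivial connected \emph{proper} subgroup already (e.g. a maximal torus, if $G$ is not itself a torus, or a Borel, if $G$ is not solvable), or $G$ is a torus of dimension $\geq 2$, in which case any one-dimensional subtorus $\Gm \hookrightarrow G$ is a connected proper non-trivial subgroup. More carefully: if $G$ is reductive but not a torus, a maximal torus is proper, connected, non-trivial; if $G$ is a torus of rank $\geq 2$, take a subtorus; if $G$ is solvable but not a torus and not unipotent, then $R_u(G)$ is a non-trivial proper connected subgroup (it is proper since $G$ is not unipotent, non-trivial since $G$ is not reductive hence not a torus in the solvable case).

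\emph{Case 2: $G$ is unipotent.} A non-trivial connected unipotent group over a field of characteristic zero is isomorphic as a variety to affine space and has a non-trivial center; more usefully, it admits a central subgroup isomorphic to $\Ga$. If $\dim G \geq 2$, this central $\Ga$ is a connected proper non-trivial subgroup. Alternatively, one can use that a connected unipotent group of dimension $\geq 2$ has a normal subgroup with quotient $\Ga$, and that subgroup is connected, proper, and non-trivial. Either way the key input is that unipotent groups in characteristic zero are ``filtered by $\Ga$'s''.

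Assembling these, in every case a connected affine $G$ with $\dim G \geq 2$ has a connected proper non-trivial closed subgroup $H$, which is what is claimed. The main obstacle — really more a matter of bookkeeping than genuine difficulty — is organizing the case analysis so that every connected linear algebraic group of dimension $\geq 2$ is covered without gaps: tori of rank $\geq 2$, non-solvable reductive groups, solvable-but-not-unipotent groups, and unipotent groups each need their own (easy) argument, and one must be slightly careful that the produced subgroup is genuinely \emph{proper} (so one cannot, e.g., take a maximal torus when $G$ is already a torus). A clean way to avoid enumerating cases is: if $G$ is not unipotent, $R_u(G) \subsetneq G$; if moreover $R_u(G) \neq 1$ we are done, and if $R_u(G) = 1$ then $G$ is reductive, and a reductive group of dimension $\geq 2$ is either a torus (take a subtorus) or non-commutative (take a maximal torus, which is proper). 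If $G$ is unipotent of dimension $\geq 2$, take the central $\Ga$. I expect the author's proof to follow essentially this outline, citing standard references for the structure theory of linear algebraic groups.
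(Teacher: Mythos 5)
Your proof is correct, but it takes a genuinely different route from the paper's. You invoke the structure theory of linear algebraic groups: the unipotent radical $R_u(G)$, the dichotomy reductive/non-reductive, maximal tori, and the existence of a central $\Ga$ in a nontrivial unipotent group in characteristic zero; your final "clean" formulation (if $R_u(G)\neq 1$ take it; if $G$ is reductive take a subtorus or a maximal torus; if $G$ is unipotent take a central $\Ga$) does cover all cases without gaps. The paper argues more economically with just two cases: if $G$ is commutative, it is isomorphic to $\GG_{m,k}^s\times\GG_{a,k}^t$ with $s+t\geq 2$, where the claim is obvious; if $G$ is non-commutative, one picks an element $g\in G(k)$ of infinite order and takes $H$ to be the identity component of the Zariski closure of $\langle g\rangle$ --- this is positive-dimensional (hence non-trivial) and commutative (hence a \emph{proper} subgroup, since $G$ is not commutative). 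So the paper's key trick is that properness comes for free from commutativity in the non-commutative case, which lets it avoid Levi/radical decompositions and the structure of unipotent groups entirely, at the price of needing an element of infinite order and the classification of connected commutative affine groups in characteristic zero; your argument uses heavier but entirely standard machinery and would be equally acceptable with references to the structure theory.
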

\begin{proof}
If $G$ is commutative, then $G$ is isomorphic to the group scheme $ \mathbb{G}_{m,k}^s \times \mathbb{G}_{a,k}^t$ for some non-negative integers $s$ and $t$ (with $s+t\geq 2$) in which case the statement of the lemma is clear.

Thus, to prove the lemma, we may and do assume that $G$ is a non-commutative  group scheme.  Since $k$ is algebraically closed of characteristic zero and $G$ is a positive-dimensional affine finite type group scheme over $k$, we may choose an element   
 $g\in G(k)$   of infinite order.
Let $T$ be the identity component of the Zariski closure of the group $\{g^n \ | \ n\in \mathbb{Z}\}$ generated by $g$. Then $T$ is a non-trivial connected affine commutative group scheme.  Note that, as $G$ is non-commutative, we have that $G\neq T$. This shows that $G$ has a proper connected subgroup, as required.
\end{proof}

\begin{lemma}\label{lemggensubgroup} If $G$ is  a finite type connected affine group scheme over $k$ with $\dim G\geq 2$, then there exists a positive integer $s$ and  proper connected closed subgroups $H_1,\ldots, H_s$ of $G$ such that $G$ equals the subgroup $\langle H_1,\ldots, H_s\rangle$ generated by $H_1, \ldots, H_s$. 
\end{lemma}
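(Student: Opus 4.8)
The plan is to reduce the statement to the structure theory of linear algebraic groups over an algebraically closed field of characteristic zero. Recall that for such a $G$, its unipotent radical $R_u(G)$ is a connected normal subgroup, and $G/R_u(G)$ is reductive; a reductive group is generated by a maximal torus together with its root subgroups (the $\Ga$'s coming from the roots), while $R_u(G)$ itself, being unipotent and connected, is generated by its one-parameter subgroups isomorphic to $\Ga$. Since $\dim G \geq 2$, I want to exhibit finitely many \emph{proper} connected closed subgroups generating $G$; the only subtlety is ensuring properness, which can fail precisely when one of these natural subgroups happens to be all of $G$.

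First I would dispose of the commutative case directly, as in Lemma \ref{lem:groupschemes1}: if $G$ is commutative it is $\Gm^s \times \Ga^t$ with $s+t \geq 2$, and the coordinate subgroups $\Gm^{s-1}\times\Ga^t$, $\Gm^s\times\Ga^{t-1}$ (or, if $s$ or $t$ is zero, the obvious factors) are proper, connected, and generate $G$. So assume $G$ is non-commutative. Then I would take a Levi decomposition $G = R_u(G)\rtimes L$ with $L$ reductive (Mostow's theorem, valid in characteristic zero). The subgroups $R_u(G)$ and $L$ are both connected and closed, and together they generate $G$. If both are proper, I replace each by a finite generating family of proper connected closed subgroups of \emph{itself} (a proper connected closed subgroup of a proper subgroup is still a proper connected closed subgroup of $G$), and I am done by induction on dimension — the base of the induction being $\dim = 1$, where $G$ is $\Gm$ or $\Ga$ and there is nothing to prove because we may not need the lemma there; more precisely the induction bottoms out whenever every relevant factor has dimension $\leq 1$, in which case non-commutativity has already been used to split off a proper piece.

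The remaining cases are when $R_u(G) = G$ (so $G$ is unipotent, hence nilpotent, hence commutative in a one-dimensional... no — unipotent groups of dimension $\geq 2$ need not be commutative, e.g. the Heisenberg group) or when $L = G$ (so $G$ is reductive). In the unipotent case, $G$ has a nontrivial center $Z(G)$ and $G/Z(G)$ has smaller dimension; pulling back proper connected generating subgroups of $G/Z(G)$ and adjoining a proper connected subgroup of $Z(G)$ (which exists and is proper since $\dim G \geq 2$ forces either $Z(G)\neq G$, handled by the central extension, or $G$ commutative, already done) gives the result. In the reductive case, $G$ is generated by a maximal torus $T$ together with the root subgroups $U_\alpha \cong \Ga$; each $U_\alpha$ is a proper connected closed subgroup, and $T$ is proper unless $G = T$ is a torus, which is the commutative case. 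If there is at least one root, we are done; if there are no roots then $G$ is a torus, again commutative.

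The main obstacle I anticipate is purely bookkeeping: organizing the induction so that the "proper" adjective is preserved at every step and so that the base cases (dimension $1$, and the genuinely small non-commutative groups) are handled without circularity. The structural input — Levi decomposition, the root-subgroup generation of reductive groups, the existence of nontrivial centers in unipotent groups — is entirely standard over a characteristic-zero algebraically closed field, so no real analytic or geometric difficulty arises; the proof is essentially a careful case analysis glued to an induction on $\dim G$.
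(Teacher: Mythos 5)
Your argument is correct in substance but follows a genuinely different route from the paper. The paper's proof is elementary and self-contained: after disposing of the commutative case, it picks a \emph{maximal} proper connected subgroup $H$ (which exists by Lemma \ref{lem:groupschemes1}) and examines its normalizer. If $H$ is not normal, then $G=\langle H, g^{-1}Hg\rangle$ for any $g\notin N(H)$, by maximality; if $H$ is normal, then Lemma \ref{lem:groupschemes1} forces $G/H$ to be one-dimensional, and $G=\langle H,T\rangle$, where $T$ is the identity component of the Zariski closure of the cyclic group generated by a lift of an infinite-order element of $G/H$ (and $T\neq G$ since $T$ is commutative). In particular the paper gets $s=2$ in the non-commutative case, using nothing beyond the preceding lemma and characteristic zero. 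You instead invoke the standard structure theory: the Levi decomposition $G=R_u(G)\rtimes L$, generation of a reductive group by a maximal torus and its root subgroups, and the nilpotency of unipotent groups. That works and yields rather explicit generators, at the cost of heavier input.

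Two loose ends in your write-up. First, in the case where $R_u(G)$ and $L$ are both nontrivial (hence both proper), you should simply take $H_1=R_u(G)$ and $H_2=L$ and stop: your further step of replacing each factor by a generating family of proper subgroups of itself is unnecessary, and taken literally it would fail when a factor is one-dimensional (e.g.\ $L\cong\Gm$ admits no generating family of proper connected subgroups). Second, in the unipotent case your induction pulls back a generating family of proper connected subgroups of $G/Z(G)$, so you must rule out $\dim G/Z(G)=1$, since in that case the only connected ``generator'' is $G/Z(G)$ itself and its preimage is all of $G$. This does hold: if $\mathfrak{g}/\mathfrak{z}(\mathfrak{g})$ were one-dimensional then $\mathfrak{g}=\mathfrak{z}(\mathfrak{g})+kx$ would be abelian, contradicting non-commutativity — but it needs to be said. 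With these small repairs your proof is complete.
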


\begin{proof}  
If $G$ is commutative, then $G$ is isomorphic to the group scheme $ \mathbb{G}_{m,k}^d \times \mathbb{G}_{a,k}^t$ for some non-negative integers $d$ and $t$ (with $d+t\geq 2$) in which case the statement of the lemma is clear.
Thus, we may and do assume that $G$ is a non-commutative group scheme.

Since $\dim G >1$, it follows from Lemma \ref{lem:groupschemes1} that $G$ has a proper connected subgroup.
Let $H$ be a maximal proper connected subgroup of $G$, and let $N(H):=\{g\in G|\,\, g^{-1}Hg=H\}$ be its normalizer.  By the maximality of $H$ and the connectedness of $N(H)$, we have that the identity component  $N(H)^0$ is either $H$ or $G$.

If  $N(H)^0=H,$ we may choose  an element $g\in G\setminus N(H)$. Note that the  subgroup  $g^{-1}Hg$ is a   connected proper subgroup of $G$. 
Since $H$ is maximal, we see that $\langle H, g^{-1}Hg\rangle=G$, as required.

 Thus, to prove the lemma, we may and do assume that $N(H)^0=G$, so that $N(H) = G$.  In this case, $H$ is a  normal subgroup of $G$. Since $H$ is maximal, there is no proper irreducible closed subgroup of $G/H$ of positive dimension. 
Therefore, it follows from Lemma \ref{lem:groupschemes1} that the (positive-dimensional) affine connected finite type group scheme $G/H$ is one-dimensional.   Since $k$ is of characteristic zero, there is an element $g_1\in G/H$ of infinite order.
Let $g\in G$ be an element with $\pi_H(g)=g_1$. Let $T$ be the identity component of the Zariski closure of $\{ g^n \ | \ n \in \mathbb{Z}\},$ and note that  $\pi(T)=G/H.$ It follows that $G=\langle H, T\rangle.$
Since $T$ is a commutative group  scheme and $G$ is non-commutative, we see that $T\neq G$. This concludes the proof.  
\end{proof}

\begin{lemma}\label{lemcaseglinear} Let $X$ be an integral variety over $k$ and let $\Delta$ be a   closed subscheme of $X$ such that, every non-constant morphism $\mathbb{G}_{m,k}\to X$ factors over $\Delta$. Let $G$ be a   finite type connected affine group scheme over $k$ and let $U$ be a dense open subset of $G$ with $\mathrm{codim}_G(G\setminus U)\geq 2$. 
If  $f:U\to X$ is a morphism with $f(U)\not\subseteq \Delta$,  then $f$ is constant.
\end{lemma}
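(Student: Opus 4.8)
The plan is to reduce the statement about maps from a big open $U\subset G$ in an arbitrary connected affine group scheme to the hypothesis, which only concerns maps from $\Gm$. The key structural input is Lemma \ref{lemggensubgroup}: if $\dim G\geq 2$, then $G=\langle H_1,\dots,H_s\rangle$ for proper connected closed subgroups $H_i$, so I can hope to argue by induction on $\dim G$. First I would dispose of the base cases. If $\dim G=0$, then $U=G$ is a point and there is nothing to prove; if $\dim G=1$, then $G$ is $\Gm$, $\Ga$, or an elliptic curve — but $G$ is affine, so $G\cong\Gm$ or $G\cong\Ga$. For $G=\Gm$ the statement is exactly the hypothesis (note $U=\Gm$ since a big open is all of $\Gm$), and for $G=\Ga$ a non-constant $f:\Ga\to X$ restricts to a non-constant map $\Gm\to X$, which by hypothesis factors through $\Delta$; since $\Gm$ is dense in $\Ga$ and $\Delta$ is closed, $f(\Ga)\subset\Delta$, so again $f(U)\subseteq\Delta$, contradicting $f(U)\not\subseteq\Delta$. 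Hence $f$ is constant.

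For the inductive step, assume $\dim G\geq 2$ and that the result holds for all connected affine group schemes of smaller dimension. Write $G=\langle H_1,\dots,H_s\rangle$ with each $H_i$ a proper connected closed subgroup (Lemma \ref{lemggensubgroup}). Fix $f:U\to X$ with $f(U)\not\subseteq\Delta$, and suppose toward a contradiction that $f$ is non-constant. The idea is to show $f$ is constant on every coset $gH_i$ meeting $U$ in a dense open subset of that coset. Concretely, for a fixed $i$ and a generic translate $gH_i$, the intersection $U\cap gH_i$ is a dense open subset of $gH_i\cong H_i$ whose complement has codimension $\geq 2$ in $gH_i$ (this uses that $G\setminus U$ has codimension $\geq 2$ in $G$ and a generic-flatness / generic-translate argument: the family of cosets of $H_i$ sweeps out $G$, and for $g$ outside a proper closed subset the slice of $G\setminus U$ has the expected codimension in the slice). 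By the inductive hypothesis applied to $H_i$ — translated so that $gH_i$ becomes $H_i$ — the restriction $f|_{U\cap gH_i}$ is either constant or has image inside $\Delta$; in the latter case, by density and closedness it still means $f$ is constant on that coset only if... — more carefully, the inductive hypothesis says: either $f|_{U\cap gH_i}$ is constant, or its image lies in $\Delta$. I would then need the extra observation that the locus of cosets on which the image lies in $\Delta$ cannot be dense, because $f(U)\not\subseteq\Delta$ and $U$ is covered up to codimension $\geq 2$ by such cosets as $g$ varies; hence for $g$ in a dense open subset of $G$, $f|_{U\cap gH_i}$ is constant.

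Now I apply the rigidity criterion, Proposition \ref{propfaithfullyflatfactor}. Consider the quotient map $\pi_i:G\to G/H_i$ (a flat surjection of normal irreducible varieties), restricted appropriately to $U$. Having shown that $f$ is constant on $U\cap gH_i$ for $g$ ranging over a Zariski-dense set of cosets, Proposition \ref{propfaithfullyflatfactor} gives that $f$ (on the relevant open locus) factors through $G/H_i$. Doing this for each $i=1,\dots,s$ shows $f$ factors through each $\pi_i$, hence through the product map $G\to\prod_i G/H_i$; but since $H_1,\dots,H_s$ generate $G$, the intersection $\bigcap_i H_i$-cosets argument shows the common fibers of all the $\pi_i$ are single points only after translation — more precisely, a map constant along all $H_i$-cosets through a point is constant on the subgroup generated by the $H_i$, which is all of $G$. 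Therefore $f$ is constant on $U$, contradicting our assumption. The main obstacle I anticipate is the bookkeeping in the generic-translate step: ensuring that for a dense set of $g$ the slice $U\cap gH_i$ is a big open of $gH_i$ (codimension $\geq 2$ complement) and that the "image lands in $\Delta$" alternative can be excluded on a dense set of cosets. This requires a careful constructibility/dimension-counting argument, possibly phrased via the incidence variety $\{(g,x): x\in gH_i\}$ and generic flatness, together with the hypothesis $f(U)\not\subseteq\Delta$ to rule out the degenerate alternative. Once that is in place, the rest is a clean induction powered by Lemma \ref{lemggensubgroup} and Proposition \ref{propfaithfullyflatfactor}.
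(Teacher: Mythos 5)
Your proposal is correct and follows essentially the same route as the paper's proof: induction on $\dim G$, the base case via the dense copy of $\mathbb{G}_{m,k}$ inside a one-dimensional affine group, constancy of $f$ on the generic fibres of $\pi_{H_i}$ obtained from the inductive hypothesis together with a generic-translate argument (big fibrewise opens, image not in $\Delta$), factorization through $G/H_i$ via Proposition \ref{propfaithfullyflatfactor}, and the conclusion from the generating family of Lemma \ref{lemggensubgroup}. The only difference is cosmetic: in the last step the paper makes your ``constant along all $H_i$-cosets implies constant'' claim precise by passing to function fields, namely $f^*(k(X))\subseteq\bigcap_{i=1}^{s} k(G)^{H_i}=k(G)^{\langle H_1,\dots,H_s\rangle}=k(G)^G=k$, which sidesteps the fact that $f$ is only defined, and only known to be constant, on generic cosets.
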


\begin{proof} We may and do assume that $\Delta\neq X$.
We argue by induction on the dimension of $G$.
When $\dim G\leq 1$, the statement follows from the fact that $G$ contains a dense open isomorphic to $\mathbb{A}^1_k \setminus \{0\}$. Now assume that $\dim G\geq 2.$

To show that $f:U\to X$ is constant, it suffices to show that $f:G\dashrightarrow X$ is constant as a rational map.
Denote by $Z$ the Zariski closure of $f(U)$ in $X.$ Since $G$ (hence $U$) is irreducible, the closed subscheme $Z$ of $X$ is   irreducible.  To show that $f$ is constant, we may and  do assume that $Z=X$, so that $f:U\to X$ is dominant. (Indeed, note that $Z$ is a variety such that every non-constant morphism $\mathbb{G}_{m,k}\to Z$ factors over $\Delta\cap Z$. Moreover, the morphism $U\to Z$ is constant if and only if the morphism $f:U\to X$ is constant. Thus, we can replace $X$ by $Z$ to show that $f:U\to X$ is constant.) Since  the   morphism $f:U\to X$ is dominant, it induces an embedding of function fields
 $f^*(k(X))\subseteq k(G).$   The rational map $f:G\dashrightarrow X$ is constant if and only if $f^*(k(X))$ is contained in the subfield of constants $k\subseteq k(G).$

For every proper irreducible closed subgroup $H$ of $G$, denote by $\pi_H:G\to G/H$ the projection to the coset space. 
It induces an inclusion $\pi^*_H(k(G/H))\subseteq k(G).$ Observe that $\pi^*_H(k(G/H))=k(G)^H$.   (In this proof, we will only use the ``obvious'' inclusion  $\pi^*_H(k(G/H))\subseteq k(G)^H$.)
The map $\pi$ is finitely presented surjective and flat. It follows that $\pi_H(U)$ is a big open subset of $G/H.$ Moreover, there exists a dense open subscheme $V\subseteq \pi_H(U)$ such that, for every point $x$ in $V(k)$, the open subscheme $U\cap \pi_H^{-1}(x)$ of $\pi_H^{-1}(x)$  is big and satisfies   $f(\pi_H^{-1}(x))\not \subset \Delta.$ As a variety, $\pi_H^{-1}(x)$ is isomorphic to $H$, so that  the induction hypothesis implies that $f|_{\pi_H^{-1}(x)}$ is constant, for every $x$  in $V$.  It follows from Proposition \ref{propfaithfullyflatfactor}   that there exists $h_H:\pi_H(U)\to X$ such that $f=h_h\circ \pi_H.$
In particular, for every proper irreducible closed subgroup $H$ of $G$, we have 
$$f^*(k(X))\subseteq \pi^*_H(k(G/H))\subset k(G)^H. $$
  By Lemma \ref{lemggensubgroup}, there  exists a positive integer $s$ and  proper connected closed subgroups $H_1, \ldots, H_s$ of $G$ such that $G =\langle H_1,\ldots, H_s\rangle$. 
Therefore, we have that $$f^*(k(Z))\subseteq \cap_{i=1}^s k(G)^{H_i}=k(G)^{\langle H_1,\dots,H_s\rangle}=k(G)^G=k.$$
This concludes the proof. \end{proof}

 \begin{lemma}\label{lem:groupless} 
Let $X$ be an integral variety over $k$ and   let $\Delta$ be a closed subscheme of $X$. Suppose that    every  non-constant morphism $\mathbb{G}_{m,k}\to X$ factors over $\Delta$ and that, for every abelian variety $A$ over $k$ and every dense open subscheme $U\subset A$ with $\mathrm{codim}(A\setminus U)\geq 2$, every non-constant morphism $U\to X$ factors over $\Delta$. Then $X$ is groupless modulo $\Delta$ over $k$.
\end{lemma}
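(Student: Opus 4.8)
The plan is to bootstrap from the two cases that are already available --- linear groups (Lemma~\ref{lemcaseglinear}) and abelian varieties (a hypothesis of the present lemma) --- to an arbitrary connected group scheme, using Chevalley's structure theorem. So fix a finite type connected group scheme $G$ over $k$, a dense open $U\subseteq G$ with $\mathrm{codim}_G(G\setminus U)\geq 2$, and a morphism $f\colon U\to X$ with $f(U)\not\subseteq \Delta$; it suffices to prove that $f$ is constant. Since $k$ has characteristic zero, $G$ is smooth and irreducible, hence a normal variety, and by Chevalley's structure theorem there is a short exact sequence $1\to L\to G\xrightarrow{\pi} A\to 1$ with $L$ a connected linear algebraic group over $k$ and $A$ an abelian variety over $k$. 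The morphism $\pi$ is faithfully flat of relative dimension $\dim L$, and every fibre $\pi^{-1}(a)$ over a point $a\in A(k)$ is an $L$-torsor with a rational point, hence isomorphic to $L$ as a $k$-variety.

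\textbf{Step 1: the fibres of $\pi$ are generically contracted by $f$.} I would first show that there is a dense open $V\subseteq A$ such that $f|_{U\cap \pi^{-1}(a)}$ is constant for every $a\in V(k)$. A dimension count shows that $U\cap\pi^{-1}(a)$ is a big open of $\pi^{-1}(a)$ for $a$ in a dense open of $A$: decomposing $G\setminus U$ into irreducible components, each component $Z$ either dominates $A$, in which case $\dim\big(Z\cap\pi^{-1}(a)\big)=\dim Z-\dim A\leq \dim L-2$ for $a$ generic, or has image in a proper closed subset of $A$, away from which $Z\cap\pi^{-1}(a)$ is empty. On the other hand, $W:=f^{-1}(X\setminus\Delta)\cap U$ is a non-empty, hence dense, open subset of $G$, so its image $\pi(W)$ is a dense open of $A$ (as $\pi$ is open), over which $f(U\cap\pi^{-1}(a))\not\subseteq\Delta$. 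Choosing $V$ inside the intersection of these two dense opens, I would then apply Lemma~\ref{lemcaseglinear} to the morphism $f|_{U\cap\pi^{-1}(a)}$ from the big open $U\cap\pi^{-1}(a)$ of the linear group $\pi^{-1}(a)\cong L$ --- using the hypothesis that every non-constant morphism $\mathbb{G}_{m,k}\to X$ factors over $\Delta$ --- to conclude that $f|_{U\cap\pi^{-1}(a)}$ is constant for all $a\in V(k)$.

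\textbf{Step 2: descend along $\pi$ and invoke the abelian case.} The induced map $\pi|_U\colon U\to \pi(U)$ is a surjective flat morphism between normal irreducible varieties whose fibres over the Zariski-dense subset $V(k)\subseteq \pi(U)(k)$ are contracted by $f$, so Proposition~\ref{propfaithfullyflatfactor} yields a morphism $h\colon \pi(U)\to X$ with $f=h\circ \pi|_U$. The crucial observation is that $\pi(U)$ is again a big open of $A$: since $\pi$ is open, $Z:=A\setminus\pi(U)$ is closed, and $\pi^{-1}(Z)\subseteq G\setminus U$, whence $\dim Z+\dim L=\dim \pi^{-1}(Z)\leq \dim(G\setminus U)\leq \dim A+\dim L-2$, i.e. $\mathrm{codim}_A(A\setminus\pi(U))\geq 2$. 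Now $h(\pi(U))=f(U)\not\subseteq\Delta$, so by the abelian-variety hypothesis $h$ is constant, and therefore $f=h\circ\pi|_U$ is constant, as required. The degenerate cases $L$ trivial (where $G=A$ and one applies the hypothesis directly) and $A$ trivial (where $G=L$ and Lemma~\ref{lemcaseglinear} applies directly) are special instances of this argument.

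\textbf{Main obstacle.} The delicate part is the codimension bookkeeping in Step~2: one must verify that bigness of opens is preserved both upon restricting to the fibres of $\pi$ and upon taking images under $\pi$, and that the hypotheses of Proposition~\ref{propfaithfullyflatfactor} can be checked on $U$ rather than on all of $G$ (in particular that $\pi|_U\colon U\to\pi(U)$ is again flat and surjective between normal irreducible varieties). Once these points are in place, the statement follows formally by combining the linear and abelian cases.
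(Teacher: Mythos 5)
Your proposal is correct and follows essentially the same route as the paper: pass to the Chevalley quotient $\pi\colon G\to A$, use Lemma~\ref{lemcaseglinear} (via the $\mathbb{G}_{m,k}$ hypothesis) to contract the big opens in the affine fibres over a dense set of points, descend through Proposition~\ref{propfaithfullyflatfactor}, and apply the abelian-variety hypothesis to the resulting map on the big open $\pi(U)\subseteq A$. Your explicit codimension bookkeeping for the fibres and for $\pi(U)$ just fills in details the paper asserts without proof.
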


\begin{proof}   
Let $G$ be a finite type connected group scheme over $k$,  let $U\subset G$ be a dense open with $\mathrm{codim}(G\setminus U) \geq 2$, and let $f:U\to X$ be a morphism such that $f(U)\not\subseteq \Delta.$  To prove the lemma, we   need to show that $f$ is constant.

Let $H\subset G$ be the (unique)   normal affine connected subgroup of $G$ such that $A:=G/H$ is an abelian variety over $k$ (see \cite[Theorem~1.1]{ConradChevalley}). Denote by $\pi: G\to A$ the quotient map. Since $\pi$ is a finitely presented flat morphism, the image $V:=\pi(U)$ is open in $A.$ Moreover, the codimension of $A\setminus V$ in $A$ is at least two.  
For every $x\in A$, denote by $G_x:=\pi^{-1}(x)$ and $U_x:=U\cap G_x$.

Moreover, since $G\setminus U$ has codimension at least $2$ in $G$, there exists a dense open subset $V_1\subseteq V$ such that for every $x$ in $V_1$, the closed subset $G_x\setminus U$ of $G_x$ has codimension at least two.

Let $\eta$ be the generic point of $A$.  If $f(U_{\eta})\subseteq \Delta$, as $U_{\eta}$ is Zariski dense in $U$,  we have that  $f(U)\subseteq \Delta.$ This implies that $f(U_{\eta})\not\subseteq \Delta.$ Then $U_1:=(f|_U)^{-1}(X\setminus \Delta)$ is a nonempty open subset of $U.$ Since $\pi$ is flat, $V_2:=\pi(\pi^{-1}(V_1)\cap U_1)$ is open in $V_1.$

Note that, for every point $x\in V_2(k)$, the open subset  $U_x$ of $G_x$ is big and $f(U_x)\not\subseteq \Delta$. Therefore, by our assumption and   Lemma  \ref{lemcaseglinear}, we have that   $f|_{U_x}$ is constant for every $x$ in $V_2(k)$.
Since $V_2$ is Zariski dense in $A$, it follows from  Proposition \ref{propfaithfullyflatfactor} that
 there is a morphism $h:V\to X$ such that $f|_U=h\circ \pi|_U.$ Since $f(U)\not\subseteq \Delta$, it follows that $ h(V)\not\subseteq \Delta.$ Since $V$ is a big open subset of $A$, our assumption shows that $h$ is constant on $V$. It follows that $f$ is constant, as required.
\end{proof}

\begin{corollary}\label{cor:groupless2}
If $X$ is a proper integral variety $X$ over $k$ and $\Delta$ is a closed subscheme of $X$, then $X$ is groupless modulo $\Delta$ over $k$ if and only if, for every abelian variety $A$ over $k$ and every open subscheme $U\subset A$ with $\mathrm{codim}(A\setminus U)\geq 2$, every non-constant morphism of varieties $U\to X$ factors over $\Delta$.  
\end{corollary}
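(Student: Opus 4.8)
The plan is as follows. The implication ``$\Rightarrow$'' is immediate from the definition of grouplessness modulo $\Delta$, since an abelian variety over $k$ is in particular a finite type connected group scheme over $k$. For the converse implication, the strategy is to invoke Lemma \ref{lem:groupless}. That lemma deduces ``$X$ is groupless modulo $\Delta$'' from two hypotheses: (i) every non-constant morphism $\mathbb{G}_{m,k}\to X$ factors over $\Delta$, and (ii) for every abelian variety $A$ over $k$ and every dense open $U\subset A$ with $\mathrm{codim}(A\setminus U)\geq 2$, every non-constant morphism $U\to X$ factors over $\Delta$. Hypothesis (ii) is exactly what we are assuming (an open with complement of codimension $\geq 2$ in the irreducible $A$ is automatically dense), so the only thing left to produce is hypothesis (i). This is the single step where the properness of $X$ enters, and it is where I would concentrate the argument.

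To verify (i), suppose $g\colon\mathbb{G}_{m,k}\to X$ is a non-constant morphism which, for contradiction, does not factor over $\Delta$; since $\mathbb{G}_{m,k}$ is reduced, this means $g(\mathbb{G}_{m,k})\not\subseteq\Delta$. First I would use that $\mathbb{P}^1_k$ is a smooth projective curve and $X$ is proper to extend $g$ to a morphism $\overline g\colon\mathbb{P}^1_k\to X$ (a rational map from a smooth curve to a proper variety is automatically a morphism by the valuative criterion of properness); note that $\overline g$ is non-constant and $\overline g(\mathbb{P}^1_k)\not\subseteq\Delta$ since its image contains $g(\mathbb{G}_{m,k})$. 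Next I would fix an elliptic curve $E$ over $k$ together with a non-constant morphism $q\colon E\to\mathbb{P}^1_k$ — such a $q$ exists, e.g.\ the morphism attached to any non-constant element of $k(E)$, or the degree-two quotient of $E$ by its inversion involution — which is automatically surjective, $E$ being a complete curve. Then $\overline g\circ q\colon E\to X$ is non-constant with image $\overline g(q(E))=\overline g(\mathbb{P}^1_k)\not\subseteq\Delta$. Since $E$ is one-dimensional, the only dense open $U\subseteq E$ with $\mathrm{codim}(E\setminus U)\geq 2$ is $U=E$, so applying hypothesis (ii) to $A=E$ and $U=E$ forces $\overline g\circ q$ to factor over $\Delta$; but then $\overline g(\mathbb{P}^1_k)=\overline g(q(E))\subseteq\Delta$, a contradiction. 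Hence (i) holds.

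With (i) and (ii) both in hand, Lemma \ref{lem:groupless} immediately gives that $X$ is groupless modulo $\Delta$ over $k$, completing the converse implication. I do not anticipate a genuine obstacle: the only subtlety is the passage from $\mathbb{G}_{m,k}$ to a complete curve, which is handled cleanly by properness, and the bookkeeping around ``factors over $\Delta$'', which causes no trouble because all the source schemes in sight ($\mathbb{G}_{m,k}$, $\mathbb{P}^1_k$, $E$) are reduced, so set-theoretic containment of the image in $\Delta$ is enough to conclude that the morphism factors through the closed subscheme $\Delta$.
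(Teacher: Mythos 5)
Your proposal is correct and follows essentially the same route as the paper: reduce to Lemma \ref{lem:groupless}, using properness to extend a non-constant $\mathbb{G}_{m,k}\to X$ to $\mathbb{P}^1_k\to X$ and then dominating $\mathbb{P}^1_k$ by an elliptic curve to force the image into $\Delta$. The only difference is presentational (you argue by contradiction and spell out the elliptic cover explicitly), not mathematical.
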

\begin{proof}   Suppose that, for every abelian variety $A$ over $k$ and every dense open subscheme $U\subset A$ with $\mathrm{codim}(A\setminus U)\geq 2$, every non-constant morphism of varieties $U\to X$ factors over $\Delta$.  
 Let $f:\mathbb{G}_{m,k}\to X$ be a  non-constant morphism. Since $X$ is proper over $k$, this morphism extends to a morphism $\overline{f}:\mathbb{P}^1_k\to X$. Since rational curves are dominated by elliptic curves, our assumption implies that the image of $\overline{f}$ (and thus $f$) is contained in $\Delta$.
It now follows from Lemma \ref{lem:groupless} that $X$ is groupless modulo $\Delta$.
\end{proof}

 \subsection{Applying the theorem of Hwang-Kebekus-Peternell}\label{section:hkp}
 If $X$ is a proper variety over $k$, we let $\mathrm{Aut}_{X/k}$ be the locally finite type group scheme over $k$ parametrizing automorphisms of $X$.  Its identity component $\mathrm{Aut}^0_{X/k}$ is a finite type connected group scheme over $k$.

 \begin{lemma}\label{lem:aut0}  
 Let $X$ be a projective integral variety over $k$ and let $\Delta\subsetneq X$ be a proper closed subset  such that, for every abelian variety $A$ over $k$, every non-constant morphism $A\to X$ factors over $\Delta$. Then $\mathrm{Aut}_{X/k}$ is zero-dimensional. 
 \end{lemma}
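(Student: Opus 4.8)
The plan is to argue by contradiction. Suppose $\mathrm{Aut}_{X/k}$ is not zero-dimensional, so that $G:=\mathrm{Aut}^0_{X/k}$ is a positive-dimensional connected algebraic group over $k$. Since $k$ is algebraically closed of characteristic zero, I would invoke Chevalley's structure theorem to obtain a short exact sequence $1\to L\to G\to A_0\to 1$ with $L$ a connected linear algebraic group and $A_0$ an abelian variety, and then split into the cases $L=0$ and $L\neq 0$.

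The common ingredient in both cases is the following. The group scheme $\mathrm{Aut}_{X/k}$ acts faithfully on $X$, hence so does every nontrivial subgroup scheme $H\subseteq G$; in particular the fixed locus $X^{H}$ is a proper closed subset of $X$. As $X$ is integral and $\Delta\subsetneq X$ is a proper closed subset, the open subset $X\setminus\Delta$ is dense and therefore not contained in $X^{H}$. So I can pick a point $x\in (X\setminus\Delta)(k)$ whose orbit $H\cdot x\cong H/\mathrm{Stab}_H(x)$ is positive-dimensional; the orbit morphism $H\to X$, $h\mapsto h\cdot x$, is then nonconstant and its image is not contained in $\Delta$. In the case $L=0$ we have $G=A_0$, a positive-dimensional abelian variety, and applying this with $H=G$ directly produces a nonconstant morphism $A_0\to X$ with image not contained in $\Delta$ — contradicting the hypothesis. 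In the case $L\neq 0$, the group $L$ is a positive-dimensional connected linear algebraic group over $k$, so, exactly as in the proof of Lemma \ref{lem:groupschemes1}, it contains a one-dimensional connected closed subgroup $T$ isomorphic to $\mathbb{G}_{a,k}$ or $\mathbb{G}_{m,k}$. Applying the observation with $H=T$ gives a nonconstant orbit morphism $T\to X$ whose image $T\cdot x\cong T/\mathrm{Stab}_T(x)$ is again isomorphic to $\mathbb{G}_{a,k}$ or $\mathbb{G}_{m,k}$ (a proper closed subgroup of $\mathbb{G}_{a,k}$ is trivial, one of $\mathbb{G}_{m,k}$ is finite). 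Since $X$ is proper, the closure $C:=\overline{T\cdot x}$ in $X$ is then a rational curve with $C\not\subseteq\Delta$, and composing the normalization $\mathbb{P}^1_k\cong\widetilde C\to C\hookrightarrow X$ with a double cover $E\to\mathbb{P}^1_k$ from an elliptic curve $E$ yields a nonconstant morphism $E\to X$ with image $C\not\subseteq\Delta$ — again contradicting the hypothesis, by the same reasoning as in the proof of Corollary \ref{cor:groupless2}.

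I expect the only delicate points to be (i) extracting the point with positive-dimensional orbit from faithfulness of the action, which is handled via the closedness of fixed loci together with the density of $X\setminus\Delta$, and (ii) checking that in the linear case the orbit closure is genuinely a rational curve meeting $X\setminus\Delta$; everything else is structure theory of algebraic groups of the kind already used earlier in the paper.
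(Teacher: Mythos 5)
Your argument is correct, but it is organized differently from the paper's. The paper first observes that the hypothesis forces $X$ to be non-uniruled (any rational curve not contained in $\Delta$ would be dominated by an elliptic curve, contradicting the assumption), then simply cites the structural fact that the identity component $G=\mathrm{Aut}^0_{X/k}$ of the automorphism group scheme of a non-uniruled proper variety is an abelian variety; after that it applies the hypothesis to the orbit maps $G\to X$, $g\mapsto g\cdot x$, for \emph{every} $x\in X\setminus\Delta$, concluding that each $g\in G$ fixes the dense set $X\setminus\Delta$, hence all of $X$ by closedness of fixed loci, hence $G$ is trivial by faithfulness. You instead argue by contradiction via Chevalley's decomposition $1\to L\to G\to A_0\to 1$ and handle the linear part by hand: a one-dimensional subgroup $T\cong\mathbb{G}_{a,k}$ or $\mathbb{G}_{m,k}$ of $L$, a point $x\notin\Delta\cup X^{T}$, and the orbit closure through $x$ produce a rational curve not contained in $\Delta$, which the elliptic-curve-dominates-$\mathbb{P}^1$ trick turns into a contradiction; the abelian case is the same orbit-map argument as the paper's, applied at a single well-chosen point outside $\Delta\cup X^{G}$ rather than at all points of $X\setminus\Delta$. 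In effect you reprove, in the situation at hand, the fact the paper quotes (no positive-dimensional linear part in $\mathrm{Aut}^0$ of a non-uniruled variety), which makes your proof longer but more self-contained; the paper's version is shorter at the cost of that citation. The essential ingredients — elliptic curves dominating $\mathbb{P}^1$, the hypothesis applied to orbit maps of an abelian subgroup of $\mathrm{Aut}^0$, density of $X\setminus\Delta$ together with closedness of fixed loci and faithfulness — coincide, and all the delicate points you flag (proper closed subgroups of a connected group have smaller dimension, so the chosen orbit is positive-dimensional; the orbit closure in the linear case is a rational curve meeting $X\setminus\Delta$) check out in characteristic zero.
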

 \begin{proof} Since $\mathbb{P}^1_k$ is dominated by an elliptic curve, our assumption implies that $X$ is non-uniruled. In particular, it follows that $G:=\Aut^0_{X/k}$ is an abelian variety.   Let $x\in X\setminus \Delta$. Since $x$ lies in the image of the   morphism $G\to X$ defined by $g\mapsto g\cdot x$, the latter does     not factor over $\Delta$. Then, as $G$ is an abelian variety, our assumption implies that   the morphism $G\to X$ defined by $g\mapsto g\cdot x$ is constant. In particular, for every $g$ in $G$, the fixed locus $X^g$ of $g$ contains $X\setminus \Delta$. Since $X^g$ is closed, we conclude that $X^g = X$, i.e., the action of $g$ on $X$ is trivial. As the action of $G$ on $X$ is faithful, this implies that $G$ is trivial, as required.
 \end{proof}
 
If $X$ and $Y$ are projective varieties over $k$, we let $\underline{\mathrm{Hom}}_k(Y,X)$ be the scheme parametrizing morphisms $X\to Y$. Note that $\underline{\Hom}_k(Y,X)$ is a countable disjoint union of finite type schemes.  Moreover, we let $\underline{\mathrm{Sur}}_k(Y,X)$ be the scheme parametrizing surjective morphisms $Y\to X$.

 \begin{theorem} \label{thm:hkp0}
 Let $X$ be a projective normal integral variety over $k$ and let $\Delta$ be a closed subscheme  such that, for every abelian variety $A$ over $k$, every non-constant morphism $A\to X$ factors over $\Delta$.  Let $Y$ be a projective normal integral variety over $k$. Then the scheme $\underline{\mathrm{Sur}}_k(Y,X)$ is a countable disjoint union of zero-dimensional smooth projective schemes over $k$. 
 \end{theorem}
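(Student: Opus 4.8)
The plan is to combine the theorem of Hwang--Kebekus--Peternell \cite{HKP} on deformations of surjective morphisms with Lemma \ref{lem:aut0}. Recall that Hwang--Kebekus--Peternell show that, given projective normal integral varieties $Y$ and $X$, the connected component of $\underline{\Hom}_k(Y,X)$ through a surjective morphism $f\colon Y\to X$ is, after normalization, a (smooth, by their result) projective variety whose action on $X$ factors through $\Aut^0_{X/k}$; more precisely, the deformations of $f$ are obtained by post-composing $f$ with a one-parameter family of automorphisms of $X$. Thus the first step is to invoke this structural result to reduce the statement to understanding $\Aut^0_{X/k}$.

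Concretely, I would argue as follows. Let $Z$ be a connected component of $\underline{\mathrm{Sur}}_k(Y,X)$ containing a surjective morphism $f$. By Hwang--Kebekus--Peternell, the normalization $\widetilde Z$ of the reduction $Z_{\mathrm{red}}$ is a smooth projective variety over $k$, and the universal morphism induces a morphism $\widetilde Z \to \Aut^0_{X/k}$ (sending a point $g$ to the automorphism $\sigma_g$ with $g = \sigma_g \circ f$), which is a closed immersion (or at least finite onto its image) after fixing $f$ as a base point. Now Lemma \ref{lem:aut0} applies: since every non-constant morphism from an abelian variety to $X$ factors over $\Delta\subsetneq X$, the group scheme $\Aut_{X/k}$ is zero-dimensional, hence $\Aut^0_{X/k}$ is trivial. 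Therefore $\widetilde Z$ is a point, and hence $Z$ itself is zero-dimensional. Since $Z$ is also reduced (each component of $\underline{\mathrm{Sur}}_k(Y,X)$ is smooth by Hwang--Kebekus--Peternell, as the obstruction space vanishes once $\Aut^0_{X/k}$ is trivial) and $\underline{\mathrm{Sur}}_k(Y,X)$ is a countable union of finite type schemes, it follows that $\underline{\mathrm{Sur}}_k(Y,X)$ is a countable disjoint union of zero-dimensional smooth projective schemes over $k$, as claimed. Properness of each component follows from the fact that $\underline{\mathrm{Sur}}_k(Y,X)$ is open in the proper scheme $\underline{\mathrm{Hom}}_k(Y,X)$ restricted to a component (surjectivity being an open condition among morphisms of projective varieties of the same dimension, once non-uniruledness rules out positive-dimensional fibres degenerating)—alternatively, a zero-dimensional finite type $k$-scheme is automatically proper.

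The main obstacle I anticipate is correctly citing and applying the Hwang--Kebekus--Peternell theorem: one must check that their hypotheses (normality of $Y$ and $X$, and that $f$ is surjective with $X$ non-uniruled — the latter being guaranteed here since $\mathbb{P}^1_k$ is dominated by an elliptic curve and every morphism from an abelian variety to $X$ factors over the proper closed subset $\Delta$) are met, and that their conclusion really does give smoothness of the Hom-scheme at $[f]$ together with the description of deformations of $f$ via $\Aut^0_{X/k}$. A secondary subtlety is the passage between $Z$, $Z_{\mathrm{red}}$, and $\widetilde Z$: one wants to conclude that $Z$ is itself reduced and zero-dimensional, not merely that its normalization is a point; this is where the smoothness assertion of Hwang--Kebekus--Peternell (equivalently, vanishing of $H^1$ of the relevant sheaf, which is controlled by $\mathrm{Lie}(\Aut^0_{X/k}) = 0$) is essential. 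Once these points are in place, the argument is short, as it is essentially Lemma \ref{lem:aut0} packaged through the deformation theory of surjective morphisms.
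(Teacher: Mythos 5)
Your overall architecture (Hwang--Kebekus--Peternell plus Lemma \ref{lem:aut0}) is the right one, but the step you yourself flagged as the main obstacle is where the argument breaks: the structural statement you attribute to \cite{HKP} is not what their theorem says, and it is false in general. Deformations of a surjective morphism $f\colon Y\to X$ onto a non-uniruled normal projective variety are \emph{not} obtained by post-composing $f$ with automorphisms of $X$; there is no map ``$g\mapsto \sigma_g$ with $g=\sigma_g\circ f$'' from the component to $\Aut^0_{X/k}$. For instance, if $X=(E\times E')/G$ is a bielliptic surface and $f\colon A=E\times E'\to X$ is the quotient map, then precomposition with translations of $A$ gives a two-dimensional family of deformations of $f$, while $\Aut^0_{X/k}$ is only one-dimensional, so most deformations are not of the form $\sigma\circ f$. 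What \cite[Theorem~1.2]{HKP} actually provides is a factorization $Y\to Z\to X$ with $Z\to X$ finite surjective, together with a surjection from the abelian variety $\Aut^0_{Z/k}$ onto the (smooth) connected component of $\underline{\Hom}_k(Y,X)$ through $f$; the deformations are governed by automorphisms of the intermediate variety $Z$, not of $X$.

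Consequently, applying Lemma \ref{lem:aut0} directly to $X$ does not close the argument: you must first transfer the hypothesis from $X$ to $Z$. This is easy but necessary: if $A\to Z$ is a non-constant morphism from an abelian variety, then its composition with the finite morphism $g\colon Z\to X$ is non-constant, hence factors over $\Delta$, so $A\to Z$ factors over $g^{-1}(\Delta)$, which is a proper closed subset of $Z$ because $g$ is finite surjective and $\Delta\subsetneq X$. Then Lemma \ref{lem:aut0}, applied to $Z$ with the closed subset $g^{-1}(\Delta)$, shows $\Aut^0_{Z/k}$ is trivial, so the component through $f$ is $\Spec k$; this is exactly the paper's proof. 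Your remaining remarks are essentially fine once this is repaired (smoothness of the component is part of the HKP statement, so no normalization or reducedness discussion is needed, and a zero-dimensional finite type $k$-scheme is indeed finite, hence projective), but note that $\underline{\Hom}_k(Y,X)$ itself is not proper, so properness of components should be argued via zero-dimensionality rather than openness in a proper scheme.
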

\begin{proof}
(We follow the proof of Theorem 2.8 in \cite{JKa}.)
Let $f:Y\to X$ be a surjective morphism. Let $\underline{\Hom}_k^f(Y,X)$ be the connected component of $\underline{\Hom}_k(Y,X)$ containing $f$.
By  \cite[Theorem~1.2]{HKP}, as $X$ is not covered by  rational curves, the scheme $\underline{\Hom}_k^f(Y,X)$ is smooth over $k$ and there exists a factorization $Y\to Z\to X$ and a surjective morphism $\Aut^0_{Z/k} \to \underline{\Hom}_k^f(Y,X)$ such that   $f:Z\to X$ is a finite surjective morphism and $\Aut^0_{Z/k}$ is an abelian variety over $k$.  Now, by our assumption on $X$, as  $Z\to X$ is finite,  for every abelian variety $A$ over $k$, it follows that   every non-constant morphism $A\to Z$ factors over the proper closed subset $f^{-1}(\Delta)$ of $Z$. Therefore, by Lemma \ref{lem:aut0}, the group scheme  $\Aut_{Z/k}^0$ is trivial. This implies that $\underline{\Hom}_k^f(Y,X)$ is isomorphic to $\Spec k$.  
\end{proof}

In \cite[Theorem~2.9]{JKa} it is shown that $\underline{\mathrm{Sur}}_k(Y,X)$ is zero-dimensional, assuming $X$ is groupless.  Our following result generalizes this statement for groupless varieties to pseudo-groupless varieties.
\begin{corollary}\label{cor:hkp}
If $X$ is  a pseudo-groupless projective normal integral variety over $k$ and $Y$ is a   projective normal integral variety over $k$, then $\underline{\mathrm{Sur}}_k(Y,X)$ is a countable disjoint union of zero-dimensional smooth projective schemes over $k$. 
\end{corollary}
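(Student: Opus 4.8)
The plan is to reduce the statement directly to Theorem \ref{thm:hkp0}, so that essentially no new work is required. The only thing to do is to extract, from the definition of pseudo-grouplessness, precisely the hypothesis that Theorem \ref{thm:hkp0} asks for.

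First I would unwind the hypothesis. Since $X$ is pseudo-groupless over $k$, by definition there is a proper closed subset $\Delta \subsetneq X$ such that $X$ is groupless modulo $\Delta$ over $k$; equip $\Delta$ with its reduced closed subscheme structure. Next, I would specialize the grouplessness condition to abelian varieties: every abelian variety $A$ over $k$ is a finite type connected group scheme over $k$, and $A$ is itself a dense open subscheme of $A$ whose complement (the empty set) trivially has codimension at least two in $A$. Hence, applying the definition of ``groupless modulo $\Delta$'' with $G = A$ and $U = A$, every non-constant morphism $A \to X$ factors over $\Delta$.

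With this in hand, the pair $(X,\Delta)$ together with $Y$ satisfies the hypotheses of Theorem \ref{thm:hkp0}: $X$ is a projective normal integral variety over $k$, $\Delta \subset X$ is a closed subscheme such that every non-constant morphism from an abelian variety over $k$ factors over $\Delta$, and $Y$ is a projective normal integral variety over $k$. Theorem \ref{thm:hkp0} then yields that $\underline{\mathrm{Sur}}_k(Y,X)$ is a countable disjoint union of zero-dimensional smooth projective schemes over $k$, which is exactly the desired conclusion.

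I do not expect any genuine obstacle here. The single point to verify is the (trivial) observation that abelian varieties are among the group schemes tested by the definition of pseudo-grouplessness, so that ``$X$ groupless modulo $\Delta$'' is in fact much stronger than what Theorem \ref{thm:hkp0} requires. All of the substantive input — the deformation-theoretic result of Hwang--Kebekus--Peternell \cite{HKP} and the argument that $\mathrm{Aut}^0$ of the intermediate variety is trivial (Lemma \ref{lem:aut0}) — has already been absorbed into the proof of Theorem \ref{thm:hkp0}, so here it only remains to cite it.
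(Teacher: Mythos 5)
Your proposal is correct and follows the paper's own proof exactly: extract the $\Delta$ from pseudo-grouplessness, observe that taking $G=A$ an abelian variety and $U=A$ in the definition gives that every non-constant morphism $A\to X$ factors over $\Delta$, and then cite Theorem \ref{thm:hkp0}. The only difference is that you spell out this specialization to abelian varieties, which the paper leaves implicit.
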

\begin{proof}
Since $X$ is pseudo-groupless over $k$,  there is a proper closed subset $\Delta\subsetneq X$    such that, for every abelian variety $A$ over $k$, every non-constant morphism $A\to X$ factors over $\Delta$. Therefore, the result follows from Theorem \ref{thm:hkp0}.
\end{proof}

\begin{remark}
There are projective varieties $X$ which are \textbf{not} pseudo-groupless over $k$, but for which the conclusion of the theorem above still holds, i.e.,  for every projective integral variety $Y$ over $k$, the scheme $\underline{\mathrm{Sur}}_k(Y,X)$ is smooth over $k$ \textbf{and} zero-dimensional. For example, let   $X$ be the  blow-up of a simple abelian surface $A$  in its identity element.  Then, for every abelian variety $B$ over $k$, every morphism $B\to X$ is constant.  
\end{remark}

\begin{remark}
Let $X$ be a (smooth projective) K3 surface over $k$. Then $\underline{\mathrm{Sur}}(Y,X)$ is zero-dimensional. More generally, if $X$ is a  simply connected  smooth projective connected variety over $k$ with $\mathrm{h}^0(X,T_X)=0$, then $\underline{\mathrm{Sur}}_k(Y,X)$ is zero-dimensional; see \cite{HKP}.
\end{remark}

\subsection{Classifying surfaces of general type} 
If $X$ and $Y$ are projective varieties over $k$, then we let $\mathrm{Dom}_k(Y,X)$ be the set of dominant rational maps $Y\dashrightarrow X$. The finiteness (or infinitude) of $\mathrm{Dom}_k(Y,X)$ is a property of the birational equivalence class of $X$. That is,  if $X'$ is a projective variety over $k$ which is birational to $X$ over $k$, then $\mathrm{Dom}_k(Y,X) = \mathrm{Dom}_k(Y,X')$.

 \begin{lemma}\label{lem:groupless_surfaces}
Let $X$ be a proper pseudo-groupless integral two-dimensional variety over $k$. Then   $X$ is of general type and, for every projective integral variety $Y$, the set  $\mathrm{Dom}_k(Y,X)$ is finite. 
 \end{lemma}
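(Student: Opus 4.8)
The plan is to reduce both assertions to known structural facts about surfaces together with the results already established in the paper. First I would show $X$ is of general type by going through the Enriques--Kodaira classification of minimal models. Let $\widetilde{X}\to X$ be a resolution of singularities and let $X_{\min}$ be a minimal model; since pseudo-grouplessness is a birational invariant for proper integral varieties (Lemma \ref{lem:birational_invariance}), and by Remark \ref{rem:gr_is_nonuni} a pseudo-groupless proper surface is non-uniruled, $X_{\min}$ is a smooth projective minimal surface of non-negative Kodaira dimension which is again pseudo-groupless. The cases $\kappa = 0$ and $\kappa = 1$ must be excluded. For $\kappa = 0$ one runs through the list (abelian, bielliptic, K3, Enriques): abelian and bielliptic surfaces are dominated by an abelian variety, hence admit a non-constant map from $\Gm$ or an abelian variety that does not factor through any proper closed $\Delta$ (using Corollary \ref{cor:groupless2}), contradicting pseudo-grouplessness; for K3 and Enriques surfaces one uses that they contain (or are dominated, after an \'etale cover in the Enriques case, by something containing) enough rational or elliptic curves sweeping out the surface, so no proper $\Delta$ can absorb all non-constant maps from $\Gm$ — indeed a K3 or Enriques surface is covered by elliptic curves, and an elliptic curve minus a point gives a non-constant map from $\mathbb{A}^1\setminus\{0\}\cong\Gm$, and these sweep out the whole surface. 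The case $\kappa = 1$ is the elliptic fibration case: an honestly elliptic surface $X_{\min}\to C$ has infinitely many fibres that are elliptic curves, each giving a non-constant $\Gm\to X_{\min}$, and these cover $X_{\min}$, so again no proper closed subset $\Delta$ works. Hence $\kappa(X_{\min}) = 2$, i.e. $X$ is of general type.

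Once $X$ is of general type, the finiteness of $\mathrm{Dom}_k(Y,X)$ for every projective integral $Y$ should follow from the two-dimensionality together with classical results. Here I would invoke the fact (due to Maehara, or Deschamps--Menegaux, building on Kobayashi--Ochiai) that for $X$ a surface of general type and $Y$ any projective integral variety, the set of dominant rational maps $Y\dashrightarrow X$ is finite. Since $\mathrm{Dom}_k(Y,X)$ is a birational invariant of $X$ (as noted just before the lemma), it suffices to check this for $X$ smooth projective of general type, which is exactly the classical statement. Alternatively, one reduces to the case $\dim Y = 2$: a dominant rational map $Y\dashrightarrow X$ factors through the image of $Y$ under a suitable projection only in a controlled way, but the cleanest route is to separate into two sub-cases according to whether $Y$ is a surface (apply the de Franchis--Severi-type finiteness for surfaces of general type directly) or $\dim Y > 2$ (restrict to a general complete-intersection surface $S\subset Y$; a dominant $Y\dashrightarrow X$ restricts to a dominant $S\dashrightarrow X$, and the restriction map on $\mathrm{Dom}$ is injective because two dominant rational maps agreeing on a dense subset agree).

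The main obstacle I anticipate is the first part — ruling out Kodaira dimension $0$ and $1$ cleanly — because it requires knowing, for each class in the classification, that the surface is \emph{swept out} by images of $\Gm$ or by abelian varieties in a way incompatible with \emph{any} proper closed $\Delta$, not merely that it admits one such map. For abelian and bielliptic surfaces this is immediate from translations. For K3, Enriques, and properly elliptic surfaces one genuinely needs the (classical but non-trivial) fact that these surfaces are covered by elliptic curves: every K3 surface admits an elliptic pencil after a small deformation is false, but a \emph{given} projective K3 need not — so one must instead argue via the abundance of rational curves on K3 and Enriques surfaces, or note that any surface with $\kappa \le 1$ and non-negative Kodaira dimension that is not (bi)elliptic still carries infinitely many rational or elliptic curves moving in families covering it. A safe formulation is: a smooth projective surface with $\kappa(X) \in \{0,1\}$ is not pseudo-groupless because it is dominated by, or contains a covering family of, genus-$\le 1$ curves, and a non-constant map from such a curve (minus a point if rational or after translating if elliptic) yields a non-constant $\Gm \to X$ or abelian-variety-to-$X$ map whose images cover $X$, so cannot all factor through a proper closed subset. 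I would spell this out case by case using the classification, which is where the real work lies; the finiteness of $\mathrm{Dom}_k(Y,X)$ is then essentially a citation.
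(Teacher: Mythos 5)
Your proposal follows essentially the same route as the paper: reduce by the birational invariance of pseudo-grouplessness (Lemma \ref{lem:birational_invariance}) to a minimal smooth surface, rule out $\kappa=0,1$ by observing that a pseudo-groupless surface can neither be covered by elliptic curves nor admit a finite \'etale cover by an abelian surface (for K3/Enriques this rests on the Bogomolov--Mumford-type fact, cited in the paper from Huybrechts, that a projective K3 is dominated by a family of elliptic curves), and then quote the classical Kobayashi--Ochiai/Maehara finiteness of dominant rational maps onto a surface of general type. Two peripheral slips do not affect the argument: an elliptic curve minus a point is \emph{not} isomorphic to $\Gm$ (but an elliptic curve is itself an abelian variety, so your ``safe formulation'' via the abelian-variety clause is the right fix), and in your optional reduction to $\dim Y=2$ the restriction map $\mathrm{Dom}_k(Y,X)\to \mathrm{Dom}_k(S,X)$ is not injective for a fixed surface $S\subset Y$ --- fortunately that reduction is unnecessary, since the cited finiteness theorem already allows arbitrary projective $Y$.
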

 \begin{proof} (If $X$ is groupless, then the lemma follows essentially from \cite[Corollary~1.6.(2)]{ShangZhang}, see also \cite{JAut}.)  To prove the lemma,  let $X_{min}$ be the minimal model of $X$ (see \cite[Chapter~9]{Liu2}). Since pseudo-grouplessness is a birational invariant for proper varieties (Lemma \ref{lem:birational_invariance}),  we may and do assume that $X$ is a  minimal smooth projective surface. Moreover,    if $X$ is of general type, then $\mathrm{Dom}_k(Y,X)$ is finite for every projective integral variety $Y$; see\cite{Iitaka}. Therefore, it suffices to show that $X$ is of general type.
 
 Since $X$ is pseudo-groupless, we see that $X$ is not uniruled. Therefore,  the Kodaira dimension of $X$ is  $0, 1$, or $2$. Since $X$ is pseudo-groupless, the variety $X$ is not covered by elliptic curves. In particular, as smooth proper surfaces of Kodaira dimension $1$ admit an elliptic fibration (over a curve), we see that the Kodaira dimension of $X$ is $0$ or $2$. To conclude the proof, it suffices to show that the Kodaira dimension of $X$ is not $0$.

Since $X$ is a projective pseudo-groupless variety over $k$, for every abelian  surface $A$, every morphism $A \to X$ is non-surjective.   In particular, the surface  $X$ does not admit a finite \'etale cover $Y\to X$ with $Y$ an abelian surface.  
 Thus,  if $X$ is of Kodaira dimension zero, by the classification of surfaces, it is a K3 surface up to a finite \'etale   cover. However, a K3 surface    is covered by elliptic curves  
\cite[Corollary~2.2]{HuybrechtsBook}, and is  thus not pseudo-groupless.  We conclude  that $X$ is not covered by a K3 surface, so that $X$ is of Kodaira dimension two as required.
 \end{proof}

Conjecturally, a projective variety of general type is pseudo-groupless. We show that the finiteness of the set $\mathrm{Dom}_k(Y,X)$ for every $Y$ is equivalent to being of general type when $\dim X <3$.  We start with the following property of K3 surfaces.

 \begin{proposition}\label{prop:k3}
 Let $X$ be a K3 surface over $k$.  Then there is a projective integral variety $Y$ over $k$ such that $\mathrm{Dom}_k(Y,X)$ is infinite.
 \end{proposition}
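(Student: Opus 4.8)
The plan is to exploit the fact that a projective K3 surface $X$ carries many automorphisms of infinite order, or more precisely that it admits infinitely many distinct dominant rational self-maps, and then to ``absorb'' these into dominant rational maps from a single fixed variety $Y$. The cleanest route is to take $Y$ to be a desingularization of $X\times X$ (or $X$ itself if $X$ already has infinitely many self-maps); the real content is producing infinitely many dominant rational maps $X\dashrightarrow X$, since then composing with the projection $Y = \widetilde{X\times X}\to X$ gives infinitely many elements of $\mathrm{Dom}_k(Y,X)$, as distinct self-maps of $X$ remain distinct after precomposition with a dominant map.

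So the heart of the matter is: a K3 surface $X$ over $k$ has $\mathrm{Bir}_k(X) = \mathrm{Aut}(X)$ infinite, OR it admits a non-isomorphic dominant rational self-map (e.g. a non-invertible self-map coming from an elliptic fibration). First I would reduce to the case where $k = \CC$ by a spreading-out/specialization argument, or simply invoke that every K3 surface over $k$ is defined over a finitely generated field and base-change considerations let us work over $\CC$; grouplessness-type statements and the existence of the sought-after $Y$ descend appropriately. Over $\CC$ one then argues as follows. If $\rho(X)=\mathrm{rank}\,\NS(X)\geq 2$, then by the theory of the ample cone of a K3 surface (Pjateckii-Shapiro--Shafarevich, Kovács, Sterk) the group $\mathrm{Aut}(X)$ is infinite unless $\NS(X)$ has a very special shape; in the remaining low-Picard-rank cases one uses that $X$ admits an elliptic fibration $\pi: X\to \PP^1$ (automatic once $\NS(X)$ represents $0$, hence for $\rho\geq 5$ always, and for many lattices of smaller rank), and the multiplication-by-$2$ map along the fibres — or translation by a suitable multisection — gives a dominant rational self-map of infinite order. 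For the genuinely exceptional lattices of rank $1$ (where $\mathrm{Aut}(X)$ is finite and no elliptic fibration exists), $X$ carries instead a large family of rational curves, or one uses that a degree $2$ polarization gives a double cover of $\PP^2$ branched over a sextic and hence a self-map; in all cases $\mathrm{Dom}_k(X,X)$ is infinite.

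The main obstacle I expect is the bookkeeping across the different lattice-theoretic cases for $\NS(X)$: the statement ``$\mathrm{Aut}(X)$ is infinite'' fails precisely for a short finite list of K3 surfaces (those with $\NS(X)$ in a small explicit set of rank $\leq 2$ lattices), so one cannot simply cite infinitude of the automorphism group uniformly. The robust workaround — and probably the intended proof — is not to go through automorphisms at all but to use that every complex projective K3 surface $X$ is \emph{dominated by, and dominates, itself via infinitely many maps after one blow-up}: concretely, there is always a dominant generically finite rational map $\psi: \PP^2 \dashrightarrow X$ of some degree (via a suitable linear system) and dually $X$ maps finitely to $\PP^2$ after choosing a degree $2$ class, or — more to the point for getting $\mathrm{Dom}_k(Y,X)$ infinite with a \emph{single} $Y$ — one takes $Y = X$ when $\mathrm{Aut}(X)$ is infinite and, in the finitely many remaining cases, checks by hand (using the explicit geometry: these are e.g. K3s with $\NS(X)\cong \langle 2\rangle$, which are double planes, and one writes down infinitely many self-rational-maps via Cremona-type constructions on $\PP^2$ pulled back through the double cover) that the conclusion still holds. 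I would organize the write-up as: (i) reduce to $\CC$; (ii) handle the case $\mathrm{Aut}(X)$ infinite by taking $Y=X$; (iii) handle the case $\mathrm{Aut}(X)$ finite — necessarily $\rho(X)\leq 2$ and $\NS(X)$ in Sterk's finite list — by exhibiting in each case a dominant rational self-map of infinite order, and take $Y = X$ again. The only slightly delicate point is verifying that the exhibited rational self-maps are genuinely of infinite order and pairwise distinct, which follows by tracking their action on $\HH^2(X,\ZZ)$ or on the (finite-dimensional) space $\HH^0(X,\omega_X)$ together with a degree computation.
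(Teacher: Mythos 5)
Your strategy hinges on the claim that ``in all cases $\mathrm{Dom}_k(X,X)$ is infinite,'' and that claim is false; this breaks every variant of your plan that takes $Y=X$ (or a desingularization of $X\times X$, since there you still need infinitely many self-maps of $X$ to compose with the projection). For a very general polarized K3 surface --- Picard rank one, e.g.\ a very general quartic in $\PP^3$ or a very general double plane --- there is no elliptic fibration (no nonzero class of square zero in $\NS(X)$), $\Aut(X)$ is finite (often trivial or of order two), and, since $\omega_X$ is trivial and $X$ is simply connected with $c_2(X)=24\neq 0$, every dominant rational self-map of a K3 surface is birational; hence $\mathrm{Dom}_k(X,X)=\Bir_k(X)=\Aut(X)$ is \emph{finite}. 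Your proposed rescue in these cases does not produce self-maps: the degree-two morphism $X\to \PP^2$ is a map to $\PP^2$, not a self-map of $X$, and a Cremona transformation of $\PP^2$ does not preserve the branch sextic, so it does not lift through the double cover. Note also that the exceptional locus of your case analysis is not ``a short finite list'': the finite-automorphism K3 surfaces include \emph{all} rank-one N\'eron--Severi lattices $\langle 2d\rangle$, i.e.\ the generic member of every polarized family, and these admit neither an elliptic fibration nor any dominant self-map of infinite order.

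The missing idea is to let $Y$ be genuinely different from $X$. The paper's proof is two lines: by \cite[Corollary~13.2.2]{HuybrechtsBook} every K3 surface $X$ is dominated by a variety $Y$ admitting a fibration in elliptic curves over a base $B$ with a section (this comes from the one-dimensional covering family of elliptic curves that exists on any projective K3). The fibrewise multiplication-by-$n$ maps give infinitely many dominant endomorphisms of $Y$ of pairwise distinct degrees, and composing them with the dominant map $Y\to X$ yields infinitely many pairwise distinct elements of $\mathrm{Dom}_k(Y,X)$. So your instinct to use elliptic fibrations and multiplication maps is the right one, but those maps must be constructed on an auxiliary dominating variety $Y$: on $X$ itself they generally do not exist, and no amount of lattice-by-lattice bookkeeping will make $\mathrm{Dom}_k(X,X)$ infinite for the generic K3.
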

 \begin{proof}
 Choose $Y\to X$ dominant and $Y\to B$ a family of elliptic curves (with a section). Such data exists by \cite[Corollary~13.2.2]{HuybrechtsBook}. Since $Y$ has infinitely many   endomorphisms, the set $\mathrm{Dom}_k(Y,X)$ is infinite. This concludes the proof.
 \end{proof}
 
 \begin{corollary}\label{cor:surfaces}
Let $X$ be a projective integral variety of dimension at most two over $k$ such that, for every projective integral variety $Y$ over $k$, the set $\mathrm{Dom}_k(Y,X)$ is finite. Then  $X$ is of general type.
 \end{corollary}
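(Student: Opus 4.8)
The plan is to prove the contrapositive: if $X$ has dimension at most two and is \emph{not} of general type, then there is a projective integral variety $Y$ over $k$ with $\mathrm{Dom}_k(Y,X)$ infinite. Since $\mathrm{Dom}_k(Y,X)$ depends only on the birational class of $X$, we may replace $X$ by a smooth projective birational model, and (for surfaces) by a minimal one. Two elementary observations about dominant rational maps will be used throughout, and I would record them first: (a) degrees of function field extensions are multiplicative under composition, so if $u\colon V\dashrightarrow W$ is a fixed dominant rational map and $\{g_n\}$ is a family of dominant rational self-maps of $V$ with pairwise distinct degrees, then the composites $\{u\circ g_n\}$ are pairwise distinct; and (b) if $\pi\colon S\to\bar S$ is a degree-$d$ finite \'etale Galois cover and $Y$ is irreducible, then $g\mapsto\pi\circ g$ has fibres of cardinality at most $d$ on the set of dominant rational maps $Y\dashrightarrow S$ (indeed $\pi\circ g=\pi\circ g'$ forces the irreducible $Y$ to land in one component of $S\times_{\bar S}S$, i.e.\ $g'=\rho\circ g$ for some $\rho$ in the Galois group). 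The case $\dim X\le 1$ is then immediate: if $X=\mathbb{P}^1_k$ the self-maps $z\mapsto z^n$ are infinitely many, if $X$ is an elliptic curve its translations are, and if $\dim X=0$ there is nothing to prove; so the hypothesis forces $X$ to have genus $\ge 2$, hence to be of general type.

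For $\dim X=2$ I would split into cases according to $\kappa(X)\in\{-\infty,0,1\}$ using the Enriques--Kodaira classification of the minimal model. If $\kappa(X)=-\infty$ then $X$ is birational to $C\times\mathbb{P}^1_k$ for a smooth projective curve $C$, and $\mathrm{Dom}_k(C\times\mathbb{P}^1_k,X)$ contains the pairwise distinct maps $\mathrm{id}_C\times(z\mapsto z^n)$. If $\kappa(X)=0$, the minimal model of $X$ is an abelian surface, a bielliptic surface, a K3 surface, or an Enriques surface. For an abelian surface $A$ take $Y=A$: the isogenies $[n]$ ($n\ge 1$) lie in $\mathrm{Dom}_k(A,A)$ and are pairwise distinct. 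For a K3 surface, apply Proposition \ref{prop:k3}. For a bielliptic surface, which is a finite \'etale quotient $\pi\colon A\to A/G$ of an abelian surface $A$, take $Y=A$ and observe that the finite morphisms $\pi\circ[n]\colon A\to A/G$ have pairwise distinct degrees $|G|\,n^{4}$. For an Enriques surface $\bar S$ with K3 double cover $\pi\colon S\to\bar S$, let $Y$ be the variety produced by Proposition \ref{prop:k3} for the K3 surface $S$; then $g\mapsto\pi\circ g$ sends the infinite set $\mathrm{Dom}_k(Y,S)$ into $\mathrm{Dom}_k(Y,\bar S)$ with fibres of size at most two by (b), so its image is infinite. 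Finally, if $\kappa(X)=1$, the Iitaka fibration $X\to C$ is an elliptic fibration; choosing any multisection $D\subset X$ and letting $C'$ be its normalization, a smooth model $X'$ of $X\times_C C'$ is an elliptic surface over $C'$ \emph{with a section}, hence carries multiplication-by-$n$ dominant rational self-maps $[n]\colon X'\dashrightarrow X'$ of degree $n^2$; composing with the generically finite dominant projection $u\colon X'\to X$ and invoking (a) produces the infinitely many distinct maps $u\circ[n]$ in $\mathrm{Dom}_k(X',X)$.

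In every case we have exhibited a $Y$ with $\mathrm{Dom}_k(Y,X)$ infinite, contradicting the hypothesis; therefore $X$ is of general type. The only point demanding genuine care — and the main ``obstacle'' — is the bookkeeping in (a) and (b): verifying that each exhibited family stays infinite after composing with a generically finite map or an \'etale quotient. The structural input (reduction to a $\mathbb{P}^1$-bundle, an abelian surface, a K3 surface, or an elliptic surface with a section) is entirely supplied by the classification of surfaces together with Proposition \ref{prop:k3}, so I expect no further serious difficulty; the argument is essentially a dictionary between Kodaira dimension at most $1$ and the existence of a self-map-rich dominant cover.
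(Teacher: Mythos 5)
Your proposal is correct and follows essentially the same route as the paper: pass to a minimal smooth model, run through the Enriques--Kodaira cases $\kappa\in\{-\infty,0,1\}$, use Proposition \ref{prop:k3} for the K3 case, the group structure of the generic fibre after a base change trivializing the elliptic fibration for $\kappa=1$, and self-maps of $\mathbb{P}^1_k\times C$ for $\kappa=-\infty$. The only difference is that you spell out details the paper compresses (the bielliptic and Enriques subcases, which the paper subsumes under ``dominated by a K3 or abelian surface'', and the degree/Galois bookkeeping showing the composed families stay infinite), and these verifications are correct.
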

\begin{proof} (We follow essentially the proof of  \cite[Theorem~II]{GreenGriffithsTwoApps}). If $X'$ is a projective variety which is birational to $X$, then $\mathrm{Dom}_k(Y,X')$ is finite for every projective integral variety $Y$ over $k$. Therefore, to prove the corollary,  we may and do assume that $X$ is a minimal  smooth projective   surface over $k$. Let $\kappa$ be the Kodaira dimension of $X$. If $\kappa =1$, then $X$ admits an elliptic fibration $X\to B$ from which it is not hard to see that  there is a finite cover $B'\to B$ such that the pull-back $X':=X\times_B B'$ has a section and (therefore) admits   infinitely many dominant rational self-maps    (induced by the group structure of the generic fibre of $X'\to B'$). Therefore, $\kappa\neq 1$. If $\kappa =0$, then $X$ is dominated by a K3 surface or an abelian surface.  By Proposition \ref{prop:k3} and the fact that every abelian surface has infinitely many surjective endomorphisms, we see that $\kappa \neq 0$.  Suppose that $\kappa =-\infty$. Then, by the classification of such surfaces, there is a smooth projective connected curve $C$ over $k$ such that $X$ is birational to $\mathbb{P}^1_k\times_k C$. However, since the latter surface has infinitely many surjective endomorphisms, we see that $\kappa \neq -\infty$.  We conclude that $\kappa = 2$, as required.
\end{proof}

\begin{theorem} Let $X$ be a proper integral variety of dimension at most two over $k$. Then $X$ is of general type if and only if, for every projective variety $Y$ over $k$, the set $\mathrm{Dom}_k(Y,X)$ is finite.
\end{theorem}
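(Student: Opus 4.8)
The plan is to prove the two directions separately, combining results already established in this excerpt with the classical theory for varieties of general type.

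\smallskip

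\emph{First direction (general type $\Rightarrow$ finiteness of $\mathrm{Dom}_k(Y,X)$).} Suppose $X$ is of general type. Since the finiteness of $\mathrm{Dom}_k(Y,X)$ depends only on the birational class of $X$, I may replace $X$ by a smooth projective model (and, in the surface case, by its minimal model). Then I invoke the classical finiteness theorem for dominant rational maps to a variety of general type, as recorded via \cite{Iitaka} (this is exactly the input already used in the proof of Lemma \ref{lem:groupless_surfaces}): for $X$ of general type and $Y$ any projective integral variety, $\mathrm{Dom}_k(Y,X)$ is finite. Strictly speaking one should note that a general projective variety $Y$ need not be integral; but $\mathrm{Dom}_k(Y,X)$ is empty unless $Y$ has an irreducible component dominating $X$, and a dominant rational map from $Y$ is determined by its restriction to that component, so the general case reduces to the integral case. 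This direction requires essentially no new work.

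\smallskip

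\emph{Second direction (finiteness for all $Y$ $\Rightarrow$ general type).} This is precisely the content of Corollary \ref{cor:surfaces}, which is already proved in the excerpt for $X$ of dimension at most two: if $\mathrm{Dom}_k(Y,X)$ is finite for every projective integral variety $Y$, then $X$ is of general type. So the second implication follows immediately by quoting Corollary \ref{cor:surfaces}, after the same harmless reduction to $Y$ integral noted above. One only needs to be slightly careful that the hypothesis of the theorem quantifies over all projective varieties $Y$, whereas Corollary \ref{cor:surfaces} quantifies over integral ones; since finiteness over all integral $Y$ is a weaker hypothesis, the theorem's hypothesis implies it, and we are done.

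\smallskip

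\emph{Main obstacle and structure.} There is no real obstacle here: the theorem is essentially a clean repackaging of Corollary \ref{cor:surfaces} (for the hard direction) together with Matsumura/Iitaka-type finiteness (for the easy direction). The only thing to watch is the bookkeeping around the word ``integral'' versus the word ``variety'' in the two directions, and the observation that $\mathrm{Dom}_k(Y,X)$ is insensitive to replacing $X$ by a birational model. So the proof will read, in essence: ``If $X$ is of general type, replace $X$ by a smooth projective model; then $\mathrm{Dom}_k(Y,X)$ is finite by \cite{Iitaka}. Conversely, if $\mathrm{Dom}_k(Y,X)$ is finite for all projective integral $Y$, then $X$ is of general type by Corollary \ref{cor:surfaces}.'' I would present it in exactly that order, the forward (general type) direction first since it is the more elementary one.
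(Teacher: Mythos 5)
Your proposal is correct and follows essentially the same route as the paper: the forward direction is exactly the Kobayashi--Ochiai/Iitaka finiteness theorem (as cited via \cite{Iitaka}), and the converse is a direct quotation of Corollary \ref{cor:surfaces}. The extra bookkeeping you add (reduction to integral $Y$ and birational invariance of $\mathrm{Dom}_k(Y,X)$) is harmless and consistent with the paper's argument.
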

 
\begin{proof}
If $X$ is of general type, then it follows from Kobayashi-Ochiai's theorem that, for every projective integral variety $Y$ over $k$, the set $\mathrm{Dom}_k(Y,X)$ is finite (see \cite{Iitaka}). Conversely,  if $X$ is a projective variety of dimension at most two over $k$ such that, for every projective integral variety $Y$ over $k$, the set $\mathrm{Dom}_k(Y,X)$ is finite, then $X$ is of general type. This is Corollary \ref{cor:surfaces}. 
\end{proof}

\section{Pseudo-algebraic hyperbolicity and pseudo-boundedness}\label{section:alg}
In this section we study the boundedness of maps from curves into  varieties. We first ``pseudify'' Demailly's notion of algebraic hyperbolicity following \cite{vBJK} (see also \cite{JKa}).

 \begin{definition}[Algebraic hyperbolicity modulo a subset] 
 Let $X$ be a projective variety over $k$ and let $\Delta$ be a closed subscheme of $X$. We say that $X$ is \emph{algebraically hyperbolic over $k$ modulo $\Delta$} if, for every ample line bundle $L$ on $X$, there is a real number $\alpha(X,\Delta,L)$ depending only on $X$, $\Delta$ and $L$ such that, for every smooth projective connected curve $C$ over $k$ and every morphism $f:C\to X$ with $f(C)\not \subset \Delta$, the inequality
 \[
 \deg_C f^\ast L \leq \alpha(X,\Delta,L) \cdot \mathrm{genus}(C)
 \] holds.
 \end{definition}

\begin{definition}
 A projective variety  $X$ over $k$ is \emph{pseudo-algebraically hyperbolic (over $k$)} if there is a proper closed subset $\Delta\subsetneq X$ such that $X$ is algebraically hyperbolic modulo $\Delta$.
\end{definition}

 \begin{definition}[Boundedness modulo a subset] Let $n\geq 1$ be an integer, 
 let $X$ be a projective variety over $k$, and let $\Delta$ be a closed subscheme of $X$. We say that $X$ is \emph{$n$-bounded   over $k$ modulo $\Delta$} if, for every normal projective variety $Y$ of dimension at most $n$, the scheme $\underline{\Hom}_k(Y,X)\setminus \underline{\Hom}_k(Y,\Delta)$ is of finite type over $k$. We say that $X$ is \emph{bounded over $k$ modulo $\Delta$} if, for every $n\geq 1$, the variety $X$ is $n$-bounded modulo $\Delta$.
 \end{definition}
 
\begin{definition} Let $n\geq 1$ be an integer.
 A projective variety  $X$ over $k$ is \emph{pseudo-$n$-bounded over $k$} if there is a proper closed subset $\Delta\subsetneq X$ such that $X$ is $n$-bounded  modulo $\Delta$.   
\end{definition}

 \begin{definition}
 A projective variety is \emph{pseudo-bounded over $k$} if, for every integer $n\geq 1$, we have that $X$ is pseudo-$n$-bounded over $k$.
 \end{definition}

 In the definition of algebraic hyperbolicity one asks for the existence of a uniform and linear bound in terms of the genus. If we instead   ``only'' ask for a uniform bound in the genus, we  retrieve  an a priori weaker notion of hyperbolicity which is referred to as \emph{weak boundedness} by Kov\'acs-Lieblich \cite{KovacsLieblich}. Their notion can be shown to coincide with  the notion of \emph{boundedness} as defined above; see  \cite[\S 9]{vBJK}.

 \begin{lemma}\label{lem:birrie1}
 Let $f:X\to Y$ be a proper birational surjective morphism of projective integral varieties over $k$. Let $\Delta_X\subset X$ be a closed subscheme. If $X$ is $1$-bounded modulo $\Delta_X$ (resp. algebraically hyperbolic modulo $\Delta_X$) over $k$, then $Y$ is $1$-bounded modulo $f(\Delta_X)\cup f(\mathrm{Exc}(f))$ (resp. algebraically hyperbolic modulo $f(\Delta_X)\cup f(\mathrm{Exc}(f))$) over $k$.
 \end{lemma}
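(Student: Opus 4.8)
The plan is to reduce any test curve in $Y$ to a test curve in $X$ by lifting along $f$, and then transfer the degree bound. Write $\Delta_Y := f(\Delta_X)\cup f(\mathrm{Exc}(f))$; this is a closed subset of $Y$, because $f$ is proper and, with $\mathrm{Exc}(f)\subset X$ the exceptional locus, $f$ restricts to an isomorphism $X\setminus\mathrm{Exc}(f)\xrightarrow{\sim}Y\setminus f(\mathrm{Exc}(f))$. Let $C$ be a smooth projective connected curve over $k$ and let $g\colon C\to Y$ be a morphism with $g(C)\not\subset\Delta_Y$. First I would note that, since $g(C)\not\subset f(\mathrm{Exc}(f))$, the generic point of $C$ is sent into $Y\setminus f(\mathrm{Exc}(f))$; composing with the inverse of the isomorphism above yields a rational map $C\dashrightarrow X$, and since $C$ is a smooth curve and $X$ is proper this extends uniquely to a morphism $\tilde g\colon C\to X$ with $f\circ\tilde g=g$. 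Moreover $\tilde g(C)\not\subset\Delta_X$, since otherwise $g(C)=f(\tilde g(C))\subset f(\Delta_X)\subset\Delta_Y$.

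The second step is the degree comparison. Fix an ample line bundle $L$ on $Y$ and an ample line bundle $M$ on $X$. The line bundle $f^\ast L$ is nef, being the pullback of an ample line bundle along a morphism, so the class of $nM-f^\ast L$ in $\NS(X)_\QQ$ is ample once $n$ is large enough; fix such an $n$. Then $g^\ast L=\tilde g^\ast f^\ast L$ and $\deg_C\tilde g^\ast(nM-f^\ast L)\geq 0$, whence $\deg_C g^\ast L\leq n\cdot\deg_C\tilde g^\ast M$. In the algebraically hyperbolic case, the hypothesis applied to $\tilde g$ (which is legitimate because $\tilde g(C)\not\subset\Delta_X$) gives $\deg_C\tilde g^\ast M\leq\alpha(X,\Delta_X,M)\cdot\mathrm{genus}(C)$, so $\deg_C g^\ast L\leq n\,\alpha(X,\Delta_X,M)\cdot\mathrm{genus}(C)$, and one takes $\alpha(Y,\Delta_Y,L):=n\,\alpha(X,\Delta_X,M)$; this proves that $Y$ is algebraically hyperbolic modulo $\Delta_Y$.

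For the $1$-boundedness assertion I would use the reformulation (compatible with the discussion in the introduction): a projective variety $Z$ is $1$-bounded modulo a closed subset $\Delta$ if and only if, for every smooth projective connected curve $C$ and every ample line bundle $N$ on $Z$, the set $\{\deg_C h^\ast N : h\colon C\to Z,\ h(C)\not\subset\Delta\}$ is bounded — this holds because $\underline{\Hom}_k(C,Z)$ is a countable disjoint union of finite type schemes, indexed by the (locally constant) degree against $N$, and $\underline{\Hom}_k(C,Z)\setminus\underline{\Hom}_k(C,\Delta)$ is an open subscheme, hence of finite type exactly when it meets only finitely many of them. Granting this, $1$-boundedness of $X$ modulo $\Delta_X$ bounds $\deg_C h^\ast M$ over all $h\colon C\to X$ with $h(C)\not\subset\Delta_X$; applying this to $\tilde g$ and invoking $\deg_C g^\ast L\leq n\deg_C\tilde g^\ast M$ bounds $\deg_C g^\ast L$ over all $g$ with $g(C)\not\subset\Delta_Y$, as required. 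The only genuinely delicate points are the bookkeeping in the first step — verifying that excluding $f(\mathrm{Exc}(f))$ really forces the generic point of $C$ into the isomorphism locus of $f$ and that the lift avoids $\Delta_X$ — together with the nef-pullback trick needed to compare an ample bundle on $Y$ with one on $X$; once these are settled the degree estimate is formal, so I do not anticipate a serious obstacle.
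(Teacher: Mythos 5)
Your proposal is correct and follows essentially the same route as the paper: lift a curve $g:C\to Y$ not landing in $f(\Delta_X)\cup f(\mathrm{Exc}(f))$ to $\tilde g:C\to X$ via the isomorphism locus (using smoothness of $C$ and properness of $X$), then bound $\deg_C g^\ast L$ by comparing $f^\ast L$ with an ample bundle on $X$ — the paper writes $f^\ast L_Y = L_X - A$ with $L_X, A$ ample, which is the same trick as your $nM - f^\ast L$ ample. Your extra justifications (closedness of $\Delta_Y$, the degree-bound reformulation of $1$-boundedness) are details the paper leaves implicit, not a different argument.
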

 \begin{proof} Define $\Delta_Y := f(\Delta_X)\cup f(\mathrm{Exc}(f))$.
 Let $L_Y$ be an ample line bundle and write $f^\ast L_Y = L_X - A$, where $L_X$ and $A$ are ample line bundles on $X$. Assume that $X$ is $1$-bounded modulo $\Delta_X$, and let $\alpha(X,L_X,\Delta_X)$ be the real number in the definition of $1$-boundedness modulo $\Delta_X$. 
 
 If $g:C\to Y$ is a morphism with $g(C)\not\subset \Delta_Y$,  then we can lift $g:C\to Y$ to a morphism $\overline{g}:C\to X$. We see that
 \[
 \deg_C g^\ast L_Y = \deg_C \overline{g}^\ast f^\ast L_Y = \deg_C \overline{g}^\ast(L_X - A) \leq \deg_C \overline{g}^\ast L_X.
  \]
This implies that $\deg_C g^\ast L_Y$ is bounded by the real number  $\alpha(X,L_X,\Delta_X)$, so that $Y$ is $1$-bounded modulo $\Delta_Y$, as required.

A minor modification of the above arguments shows that, if $X$ is  algebraically hyperbolic modulo $\Delta_X$ over $k$, then $Y$ is   algebraically hyperbolic modulo $f(\Delta_X)\cup f(\mathrm{Exc}(f))$ over $k$. This concludes the proof.
 \end{proof}
 
 \begin{proposition}[Birational invariance]\label{prop:bir_inv_boundedness}
 Let $X$ and $Y$ be projective integral varieties over $k$. Assume that   $X$ and $Y$ are birationally equivalent over $k$. Then the following statements hold.
 \begin{enumerate}
 \item The projective variety $X$ is pseudo-algebraically hyperbolic over $k$ if and only if $Y$ is pseudo-algebraically hyperbolic over $k$.
 \item The projective variety $X$ is pseudo-$1$-bounded over $k$ if and only if $Y$ is pseudo-$1$-bounded over $k$.
 \end{enumerate}  
 \end{proposition}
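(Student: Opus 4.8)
The plan is to reduce to the case of a proper birational surjective morphism, for which one direction of the inheritance is already Lemma~\ref{lem:birrie1} and the other requires a short companion argument. By symmetry it suffices to show that pseudo-algebraic hyperbolicity (resp. pseudo-$1$-boundedness) of $X$ implies that of $Y$. First I would let $Z\subseteq X\times_k Y$ be the closure, with reduced induced structure, of the graph of the birational map $X\dashrightarrow Y$; this is a projective integral variety, and the two projections give proper birational surjective morphisms $p\colon Z\to X$ and $q\colon Z\to Y$. Since Lemma~\ref{lem:birrie1} propagates the relevant property \emph{downwards} along $q$, the proposition will follow once we know it also propagates \emph{upwards} along $p$, i.e. once we prove the following companion statement: if $p\colon Z\to X$ is a proper birational surjective morphism of projective integral varieties and $X$ is algebraically hyperbolic (resp. $1$-bounded) modulo a proper closed subset $\Delta_X\subsetneq X$, then $Z$ is algebraically hyperbolic (resp. $1$-bounded) modulo a suitable proper closed subset $\Delta_Z\subsetneq Z$.

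To prove this companion statement I would fix once and for all an ample line bundle $L_X^0$ on $X$ and an ample line bundle $L_Z^0$ on $Z$. Because $p$ is birational, $p^\ast L_X^0$ is nef and big, so Kodaira's lemma provides an integer $m_0\geq 1$ and an effective divisor $A$ on $Z$ with $m_0\,p^\ast L_X^0-L_Z^0\sim A$. Set $\Delta_Z:=p^{-1}(\Delta_X)\cup\mathrm{Supp}(A)$; this is a proper closed subset of $Z$ because $p$ is surjective and $Z$ is integral. For a smooth projective connected curve $C$ over $k$ and a morphism $g\colon C\to Z$ with $g(C)\not\subseteq\Delta_Z$, the pulled-back divisor $g^\ast A$ is effective, so
\[
\deg_C g^\ast L_Z^0\;\le\;\deg_C g^\ast L_Z^0+\deg_C g^\ast A\;=\;m_0\deg_C (p\circ g)^\ast L_X^0,
\]
and since $g(C)\not\subseteq p^{-1}(\Delta_X)$ forces $(p\circ g)(C)\not\subseteq\Delta_X$, the right-hand side is $\le m_0\,\alpha(X,\Delta_X,L_X^0)\cdot\mathrm{genus}(C)$ (resp. $\le m_0\,\alpha(X,\Delta_X,L_X^0,C)$). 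The key point is that $\Delta_Z$ does not depend on the chosen ample bundle on $Z$: for an arbitrary ample $L_Z$ on $Z$ one picks $m_1\ge 1$ with $m_1 L_Z^0-L_Z$ ample, hence nef, so $\deg_C g^\ast L_Z\le m_1\deg_C g^\ast L_Z^0$ for every $g\colon C\to Z$; combining this with the previous bound yields the required estimate with constant $m_1 m_0\,\alpha(X,\Delta_X,L_X^0)$ (resp. $m_1 m_0\,\alpha(X,\Delta_X,L_X^0,C)$). This proves the companion statement.

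Putting it together, I would apply the companion statement to $p$ to conclude that $Z$ is algebraically hyperbolic (resp. $1$-bounded) modulo $\Delta_Z$, and then Lemma~\ref{lem:birrie1} applied to $q$ gives that $Y$ is algebraically hyperbolic (resp. $1$-bounded) modulo $q(\Delta_Z)\cup q(\mathrm{Exc}(q))$, which is a proper closed subset of $Y$ since it is closed of dimension strictly smaller than $\dim Y$; hence $Y$ is pseudo-algebraically hyperbolic (resp. pseudo-$1$-bounded). The hardest part is the companion ``upward'' statement—in particular arranging that the exceptional data $\mathrm{Supp}(A)$ is extracted from a \emph{single} ample line bundle, so that $\Delta_Z$ is independent of $L_Z$, together with the bookkeeping needed to check that $\Delta_Z$ and the final subset of $Y$ are genuinely \emph{proper} closed subsets; the remaining steps are the routine birational manipulations already encapsulated in Lemma~\ref{lem:birrie1}.
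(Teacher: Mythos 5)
Your proposal is correct and follows essentially the same route as the paper: reduce to a proper birational surjective morphism between the two varieties (the paper does this reduction implicitly, you make it explicit via the graph closure $Z$), use Lemma \ref{lem:birrie1} for the ``downward'' direction, and prove the ``upward'' direction by bounding an ample bundle on the source by a multiple of the pullback of an ample bundle plus an effective divisor whose support is absorbed into the exceptional subset. The only differences are minor: the paper produces its effective correction $H$ supported on $\mathrm{Exc}(f)$ via the negativity lemma so that $f^\ast L_Y^{\otimes n}-H$ is ample, whereas you invoke Kodaira's lemma and simply enlarge $\Delta_Z$ by $\mathrm{Supp}(A)$, and you spell out (more carefully than the paper) the reduction from an arbitrary ample bundle on the source to the fixed one.
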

 \begin{proof}  We may and do assume that there is a proper birational surjective morphism $f:X\to Y$.  
 
  Let us first assume that $Y$ is pseudo-algebraically hyperbolic over $k$. Thus, let  $\Delta_Y\subsetneq Y$ be a proper closed subset of $Y$ such that $Y$ is algebraically hyperbolic     modulo $\Delta_Y$, and let $\alpha(Y,L_Y, \Delta_Y)$ be the real number in the definition of algebraic hyperbolicity modulo $\Delta_Y$. Let $L_Y$ be an ample line bundle on $Y$. Let $n\geq 1$ be an integer and let $H$ be an effective divisor on $X$ with support in  $\mathrm{Exc}(f)$  such that $L_X:=f^*L_Y^{\otimes n} - H$ is ample on $X$; such a divisor exists by the negativity lemma \cite[Lemma~3.39]{KollarMori}. Note that $n$ only depends on $f:X\to Y$ and $L_Y$. Let $\Delta_X = f^{-1}(\Delta_Y)\cup \mathrm{Exc}(f)$. Then, for every smooth projective connected curve $C$ over $k$   and every  morphism $g:C\to X$ with $g(C)\not\subset \Delta_X$, we have that 
 \[
 \deg_C g^\ast L_X = n\deg_C (fg)^\ast L_Y - \deg_C g^\ast H \leq n  \deg_C (fg)^\ast L_Y \leq n \cdot \alpha(Y,L_Y, \Delta_Y) \cdot \mathrm{genus}(C).
 \] Indeed, the first equality is straightforward, and the first inequality uses that the intersection of the effective  divisor $H$ with the image of $C$ in $X$ is non-negative, as $H$ is supported on $\mathrm{Exc}(f)$ and $f(C)\not\subset \mathrm{Exc}(f)$.  The last inequality follows from the fact that $Y$ is algebraically hyperbolic modulo $\Delta_Y$ and implies  that $X$ is pseudo-algebraically hyperbolic.
 
 If $Y$ is pseudo-$1$-bounded over $k$, then a similar line of reasoning shows that $X$ is pseudo-$1$-bounded over $k$.  By Lemma \ref{lem:birrie1} this concludes the proof.
 \end{proof}

 \begin{proposition}\label{prop:trivs}
 Let $X$ be a projective variety over $k$ and let $\Delta$ be a closed subscheme. 
Then, the following statements hold.
\begin{enumerate}
\item      If $X$ is algebraically hyperbolic modulo $\Delta$ over $k$, then $X$ is bounded modulo $\Delta$ over $k$.
\item  Let $k\subset L$ be an extension of algebraically closed fields. If $X$ is algebraically hyperbolic modulo $\Delta$ (resp. bounded modulo $\Delta$) over $k$, then $X_L$ is algebraically hyperbolic modulo $\Delta_L$  (resp. bounded modulo $\Delta_L$) over $L$.
\end{enumerate}
\end{proposition}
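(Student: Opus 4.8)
The plan is to handle the two statements separately, each being essentially a spreading-out/specialization argument together with elementary intersection-theoretic bookkeeping.

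For statement $(1)$, assume $X$ is algebraically hyperbolic modulo $\Delta$ over $k$, with bound $\alpha(X,\Delta,L)$ for each ample $L$. To prove $X$ is bounded modulo $\Delta$, fix an integer $n\geq 1$ and a normal projective variety $Y$ over $k$ of dimension at most $n$; I need the scheme $\underline{\Hom}_k(Y,X)\setminus\underline{\Hom}_k(Y,\Delta)$ to be of finite type. Recall that a connected component of $\underline{\Hom}_k(Y,X)$ is determined by the numerical class of the graph, equivalently by the polynomial $m\mapsto \chi(Y, f^\ast L^{\otimes m}\otimes H^{\otimes\ell})$ for $H$ ample on $Y$ and $L$ ample on $X$; so it suffices to bound the intersection numbers $f^\ast L\cdot H^{\dim Y -1}$ (and their analogues) uniformly over all $f$ with $f(Y)\not\subset\Delta$. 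The standard device is to fix a very ample $H$ on $Y$ and take a general complete-intersection curve $C=H_1\cap\cdots\cap H_{\dim Y -1}$; by Bertini, for general choices $C$ is a smooth projective connected curve of genus $g$ depending only on $Y$ and $H$ (not on $f$), and one can moreover arrange $f(C)\not\subset\Delta$ provided $f(Y)\not\subset\Delta$ (this uses that the $H_i$ can be chosen to avoid having $C$ land in $\Delta$, since $\Delta$ is a proper closed subset and $f(Y)\not\subset\Delta$). Then $\deg_C f^\ast L = f^\ast L\cdot H^{\dim Y -1}\leq \alpha(X,\Delta,L)\cdot g$ by algebraic hyperbolicity modulo $\Delta$, and since $g$ is bounded independently of $f$, this intersection number is bounded. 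Bounding the remaining numerical data (e.g.\ $f^\ast L^{2}\cdot H^{\dim Y-2}$, etc.) is done the same way by cutting down with fewer hyperplanes and invoking algebraic hyperbolicity on the resulting surfaces' curve sections — or, more cleanly, one reduces directly to the curve case by the argument in \cite[\S9]{vBJK} identifying boundedness with weak boundedness, which is already cited in the text. Either way, finitely many connected components of $\underline{\Hom}_k(Y,X)$ meet the complement of $\underline{\Hom}_k(Y,\Delta)$, and each is of finite type, giving $(1)$.

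For statement $(2)$, the assertion for algebraic hyperbolicity is the cleaner one: given a smooth projective connected curve $C$ over $L$ and a morphism $f\colon C\to X_L$ with $f(C)\not\subset\Delta_L$, a standard spreading-out argument produces a finitely generated $k$-subalgebra $A\subset L$, a smooth projective family $\mathcal C\to\Spec A$ with geometrically connected one-dimensional fibres, a morphism $F\colon\mathcal C\to X\times_k\Spec A$ restricting to $f$ over $L$, and — after shrinking $\Spec A$ — the property that $F_s(\mathcal C_s)\not\subset\Delta$ for every $k$-point $s\in\Spec A$. Choosing such an $s$ (which exists as $k$ is algebraically closed and $\Spec A$ is a positive-dimensional $k$-variety, or is a point if $C$ descends to $k$), and fixing an ample $L_0$ on $X$ with $L=(L_0)_L$, the degree $\deg_C f^\ast L$ equals $\deg_{\mathcal C_s} F_s^\ast L_0$, the genus of $C$ equals the genus of $\mathcal C_s$, and the bound $\alpha(X,\Delta,L_0)$ works over $L$ as well; since an arbitrary ample line bundle on $X_L$ is numerically proportional to one defined over $k$ (as $\NS(X_L)=\NS(X)$ in characteristic zero) up to replacing $\alpha$ by a constant multiple, this proves algebraic hyperbolicity of $X_L$ modulo $\Delta_L$. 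For boundedness, one argues similarly: a finite-type family of morphisms $Y_L\to X_L$ not factoring through $\Delta_L$ spreads out to a family over a $k$-variety, and finiteness of type descends; alternatively, simply combine part $(1)$ with the algebraic-hyperbolicity case just proved when $X$ is algebraically hyperbolic, and in general invoke the weak-boundedness reformulation which is manifestly insensitive to algebraically closed base change.

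The main obstacle is the first statement: one must be careful that the curve $C$ cut out of $Y$ by general hyperplanes can be taken to have genus independent of $f$ \emph{and} to satisfy $f(C)\not\subset\Delta$, and that controlling a single intersection number $f^\ast L\cdot H^{\dim Y-1}$ genuinely suffices to pin down the connected component of $\underline{\Hom}_k(Y,X)$ — this is where appealing to the established equivalence of boundedness and weak boundedness from \cite[\S9]{vBJK} is the safest route, rather than re-deriving the numerical bookkeeping by hand. The spreading-out in $(2)$ is routine once $(1)$ and the curve case are in hand.
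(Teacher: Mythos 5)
The paper's own proof of this proposition is a single citation to \cite[\S 9]{vBJK}, so there is no in-text argument to compare with line by line; your proposal essentially reconstructs the argument behind that citation, and its overall shape is sound. In $(1)$ the key point you correctly exploit is that the algebraic-hyperbolicity bound depends only on the genus: the general complete-intersection curves $C_f\subset Y$ must be chosen depending on $f$ (to guarantee $f(C_f)\not\subset\Delta$), but they all have the same genus $g$, so $\deg_{C_f} f^\ast L=f^\ast L\cdot H^{\dim Y-1}\leq \alpha(X,\Delta,L)\cdot g$ is uniform in $f$; this is exactly why no uncountability hypothesis is needed here, in contrast with Proposition \ref{prop:1b_bir_b}, where only a curve-by-curve bound is available. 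Your treatment of $(2)$ by spreading out, shrinking the base so that no special fibre of the map lands in $\Delta$, and specializing to a $k$-point is also fine; the reduction of an arbitrary ample bundle on $X_L$ to a $k$-defined one is most cleanly phrased via $\NS(X_L)=\NS(X)$ (the class of the given ample bundle is itself defined over $k$), or by noting that it suffices to verify the degree bound for a single ample bundle.

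The one step that does not work as written is your bound on the remaining intersection numbers $f^\ast L^i\cdot H^{\dim Y-i}$: ``cutting down with fewer hyperplanes and invoking algebraic hyperbolicity on the resulting surfaces' curve sections'' does not directly apply, because algebraic hyperbolicity controls degrees of curves mapping to $X$, not self-intersection numbers such as $(f|_S)^\ast L\cdot (f|_S)^\ast L$ on a surface $S\subset Y$. The standard fix is the Khovanskii--Teissier (log-concavity, or Hodge-index) inequalities for the nef class $f^\ast L$ and the ample class $H$: setting $s_i=f^\ast L^i\cdot H^{\dim Y-i}$, log-concavity gives $s_i\leq s_1^i/s_0^{i-1}$, so the single bounded number $s_1$ already bounds the degree of the graph of $f$ in $Y\times X$, hence confines all $f$ with $f(Y)\not\subset\Delta$ to finitely many finite-type components of $\underline{\Hom}_k(Y,X)$. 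Your alternative fallback of invoking \cite[\S 9]{vBJK} for this bookkeeping (and for the persistence of boundedness in $(2)$) is of course legitimate --- it is literally what the paper does for the entire proposition --- but with that reading the proposal is a gloss on the citation rather than an independent proof, so the missing inequality above is the piece you should supply.
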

\begin{proof}
See \cite[\S 9]{vBJK}.
\end{proof}
\begin{proposition}\label{prop:1b_bir_b}  Assume that $k$ is \textbf{uncountable}. Let $X$ be a projective integral variety over $k$, and let   $Y$ be a projective integral variety  which is birational to $X$  over $k$. Then,  $X$ is pseudo-$1$-bounded over $k$ if and only if $Y$ is pseudo-bounded over $k$.
\end{proposition}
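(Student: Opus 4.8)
The plan is to reduce the proposition to the assertion that, for a projective integral variety $Z$ over $k$ and a proper closed subset $\Delta\subsetneq Z$, if $Z$ is $1$-bounded modulo $\Delta$ then $Z$ is $n$-bounded modulo $\Delta$ for every $n\geq 1$; in particular $Z$ is then bounded modulo $\Delta$, hence pseudo-bounded. Granting the assertion, the proposition follows from the birational invariance of pseudo-$1$-boundedness (Proposition~\ref{prop:bir_inv_boundedness}.(2)): if $Y$ is pseudo-bounded it is pseudo-$1$-bounded, hence $X$ is pseudo-$1$-bounded; and conversely, if $X$ is pseudo-$1$-bounded then so is $Y$, whence the assertion upgrades this to pseudo-boundedness of $Y$.

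To prove the assertion I would induct on $n$, the case $n=1$ being trivial. Assume $n\geq 2$, that $Z$ is $(n-1)$-bounded modulo $\Delta$, and let $W$ be a normal integral projective variety over $k$ (the general case of a normal projective $W$ reduces to this). We must show $H:=\underline{\Hom}_k(W,Z)\setminus\underline{\Hom}_k(W,\Delta)$ is of finite type; by the induction hypothesis we may assume $\dim W=n$. Fix ample line bundles $M$ on $W$ and $L$ on $Z$, and set $N:=p_1^\ast M\otimes p_2^\ast L$ on $W\times Z$, where $p_1,p_2$ are the projections. Since $\underline{\Hom}_k(W,Z)$ is a countable disjoint union of quasi-projective schemes and, by boundedness of Hilbert schemes, for each $d$ only finitely many of its connected components contain a morphism whose graph $\Gamma_f\subset W\times Z$ has $\deg_N\Gamma_f\leq d$, it suffices to bound $\deg_N\Gamma_f$ as $f$ ranges over morphisms $W\to Z$ with $f(W)\not\subset\Delta$. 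Using $\Gamma_f\cong W$ (via $p_1$) and $p_2|_{\Gamma_f}=f$ gives
\[
\deg_N\Gamma_f=\sum_{i=0}^{n}\binom{n}{i}\,b_i(f),\qquad b_i(f):=c_1(M)^i\cdot c_1(f^\ast L)^{n-i}\cdot[W],
\]
with $b_i(f)\geq 0$ and $b_n(f)=\deg_M W$ constant, so it is enough to bound $b_0(f),\dots,b_{n-1}(f)$.

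Here is where I would use the uncountability of $k$. Suppose $H$ were not of finite type; then there are morphisms $f_j\colon W\to Z$ ($j\in\mathbb{N}$) lying in pairwise distinct components, each with $f_j(W)\not\subset\Delta$, and after passing to a subsequence $\deg_N\Gamma_{f_j}\to\infty$. Fix $m\gg0$ with $M^{\otimes m}$ very ample. The key step is to choose, once and for all, a general flag $W=W^{(0)}\supset W^{(1)}\supset\cdots\supset W^{(n-1)}$, with $W^{(\ell)}$ cut out of $W^{(\ell-1)}$ by a member of $|M^{\otimes m}|$, such that $f_j(W^{(\ell)})\not\subset\Delta$ for all $1\leq\ell\leq n-1$ and all $j$: for a general flag each $W^{(\ell)}$ is normal, integral and projective of dimension $n-\ell$ (Seidenberg's theorem and Bertini, using $\characteristic k=0$), and for each pair $(\ell,j)$ the flags with $W^{(\ell)}\subset f_j^{-1}(\Delta)$ form a proper closed subset of the irreducible parameter space of flags (because $f_j^{-1}(\Delta)\subsetneq W$), so the countably many bad loci do not cover the parameter space since $k$ is uncountable. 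For $1\leq\ell\leq n-1$ one has $c_1(f_j^\ast L)^{n-\ell}\cdot[W^{(\ell)}]=m^{\ell}b_\ell(f_j)$, and since $\dim W^{(\ell)}\leq n-1$ the induction hypothesis shows that $\underline{\Hom}_k(W^{(\ell)},Z)\setminus\underline{\Hom}_k(W^{(\ell)},\Delta)$ is of finite type and contains all $f_j|_{W^{(\ell)}}$; as the intersection number $c_1(g^\ast L)^{n-\ell}\cdot[W^{(\ell)}]$ is constant on connected components of this scheme, the numbers $b_\ell(f_j)$ with $1\leq\ell\leq n-1$ are bounded in $j$. Finally the Khovanskii--Teissier inequality gives $b_0(f_j)\,b_2(f_j)\leq b_1(f_j)^2$ (as $f_j^\ast L$ is nef and $M$ is ample), and since $b_2(f_j)$ is a non-negative integer this bounds $b_0(f_j)$ as well --- when $b_2(f_j)=0$ one has $b_0(f_j)=0$, the support of the log-concave non-negative sequence $(b_i(f_j))_{i=0}^{n}$ being an interval containing $n$. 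Thus every $b_i(f_j)$ is bounded, so $\deg_N\Gamma_{f_j}$ is bounded, contradicting $\deg_N\Gamma_{f_j}\to\infty$. This proves the assertion.

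The main obstacle, and the reason $k$ is assumed uncountable, is precisely this inductive descent: to replace the $n$-dimensional source $W$ by an $(n-1)$-dimensional hyperplane section \emph{without enlarging the exceptional locus $\Delta$}, one must exhibit a single flag of general hyperplane sections that simultaneously avoids the preimages $f_j^{-1}(\Delta)$ of infinitely many morphisms at once. A secondary technical point is that the one intersection number $b_0(f)=c_1(f^\ast L)^n\cdot[W]$ cannot be accessed by cutting $W$ down with divisors, and must instead be controlled via the Khovanskii--Teissier log-concavity of the $b_i(f)$.
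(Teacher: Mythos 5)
Your reduction coincides with the paper's: by Proposition \ref{prop:bir_inv_boundedness} pseudo-$1$-boundedness passes between the birational varieties $X$ and $Y$, and the entire content is the upgrade ``$1$-bounded modulo $\Delta$ implies bounded modulo $\Delta$'' over an uncountable field. The difference is that the paper disposes of this upgrade with a citation to \cite[Lemma~9.10]{vBJK}, whereas you supply a complete proof: induction on the dimension of the source, reading off boundedness of the Hom scheme from boundedness of the graph degrees $\deg_N\Gamma_f=\sum\binom{n}{i}b_i(f)$, controlling $b_1,\dots,b_{n-1}$ by restricting to a single flag of general very ample sections that simultaneously avoids the loci $f_j^{-1}(\Delta)$ (this is where uncountability enters), and recovering $b_0$ from Khovanskii--Teissier log-concavity together with $b_n=\deg_M W>0$. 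This argument is sound and is a genuine gain in self-containedness; the log-concavity step in particular is a nice way around the fact that the top self-intersection $c_1(f^\ast L)^n\cdot[W]$ is not seen by hyperplane sections. Two points should be tightened. First, your assertion that the flags with $W^{(\ell)}\subset f_j^{-1}(\Delta)$ form a \emph{proper closed} subset of the flag space is not immediate (a priori the containment locus is only constructible, and a dense constructible proper subset would ruin the countability argument); the cleanest repair, which is also the trick the paper itself uses in the proof of Proposition \ref{prop:1b_is_geomhyp2}, is to first choose a point $x\in W(k)$ outside the countable union $\bigcup_j f_j^{-1}(\Delta)$ of proper closed subsets --- possible since $k$ is uncountable --- and then take a general flag of members of $|M^{\otimes m}|$ through $x$; alternatively, restrict to the open locus of flags whose members intersect in the expected dimension, over which the family of the $W^{(\ell)}$ is flat and proper, so that non-containment in a closed subset is an open condition on the flag. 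Second, your parenthetical reduction from normal projective $W$ to normal \emph{integral} $W$ is only valid if the test varieties in the definition of $n$-boundedness modulo $\Delta$ are read as integral (for a disconnected $W$, a component mapping into $\Delta$ with unbounded degree would spoil finite-typeness); this is surely the intended reading of the definition, so it is a definitional quibble rather than a flaw in your argument.
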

 \begin{proof}
 If $X$ is pseudo-$1$-bounded over $k$, it follows from  Proposition \ref{prop:bir_inv_boundedness} that $Y$ is pseudo-$1$-bounded over $k$. It then follows from \cite[Lemma~9.10]{vBJK} that   $Y$ is pseudo-bounded over $k$. This concludes the proof.
 \end{proof}

\section{Pseudo-geometric hyperbolicity}\label{section:geomhyp}
It is natural to study the finiteness (as opposed to boundedness) of pointed curves on a hyperbolic variety.  Following \cite{vBJK, JBook} we formalize this property and refer to it as being ``geometrically hyperbolic''.

 \begin{definition}[Geometric hyperbolicity modulo a subset]   Let $X$ be a  variety over $k$ and let $\Delta$ be a closed subscheme of $X$. We say that $X$ is \emph{geometrically hyperbolic   over $k$ modulo $\Delta$} if, for every $x$ in $X(k)\setminus \Delta$, every smooth quasi-projective connected curve $C$  over $k$ and every $c$ in $C(k)$,  the set $\Hom_k((C,c),(X,x))$ of morphisms $f:C\to X$ with $f(c) = x$ is finite.   
 \end{definition}

\begin{definition} Let $n\geq 1$ be an integer.
 A variety $X$ over $k$ is \emph{pseudo-geometrically hyperbolic over $k$} if there is a proper closed subset $\Delta\subsetneq X$ such that $X$ is geometrically hyperbolic   modulo $\Delta$. 
\end{definition}

  A variety $X$  over $k$ is  {geometrically hyperbolic over $k$} if it is geometrically hyperbolic modulo the empty subset over $k$, i.e., for every smooth quasi-projective connected curve $C$ over $k$, every $c$ in $C(k)$, and every $x$ in $X(k)$, the set $\Hom_k((C,c),(X,x))$ of morphisms of $k$-schemes $f:C\to X$ with $f(c)=x$ is finite. Note that geometric hyperbolicity is not a birational invariant. However, if $X$ and $Y$ are proper integral varieties over $k$ which are birational over $k$, then $X$ is pseudo-geometrically hyperbolic over $k$ if and only if $Y$ is pseudo-geometrically hyperbolic over $k$.

  \begin{lemma}\label{lem:geomhyp_persists} Let $k\subset L$ be an extension of \textbf{uncountable} algebraically closed fields of characteristic zero,   let $X$ be a variety over $k$, and let $\Delta$ be a closed subscheme of $X$.
  If  $X$ is  geometrically hyperbolic modulo $\Delta$ over $k$, then $X_L$ is geometrically hyperbolic modulo $\Delta_L$ over $L$.
  \end{lemma}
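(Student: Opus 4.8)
The plan is to argue by contradiction by a spreading‑out and specialization argument, in the spirit of the proof of \cite[Lemma~2.2]{JKa}. Suppose $X_L$ is not geometrically hyperbolic modulo $\Delta_L$ over $L$. Then there are a point $x\in X_L(L)\setminus\Delta_L$, a smooth quasi‑projective connected curve $C$ over $L$, a point $c\in C(L)$, and infinitely many pairwise distinct morphisms $f_1,f_2,\dots\colon C\to X_L$ with $f_i(c)=x$. First I would compactify: choose a smooth projective connected curve $\overline C$ over $L$ containing $C$ as a dense open with finite complement $D$, and a compactification $\overline X\supseteq X$ of $X$ over $k$ (a proper variety, projective if $X$ is quasi‑projective). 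Since $\overline C$ is a smooth curve and $\overline X_L$ is proper, each $f_i$ extends uniquely to $\overline f_i\colon\overline C\to\overline X_L$; the $\overline f_i$ are still pairwise distinct, satisfy $\overline f_i(c)=x$, and satisfy $\overline f_i^{-1}(\overline X\setminus X)\subseteq D$, i.e.\ they map $\overline C\setminus D$ into $X_L$.

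Next I would spread out all of this over a finitely generated $k$‑subalgebra $A\subset L$ with $S:=\Spec A$: a smooth projective scheme $\overline{\mathcal C}\to S$ with geometrically connected one‑dimensional fibres, a divisor $\mathcal D\subset\overline{\mathcal C}$ finite étale over $S$, a section $\sigma\colon S\to\overline{\mathcal C}$ with $\sigma(S)\cap\mathcal D=\emptyset$ spreading out $c$, and a section $\xi\colon S\to X_S:=X\times_k S$ with $\xi(S)\cap\Delta_S=\emptyset$ spreading out $x$; all these conditions can be arranged by shrinking $S$, since they hold over the generic point $\eta$ of $S$. If $\dim S=0$ then $A=k$, the whole configuration is already defined over $k$, and we contradict geometric hyperbolicity of $X$ modulo $\Delta$ over $k$; so assume $\dim S\geq 1$. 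Inside the locally‑finite‑type $S$‑scheme $\underline{\Hom}_S(\overline{\mathcal C}/S,\overline X_S/S)$, the condition ``sends $\sigma$ to $\xi$ and maps $\overline{\mathcal C}\setminus\mathcal D$ into $X_S$'' defines a locally closed subscheme $\mathcal M$ — the first condition is closed, the second open by properness of $\overline{\mathcal C}\to S$. Writing $\mathcal M=\bigsqcup_{r\geq 1}\mathcal M_r$ as a disjoint union of its open‑and‑closed finite‑type pieces, the $\overline f_i$ become infinitely many pairwise distinct $L$‑points of $\mathcal M$ lying over $\eta$, hence $L$‑points of the generic fibres $(\mathcal M_r)_\eta$.

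Finally I would specialize, distinguishing two cases. If some $\mathcal M_r$ contains infinitely many of the $\overline f_i$, pick an irreducible closed subvariety $Z\subseteq\mathcal M_r$ containing infinitely many of them; then $Z$ dominates $S$ and $\dim Z_\eta\geq 1$, so after a further shrink $Z\to S$ is surjective with fibres of dimension $\geq 1$. Choosing $s\in S(k)$ also inside the dense opens over which the fibres $\overline{\mathcal C}_s$ are smooth connected curves and $\xi(s)\notin\Delta$, the infinitely many $k$‑points of $Z_s$ restrict (along $\overline{\mathcal C}_s\setminus\mathcal D_s$) to infinitely many pairwise distinct morphisms of $k$‑varieties $(\overline{\mathcal C}_s\setminus\mathcal D_s,\sigma(s))\to(X,\xi(s))$, contradicting geometric hyperbolicity of $X$ modulo $\Delta$ over $k$. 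Otherwise every $\mathcal M_r$ contains only finitely many of the $\overline f_i$, so $R:=\{r:(\mathcal M_r)_\eta\neq\emptyset\}$ is infinite, and for each $r\in R$ the image of $\mathcal M_r\to S$ is constructible and contains $\eta$, hence contains a dense open $V_r\subseteq S$. Here uncountability enters: since $k$ is uncountable, the positive‑dimensional variety $S$ is not a countable union of proper closed subsets, so there is $s\in S(k)$ lying in $\bigcap_{r\in R}V_r$ and in the finitely many good loci above; then $(\mathcal M_r)_s\neq\emptyset$ for every $r\in R$, giving $k$‑points $g_r\in(\mathcal M_r)_s$ that are pairwise distinct because the $\mathcal M_r$ are disjoint, and again we obtain infinitely many morphisms $(\overline{\mathcal C}_s\setminus\mathcal D_s,\sigma(s))\to(X,\xi(s))$ and the same contradiction. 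The main obstacle is the bookkeeping of the spreading‑out — in particular checking that ``maps into $X_S$'' is an open condition on $\underline{\Hom}_S$ and organizing the fibrewise distinctness of the $\overline f_i$ — together with the (essential) use of uncountability through the fact that a positive‑dimensional variety over an uncountable algebraically closed field is not covered by countably many proper closed subvarieties.
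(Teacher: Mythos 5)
Your overall strategy is the intended one: the paper's own ``proof'' is only a citation of \cite[Lemma~2.4]{JLitt} and \cite[Remark~9.15]{vBJK}, and the argument behind those references is exactly your spreading-out-and-specialization scheme, with uncountability of $k$ entering through the countably many components of the Hom-scheme. However, one step is wrong as stated: the condition ``maps $\overline{\mathcal C}\setminus\mathcal D$ into $X_S$'' is \emph{not} an open condition on $\underline{\Hom}_S(\overline{\mathcal C},\overline{X}_S)$, and properness of $\overline{\mathcal C}\to S$ does not give this. The locus in question is the complement of the image, under the proper projection to the Hom-scheme $H$, of the set $u^{-1}\bigl((\overline{X}\setminus X)_H\bigr)\cap(\overline{\mathcal C}\setminus\mathcal D)_H$, which is only locally closed, so by Chevalley the locus is merely constructible. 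Concretely, take $\overline{\mathcal C}=\overline{X}=\mathbb{P}^1$, $\mathcal D=\{\infty\}$, $X=\mathbb{A}^1$, and the family of automorphisms $u_t([z:w])=[z:tz-w]$ over $t\in\mathbb{A}^1$: for $t\neq 0$ the point $z=1/t$ maps to $\infty$, while $u_0$ maps $\mathbb{A}^1$ into $\mathbb{A}^1$, so the locus is $\{t=0\}$, not open; a similar family (e.g.\ $[z:w]\mapsto[z^2:w(w-tz)]$) shows it need not be closed either. Hence your $\mathcal M$ is in general only a locally constructible subset, not a locally closed subscheme, and the literal statements ``$\mathcal M=\bigsqcup_r\mathcal M_r$ with open-and-closed finite-type pieces'' and ``pick an irreducible closed subvariety $Z\subseteq\mathcal M_r$'' do not make sense as written.

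The damage is repairable, because everything you use downstream is constructibility: decompose $H=\bigsqcup_r H_r$ into its finite-type open-and-closed pieces and set $\mathcal M_r:=\mathcal M\cap H_r$, a constructible subset; Chevalley still gives that the images in $S$ are constructible, hence contain a dense open once they contain $\eta$, and a nonempty constructible subset of a finite-type $k$-scheme contains a $k$-point, which is all Case 2 needs. In Case 1, write $\mathcal M_r$ as a finite union of locally closed pieces, pick one containing infinitely many of the $\overline{f}_i$, and let $Z$ be the intersection of the closure of those points with the open part of that piece: then $Z$ is locally closed, contained in $\mathcal M$, lies over $\eta$, and its generic fibre is positive-dimensional (a zero-dimensional fibre over $\eta$ would carry only finitely many $L$-points), after which your specialization argument goes through verbatim. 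Two further small points: when $X$ is not quasi-projective the compactification $\overline{X}$ need not be projective, so $\underline{\Hom}_S(\overline{\mathcal C},\overline{X}_S)$ is a priori only an algebraic space locally of finite type (harmless, since it is still a countable union of quasi-compact pieces and Chevalley applies); and it is worth noting that uncountability of $k$ is used only in your Case 2, not in Case 1.
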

  \begin{proof}  This follows from \cite[Lemma~2.4]{JLitt} when $\Delta =\emptyset$. As is stated in \cite[Remark~9.15]{vBJK}, the statement of the lemma can be proven by adapting the arguments of the proof of \cite[Corollary~9.7]{vBJK}. 
  \end{proof}

\begin{proposition}\label{prop:geo_is_grp}
Let $X$ be a proper variety over $k$ and let $\Delta\subsetneq X$ be a proper closed subset of $X$. If $X$ is geometrically hyperbolic modulo $\Delta$ over $k$, then $X$ is groupless modulo $\Delta$ over $k$.
\end{proposition}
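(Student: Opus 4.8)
The plan is to show directly that any non-constant morphism $u\colon U\to X$ from a dense open $U\subset G$ of a finite type connected group scheme $G$, with $\mathrm{codim}_G(G\setminus U)\geq 2$, must factor through $\Delta$, by exhibiting a point of $X(k)\setminus\Delta$ through which infinitely many curves-from-a-fixed-pointed-curve pass whenever $u$ does not factor through $\Delta$. Since $X$ is proper, by Remark \ref{remark:properness} it suffices to handle an arbitrary dense open $U\subset G$ (no codimension hypothesis needed), and in fact one may as well extend $u$ where possible. So suppose, for contradiction, that $u\colon U\to X$ is non-constant with $u(U)\not\subset\Delta$.

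The key idea is translation: for $g\in G(k)$, consider the translate $u_g\colon U_g:=g^{-1}U\to X$, $x\mapsto u(gx)$. Choose a point $e'\in U(k)$ with $x_0:=u(e')\notin\Delta$; this is possible because $u(U)\not\subset\Delta$ and $\Delta$ is closed. Now pick a smooth affine connected curve $C\subset G$ through the identity $e$ of $G$ such that $u$ is non-constant on $C\cap U$ (such a curve exists because $u$ is non-constant, so some curve in $G$ is not contracted; a general complete-intersection-type curve through $e$ works, and we may shrink it to a smooth affine connected curve meeting $U$). For $g$ ranging over the dense open set $\{g\in G(k): ge'\in U\}$, the translated curves $g\cdot C$ (or rather the map $c\mapsto u(gc)$ restricted to where it is defined, then using properness of $X$ to extend $C$'s smooth compactification) give morphisms from a fixed pointed smooth curve $(C,e')$ — after a harmless reparametrization sending $e'$ to a fixed base point — all sending the base point to $u(ge')$. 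To land all of these at the single point $x_0$, instead translate so that the base point maps to $x_0$: more carefully, fix $c_0\in C(k)$, and for $g$ in a suitable dense open subset of $G(k)$ consider the morphism $\phi_g\colon C\dashrightarrow X$, $c\mapsto u(g c)$; by properness $\phi_g$ extends to $\overline{C}\to X$ on the smooth projective model $\overline C$. The locus of $g$ with $\phi_g(c_0)=x_0$ is cut out by the condition $u(gc_0)=x_0$, i.e. $gc_0\in u^{-1}(x_0)$; choosing $c_0$ with $u(c_0)=x_0$ (which we can arrange by taking $C$ through $e'$ and setting $c_0=e'$), this locus contains at least the subgroup-coset structure forcing it to be positive-dimensional, hence infinite since $k$ has infinitely many points. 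As $g$ varies in this infinite set, the morphisms $\overline C\to X$ are pairwise distinct for infinitely many $g$ — because $u$ is non-constant and the $G$-translates of a non-contracted curve sweep out a positive-dimensional family of distinct maps (two translates agreeing as maps would force a nontrivial stabilizer relation contradicting non-constancy after passing to a quotient). This produces infinitely many elements of $\Hom_k((\overline C, c_0),(X,x_0))$ with $x_0\notin\Delta$, contradicting geometric hyperbolicity modulo $\Delta$.

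Thus $u$ must factor through $\Delta$, proving $X$ is groupless modulo $\Delta$. The main obstacle — and the step requiring genuine care — is the \emph{distinctness} of the translated morphisms $\phi_g$ for infinitely many $g$: one must rule out that the family $g\mapsto \phi_g$ collapses to finitely many maps on the relevant infinite subset of $G(k)$. This is where non-constancy of $u$ is essential: if $\phi_g=\phi_{g'}$ as morphisms $\overline C\to X$ for all $g,g'$ in a positive-dimensional subvariety, one deduces that $u$ is invariant under a positive-dimensional subgroup, and then $u$ factors through a lower-dimensional quotient group scheme; an induction on $\dim G$ (base case $\dim G\le 1$, where $G$ contains $\Gm$ or is an elliptic curve, handled using that a rational curve on the proper $X$ has constant or $\Delta$-valued image and that $U_\eta$-style density arguments as in Lemma \ref{lemcaseglinear} apply) then reduces to the case that the family does not collapse. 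A secondary technical point is ensuring the reparametrization identifying all the source curves with a single fixed pointed curve $(\overline C, c_0)$ is legitimate; this is immediate once we translate $C$ inside $G$ rather than changing the abstract curve, since all translates $g\cdot C$ are isomorphic to $C$ via left multiplication by $g$, carrying $c_0$ to $gc_0$, and we then compose with this isomorphism to obtain honest elements of $\Hom_k((\overline C,c_0),(X,x_0))$.
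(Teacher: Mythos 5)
Your translation strategy has a fatal gap at precisely the point where you need the maps to be \emph{pointed}. Geometric hyperbolicity modulo $\Delta$ only bounds sets of the form $\Hom_k((C,c_0),(X,x_0))$, so you must produce infinitely many distinct morphisms sending a \emph{fixed} point $c_0$ to a \emph{fixed} point $x_0\notin\Delta$. The locus of $g$ for which your translate $\phi_g$ does this is $\{g\in G(k): u(gc_0)=x_0\}=u^{-1}(x_0)\cdot c_0^{-1}$, and there is no ``subgroup-coset structure'' forcing this to be positive-dimensional: the fiber of an arbitrary non-constant morphism $u$ is not a coset of anything. If $u$ is quasi-finite onto its image (e.g.\ an immersion of an abelian surface), this locus is finite, you obtain only finitely many pointed maps, and no contradiction with geometric hyperbolicity results. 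The secondary step is also unjustified: from $\phi_g=\phi_{g'}$ you only get $u(gc)=u(g'c)$ for $c\in C$, i.e.\ agreement of $u$ along the single curve $gC$, which does not yield invariance of $u$ under the element $g'g^{-1}$ on all of $G$, so the claimed reduction ``collapse of the family $\Rightarrow$ $u$ factors through a lower-dimensional quotient'' does not follow.

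The paper's proof avoids both problems by changing the source of the infinite family. First it reduces, via Corollary \ref{cor:groupless2} (properness of $X$), to a non-constant morphism $f:U\to X$ from a big open $U$ of an \emph{abelian variety} $A$ with $f(U)\not\subset\Delta$. Then, instead of translations (which move the basepoint condition around), it uses the multiplication maps $[n]:A\to A$, which all fix the origin: choosing $0\in U$ with $f(0)\notin\Delta$, $f$ smooth at $0$, and a smooth projective curve $C\ni 0$ with $T_0C\not\subset\ker(df_0)$, the composites $f\circ[n]|_C$ extend (properness of $X$) to morphisms $f_n:C\to X$ with $f_n(0)=f(0)$ automatically, and they are pairwise distinct because their differentials at $0$ are $n\cdot df_0|_{T_0C}\neq 0$ in characteristic zero. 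That is the mechanism your proposal is missing: a family of self-maps of the group fixing a point, rather than translations, which is exactly what the reduction to abelian varieties makes available.
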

\begin{proof} 
Suppose that $X$ is not groupless modulo $\Delta$ over $k$. Assume that  there exists a non constant morphism 
$f:U\to X$ whose image is not contained in $\Delta$, where $U$ is a big open subset of an abelian variety $A$ over $k$.
Denote by $Y$ the Zariski closure of $f(U)$, and note that $Y\cap \Delta \neq Y$.
Choose a point $0\in U$ with $f(0)\not\in \Delta$ such  that $f:U\to Y$ is smooth at $0$ (so that $f(0)$ is nonsingular in $Y$). We may and do assume that $0$ is the identity element of $A$.
Let $K\subseteq T_0A$ be  the kernel of $df$ at $0$.
Now, let $C$ be a smooth projective irreducible curve  in $A$ which contains $0$ and such that  $T_0 C \not\subset K$.   For $n\geq 1$, let $[n]:A\to A$ be the multiplication by $n$ on $A$.
Then, for every $n\geq 1$, as $0\in [n](C)$, the rational map $f\circ [n]|_C: C \dashrightarrow  Y\subseteq X$ is well-defined and   non-constant.
Moreover, since $X$ is proper over $k$ and $C$ is a smooth curve over $k$, for every $n\geq 1$, the rational map  $f\circ [n]|_C$ extends uniquely to a morphism $f_n:= f\circ [n]|_C: C \to  Y\subseteq X$.
Note that the morphism $ f_n$ is smooth at $0$ by construction and its differential $df_n: T_0C\to T_{f(0)}Y$ equals  $n\cdot  {d}f$. As $ {d}f \neq 0$, these differentials are pairwise distinct. Therefore, 
since the morphisms $f_1, f_2, \ldots$ have pairwise distinct differential, they are pairwise distinct.  Since   $$f_1, f_2, \ldots \in \Hom_k((C,0),(X,f(0)))\setminus \Hom_k((C,0),(\Delta,f(0)))$$ is an infinite sequence of pairwise distinct morphisms, we conclude that $X$ is not geometrically hyperbolic modulo $\Delta$.   
\end{proof}

\begin{corollary} Let $X$ be a projective normal integral variety over $k$, and let  $Y$ be a projective normal integral variety over $k$.
If $X$ is pseudo-geometrically hyperbolic over $k$, then $\underline{\mathrm{Sur}}_{k}(Y,X)$ is   zero-dimensional, and smooth over $k$.
\end{corollary}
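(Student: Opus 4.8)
The plan is to deduce this corollary directly from the machinery already assembled in the excerpt, combining Proposition~\ref{prop:geo_is_grp} with Corollary~\ref{cor:hkp}. First I would observe that a projective normal integral variety is in particular proper over $k$, so Proposition~\ref{prop:geo_is_grp} applies: since $X$ is pseudo-geometrically hyperbolic over $k$, there is a proper closed subset $\Delta\subsetneq X$ such that $X$ is geometrically hyperbolic modulo $\Delta$, and Proposition~\ref{prop:geo_is_grp} then gives that $X$ is groupless modulo $\Delta$. Hence $X$ is pseudo-groupless over $k$.

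Next, since $X$ is a pseudo-groupless projective normal integral variety over $k$ and $Y$ is a projective normal integral variety over $k$, Corollary~\ref{cor:hkp} applies verbatim and tells us that $\underline{\mathrm{Sur}}_k(Y,X)$ is a countable disjoint union of zero-dimensional smooth projective schemes over $k$. In particular $\underline{\mathrm{Sur}}_k(Y,X)$ is zero-dimensional and smooth over $k$, which is exactly the assertion of the corollary.

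There is essentially no obstacle here: the corollary is a formal consequence of two results proven just above it, and the only thing to check is that the hypotheses match (``proper'' is implied by ``projective'', and ``pseudo-geometrically hyperbolic'' feeds into Proposition~\ref{prop:geo_is_grp} to produce ``pseudo-groupless''). If one wished to make the statement sharper by recording that $\underline{\mathrm{Sur}}_k(Y,X)$ is also a \emph{countable} disjoint union of projective schemes, that too is immediate from Corollary~\ref{cor:hkp}, but as stated the conclusion only asks for zero-dimensionality and smoothness.

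\begin{proof}
Since $X$ is projective over $k$, it is in particular proper over $k$. As $X$ is pseudo-geometrically hyperbolic over $k$, there is a proper closed subset $\Delta\subsetneq X$ such that $X$ is geometrically hyperbolic modulo $\Delta$ over $k$. By Proposition \ref{prop:geo_is_grp}, it follows that $X$ is groupless modulo $\Delta$ over $k$, so that $X$ is pseudo-groupless over $k$. Now, $X$ is a pseudo-groupless projective normal integral variety over $k$ and $Y$ is a projective normal integral variety over $k$, so Corollary \ref{cor:hkp} implies that $\underline{\mathrm{Sur}}_k(Y,X)$ is a countable disjoint union of zero-dimensional smooth projective schemes over $k$. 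In particular, $\underline{\mathrm{Sur}}_k(Y,X)$ is zero-dimensional and smooth over $k$, as required.
\end{proof}
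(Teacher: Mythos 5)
Your proof is correct and follows exactly the paper's own route: the paper likewise deduces pseudo-grouplessness of $X$ from Proposition \ref{prop:geo_is_grp} and then invokes Corollary \ref{cor:hkp}. Your write-up merely spells out the intermediate steps (choice of $\Delta$, projective implies proper) a bit more explicitly.
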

\begin{proof}
Since  $X$ is pseudo-groupless over $k$ (Proposition \ref{prop:geo_is_grp}), the result follows from Corollary \ref{cor:hkp}.
\end{proof}

   \begin{corollary}  \label{cor:1b_is_gr}
 Let $X$ be a projective variety over $k$ and let $\Delta\subset X$ be a closed subscheme.   If $X$ is $1$-bounded modulo $\Delta$, then $X$ is groupless modulo $\Delta$.
\end{corollary}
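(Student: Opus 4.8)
The plan is to combine the reduction of grouplessness to abelian varieties (Corollary \ref{cor:groupless2}) with an ``unbounded degree'' argument built from the multiplication-by-$n$ endomorphisms of an abelian variety. Since $X$ is projective, hence proper, Corollary \ref{cor:groupless2} tells us that $X$ is groupless modulo $\Delta$ if and only if, for every abelian variety $A$ over $k$ and every dense open $U\subseteq A$ with $\mathrm{codim}(A\setminus U)\geq 2$, every non-constant morphism $U\to X$ factors over $\Delta$ (after replacing $X$ by its irreducible components we may assume $X$ is integral). So I would assume, for contradiction, that there are such $A$, $U$ and a non-constant morphism $f\colon U\to X$ with $f(U)\not\subseteq\Delta$, and aim to contradict $1$-boundedness modulo $\Delta$. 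As a preliminary step I would pass to an uncountable base field: $1$-boundedness modulo $\Delta$ is preserved under extension of algebraically closed fields (Proposition \ref{prop:trivs}(2) and the analogous statement for $1$-boundedness, see \cite[\S 9]{vBJK}), while the existence of a bad map from a big open of an algebraic group visibly descends along such extensions; so we may and do assume $k$ is uncountable.

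The heart of the argument is to manufacture from $f$ one smooth projective curve $C$ together with infinitely many morphisms $C\to X$ of unbounded $L$-degree, none factoring over $\Delta$. Fix an ample line bundle $L$ on $X$ and let $\mathcal L$ be the line bundle on $A$ restricting to $f^\ast L$ on $U$; it exists and is unique because $A$ is smooth and $A\setminus U$ has codimension at least two. By the theorem of the cube (and the fact that $\mathrm{NS}(A)$-classes are ``symmetric''), $[n]^\ast\mathcal L\equiv n^2\mathcal L$ in $\mathrm{NS}(A)$ for all $n\geq 1$. Now let $C\subset A$ be a general complete intersection of members of a fixed very ample linear system on $A$. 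Then $C$ is a smooth projective connected curve contained in $U$, the restriction $f|_C$ is non-constant, and $f(C)\not\subseteq\Delta$; moreover, for each fixed $n\geq 1$, a general such $C$ also satisfies $[n](C)\subseteq U$ and $[n](C)\not\subseteq Z$, where $Z\subsetneq A$ is the Zariski closure of $f^{-1}(\Delta)\cap U$ — each of the finitely many ``base'' conditions and each of the two conditions indexed by $n$ is a proper closed condition on the irreducible parameter space of such curves. Since $k$ is uncountable, a single $C$ can be chosen satisfying all of these countably many conditions simultaneously.

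For such a curve $C$, each composite $f_n:=f\circ[n]|_C\colon C\to X$ is a well-defined morphism (because $[n](C)\subseteq U$), it is non-constant with $f_n(C)\not\subseteq\Delta$ (because $[n](C)\not\subseteq Z$), and
\[
\deg_C f_n^\ast L=\deg_C\bigl([n]|_C\bigr)^\ast\mathcal L=n^2\,(\mathcal L\cdot C)=n^2\deg_C (f|_C)^\ast L,
\]
using that $[n]^\ast\mathcal L$ and $\mathcal L^{\otimes n^2}$ differ by an element of $\mathrm{Pic}^0(A)$, whose restriction to $C$ has degree zero; this quantity tends to infinity since $\deg_C(f|_C)^\ast L>0$ ($f|_C$ is non-constant and $L$ is ample). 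Thus $f_1,f_2,\dots$ are points of the open subscheme $\underline{\mathrm{Hom}}_k(C,X)\setminus\underline{\mathrm{Hom}}_k(C,\Delta)$ of unbounded $L$-degree, contradicting the fact that this scheme is of finite type over $k$ — which is precisely the assumption that $X$ is $1$-bounded modulo $\Delta$, applied with $Y=C$. The main obstacle in this plan is the curve selection in the second paragraph: one must produce a single curve that is ``generic enough'' with respect to all of the conditions indexed by $n$, and it is exactly to make this possible that the reduction to an uncountable base field is carried out first.
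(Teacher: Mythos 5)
Your proposal is correct, but it takes a genuinely different route from the paper. The paper disposes of this corollary in two lines: $1$-boundedness modulo $\Delta$ implies geometric hyperbolicity modulo $\Delta$ (citing \cite[Proposition~9.17]{vBJK}), and geometric hyperbolicity modulo $\Delta$ implies grouplessness modulo $\Delta$ (Proposition \ref{prop:geo_is_grp}). The underlying mechanism in Proposition \ref{prop:geo_is_grp} is the same multiplication-by-$n$ trick you use, but the maps $f\circ[n]|_C$ are there taken for a single curve $C$ through a point where $df\neq 0$, extended across indeterminacy by properness of $X$ and smoothness of $C$, and distinguished by their \emph{differentials} at that point; this only requires finiteness of pointed maps, so no degree estimate, no genericity of $C$, and no uncountability of $k$ are needed. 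You instead distinguish the composites by unbounded $L$-degree via $[n]^\ast\mathcal L\equiv n^2\mathcal L$, which forces you to ensure $[n](C)\subseteq U$ for \emph{all} $n$ at once so that each $f_n$ is a genuine morphism and the degree identity holds on all of $C$ (rightly so: for a mere extension of the rational map $f\circ[n]|_C$ the pullback can differ from $([n]|_C)^\ast\mathcal L$ by a divisor supported on the bad points, and the degree computation would break). That is exactly what drives your very-general-curve selection and hence the reduction to an uncountable field, which in turn needs persistence of $1$-boundedness modulo $\Delta$ under extensions of algebraically closed fields --- a fact true and available in \cite[\S 9]{vBJK}, but not literally contained in Proposition \ref{prop:trivs} as stated, so it should be cited with care (also, this input \emph{base-changes} the bad map upward rather than ``descends'' it, as you write). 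What your approach buys is independence from the geometric-hyperbolicity intermediary and an explicit quantitative contradiction with boundedness; what the paper's approach buys is a shorter argument valid over any algebraically closed $k$ of characteristic zero without any genericity or field-extension step.
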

 \begin{proof} Since $X$ is $1$-bounded modulo $\Delta$, it follows that $X$ is geometrically hyperbolic modulo $\Delta$; see \cite[Proposition~9.17]{vBJK}. Therefore,   it follows from   Proposition \ref{prop:geo_is_grp} that $X$ is groupless  modulo $\Delta$.
 \end{proof}

\begin{corollary}\label{cor:1b_is_rigid} Let $X$ be a projective normal integral variety  over $k$, and let $Y$ be  a projective normal integral variety over $k$.
If $X$ is pseudo-$1$-bounded over $k$, then $\underline{\mathrm{Sur}}_{k}(Y,X)$ is   zero-dimensional, and smooth over $k$.
\end{corollary}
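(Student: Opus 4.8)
The plan is to deduce this statement formally from the pseudo-grouplessness results established earlier, specifically Corollary \ref{cor:1b_is_gr} together with Corollary \ref{cor:hkp}. The key observation is that pseudo-$1$-boundedness is, in the presence of the preceding lemmas, strong enough to force pseudo-grouplessness, and pseudo-grouplessness of the (normal, integral, projective) target already pins down $\underline{\mathrm{Sur}}_k(Y,X)$ up to a countable disjoint union of reduced points.

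First I would unwind the definitions: since $X$ is pseudo-$1$-bounded over $k$, by definition there exists a proper closed subset $\Delta\subsetneq X$ such that $X$ is $1$-bounded modulo $\Delta$. Then I would invoke Corollary \ref{cor:1b_is_gr}, which says precisely that $1$-boundedness modulo $\Delta$ implies grouplessness modulo $\Delta$; here it is important that the same $\Delta$ works, and that $\Delta$ remains a \emph{proper} closed subset, so that $X$ is in fact pseudo-groupless over $k$. (Internally, Corollary \ref{cor:1b_is_gr} itself passes through geometric hyperbolicity modulo $\Delta$ via \cite[Proposition~9.17]{vBJK} and then Proposition \ref{prop:geo_is_grp}, but we may use it as a black box.)

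Having established that $X$ is a pseudo-groupless projective normal integral variety over $k$, and recalling that $Y$ is a projective normal integral variety over $k$ by hypothesis, I would apply Corollary \ref{cor:hkp} directly: it states that $\underline{\mathrm{Sur}}_k(Y,X)$ is a countable disjoint union of zero-dimensional smooth projective schemes over $k$. In particular $\underline{\mathrm{Sur}}_k(Y,X)$ is zero-dimensional and smooth over $k$, which is exactly the assertion of the corollary.

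Since the argument is a chain of citations to results proved earlier in the text, there is no genuine obstacle; the only point requiring a moment's care is the bookkeeping of the exceptional locus $\Delta$ — namely checking that the closed subset furnished by the definition of pseudo-$1$-boundedness is the very one to which Corollary \ref{cor:1b_is_gr} applies, and that it stays a proper closed subset so that ``groupless modulo $\Delta$'' upgrades to ``pseudo-groupless.'' After that the normality and integrality hypotheses on $X$ and $Y$ match the hypotheses of Corollary \ref{cor:hkp} verbatim, and we are done.
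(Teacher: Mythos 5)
Your proposal is correct and follows exactly the paper's own argument: deduce pseudo-grouplessness of $X$ from Corollary \ref{cor:1b_is_gr} (applied to the $\Delta$ furnished by pseudo-$1$-boundedness) and then apply Corollary \ref{cor:hkp} to conclude that $\underline{\mathrm{Sur}}_k(Y,X)$ is zero-dimensional and smooth over $k$. Your extra remarks on tracking $\Delta$ are sound but add nothing beyond the paper's two-line proof.
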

\begin{proof}
Since  $X$ is pseudo-groupless over $k$ (Corollary  \ref{cor:1b_is_gr}), the result follows from Corollary \ref{cor:hkp}.
\end{proof}

  \section{Finiteness results for pseudo-bounded varieties}\label{section:fin}
  
  In this section we prove finiteness results for pseudo-bounded varieties, and especially for  the set $\mathrm{Sur}_k(Y,X)$ of surjective morphisms $Y\to X$ with $X$ pseudo-bounded. We start with the following proposition which says that, roughly speaking, given a projective pseudo-bounded variety $X$ over $k$ and a general point $x$ in $X(k)$, the set of pointed maps $(Y,y)\to (X,x)$  is finite.
  
    \begin{proposition}\label{prop:1b_is_geomhyp2} Let $X$ be a projective variety over $k$ and let $\Delta\subsetneq X$ be a proper closed subset of $X$ such that  $X$ is bounded modulo $\Delta$ over $k$. Then, for every  integral projective variety $Y$ over $k$, every $y$ in $Y(k)$ and $x$ in $X\setminus \Delta$, the set $\Hom_k((Y,y),(X,x))$ is finite. 
  \end{proposition}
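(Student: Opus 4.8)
The plan is to deduce finiteness of $\Hom_k((Y,y),(X,x))$ from the finite-type hypothesis by showing that this Hom-scheme is both of finite type and zero-dimensional. First I would observe that, since $x \notin \Delta$, the constant map $Y \to \{x\}$ does not factor through $\Delta$, and more importantly \emph{no} morphism $f \colon Y \to X$ with $f(y) = x$ can have image contained in $\Delta$ (because $f(y) = x \notin \Delta$). Hence the scheme $Z := \underline{\Hom}_k((Y,y),(X,x))$, the closed subscheme of $\underline{\Hom}_k(Y,X)$ cut out by the evaluation condition at $y$, is contained in $\underline{\Hom}_k(Y,X) \setminus \underline{\Hom}_k(Y,\Delta)$. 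Applying the hypothesis that $X$ is bounded modulo $\Delta$ (in particular $\dim Y$-bounded modulo $\Delta$; note we may assume $Y$ is normal after passing to its normalization, which only enlarges the Hom-set by a finite-to-one correspondence, or alternatively invoke boundedness for all projective $Y$ via \cite[\S 9]{vBJK}), we conclude that $Z$ is of finite type over $k$.

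It remains to see that $Z$ is zero-dimensional, for then, being of finite type, it is a finite scheme and hence $\Hom_k((Y,y),(X,x)) = Z(k)$ is finite. For this I would argue by contradiction: suppose some connected component of $Z$ containing a point $[f]$ is positive-dimensional. Then there is a smooth quasi-projective connected curve $T$, a point $t_0 \in T(k)$, and a non-constant morphism $T \to Z$ sending $t_0$ to $[f]$; this yields a family $F \colon Y \times T \to X$ of morphisms, all sending $y$ to $x$, which is non-constant in the $T$-direction. Fixing any point $y' \in Y(k)$ in general position, the induced map $T \to X$, $t \mapsto F(y',t)$, together with the point $t_0 \mapsto F(y',t_0) = f(y')$, gives an element of $\Hom_k((T,t_0),(X,f(y')))$, and for varying choices one produces infinitely many such maps — contradicting geometric hyperbolicity of $X$ modulo $\Delta$, which holds because boundedness modulo $\Delta$ implies geometric hyperbolicity modulo $\Delta$ (Corollary \ref{cor:1b_is_gr} together with \cite[Proposition~9.17]{vBJK}, or directly \cite[Proposition~9.17]{vBJK}), provided $f(y') \notin \Delta$. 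One subtlety: a priori the deformation could move $f$ while keeping the image of a \emph{fixed} $y'$ constant; to rule this out I would use that a non-constant family of morphisms $Y \to X$ cannot have \emph{every} evaluation map $t \mapsto F(y',t)$ constant (else $F$ would be constant in $T$), so some $y'$ — which can be taken outside $f^{-1}(\Delta)$ since that is a proper closed subset — gives a genuinely non-constant curve in $X$, and the pointing at $t_0$ is then automatic.

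The main obstacle I expect is the last step: carefully extracting, from positive-dimensionality of the Hom-scheme, an \emph{infinite} family of pairwise distinct pointed maps from a \emph{single} pointed curve $(T,t_0)$ into $(X, x')$ for a \emph{fixed} target point $x' \notin \Delta$. The cleanest route is probably to not fix the curve $C$ in the definition of geometric hyperbolicity but rather to note that geometric hyperbolicity modulo $\Delta$ forbids even a one-parameter non-constant family through a point outside $\Delta$: if $\dim_{[f]} Z \geq 1$, pick $y'$ with $f(y') \notin \Delta$, let $T$ be a general curve through $[f]$ in $Z$, normalize and compactify, and consider $\mathrm{ev}_{y'} \colon T \to X$; its fibers over $f(y')$ would have to be infinite (as $Z$ is a scheme of finite type, a non-constant map from a curve has finite fibers, so this needs care) — better: the family $F|_{Y\times T}$ already violates geometric hyperbolicity because for the generic $y'$ the maps $\{F(-, t)\}_{t\in T}$ restricted to a curve $C \subset Y$ through $y'$ with $F|_{C\times T}$ non-isotrivial give infinitely many elements of $\Hom_k((C,y'),(X,x'))$. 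I would spell this out using that $\underline{\Hom}_k(C,X)$ maps $T$ non-constantly (since $F$ is non-constant and $C$ can be chosen so that $F|_{C \times T}$ is non-constant), and two distinct points of $T$ give distinct morphisms $C \to X$ agreeing at $y'$ — contradiction. This curve-slicing argument, reminiscent of the proof of Proposition \ref{prop:geo_is_grp}, is where the real work lies.
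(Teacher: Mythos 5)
Your overall strategy (finite type of the pointed Hom scheme from boundedness, plus zero-dimensionality, plus a curve-slicing contradiction) is viable and genuinely different from the paper's proof, but as written the slicing step has a real gap: you base the pointed maps at a \emph{general} point $y'$ and assert that ``two distinct points of $T$ give distinct morphisms $C\to X$ agreeing at $y'$''. There is no reason the members $F(-,t)$ of the family agree at $y'$: they are only constrained to agree at the marked point $y$, where they all take the value $x$. Your attempted repair via the evaluation map $\mathrm{ev}_{y'}\colon T\to X$ does not help either, since a single non-constant pointed map from a curve contradicts nothing — geometric hyperbolicity modulo $\Delta$ forbids \emph{infinitely many} pointed maps from a fixed pointed curve, not a non-constant one. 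The fix is to slice through the marked point itself: pick $t_1\neq t_2$ with $F(-,t_1)\neq F(-,t_2)$, a point $Q$ where they differ, and an integral curve $C\subset Y$ through $y$ and $Q$ (normalize $C$ and lift $y$ if needed). Then the classifying map $T\to \underline{\Hom}_k(\widetilde C,X)$ is non-constant, so its image is infinite, and all the resulting maps send the lift of $y$ to $x\notin\Delta$; this contradicts geometric hyperbolicity modulo $\Delta$, which you may indeed quote from \cite[Proposition~9.17]{vBJK} as the paper does in Corollary \ref{cor:1b_is_gr}. Note that basing at $y$ is not just simpler — it is what makes the pointing condition hold at all, and it also removes the need to choose $y'$ outside $f^{-1}(\Delta)$.

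For comparison, the paper's own proof does not pass through geometric hyperbolicity at all: starting from an infinite sequence of pointed maps, it uses the finite-type statement only to bound Hilbert polynomials, base changes to an uncountable field to find a single point separating \emph{all} pairs $f_i\neq f_j$, chooses a curve through $y$ and that point by Bertini, applies Mori's bend-and-break to produce a rational curve through $x$, and specializes back to $k$, contradicting that all rational curves of $X$ lie in $\Delta$. Your route, once repaired, is arguably more economical because working with a positive-dimensional component of the Hom scheme means you only need to separate one pair of parameters (no uncountable base change), at the price of invoking the external curve-level result \cite[Proposition~9.17]{vBJK}, whereas the paper's argument is self-contained modulo bend-and-break.
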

  \begin{proof} Let $Y$, $y$, and $x$ be as in the statement, and suppose that $\Hom_k((Y,y),(X,x))$ is infinite with $Y$, $y$ and $x$. Let  $$f_1, f_2, \ldots \in \Hom_k((Y,y),(X,x))$$ be pairwise distinct elements, and note  that $f_i\not\in \Hom_k(Y,\Delta)$. Since the scheme $$\underline{\Hom}_k((Y,y),(X,x))\setminus \underline{\Hom}_k(Y,\Delta)$$ is of finite type over $k$,  the possible Hilbert polynomials of all the $f_i$ form a finite set  (depending only on $X, \Delta,$ and $C$). Then, by  Mori's bend-and-break \cite[Proposition~3.1]{Debarrebook1}, there is a rational curve in $X$ containing $x$ contradicting the fact that every rational curve in $X$ is contained in $\Delta$. (To see that $X$ has a rational curve containing $x$, let $Y'\to Y$ be a smooth projective desingularization of $Y$, and  let $L/k$ be an uncountable algebraically closed field extension of $k$. Note that the pairwise distinct morphisms $f_{i,L}:Y'_L\to X_L$ have bounded degree. For every pair $(i,j)$ of positive integers, let $\Gamma_{i,j}\subset Y'(L)$ be the closed set of points $P$ in $Y'(L)$ such that $f_i(P) = f_j(P)$.  Since $L$ is uncountable, we have that $\cup_{i\neq j} \Gamma_{i,j} \neq Y'(L)$. Let $P\in Y'(L)\setminus \cup_{i\neq j}\Gamma_{i,j}$. By Bertini's theorem, there is a smooth projective connected curve  $C\subset Y'_L$  containing $c:=y$ and the point $P$. In particular, the morphisms $f_i|_C: C\to X_L$ have bounded degree (because the morphisms $f_i:Y\to X$ have bounded Hilbert polynomial), satisfy $f_i(c) = x$, and are pairwise distinct (as $f_i(P)\neq f_j(P)$ for all $i\neq j$).  Thus, by \cite[Proposition~3.1]{Debarrebook1}, there is a rational curve $\mathbb{P}^1_L\to X_L$ whose image contains $x$. By a standard spreading-out and specialization argument, it follows that $X$ has a rational curve $\mathbb{P}^1_k\to X$ whose image contains $x$.)
  \end{proof}
  
  If $X$ and $Y$ are varieties over $k$, we let $\mathrm{Sur}_k(Y,X)$ be the set of surjective morphisms $Y\to X$ over $k$. If $X$ and $Y$ are projective over $k$, then $\mathrm{Sur}_k(Y,X)$ is the set of $k$-points of the scheme $\underline{\mathrm{Sur}}_k(Y,X)$ (see Section \ref{section:hkp}).

  \begin{lemma}\label{lem:fin_1}
Let $n$ be a positive integer and let $X$ be a pseudo-$n$-bounded projective integral variety    over $k$. Then, for every   projective integral variety $Y$ over $k$ of dimension at most $n$, the set $\mathrm{Sur}_k(Y,X)$ is finite.
\end{lemma}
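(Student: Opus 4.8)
The plan is to combine the fact that $\mathrm{Sur}_k(Y,X)$ is parametrized by a finite-type scheme with a rigidity statement coming from pseudo-grouplessness. First I would observe that since $X$ is pseudo-$n$-bounded, there is a proper closed subset $\Delta\subsetneq X$ such that $X$ is $n$-bounded modulo $\Delta$. Replacing $X$ and $Y$ by normalizations does not change $\mathrm{Sur}_k(Y,X)$ up to a finite-to-one correspondence (and the normalization of $X$ is still pseudo-$n$-bounded with the preimage of $\Delta$ as exceptional locus), so I may assume $X$ and $Y$ are normal. By the definition of $n$-boundedness modulo $\Delta$, the scheme $\underline{\Hom}_k(Y,X)\setminus\underline{\Hom}_k(Y,\Delta)$ is of finite type over $k$; since any surjective morphism $Y\to X$ has image not contained in $\Delta$ (as $\Delta\subsetneq X$), the scheme $\underline{\mathrm{Sur}}_k(Y,X)$ is a closed (or at least locally closed) subscheme of this finite-type scheme, hence itself of finite type over $k$. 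Therefore $\underline{\mathrm{Sur}}_k(Y,X)$ has finitely many connected components, each of finite type.

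Next I would invoke Corollary \ref{cor:1b_is_rigid}: since $X$ is pseudo-$1$-bounded (being pseudo-$n$-bounded implies being $1$-bounded modulo $\Delta$, hence pseudo-$1$-bounded), the scheme $\underline{\mathrm{Sur}}_k(Y,X)$ is zero-dimensional and smooth over $k$. A scheme that is both of finite type over $k$ and zero-dimensional has only finitely many $k$-points. Since $\mathrm{Sur}_k(Y,X)$ is exactly the set of $k$-points of $\underline{\mathrm{Sur}}_k(Y,X)$, it follows that $\mathrm{Sur}_k(Y,X)$ is finite, as required.

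The one point requiring a little care is the reduction to the normal case, since Corollary \ref{cor:1b_is_rigid} is stated for normal integral varieties: one should check that a dominant (hence surjective, $X$ being proper) morphism $Y\to X$ lifts uniquely to the normalizations $\widetilde{Y}\to\widetilde{X}$, so that $\mathrm{Sur}_k(Y,X)$ injects into $\mathrm{Sur}_k(\widetilde{Y},\widetilde{X})$, and that pseudo-$1$-boundedness is inherited by $\widetilde{X}$ (which follows from Lemma \ref{lem:birrie1} applied to $\widetilde{X}\to X$, or directly). This is the only genuine subtlety; the rest is an immediate assembly of the finite-type statement (from $n$-boundedness) with the zero-dimensionality statement (from pseudo-grouplessness via Hwang--Kebekus--Peternell). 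I do not expect any serious obstacle.
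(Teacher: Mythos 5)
Your proof is correct and is essentially the paper's own argument: reduce to $X$ and $Y$ normal, use $n$-boundedness modulo $\Delta$ (which applies since $n\geq 1$ also gives pseudo-$1$-boundedness) to see that $\underline{\mathrm{Sur}}_k(Y,X)$ sits inside the finite type scheme $\underline{\Hom}_k(Y,X)\setminus\underline{\Hom}_k(Y,\Delta)$, and combine this with the zero-dimensionality coming from Corollary \ref{cor:1b_is_rigid}. One small caveat on your reduction step: Lemma \ref{lem:birrie1} transfers boundedness from the source to the target of a proper birational surjective morphism, so citing it for $\widetilde{X}\to X$ goes in the wrong direction; for the normalization you should instead argue directly as in the proof of Proposition \ref{prop:bir_inv_boundedness} (pull back an ample bundle and correct by an exceptional divisor), or simply note that composing with $\widetilde{X}\to X$ identifies the relevant Hom schemes.
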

\begin{proof} Replacing $Y$ by    its normalization if necessary, we may and do assume that $Y$ is normal.  Since every surjective morphism $Y\to X$ factors uniquely over the normalization of $X$, we may and do assume that $X$ is normal.
 Let $\Delta\subsetneq X$ be a proper closed subset of $X$ such that $X$ is $n$-bounded modulo $\Delta$.  From the fact that $X$ is  $n$-bounded modulo $\Delta$, it follows readily that $X$ is $1$-bounded modulo $\Delta$. Indeed, as $X$ is $n$-bounded modulo $\Delta$, for every normal projective variety $Y$ of dimension at most $n$, the scheme $\underline{\Hom}_k(Y,X) \setminus \underline{\Hom}_k(Y,\Delta)$ is of finite type over $k$. In particular, as $n>0$, this applies to one-dimensional normal projective varieties $Y$ over $k$.
 
Note that $\mathrm{Sur}_k(Y,X)  = \mathrm{Sur}_k(Y,X) \setminus \Hom_k(Y,\Delta)$. Thus, by $n$-boundedness of $X$ modulo $\Delta$, it follows that $\underline{\mathrm{Sur}}_k(Y,X)$ is of finite type.   Now, since  $X$ is a projective pseudo-$1$-bounded      normal integral variety over $k$, by Corollary \ref{cor:1b_is_rigid}, the scheme $\underline{\mathrm{Sur}}_k(Y,X)$ is   zero-dimensional. Since $\underline{\mathrm{Sur}}_k(Y,X)$ is zero-dimensional and of finite type over $k$, we conclude that $\mathrm{Sur}_k(Y,X)$ is finite, as required.   
\end{proof}
  
    \begin{theorem} \label{thm:surr} Let $X$ be a pseudo-bounded projective integral variety over $k$. If $Y$ is a projective integral variety over $k$, then the set $\mathrm{Sur}_k(Y,X)$ is finite.
  \end{theorem}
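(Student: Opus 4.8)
The plan is to deduce Theorem \ref{thm:surr} immediately from Lemma \ref{lem:fin_1}, which already establishes the finiteness of $\mathrm{Sur}_k(Y,X)$ under the \emph{a priori} weaker hypothesis that $X$ is pseudo-$n$-bounded, provided that $\dim Y \leq n$. The only thing to do is to choose the right value of $n$.

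First I would set $n := \max(1,\dim Y)$, so that $n$ is a positive integer with $\dim Y \leq n$. By the definition of pseudo-boundedness, since $X$ is pseudo-bounded over $k$, it is in particular pseudo-$n$-bounded over $k$. Now $X$ is a pseudo-$n$-bounded projective integral variety over $k$ and $Y$ is a projective integral variety over $k$ of dimension at most $n$, so Lemma \ref{lem:fin_1} applies and yields that $\mathrm{Sur}_k(Y,X)$ is finite, as required.

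There is essentially no obstacle here: all of the substance sits in Lemma \ref{lem:fin_1} (and, behind it, in Corollary \ref{cor:1b_is_rigid} together with the deformation-theoretic input of Hwang--Kebekus--Peternell). The only point requiring a word of care is the degenerate case $\dim Y = 0$, where one cannot take $n = \dim Y$ since pseudo-$0$-boundedness has not been defined; taking $n = 1$ circumvents this, and in any event when $\dim Y = 0$ the variety $Y$ is a point and $\mathrm{Sur}_k(Y,X)$ is empty unless $X$ is a point as well, so finiteness is automatic.
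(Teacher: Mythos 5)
Your proposal is correct and is essentially identical to the paper's own proof, which likewise observes that a pseudo-bounded $X$ is pseudo-$n$-bounded for every $n\geq 1$ and then invokes Lemma \ref{lem:fin_1}. Your extra remark about the degenerate case $\dim Y=0$ is a harmless refinement of the same argument.
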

 \begin{proof}
Note that, for every integer $n\geq 1$, the projective variety $X$ is pseudo-$n$-bounded (by definition), so that the result    follows from Lemma \ref{lem:fin_1}.
 \end{proof}

\begin{corollary}\label{cor:b_is_all} 
 Let $X$ be a projective variety over $k$ and let $\Delta\subsetneq X$ be a proper closed subset of $X$ such that   is bounded    modulo $\Delta$ over $k$. Then, for $A\subset X$ a non-empty integral closed subscheme  not contained in $\Delta$, and $B\subset X$ a non-empty integral closed subscheme, the set of morphisms $f:Y\to X$ with $f(B) = A$ is finite. 
\end{corollary}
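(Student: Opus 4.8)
The plan is to study the restriction map $f\mapsto f|_B$ from $\Hom_k((Y,B),(X,A)):=\{f:Y\to X\mid f(B)=A\}$ to the set of surjective morphisms $B\to A$, and to show separately that its image is finite and that each of its fibres is finite.

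For the image, first I would note that $A$, being a closed subscheme of the projective variety $X$, is projective and integral, and that it is bounded modulo $A\cap\Delta$ over $k$. To see the latter, I would use that for every normal projective variety $Z$ over $k$ the inclusion $A\hookrightarrow X$ induces a closed immersion $\underline{\Hom}_k(Z,A)\hookrightarrow\underline{\Hom}_k(Z,X)$ under which $\underline{\Hom}_k(Z,A)\setminus\underline{\Hom}_k(Z,A\cap\Delta)$ is identified with $\underline{\Hom}_k(Z,A)\cap\bigl(\underline{\Hom}_k(Z,X)\setminus\underline{\Hom}_k(Z,\Delta)\bigr)$, i.e. with a closed subscheme of the $k$-scheme $\underline{\Hom}_k(Z,X)\setminus\underline{\Hom}_k(Z,\Delta)$, which is of finite type by hypothesis; hence it is itself of finite type. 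Since $A\not\subset\Delta$, the closed subset $A\cap\Delta$ is proper in $A$, so $A$ is pseudo-bounded over $k$, and Theorem \ref{thm:surr} then gives that $\mathrm{Sur}_k(B,A)$ is finite. (Here one uses that $B$ is projective and integral and that $f|_B:B\to A$ is genuinely surjective, as $B$ is proper and reduced and $A$ is integral, so the set-theoretic image $f(B)$ is closed and equal to $A$.)

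For the fibres, fix a surjective $g:B\to A$. Because $g$ is surjective and $A\not\subset\Delta$, the open subset $g^{-1}(A\setminus(A\cap\Delta))$ of $B$ is non-empty, hence contains a $k$-point $b$; put $a:=g(b)\in X(k)\setminus\Delta$. Every $f:Y\to X$ with $f|_B=g$ satisfies $f(b)=a$, so the fibre of the restriction map over $g$ is contained in $\Hom_k((Y,b),(X,a))$, which is finite by Proposition \ref{prop:1b_is_geomhyp2} applied to the projective integral variety $Y$, the point $b\in Y(k)$, and $a\in X\setminus\Delta$. Combining the two steps, $\Hom_k((Y,B),(X,A))$ is a union, over the finite set $\mathrm{Sur}_k(B,A)$, of finite fibres, hence finite.

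The one step that is not an immediate citation is the descent of the boundedness property from $X$ (modulo $\Delta$) to the closed subvariety $A$ (modulo $A\cap\Delta$); I expect this to be the main point, though still routine, once one records that $\underline{\Hom}_k(Z,-)$ sends closed immersions to closed immersions and commutes with intersections of closed subschemes. The remaining ingredients, Theorem \ref{thm:surr} and Proposition \ref{prop:1b_is_geomhyp2}, are already available, so the rest of the argument is bookkeeping.
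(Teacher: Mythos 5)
Your proposal is correct and follows essentially the same route as the paper: restrict to $B$, note $A$ is bounded modulo $A\cap\Delta$ hence pseudo-bounded so that $\mathrm{Sur}_k(B,A)$ is finite by Theorem \ref{thm:surr}, then bound each fibre of the restriction map by a pointed Hom-set $\Hom_k((Y,b),(X,a))$ with $a\notin\Delta$, finite by Proposition \ref{prop:1b_is_geomhyp2}. Your two added details (deducing boundedness of $A$ modulo $A\cap\Delta$ from the closed immersion of Hom schemes, and choosing the basepoint $b$ with $g(b)\notin\Delta$) are points the paper leaves implicit, and they are handled correctly.
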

 \begin{proof}
 Since $A$ is not contained in $\Delta$, we have that $A\cap \Delta\neq A$. Therefore, as $A$ is bounded modulo $A\cap \Delta$, it follows that $A$ is pseudo-bounded, so that the set of surjective morphisms $B\to A$ is finite (Theorem \ref{thm:surr}). Now, fix a  surjective morphism $\psi:B\to A$. Then, it suffices to show  that the set $\Hom_k ((Y,B),(X,A);\psi)$ of morphisms $f:Y\to X$ with $f(B) = A$ and $f|_B = \psi$ is finite. To do so, let $y\in B(k)$ and let $x= \psi(b)$. Then, the   set $\Hom_k((Y,B),(X,A);\psi)$ is contained  in  $\Hom_k((Y,y),(X,x))$. The latter set is finite by Proposition \ref{prop:1b_is_geomhyp2}. This concludes the proof.
 \end{proof}

We now prove Theorem \ref{thm:sur_intro1} as stated in the introduction.
 \begin{proof}[Proof of Theorem \ref{thm:sur_intro1}]
Since $X$ is pseudo-$1$-bounded over $k$ and $k$ is uncountable, it follows that $X$ is pseudo-bounded over $k$ (Proposition \ref{prop:1b_bir_b}). The finiteness of $\mathrm{Sur}(Y,X)$ now follows from Lemma \ref{lem:fin_1}.
 \end{proof}
 
 \begin{remark}[Noguchi's theorem]\label{remark:noguchi}
 Since Brody hyperbolic projective varieties over $\mathbb{C}$ are $1$-bounded  by Arzela-Ascoli's theorem and Brody's Lemma, we see that Theorem \ref{thm:sur_intro1} implies that, for $X$ a Brody hyperbolic projective variety over $\mathbb{C}$ and $Y$ a projective integral variety over $\mathbb{C}$, the set $\mathrm{Sur}_{\mathbb{C}}(Y,X)$ is finite. The latter statement was a conjecture of Lang which was   proven by Noguchi \cite{Noguchi13}; see \cite[p.~303]{KobayashiBook} for a discussion of the history of Lang's conjecture. Note that Theorem \ref{thm:sur_intro1} is more general than Noguchi's result, as it allows for an ``exceptional'' locus along which the variety is \textbf{not} hyperbolic.
\end{remark} 
 
 We now prove Theorem \ref{thm:final}.  
 
 \begin{proof}[Proof of Theorem \ref{thm:final}]Since $X$ is algebraically hyperbolic modulo $\Delta$,    the projective variety $X$ is bounded modulo $\Delta$  over $k$ (see Proposition \ref{prop:trivs}). Therefore, the second statement follows from Corollary \ref{cor:b_is_all}. The first statement follows from the definition of boundedness modulo $\Delta$.
 \end{proof}
 
We now note that Theorem \ref{thm:sur_intro} is an immediate consequence of Theorem \ref{thm:final}.
 
\begin{proof}[Proof of Theorem \ref{thm:sur_intro}]
This follows from Theorem \ref{thm:final} by taking $B= Y$ and $A=X$.
\end{proof}

 We also have   the following finiteness result for $1$-bounded pairs $(X,\Delta)$ over an uncountable field.

\begin{corollary}  Assume that $k$ is \textbf{uncountable}.
 Let $X$ be a projective variety over $k$ and let $\Delta\subsetneq X$ be a proper closed subset of $X$ such that   is $1$-bounded    modulo $\Delta$ over $k$. Then, for $A\subset X$ a non-empty integral closed subscheme  not contained in $\Delta$, and $B\subset X$ a non-empty integral closed subscheme, the set of morphisms $f:Y\to X$ with $f(B) = A$ is finite. 
\end{corollary}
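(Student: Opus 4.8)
The plan is to run the proof of Corollary \ref{cor:b_is_all} almost verbatim, replacing its two appeals to ``boundedness modulo $\Delta$'' by arguments that use only ``$1$-boundedness modulo $\Delta$'' together with the uncountability of $k$. As there, one reduces to two claims: first, that the set $\mathrm{Sur}_k(B,A)$ of surjective morphisms $B\to A$ is finite; and second, that for each fixed $\psi\in\mathrm{Sur}_k(B,A)$ the set $\{f\colon Y\to X \mid f|_B=\psi\}$ is finite. Granting both, the set $\{f\colon Y\to X\mid f(B)=A\}$ is a finite union (indexed by $\mathrm{Sur}_k(B,A)$) of finite sets, hence finite.

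For the first claim, since $A\not\subset\Delta$ the closed subset $A\cap\Delta$ is proper in the integral variety $A$, and $A$ is $1$-bounded modulo $A\cap\Delta$: for every normal projective curve $Z$, the scheme $\underline{\Hom}_k(Z,A)\setminus\underline{\Hom}_k(Z,A\cap\Delta)$ is a locally closed, hence finite type, subscheme of $\underline{\Hom}_k(Z,X)\setminus\underline{\Hom}_k(Z,\Delta)$. Thus $A$ is a projective pseudo-$1$-bounded integral variety over the uncountable field $k$, so $\mathrm{Sur}_k(B,A)$ is finite by Theorem \ref{thm:sur_intro1}. For the second claim, fix $\psi$, pick a $k$-point $a\in A(k)\setminus\Delta$ (possible as $A\setminus\Delta$ is a nonempty open of $A$ and $k$ is algebraically closed) and a point $b\in B(k)$ with $\psi(b)=a$ (possible since $\psi$ is surjective and $k$ is algebraically closed); then $\{f\colon Y\to X\mid f|_B=\psi\}$ is contained in $\Hom_k((Y,b),(X,a))$. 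So everything reduces to the following pointed-rigidity statement, which is the analogue of Proposition \ref{prop:1b_is_geomhyp2} for $1$-bounded pairs over an uncountable field: \emph{if $k$ is uncountable and $X$ is $1$-bounded modulo $\Delta$, then for every projective integral variety $Y$, every $y\in Y(k)$ and every $x\in X(k)\setminus\Delta$, the set $\Hom_k((Y,y),(X,x))$ is finite}.

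To prove this, I would argue by contradiction: given pairwise distinct $f_1,f_2,\dots\in\Hom_k((Y,y),(X,x))$, take a proper birational $\nu\colon Y'\to Y$ with $Y'$ smooth projective integral, a point $y'\in\nu^{-1}(y)(k)$, and put $g_i=f_i\circ\nu$; the $g_i$ are pairwise distinct, so the loci $\Gamma_{i,j}=\{g_i=g_j\}\subsetneq Y'$ are proper closed subsets, and since $k$ is uncountable they cannot cover $Y'(k)$. Choosing $P\in Y'(k)$ outside all of them and, by a standard Bertini argument (exactly as in the proof of Proposition \ref{prop:1b_is_geomhyp2}), a smooth projective connected curve $C\subseteq Y'$ through $y'$ and $P$, the restrictions $g_i|_C\colon C\to X$ send $y'\mapsto x$ and are pairwise distinct; but $X$ is geometrically hyperbolic modulo $\Delta$ (this is how $1$-boundedness modulo $\Delta$ is used in the proof of Corollary \ref{cor:1b_is_gr}), so $\Hom_k((C,y'),(X,x))$ is finite because $x\notin\Delta$ — a contradiction. (Alternatively, one can quote \cite[Lemma~9.10]{vBJK} to upgrade ``$1$-bounded modulo $\Delta$'' directly to ``bounded modulo $\Delta$'' over the uncountable field $k$ and then invoke Corollary \ref{cor:b_is_all} verbatim.) The one real point to be careful about is this last statement: boundedness of Hilbert polynomials is no longer available as in Proposition \ref{prop:1b_is_geomhyp2}, so the role of Mori's bend-and-break there is taken over by the uncountability of $k$, which reduces matters to curves on which geometric hyperbolicity modulo $\Delta$ can be applied directly.
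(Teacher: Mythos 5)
Your proposal is correct, but your primary route is genuinely different from the paper's, whose proof is two lines: since $k$ is uncountable, $1$-boundedness modulo $\Delta$ upgrades to boundedness modulo $\Delta$ (Proposition \ref{prop:1b_bir_b}, resting on \cite[Lemma~9.10]{vBJK}), and then Corollary \ref{cor:b_is_all} applies verbatim --- exactly the alternative you mention in parentheses. Your main route instead reruns the structure of the proof of Corollary \ref{cor:b_is_all} directly from $1$-boundedness: the finiteness of $\mathrm{Sur}_k(B,A)$ via Theorem \ref{thm:sur_intro1} is the same step, but you replace the pointed-rigidity input Proposition \ref{prop:1b_is_geomhyp2} (whose proof needs bounded Hilbert polynomials and Mori's bend-and-break, available only under full boundedness modulo $\Delta$) by a direct argument: uncountability of $k$ produces a point $P$ off the countably many coincidence loci $\Gamma_{i,j}$, a Bertini curve through $y'$ and $P$ reduces to the one-dimensional case, and there geometric hyperbolicity modulo $\Delta$ (a consequence of $1$-boundedness modulo $\Delta$, as quoted in Corollary \ref{cor:1b_is_gr} from \cite[Proposition~9.17]{vBJK}) gives the contradiction. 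What each approach buys: the paper's is shorter but leans on the heavier boundedness-upgrade lemma from \cite{vBJK}; yours avoids that upgrade and bend-and-break entirely, using uncountability of $k$ in its place, so it is more self-contained given the implications already cited in the paper. A small bonus of your write-up is that you explicitly choose $a\in A(k)\setminus\Delta$ and $b\in B(k)$ with $\psi(b)=a$, which is genuinely needed to apply the pointed finiteness statement (the proof of Corollary \ref{cor:b_is_all} is terse on this point).
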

  
 \begin{proof}
 Since $k$ is uncountable and $X$ is $1$-bounded modulo $\Delta$, it follows from Proposition \ref{prop:1b_bir_b}  that $X$ is bounded modulo $\Delta$. Therefore, the corollary is an immediate consequence of Corollary \ref{cor:b_is_all}. 
 \end{proof}

\section{Pseudo-Mordellic varieties}\label{section:mordell} For proper varieties, we defined the notion of pseudo-Mordellicity in the introduction following Lang's original definition \cite{Lang2}. 
In this section we define the notion of Mordellicity modulo a closed subset for possible non-proper varieties; this definition also appears in Vojta's article \cite{VojtaLangExc}.

  The main result of this section is that pseudo-Mordellic proper varieties are pseudo-groupless (Theorem \ref{thm:ar_is_gr}). As an application we deduce the rigidity of surjective morphisms from a fixed variety to a fixed pseudo-Mordellic projective variety (Corollary \ref{cor:sur_is_rigid}).

\begin{definition}
Let $X\to S$ be a morphism of schemes. Assume that $S$ is integral with function field $K(S)$. Then, we define $X(S)^{(1)}$ to be the subset of elements $P$ in $X(K(S))$ such that, for every point $s$ in $S$ of codimension one, the point $P$ lies in the image of $X(\mathcal{O}_{S,s}) \to X(K(S))$.
\end{definition}

In \cite{VojtaLangExc} Vojta refers to the elements of $X(S)^{(1)}$ as \emph{near-$S$-integral points} of $X$. Note that, if $S$ is a one-dimensional noetherian normal integral scheme, then $X(S)^{(1)} = X(S)$.

\begin{definition}
 Let $X$ be a variety over $k$ and let $\Delta$ be a closed subset of $X$. We say that $X$ is \emph{Mordellic   modulo $\Delta$} if, for every regular $\ZZ$-finitely generated subring $A\subset k$ of $k$ and every model $\mathcal{X}$ for $X$ over $A$, we have that every positive-dimensional irreducible component of the Zariski closure of $\mathcal{X}(A)^{(1)}$ in $X$ is contained in $\Delta$.
 \end{definition}

 \begin{remark}\label{remark:proper} Let $X$ be a proper variety over $k$ and let $\Delta$ be a closed subset of $X$. 
Then, by the valuative criterion of properness, the variety  $X$  is Mordellic modulo $\Delta$ if and only if, for every finitely generated subfield $K\subset k$ and every proper model $\mathcal{X}$ over $K$, the set $\mathcal{X}(K)\setminus \Delta$ is finite.  Thus, the above definition extends the definition of pseudo-Mordellicity we gave in the introduction for proper varieties.
 \end{remark}
 
 \begin{definition}
 A variety $X$ over $k$ is \emph{pseudo-Mordellic   over $k$} if there is a proper closed subset $\Delta\subsetneq X$ such that $X$ is Mordellic    modulo $\Delta$. Moreover, $X$ is \emph{Mordellic over $k$}   if $X$ is Mordellic modulo the empty subset.  
 \end{definition}

 \begin{remark}[Examples]
 A one-dimensional variety $X$ over $k$ is   Mordellic over $k$ if and only if it is  groupless over $k$. This follows from Faltings's theorem  \cite{Faltings2, FaltingsComplements}. We refer to   \cite{Autissier1, Autissier2, BauerStoll,  CLZ, Dimitrov, FaltingsLang, Levin, Moriwaki,  VojtaSubb, Vojta1a, Vojta2b, Vojta1} for more examples of Mordellic   varieties.
Faltings proved that a closed subvariety $X$ of an abelian variety $A$ is Mordellic modulo its special locus (or Ueno locus) $\mathrm{Sp}(X)$; see \cite{FaltingsLang}.
Moriwaki used Faltings's theorem to prove that 
 a smooth projective connected variety  $X$ over $k$ whose cotangent bundle $\Omega^1_X$  is ample and globally generated  is  Mordellic  \cite{Moriwaki} (see also  \cite{ATdV}). 
 \end{remark}

 The birational invariance of the notion of pseudo-Mordellicity is essentially built into its definition. Indeed,  the infinitude of the set of near-integral points is preserved under proper birational modifications. 
 
To prove the rigidity of surjective morphisms to a proper pseudo-Mordellic variety, we will use that such varieties are pseudo-groupless. In fact, we prove the following more precise statement using the potential density of rational points on abelian varieties; this was proven by Frey-Jarden \cite{FreyJarden} (and later also by Hassett--Tschinkel \cite[\S3]{HassettTschinkel}).

 \begin{theorem}\label{thm:ar_is_gr}   Let $X$ be a proper variety over $k$ and let $\Delta\subset X$ be a closed subscheme.
 If $X$ is  Mordellic modulo $\Delta$ over $k$ , then $X$ is  groupless  modulo $\Delta$ over $k$.
 \end{theorem}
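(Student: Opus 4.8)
The plan is to argue by contraposition. Suppose $X$ is not groupless modulo $\Delta$ over $k$; I will produce a finitely generated subfield $K\subset k$ and a proper model $\mathcal{X}$ of $X$ over $K$ with $\mathcal{X}(K)\setminus\Delta$ infinite, which by Remark \ref{remark:proper} contradicts the hypothesis that $X$ is Mordellic modulo $\Delta$. The reduction to abelian varieties is what makes the argument possible: it is here that one uses that $X$ is proper, via Corollary \ref{cor:groupless2}, and the arithmetic input will be the potential density of rational points on abelian varieties.

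Since a connected group scheme over $k$ is irreducible, the failure of grouplessness is already witnessed by a non-constant map whose image lies in a single irreducible component $X_0$ of $X$; as the rational points I shall construct on $X_0$ are a fortiori rational points of $X$, I may assume $X$ is integral. By Corollary \ref{cor:groupless2} there is then an abelian variety $A$ over $k$, a dense open $U\subset A$ with $\mathrm{codim}(A\setminus U)\geq 2$, and a non-constant morphism $f\colon U\to X$ with $f(U)\not\subset\Delta$; put $Y:=\overline{f(U)}$, an integral closed subvariety of $X$ with $\dim Y\geq 1$ and $Y\not\subset\Delta$. I would then descend the data $(A,U,f,X,\Delta,Y)$ to a finitely generated subfield $K_0\subset k$ and, invoking the potential density of rational points on abelian varieties \cite{FreyJarden,HassettTschinkel}, enlarge $K_0$ to a finitely generated subfield $K\subset k$ over which all this data is still defined and for which $A_K(K)$ is Zariski-dense in $A_K$; because the Zariski closure of a set of $K$-rational points is defined over $K$, this density is unaffected by base change to $k$. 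Since $A_K\setminus U_K$ is a proper closed subset, $\Sigma:=A_K(K)\cap U_K(K)$ is still Zariski-dense in $U_K$; and since $f_K\colon U_K\to Y_K$ is dominant, its image $f_K(\Sigma)\subset Y_K(K)$ is Zariski-dense in $Y_K$, hence in $Y$. As $Y$ is positive-dimensional and not contained in $\Delta$, the set $f_K(\Sigma)\setminus\Delta$ is again Zariski-dense in $Y$, in particular infinite. Choosing a proper model $\mathcal{X}$ of $X$ over $K$ (which exists because $X$ is proper and defined over $K$), these points exhibit $\mathcal{X}(K)\setminus\Delta$ as an infinite set, which is the desired contradiction.

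The only genuinely non-formal ingredient is the potential density of rational points on abelian varieties over finitely generated fields of characteristic zero; everything else is bookkeeping. The step I expect to demand the most care is precisely this bookkeeping of fields of definition: one must carry out the potential-density extension $K$ simultaneously with, and compatibly with, the finitely many schemes and morphisms that witness the failure of grouplessness, and check that Zariski-density of the relevant sets of points is preserved under the base changes $K_0\subset K\subset k$ in play.
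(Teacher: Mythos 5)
Your proof is correct and takes essentially the same route as the paper's: reduce to abelian varieties via Corollary \ref{cor:groupless2}, spread the data out to a finitely generated subfield $K\subset k$ over which the abelian variety has Zariski-dense $K$-points (Frey--Jarden, Hassett--Tschinkel), and contradict the finiteness of $\mathcal{X}(K)\setminus\Delta$ supplied by Remark \ref{remark:proper}. The only cosmetic differences are that you argue by contraposition and work directly with the $K$-points of $A$ lying in the big open $U$, whereas the paper phrases the goal as showing the rational map $A\dashrightarrow X$ factors over $\Delta$ and first resolves its indeterminacy by a proper birational $B\to A$, pushing the dense set of $K$-points through $\mathcal{B}\to\mathcal{X}$.
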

 \begin{proof}  By Corollary \ref{cor:groupless2}, it suffices to show that, for every abelian variety $A$ over $k$, every  non-constant rational map $A\dashrightarrow X$ factors over $\Delta$. Let $A\dashrightarrow X$ be a non-constant rational map, and let $B\to A$ be a proper birational surjective morphism such that the composed rational map $B\to A\dashrightarrow X$ is a  (non-constant) morphism. It suffices to show that this morphism $f:B\to X$ is constant. To do so, we use \cite[\S3]{JAut} and choose a finitely generated subfield $K\subset k$, a model $\mathcal{B}$ for $B$ over $K$, a model $\mathcal{A}$ for $A$ over $K$ with $\mathcal{A}(K)$ Zariski-dense in $A$, and a  proper birational surjective morphism $\mathcal{B}\to \mathcal{A}$ defining $B\to A$ over $K$. Since $\mathcal{B}\to \mathcal{A}$ is proper birational surjective and $\mathcal{A}(K)$ is dense in $A$, it follows that $\mathcal{B}(K)$ is dense. Replacing $K$ by a finitely generated field extension if necessary, we may and do assume that there is a model $\mathcal{X}$ for $X$ over $K$ and a morphism $F:\mathcal{B}\to \mathcal{X}$ with $F_k=f$. Now, let $Z:=\mathrm{Im}(F)$ be the image of $F$, and note that $Z$ is a  positive-dimensional irreducible closed subscheme of $\mathcal{X}$. Since  $Z(K)$ is dense in $Z$ and $\dim Z >0$, it follows from the properness of $X$ over $k$ and Remark \ref{remark:proper} that $Z$ is contained in $\Delta$.
 This concludes the proof.
 \end{proof}

 \begin{corollary}\label{cor:sur_is_rigid}
 Let $X$ be a projective  pseudo-Mordellic normal integral  variety over $k$. Then, for every normal integral projective variety $Y$ over $k$, the scheme $\underline{\mathrm{Sur}}_k(Y,X)$ is zero-dimensional and smooth over $k$.
 \end{corollary}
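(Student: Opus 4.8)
The plan is to deduce this statement formally from the rigidity result for pseudo-groupless varieties already available in Corollary \ref{cor:hkp}, using the fact that proper pseudo-Mordellic varieties are pseudo-groupless. So the whole argument is a matter of chaining together two earlier results, and the only substance lies in those results themselves.

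First I would unwind the hypothesis: since $X$ is pseudo-Mordellic over $k$, there is a proper closed subset $\Delta \subsetneq X$ such that $X$ is Mordellic modulo $\Delta$. Because $X$ is projective over $k$, it is in particular proper over $k$, so Theorem \ref{thm:ar_is_gr} applies and shows that $X$ is groupless modulo $\Delta$. Hence $X$ is pseudo-groupless over $k$. (If one wanted to be more hands-on here, one could instead re-run the proof of Theorem \ref{thm:ar_is_gr}: for an abelian variety $A$ and a non-constant rational map $A \dashrightarrow X$, resolve it to a morphism $f \colon B \to X$ from a blow-up $B \to A$, spread $f$ out over a finitely generated subfield $K$ so that the $K$-points of the model of $A$ are Zariski dense — which they are by Frey--Jarden — and conclude from Remark \ref{remark:proper} that the positive-dimensional image $\mathrm{Im}(f)$ lands in $\Delta$.)

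Next, since $X$ is a pseudo-groupless projective normal integral variety over $k$ and $Y$ is a projective normal integral variety over $k$, Corollary \ref{cor:hkp} directly gives that $\underline{\mathrm{Sur}}_k(Y,X)$ is a countable disjoint union of zero-dimensional smooth projective schemes over $k$. In particular $\underline{\mathrm{Sur}}_k(Y,X)$ is zero-dimensional and smooth over $k$, which is exactly the assertion of the corollary.

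I do not expect any obstacle at the level of this corollary: the real work is packaged into Theorem \ref{thm:ar_is_gr} (which rests on potential density of rational points on abelian varieties) and into Corollary \ref{cor:hkp} (which rests on the Hwang--Kebekus--Peternell deformation theorem, Theorem \ref{thm:hkp0}, together with Lemma \ref{lem:aut0} forcing $\Aut^0_{Z/k}$ to be trivial). Once those are in place, the present statement is a formal consequence, so the proof should be only a few lines.
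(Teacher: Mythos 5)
Your proposal is correct and follows exactly the paper's own argument: apply Theorem \ref{thm:ar_is_gr} to conclude that the proper pseudo-Mordellic variety $X$ is pseudo-groupless, then invoke Corollary \ref{cor:hkp} to get that $\underline{\mathrm{Sur}}_k(Y,X)$ is zero-dimensional and smooth over $k$. Nothing further is needed.
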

 \begin{proof}
 Since a proper pseudo-Mordellic variety over $k$ is pseudo-groupless over $k$ (Theorem \ref{thm:ar_is_gr}), the corollary follows from the rigidity of surjective morphisms $Y\to X$ to a pseudo-groupless normal projective variety $X$; see Corollary \ref{cor:hkp}. 
 \end{proof}

\begin{remark}
Let $X$ be a proper surface over $k$. If $X$ is pseudo-Mordellic over $k$, then  $X$ is of general type.
To prove this, note that $X$ is pseudo-groupless (Theorem \ref{thm:ar_is_gr}), so that the claim follows from the fact that pseudo-groupless proper surfaces are of general type   (Lemma  \ref{lem:groupless_surfaces}). In particular, a K3 surface over $k$ is  not pseudo-Mordellic over $k$.
\end{remark}

\subsection{Arithmetic hyperbolicity} The reader might have noticed that we refer to varieties with finitely many near-integral points as being Mordellic, whereas we also employ the term ``arithmetically hyperbolic'' in some other papers \cite{vBJK, JLevin, JSZ, JAut, JLitt, JLalg}. In fact, there is a subtle difference between these notions which we will explain briefly in this section; see   \cite[\S7]{JBook} for a more detailed discussion. 
 
 Let $X$ be a variety over $k$ and let $\Delta$ be a closed subscheme of $X$. We say that $X$ is \emph{arithmetically hyperbolic over $k$ modulo $\Delta$} if, for every $\ZZ$-finitely generated subring $A\subset k$ of $k$ and every model $\mathcal{X}$ for $X$ over $A$, we have that every positive-dimensional irreducible component of the Zariski closure of $\mathcal{X}(A)$ in $X$ is contained in $\Delta$.
 With this definition at hand, a variety $X$ over $\Qbar$ is arithmetically hyperbolic over $\Qbar$ (as defined in \cite{JAut} and \cite[\S 4]{JLalg}) if and only if $X$ is arithmetically hyperbolic over $\Qbar$ modulo the empty subset.
 
As in the case of grouplessness and Mordellicity, we will say that  a variety $X$ over $k$ is \emph{pseudo-arithmetically hyperbolic over $k$} if there is a proper closed subset $\Delta\subsetneq X$ such that $X$ is arithmetically hyperbolic modulo $\Delta$.
 
   It is   clear from the definitions that, if $X$ is Mordellic modulo $\Delta$ over $k$, then $X$ is arithmetically hyperbolic modulo $\Delta$ over $k$. In particular, a pseudo-Mordellic variety is pseudo-arithmetically hyperbolic and a Mordellic variety is arithmetically hyperbolic. Indeed,  roughly speaking, to say that a variety is arithmetically hyperbolic 
  is to say that any set of integral points on it is finite, and to say that a variety is Mordellic is to say that any set of ``near-integral'' points on it is finite. The latter sets are a priori bigger.  However, there is no difference between these two sets when $k=\Qbar$. That is, a variety $X$ over $\Qbar$ is arithmetically hyperbolic modulo $\Delta$ if and only if it is Mordellic modulo $\Delta$ over $\Qbar$. Thus, the \emph{a priori} difference between  Mordellicity and arithmetic hyperbolicity can only occur over algebraically closed fields of positive transcendence degree over $\QQ$. 
  
In this paper we work with pseudo-Mordellic   varieties  instead of pseudo-arithmetically hyperbolic varieties because, if $X$ and $Y$ are proper integral varieties over $k$ which are birational over $k$, then $X$ is pseudo-Mordellic over $k$ if and only if $Y$ is pseudo-Mordellic over $k$.
  This property is crucial in our proof of Theorem \ref{thm:birs}, and we do not know whether the analogous statement holds for pseudo-arithmetic hyperbolicity.

  \section{Dominant self-maps of pseudo-Mordellic   varieties}\label{section:ends}
 
  Let $k$ be an algebraically closed field of characteristic zero. 
 If $f:X\dashrightarrow X$ is a dominant rational self-map of a variety over $k$ and $i\geq 1$, we let $f^i:X\to X$ be the composition of $f$ with itself $i$-times, and we let $\mathrm{In}(f)$ be the locus of indeterminacy of $f$.  
  
In \cite{Amerik2011}  Amerik proved that a dominant rational self-map of a variety    which is  not of finite order has points of infinite order. The precise statement reads as follows.

 \begin{theorem} [Amerik]\label{thm:Amerik} Let $k$ be  an algebraically closed field of characteristic zero, let $X$ be a variety over $k$, and let $f:X\dashrightarrow X$ be a  dominant rational self-map of infinite order. Then, for every non-empty open subset $U\subset X$, there is a point $x$ in $U\setminus \mathrm{In}(f)$ such that the orbit of $x$ is infinite and is contained in  $U\setminus \mathrm{In}(f)$.
\end{theorem}

If $k$ is uncountable, then one can prove Amerik's theorem using standard ``Noetherian induction'' type arguments. However, Amerik's theorem  requires non-trivial arguments when the base field is countable. 
Indeed, 
the arguments in Amerik's paper  rely on the work of many authors on algebraic dynamics; see for instance  
  \cite{BellBook, Ben2012, Fakhruddin, GhiTuc2009, Poonenqp, Silverman}. In addition, Amerik uses Hrushovski's theorem on intersections of graphs with Frobenius \cite{HHH, Varshavsky}.
  
 Amerik proved her theorem when $k=\Qbar$ in \cite{Amerik2011}, but  Amerik's theorem holds  (as stated above)  with $k$ an arbitrary algebraically closed field of characteristic zero. This is explained   in \cite[\S 6]{JAut}, and \cite[Theorem~5.6]{Xie0}. In fact, this version of Amerik's theorem has  been used throughout the literature; see for instance \cite[Lemma~3.3]{BellGhioca2} and \cite{Xie1}.

 The main result of this paper relies on the following application of Amerik's theorem  to proper pseudo-Mordellic varieties.

 \begin{theorem}\label{thm:amerik2}    If $X$ is a   proper pseudo-Mordellic integral variety over $k$ and $f:X \dashrightarrow X $ is  a dominant rational self-map, then $f$ is a birational self-map of finite order.
 \end{theorem}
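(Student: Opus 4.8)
The plan is to show first that $f$ has finite order, and then deduce separately that a finite-order dominant rational self-map of a proper variety is automatically a birational self-map. For the first and main part, I would argue by contradiction: suppose $f$ has infinite order. The idea is to use pseudo-Mordellicity to obstruct the existence of the point with infinite orbit guaranteed by Amerik's theorem (Theorem \ref{thm:Amerik}). To make the arithmetic input available, I would spread everything out: choose a finitely generated subfield $K\subset k$, a model $\mathcal X$ for $X$ over $K$, and a rational self-map $F:\mathcal X\dashrightarrow\mathcal X$ defining $f$; enlarging $K$ if necessary, arrange that the proper closed subset $\Delta\subsetneq X$ witnessing pseudo-Mordellicity descends to a closed subset $\mathcal D\subsetneq\mathcal X$. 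Let $U = X\setminus \Delta$, which is non-empty, and let $\mathcal U = \mathcal X\setminus\mathcal D$. Amerik's theorem applied to the open set $U\setminus\mathrm{In}(f)$ gives a point $x\in U\setminus\mathrm{In}(f)$ whose $f$-orbit is infinite and stays inside $U\setminus\mathrm{In}(f)$; in particular the orbit avoids $\Delta$ entirely.

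The key step is then to turn this orbit into infinitely many near-integral points on a model, contradicting Mordellicity modulo $\Delta$. The point $x$ is defined over some finitely generated extension $L$ of $K$; replacing $K$ by $L$ (and correspondingly enlarging the model), I may assume $x\in\mathcal X(K)$ lies in $\mathcal U(K)$. Since the whole orbit $\{x, f(x), f^2(x),\dots\}$ consists of $K$-points lying in $U\setminus\mathrm{In}(f)$, and since $F$ is defined (as a morphism) on the relevant locus after possibly shrinking $\Spec$ of the base ring — i.e. passing to a $\mathbb Z$-finitely generated subring $A\subset K$ over which $F$ restricts to a morphism on an open containing the (finitely-presented spreading-out of the) orbit — I would obtain that the orbit gives infinitely many points in $\mathcal X(A)^{(1)}$ not meeting $\mathcal D$, and hence infinitely many $A$-points of the model. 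Their Zariski closure in $X$ is then a positive-dimensional (since the orbit is infinite) closed subset not contained in $\Delta$, contradicting the hypothesis that $X$ is Mordellic modulo $\Delta$. Therefore $f$ has finite order.

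The second part is comparatively soft: if $f:X\dashrightarrow X$ is dominant and $f^n = \mathrm{id}$ as a rational map for some $n\geq 1$, then $f$ has a rational inverse (namely $f^{n-1}$), so $f$ is a birational self-map. Concretely, $f\circ f^{n-1}$ and $f^{n-1}\circ f$ both equal the identity as rational maps, which is exactly the statement that $f\in\Bir_k(X)$ and has finite order.

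The main obstacle I anticipate is the spreading-out bookkeeping in the middle step: one must simultaneously (i) descend $X$, $\Delta$, and $f$ to a model over a finitely generated ring $A$, (ii) ensure $F$ is an honest morphism along the orbit after inverting finitely many elements of $A$ — using that $\mathrm{In}(f)$ is a proper closed subset and the orbit avoids it — and (iii) check that the orbit points, being $K$-rational and disjoint from the indeterminacy and from $\mathcal D$, genuinely land in $\mathcal X(A)^{(1)}$ rather than merely in $X(K)$. The techniques of \cite[\S3]{JAut}, already cited for analogous purposes in the proof of Theorem \ref{thm:ar_is_gr}, handle precisely this kind of argument and can be invoked here; alternatively, since $k$ is algebraically closed of characteristic zero, one may first reduce to the case $k=\Qbar$ where near-integral points and integral points coincide, and where Amerik's original formulation applies directly, thereby streamlining the descent.
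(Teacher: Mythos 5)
Your main argument is correct and is essentially the paper's proof: apply Amerik's theorem (Theorem \ref{thm:Amerik}) to the complement of the exceptional set $\Delta$ to obtain an infinite orbit avoiding $\Delta$ and $\mathrm{In}(f)$, spread $X$, $\Delta$, $f$ and the point out to a finitely generated subfield $K\subset k$ with a proper model, and contradict Mordellicity modulo $\Delta$; finite order then gives birationality since $f^{n-1}$ is a rational inverse. Two small remarks: the ring-level bookkeeping with $A$ and $\mathcal{X}(A)^{(1)}$ that you anticipate as the main obstacle is unnecessary, because for proper $X$ Mordellicity modulo $\Delta$ is equivalent (Remark \ref{remark:proper}, via the valuative criterion) to the finiteness of $\mathcal{X}(K)\setminus\Delta$ for every finitely generated subfield $K$ and proper model $\mathcal{X}$, so the infinite orbit of $K$-points outside $\Delta$ already yields the contradiction. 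On the other hand, your suggested alternative of first reducing to $k=\Qbar$ does not work: $X$ need not be definable over $\Qbar$, and pseudo-Mordellicity is not known to transfer along extensions of algebraically closed fields (this is the open Persistence Conjecture discussed right after the paper's proof, and precisely why the version of Amerik's theorem over arbitrary, possibly countable, algebraically closed fields of characteristic zero is invoked).
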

 \begin{proof}  We argue by contradiction. Suppose that $f$ has infinite order. Let $\Delta\subsetneq X$ be a proper closed subset of $X$ such that $X$ is Mordellic modulo $\Delta$.  Let $U:=X\setminus \Delta$. By Amerik's theorem (Theorem \ref{thm:Amerik}), there is a point $x$ in $U\setminus \mathrm{In}(f)$ such that the orbit of $x$ is infinite and contained in $U\setminus \mathrm{In}(f)$. Now, to arrive at a contradiction, let $K\subset k$ be a finitely generated subfield, let $\mathcal{X}$ be a proper model for $X$  over $K$ such that $x$ lies in the subset $\mathcal{X}(K) $ of $X(k)$, let $\mathcal{D}\subset \mathcal{X}$ be a model for $\Delta\subset X$ over $K$, and let $F:\mathcal{X}\dashrightarrow \mathcal{X}$ be a dominant rational self-map such that $F_k  = f$. Then, the infinite orbit of $x$ is contained in $\mathcal{X}(K)\setminus D$. However, as $X$ is Mordellic modulo $\Delta$, the set $\mathcal{X}(K)\setminus D = \mathcal{X}(K)\setminus \Delta$ is finite. This concludes the proof. 
 \end{proof}
  
  \begin{remark}    
 Given a pseudo-Mordellic proper variety $X$ over $k$ and an uncountable algebraically closed field $L$ containing $k$, it is not clear whether the variety $X_L$ is pseudo-Mordellic over $L$, although it certainly should be by the Persistence Conjecture \cite[Conjecture~1.20]{vBJK}; this expectation is verified    in certain situations in  \cite{vBJK,   JLevin, JSZ, JAut,  JLitt}. This is the reason we need the full force of Amerik's theorem over countable fields to prove Theorem \ref{thm:amerik2}.
  \end{remark}

 \begin{proof}[Proof of Theorem \ref{thm:birs}] Let $X$ be a proper pseudo-Mordellic variety over $k$. Let $f:X\dashrightarrow X$ be a dominant rational self-map of $X$ over $k$, and note  that $f$ is a birational self-map of finite order (Theorem \ref{thm:amerik2}).  Thus, to conclude the proof, it suffices to show that $\mathrm{Bir}_k(X)$ is finite. However, as $\mathrm{Bir}_k(X)$ is torsion and $X$ is non-uniruled,   we conclude that $\mathrm{Bir}_k(X)$ is finite (Theorem \ref{thm:tor}). 
 \end{proof}

 \section{Dominant self-maps of pseudo-$1$-bounded varieties} In this section we prove the finiteness of the set of dominant rational self-maps $X\dashrightarrow X$ for $X$ a pseudo-$1$-bounded projective variety  (Theorem \ref{thm:birs2}).
 We start with using Amerik's theorem to prove that the automorphism group of a pseudo-$1$-bounded projective variety is torsion.  (This follows from the results in Section \ref{section:fin} when the base field is uncountable.)

 \begin{lemma}\label{lem:new}
 If $X$ is a pseudo-$1$-bounded projective variety over $k$, then $\Aut_k(X)$ is torsion.
 \end{lemma}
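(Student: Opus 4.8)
The plan is to argue by contradiction: suppose $\sigma \in \Aut_k(X)$ has infinite order. Then, viewing $\sigma$ as a dominant rational (in fact regular) self-map of infinite order, we may apply Amerik's theorem (Theorem \ref{thm:Amerik}). Let $\Delta \subsetneq X$ be a proper closed subset such that $X$ is $1$-bounded modulo $\Delta$, and set $U := X \setminus \Delta$. Amerik's theorem produces a point $x \in U$ (note $\mathrm{In}(\sigma) = \emptyset$ since $\sigma$ is an automorphism) whose $\sigma$-orbit $\{\sigma^n(x)\}_{n \geq 0}$ is infinite and entirely contained in $U$.

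Next I would use this infinite orbit outside $\Delta$ to contradict $1$-boundedness modulo $\Delta$. The idea is to ``connect up'' the orbit points by curves to produce morphisms from a fixed smooth projective curve $C$ to $X$ of unbounded degree, all avoiding $\Delta$. Concretely, fix an ample line bundle $L$ on $X$. Since $\sigma$ is an automorphism, $\sigma^{\ast}[L]$ need not equal $[L]$, but more to the point the degrees $\deg_C (\sigma^n \circ g)^{\ast} L$ for a fixed map $g : C \to X$ could fail to be unbounded if $\sigma$ preserves the numerical class of $L$. The cleaner route is the following: pass to a point $x \in U(k)$ with infinite orbit in $U$, choose (after possibly enlarging to an uncountable extension $L/k$, using that $1$-boundedness and the exceptional locus persist under field extension by Proposition \ref{prop:trivs}, and descending at the end) a very general smooth projective connected curve $C \subset X_L$ through $x$ meeting $\Delta_L$ properly, and then consider the curves $\sigma^n(C)$. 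Since $\sigma$ is an automorphism, each $\sigma^n(C)$ is again a smooth curve of the same genus as $C$, passing through $\sigma^n(x) \in U$, hence not contained in $\Delta$. The normalization maps $\widetilde{\sigma^n(C)} = C \to X$ then all have genus $= \mathrm{genus}(C)$ but, because the orbit of $x$ is infinite and the intersection numbers $\deg_C (\sigma^n|_C)^{\ast} L$ grow (as $\sigma$ has infinite order and thus cannot preserve a polarization — here one invokes Lemma \ref{lem:pseudo_is_aut}, or rather its consequence that an automorphism fixing an ample class lies in a finite-by-something group), this contradicts the uniform bound $\alpha(X,\Delta,L,C)$ in the definition of pseudo-$1$-boundedness.

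Let me reorganize the key step more carefully, since the genus is fixed and we must genuinely produce \emph{unbounded} $L$-degree. The point is that $\Aut_k(X)$ acts on $\NS(X)_{\QQ}$, a finite-dimensional space, preserving the nef cone and the integral lattice $\NS(X)$; if $\sigma$ had finite order on $\NS(X)$ then some power $\sigma^m$ fixes $[L]$, and then by Lemma \ref{lem:pseudo_is_aut} (applied with $\sigma^m \in \Aut_1(X)$) together with the finiteness of the kernel-quotient as in Lemma \ref{lem:aut1}, $\sigma^m$ would generate a finite group, contradicting infinite order of $\sigma$ — \emph{unless} $\sigma$ acts with infinite order on $\NS(X)$. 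So $\sigma^{\ast}$ has infinite order on the finitely generated abelian group $\NS(X)$; hence there is an ample class $[L]$ with $\{(\sigma^n)^{\ast}[L]\}_n$ infinite, and since all these classes are nef and the Picard number is finite, $\deg((\sigma^n)^{\ast}L \cdot [Z])$ is unbounded for a suitable fixed curve class $[Z]$. Choosing $C \to X$ a morphism from a fixed smooth projective curve with image having class a positive multiple of $[Z]$ and not contained in $\Delta$ (possible after a field extension and then descent), the compositions $\sigma^n \circ (C \to X)$ give morphisms $C \to X$ avoiding $\Delta$ with $\deg_C(\sigma^n \circ -)^{\ast}L = \deg((\sigma^n)^{\ast}L \cdot [\mathrm{image}])$ unbounded, contradicting $1$-boundedness modulo $\Delta$.

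The main obstacle, I expect, is the orbit-vs-curve bookkeeping: Amerik's theorem gives an infinite orbit of a \emph{point}, whereas $1$-boundedness constrains maps from \emph{curves}. One must either (a) run the $\NS$-action argument above to bypass Amerik entirely for this particular lemma — which is clean but makes one wonder why Amerik is invoked — or (b) genuinely use the orbit: connect finitely many orbit points $x, \sigma(x), \dots, \sigma^N(x)$ by a curve $C_N$ of controlled genus through all of them (Bertini over an uncountable field), observe $C_N \not\subset \Delta$, and bound its degree below in terms of $N$ via the spreading of the orbit, contradicting the \emph{uniform-in-$C$} bound once $\mathrm{genus}(C_N)$ stabilizes — this requires care since Bertini curves through $N+1$ points generically have genus growing with $N$. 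I would therefore lead with approach (a), but I anticipate the paper's actual proof uses Amerik more essentially, perhaps because it wants the argument to also cover the \emph{rational} self-map case uniformly in the next theorem; in writing the proof I would state it via the $\NS$-action reduction and remark that Amerik's theorem is what makes the rational (non-regular) generalization go through in Theorem \ref{thm:birs2}.
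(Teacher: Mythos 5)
Your lead argument (a) breaks down in the case where $\sigma^{\ast}$ has finite order on $\NS(X)$. From $(\sigma^m)^{\ast}[L]=[L]$ you conclude that $\sigma^m$ ``generates a finite group'' by citing Lemma \ref{lem:pseudo_is_aut} (vacuous here, since $\sigma^m$ is already an automorphism) and Lemma \ref{lem:aut1} --- but the hypothesis of Lemma \ref{lem:aut1} is that $\Aut_1(X)$ is torsion, which is essentially what you are trying to prove, so this is circular; and the kernel of $\Aut(X)\to\Aut(\NS(X))$ is not finite in general (on an abelian variety every translation fixes every N\'eron--Severi class), so ``an automorphism fixing an ample class has finite order'' is simply false without a further hyperbolicity input. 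What fixing $[L]$ really gives is that $\sigma^m$ lies in a finite type subgroup scheme of $\mathrm{Aut}_{X/k}$; to conclude finite order you must also know that $\mathrm{Aut}^0_{X/k}$ is trivial, which is exactly what pseudo-$1$-boundedness buys via pseudo-grouplessness (Corollary \ref{cor:1b_is_gr} together with Lemma \ref{lem:aut0}, i.e.\ Corollary \ref{cor:hkp}) --- an ingredient you never invoke.

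The complementary case (infinite order on $\NS(X)$) also has a gap: $1$-boundedness modulo $\Delta$ only bounds $\deg_C f^{\ast}L$ for morphisms $f\colon C\to X$ whose image is \emph{not contained in} $\Delta$, so your contradiction needs $\sigma^n(C)\not\subset\Delta$, equivalently $C\not\subset\sigma^{-n}(\Delta)$, for infinitely many $n$ --- not merely $C\not\subset\Delta$, which is all you arrange. Your parenthetical ``after a field extension and then descent'' gestures at choosing a very general curve over an uncountable field, but you neither state these countably many conditions nor justify that $1$-boundedness modulo $\Delta$ persists under the extension (Proposition \ref{prop:trivs} as stated covers algebraic hyperbolicity and boundedness modulo $\Delta$). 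The irony is that you had the right tool at the start and then discarded it: the paper's proof takes the Amerik point $x$ whose forward orbit lies in $X\setminus\Delta$, lets $C$ be a smooth complete intersection of divisors in $|L|$ through $x$, and notes $\sigma^m(C)\ni\sigma^m(x)\notin\Delta$, so $\deg_C(\sigma^m)^{\ast}L\le\alpha$ for all $m$; hence all iterates lie in a finite type subgroup scheme of $\mathrm{Aut}_{X/k}$, some power lands in $\mathrm{Aut}^0_{X/k}$, and the latter is trivial by pseudo-grouplessness --- no $\NS$-dichotomy and no field extension needed. With the $\mathrm{Aut}^0$-triviality input added and the curve routed through the Amerik orbit your argument can be repaired, but as written both branches fail.
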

 \begin{proof}  
Let $\Delta\subsetneq X$ be a proper closed subset of $X$ such that $X$ is $1$-bounded modulo $\Delta$ over $k$.  Let $f\in \Aut_k(X)$ and note that, by Amerik's theorem (Theorem \ref{thm:Amerik}),    there is a point $x$ in $X(k)  $ such that the forward $f$-orbit  $\{x, f(x), f^2(x),\ldots\}$ of $x$ is     contained in $X(k)\setminus \Delta$. Let $L$ be an ample line bundle on $X$, and let $C$ be a smooth complete intersection of divisors in $\vert L\vert $ containing $x$. Then, as  $X$ is $1$-bounded modulo $\Delta$ over $k$, there is a real number $\alpha$ depending only on $C$, $X$, $\Delta$, and $L$, such that, for every $m\geq 1$, the inequality
 \[\deg_C (f^{m})^\ast L \leq \alpha\]
 holds. This implies that the iterates of $f$ are contained in a finite type subgroup scheme of $\mathrm{Aut}_{X/k}$. In particular, there is an integer $m$ such that $f^m$ is contained in $\mathrm{Aut}_{X/k}^0$. However, as $X$ is  pseudo-$1$-bounded, it follows from its pseudo-grouplessness (Corollary \ref{cor:1b_is_gr}) that $\mathrm{Aut}^0_{X/k}$ is trivial (see Corollary \ref{cor:hkp}). This shows that $f^m$ is trivial, and thus that $f$ has finite order. This concludes the proof of the lemma.
 \end{proof}
 
As we will see below,  the fact that the automorphism group of $X$ is torsion can be leveraged to show that the birational automorphism group of $X$ is torsion. To do so, we will use the following well-known theorem of Weil.
 
 \begin{theorem}[Weil's Regularization Theorem]\label{thm:weil}
 Let $X$ be a projective integral variety over $k$ and let $f:X\dashrightarrow X$ be a birational self-map. Let $L$ be an ample line bundle on $X$ and let $C\subset X$ be a smooth one-dimensional connected complete intersection  of   divisors in $|L|$ and  let   $\alpha \in \RR$  be such that, for all $m\geq 1$, the inequality  
  $$\deg_C (f^m)^{\ast} L \leq \alpha$$ holds. Then, there is a projective integral variety $Y$ birational to $X$ such that the birational self-map $f:Y\dashrightarrow Y $ defined by $f$ extends to an automorphism $F:Y\to Y$. 
 \end{theorem}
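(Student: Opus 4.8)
The plan is to realize $f$ inside an action of a connected commutative algebraic group and then invoke Weil's classical regularization theorem for birational group actions. Write $d=\dim X$, and for $m\in\ZZ$ let $\Gamma_m\subset X\times X$ be the closure of the graph of $f^m$, with projections $p_1,p_2\colon\Gamma_m\to X$. Both $p_i$ are birational morphisms (as $f^m$ is birational), so $p_i^\ast L$ is nef on the irreducible projective variety $\Gamma_m$, we have $[\Gamma_m]\cdot(p_1^\ast L)^d=[\Gamma_m]\cdot(p_2^\ast L)^d=L^d$ by the projection formula, and the hypothesis is precisely $[\Gamma_m]\cdot(p_1^\ast L)^{d-1}\cdot(p_2^\ast L)=\deg_C(f^m)^\ast L\le\alpha$ for all $m\ge 1$.

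The first step is to show that the $\Gamma_m$, $m\in\ZZ$, lie in a bounded family of cycles on $X\times X$. By the Khovanskii--Teissier inequalities for the nef classes $p_1^\ast L$ and $p_2^\ast L$ on $\Gamma_m$, the numbers $a_j:=[\Gamma_m]\cdot(p_1^\ast L)^{d-j}\cdot(p_2^\ast L)^j$ form a log-concave sequence in $j$; together with $a_0=a_d=L^d$ and $a_1\le\alpha$ this bounds every $a_j$ by a quantity independent of $m\ge 1$, and interchanging the two factors of $X\times X$ (which carries $\Gamma_{-m}$ to $\Gamma_m$ and $a_j$ to $a_{d-j}$) gives the same bounds for $m\le -1$. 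Hence, fixing a very ample line bundle of the form $p_1^\ast L^{\otimes a}\otimes p_2^\ast L^{\otimes a}$ on $X\times X$, all the cycles $\Gamma_m$ have bounded degree, so they lie in a finite-type closed subset of the Chow variety $\mathrm{Chow}(X\times X)$.

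Next I would let $G$ be the reduced Zariski closure of $\{[\Gamma_m]:m\in\ZZ\}$ in that Chow variety, which is projective by Step 1. Composition of correspondences defines a rational map $\mathrm{Chow}(X\times X)\times\mathrm{Chow}(X\times X)\dashrightarrow\mathrm{Chow}(X\times X)$ which on graphs is the honest composition and satisfies $[\Gamma_m]\ast[\Gamma_n]=[\Gamma_{m+n}]$; as in Weil's work this commutative, associative, unital rational law with inverses upgrades (via a group-chunk argument) to the structure of a connected commutative algebraic group on $G$ (or a birational model thereof), in which $m\mapsto[\Gamma_m]$ has dense image and which acts birationally on $X$ through the universal correspondence, compatibly with the $\ZZ$-action generated by $f$. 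Applying Weil's theorem on the regularization of birational actions of connected algebraic groups then yields a projective integral variety $Y$ and a birational map $X\dashrightarrow Y$ on which this $G$-action is biregular; normalizing and, if necessary, reprojectivizing keeps $Y$ projective and integral. In particular $f$, being the action of $[\Gamma_1]\in G(k)$, extends to an automorphism $F\colon Y\to Y$ whose inverse is the action of $[\Gamma_{-1}]$, which is the desired model.

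The routine part is Step 1: pure intersection theory on $X\times X$, namely Khovanskii--Teissier together with the two trivial identities $[\Gamma_m]\cdot(p_i^\ast L)^d=L^d$. I expect the main obstacle to be Step 2 and the invocation of Weil's theorem: one must verify that composition of correspondences is well-behaved enough in families that the Chow-closure $G$ genuinely carries an algebraic group structure, and that the resulting $G$-action on $X$ is birational in the precise sense required by Weil's regularization theorem. An alternative that sidesteps the explicit group packaging, in the spirit of Cantat, is to construct $Y$ directly as the normalization of a component of an orbit closure inside a suitable Hilbert or Chow scheme built from the bounded family of graphs, and to check biregularity of $f$ by hand; in either approach the delicate content is the passage from ``the graphs of the iterates of $f$ form a bounded family'' to ``there is a birational model of $X$ on which $f$ is biregular.''
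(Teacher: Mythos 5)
The paper does not reprove this statement at all: its ``proof'' is a one-line citation to Cantat's regularization theorem \cite[Theorem~2.5]{Cantat2}, which is exactly the assertion that a birational self-map whose degree sequence $\deg_C(f^m)^\ast L$ is bounded is conjugate, on some projective birational model, to an automorphism. Your proposal instead sketches the proof of that black box, following the classical Weil--Cantat route, so it is not wrong but it is also not an alternative to the paper's argument: it is an attempt to re-derive the citation. Your Step 1 is fine and is the standard reduction: on $\Gamma_m$ both projections are birational, $a_0=a_d=L^d$, $a_1=\deg_C(f^m)^\ast L\le\alpha$, and Khovanskii--Teissier log-concavity (valid for nef classes on the possibly singular $\Gamma_m$, with the middle terms nonzero since the endpoints are) bounds all $a_j$, hence all iterates' graphs lie in a bounded family; the transpose trick handles $m<0$ even though the hypothesis is only for $m\ge 1$.

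Step 2, however, is precisely where the content of the cited theorem lives, and your sketch leaves it at the level of a plan. Concretely: (i) the Zariski closure $G$ of $\{[\Gamma_m]\}$ in the Chow variety need not be irreducible and will in general contain limit cycles that are not graphs of birational maps (cycles with contracted or ``vertical'' components), so one must first pass to the locus parametrizing honest birational maps of bounded degree and verify that composition is a morphism/rational map there in families before any group-chunk argument applies -- this is the delicate structure theory of bounded-degree parts of $\mathrm{Bir}(X)$ that Cantat (and, in more generality, Blanc--Furter) establish; and (ii) Weil's regularization theorem for the resulting birational $G$-action produces a model on which the action is regular, but not a \emph{projective} one, and the statement you must prove demands a projective integral $Y$; upgrading to a projective equivariant model (Sumihiro-type equivariant completion/quasi-projectivity, plus normalization) is a genuine step that your phrase ``normalizing and, if necessary, reprojectivizing'' does not supply. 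None of these steps would fail -- they are exactly what the literature does -- but as written your argument proves the theorem only modulo the very result the paper quotes, so if the intent is a self-contained proof, points (i) and (ii) must either be carried out or replaced by the citation, at which point Step 1 becomes unnecessary as well.
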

 \begin{proof} This follows from
\cite[Theorem~2.5]{Cantat2}.  
 \end{proof}

 In the proof of the following   result we will   use Amerik's theorem,  Weil's regularization theorem, and the theory of dynamical degrees. It is the ``geometric'' analogue of Theorem \ref{thm:amerik2}.

 \begin{proposition}\label{prop:bir_is_finite}    Let $X$ be a projective integral pseudo-$1$-bounded variety over $k$. Then,   every   dominant rational self-map $X\dashrightarrow X$ is   birational, and $\mathrm{Bir}_k(X)$ is a  torsion group.
 \end{proposition}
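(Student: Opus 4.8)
The plan is to reduce the statement to Amerik's theorem, Weil's regularization theorem, and Lemma~\ref{lem:new} via the theory of dynamical degrees. Let $f\colon X\dashrightarrow X$ be a dominant rational self-map. First I would handle the case where $f$ is birational. Suppose for contradiction that $f$ has infinite order. The idea is that, since $X$ is pseudo-$1$-bounded, the degrees $\deg_C(f^m)^\ast L$ are forced to be bounded, which via Weil (Theorem~\ref{thm:weil}) produces a projective model $Y$ birational to $X$ on which $f$ becomes an \emph{automorphism}; but $Y$ is pseudo-$1$-bounded by birational invariance (Proposition~\ref{prop:bir_inv_boundedness}), so $\Aut_k(Y)$ is torsion (Lemma~\ref{lem:new}), contradicting the infinite order of $f$. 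To obtain the boundedness of $\deg_C(f^m)^\ast L$, I would fix an ample line bundle $L$ on $X$, use Amerik's theorem (Theorem~\ref{thm:Amerik}) to produce a point $x\in X(k)$ with infinite forward orbit contained in $X\setminus(\Delta\cup \mathrm{In}(f)\cup\bigcup_m \mathrm{In}(f^m))$ — here one needs the orbit to avoid the (countably many) indeterminacy loci, which Amerik's theorem delivers by applying it on the open complement — then choose a smooth one-dimensional complete intersection $C$ of divisors in $|L|$ through $x$ with $C\not\subset\Delta$, so that for every $m\geq 1$ the composite $f^m|_C$ is a well-defined morphism $C\to X$ whose image is not contained in $\Delta$, and conclude $\deg_C(f^m)^\ast L\leq \alpha(X,\Delta,L,C)$ from pseudo-$1$-boundedness.

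Next I would treat a general dominant rational self-map $f$, the goal being to show it is automatically birational. Here the theory of dynamical degrees enters: for a dominant rational self-map $f$ of a projective variety of dimension $n$, the first dynamical degree $\lambda_1(f)$ is a birational invariant, and if $f$ is not birational then the last dynamical degree $\lambda_n(f)=\deg(f)>1$ grows; more precisely the sequence $\deg_C(f^m)^\ast L$ would have to be unbounded, since the topological degree being $>1$ forces the ``volume'' of iterated pullbacks to grow. I would argue that if $\deg_C(f^m)^\ast L$ is bounded for a complete-intersection curve $C$ as above (which again follows from Amerik plus pseudo-$1$-boundedness, exactly as in the birational case — the orbit-point construction does not use birationality of $f$), then in fact $f$ must be birational: otherwise the pullbacks $(f^m)^\ast L$ grow at a rate governed by $\lambda_1(f)$ or by the topological degree, contradicting the linear (indeed bounded) growth along $C$. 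Once $f$ is known to be birational, the first paragraph shows it has finite order, and hence $\mathrm{Bir}_k(X)$ is torsion.

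The main obstacle I anticipate is the step asserting that bounded growth of $\deg_C(f^m)^\ast L$ along a fixed complete-intersection curve forces $f$ to be birational. This requires care with the definition of dynamical degrees for rational (not necessarily birational) self-maps, the fact that $\deg_C(f^m)^\ast L$ controls the relevant intersection numbers $(f^m)^\ast L\cdot L^{n-1}$ up to the fixed constant $\deg(C)=L^n$, and the comparison of this intersection number with $\lambda_1(f)^m$ and with the topological degree $\lambda_n(f)^m$. One must also be slightly careful that the iterates $f^m$ are genuinely dominant rational self-maps (so that pullbacks and dynamical degrees are defined) — this is automatic since $f$ is dominant — and that the complete-intersection curve $C$ can be chosen to meet the infinite orbit while avoiding all the indeterminacy loci, which relies on applying Amerik's theorem to the open set obtained by deleting the countable union of these loci together with $\Delta$. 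The birational case, by contrast, is routine given Theorem~\ref{thm:weil} and Lemma~\ref{lem:new}.
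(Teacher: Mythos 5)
Your route is the same as the paper's: Amerik's theorem produces a point $x$ whose forward orbit avoids $\Delta$, a smooth one-dimensional complete intersection $C$ of divisors in $|L|$ through $x$ together with $1$-boundedness modulo $\Delta$ bounds $\deg_C(f^m|_C)^\ast L$ uniformly in $m$, dynamical degrees rule out $\deg f>1$, and Weil regularization (Theorem \ref{thm:weil}) plus birational invariance (Proposition \ref{prop:bir_inv_boundedness}) and Lemma \ref{lem:new} give finite order. However, the step you yourself single out as the main obstacle is exactly where a specific input is required, and you do not supply it. The bounded quantity is controlled by $L^{d-1}\cdot(f^m)^\ast L$ (with $d=\dim X$ and $[C]=L^{d-1}$), whose growth rate is governed by the \emph{first} dynamical degree $\lambda_1(f)$, whereas non-birationality only gives $\lambda_d(f)=\deg f>1$; the topological degree does not directly force $L^{d-1}\cdot(f^m)^\ast L$ to grow. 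The paper bridges this gap by invoking the log-concavity of the dynamical degree sequence $(\lambda_0,\dots,\lambda_d)$, which yields $\lambda_d\leq\lambda_1^{\,d}$, hence $\lambda_1\geq(\deg f)^{1/d}>1$ and therefore $L^{d-1}\cdot(f^m)^\ast L\to\infty$, contradicting the uniform bound along $C$. Without this (or an equivalent submultiplicativity/interpolation statement) the contradiction in the non-birational case is not established; saying the comparison ``requires care'' leaves the proof open precisely at its crux.

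Two smaller points. First, you should begin by resolving singularities, which is permitted since pseudo-$1$-boundedness is a birational invariant (Proposition \ref{prop:bir_inv_boundedness}): smoothness of $X$ is what guarantees a \emph{smooth} connected complete-intersection curve in $|L|$ through the Amerik point $x$, and smoothness of $C$ is needed both to apply $1$-boundedness (which bounds maps from smooth projective curves) and in the hypothesis of Theorem \ref{thm:weil}. Second, you cannot literally apply Amerik's theorem on the complement of $\Delta\cup\bigcup_m\mathrm{In}(f^m)$, since a complement of countably many closed subsets is not open; but this is unnecessary. Applying Theorem \ref{thm:Amerik} with $U=X\setminus\Delta$ already gives an orbit avoiding $\mathrm{In}(f)$ at every step, so $f^m$ is defined at $x$ for all $m$, hence $C\not\subset\mathrm{In}(f^m)$ and $f^m|_C$ extends to a morphism $C\to X$ (as $C$ is a smooth curve and $X$ is projective) whose image is not contained in $\Delta$, which is all the degree bound needs.
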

 \begin{proof}  Since pseudo-$1$-boundedness is a birational invariant (Proposition \ref{prop:bir_inv_boundedness}), by resolving the singularities of $X$, we may and do assume that $X$ is smooth over $k$.  
 
 Let $\Delta\subsetneq X$ be a proper closed subset of $X$ such that $X$ is $1$-bounded modulo $\Delta$ over $k$.
Let $f:X\dashrightarrow X$ be  a dominant rational self-map of $X$ over $k$.
 
By Amerik's theorem (Theorem \ref{thm:Amerik}), there is a point $x$ in $X(k)$ such that the forward $f$-orbit $\{x,f(x),f^2(x),\ldots\}$  is well-defined and contained in $X(k)\setminus \Delta$. Let $L$ be an ample line bundle on $X$, and let  $C$ be a smooth one-dimensional connected complete intersection of divisors in $\vert L\vert$ containing $x$.
 Since  $f^m(x) \not\in \Delta$ for every $m\geq 0$ by construction,    the morphisms 
 $f^m|_C:C\to X$  define elements of the finite type $k$-scheme $\underline{\Hom}_k(C,X)\setminus \underline{\Hom}_k(C,\Delta)$. Therefore,   there is a real number $\alpha>0$ depending only on $C$, $L$, $\Delta$, and $X$ such that, for all $m\geq 0$, the inequality 
\begin{align}\label{ineq}
\deg_C (\left(f^{m}|_C\right)^{\ast} L) \leq \alpha
\end{align} holds.

We now show that $f$ is birational. Suppose for a contradiction that $f$ is not birational.  We now use properties of dynamical degrees to deduce a contradiction.  Let $d=\dim X$. Then, for  $i=0,\ldots,d$, the $i$-th dynamical degree of $f$ is defined by 
$$\lambda_i:=\lim_{n\to \infty}(L^{d-i}\cdot (  (f^{n})^\ast L)^i)^{1/n}, $$
where we write $L^m$ for the $m$-th self-intersection number of $L$ on $X$. The fact that this is a well-defined non-negative real number is proven in \cite{DinhSibony1}.
Note that $\lambda_0=1$ and that $\lambda_d$ is the (generic) degree   of $f$. 
Moreover, the $i$-th dynamical degree of $f$ is independent of the choice of $L$ and the sequence $(\lambda_0,\ldots, \lambda_d)$ is log-concave \cite[Theorem~1.1.(3)]{TTT} (see also \cite{Dan, DinhNguyen, DinhSibony2, OguisoDyn}), 
so that 
$$  \lambda_d\leq \lambda_1^d.$$
Since $\deg f>1$ (by assumption), we have that $\lambda_d>1$. In particular, we have $\lambda_1^d>1$, and thus $\lambda_1 >1$. It follows that $(L^{d-1}\cdot (f^m)^*L)$ tends to infinity as $m$ tends to infinity. By our definition of $C$, it follows that $\deg_C ((f^{m})^\ast L) $ tends to infinity as $m$ tends to infinity. This contradicts   inequality (\ref{ineq}) above. We conclude that $f$ is birational.

Since $f$ is birational, we may apply Weil's regularization theorem  (Theorem \ref{thm:weil}) to see that   there is a projective integral variety $Y$ over $k$ which is birational to $X$ and such that $f:X\dashrightarrow X$ defines an automorphism $F:Y\to Y$. However, as    $Y$ is birational to $X$, it follows that $Y$ is pseudo-$1$-bounded over $k$ (Proposition \ref{prop:bir_inv_boundedness}), so that $\Aut_k(Y)$ is torsion (Lemma \ref{lem:new}).  Therefore, $F$ has finite order. We conclude that $f$ has finite order, as required. 
 \end{proof}
 
 With Proposition \ref{prop:bir_is_finite} at hand, we are now ready to prove the finiteness of the set of dominant rational self-maps of a pseudo-$1$-bounded projective variety over $k$.
 \begin{proof}[Proof of Theorem \ref{thm:birs2}]
 Since $X$ is pseudo-$1$-bounded over $k$, it is clear that  $X$ is non-uniruled.  Moreover, by Proposition \ref{prop:bir_is_finite}, the group $\mathrm{Bir}_k(X)$ is torsion and every dominant rational self-map $X\dashrightarrow X$ is birational.  As $X$ is non-uniruled and $\Bir_k(X)$ is torsion, it follows from  Theorem \ref{thm:tor} that  $\mathrm{Bir}_k(X)$ is finite.   This concludes the proof.
 \end{proof}

 \begin{corollary}\label{cor:birs2}
 If $X$ is a pseudo-bounded (resp. pseudo-algebraically hyperbolic) projective integral variety over $k$, then the set of dominant rational self-maps $X\dashrightarrow X$ is finite.
 \end{corollary}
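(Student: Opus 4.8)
The plan is to reduce both cases to Theorem \ref{thm:birs2}, which already establishes the finiteness of the set of dominant rational self-maps for a pseudo-$1$-bounded projective integral variety. So the only thing to verify is that a pseudo-bounded variety and a pseudo-algebraically hyperbolic variety are each pseudo-$1$-bounded over $k$.

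For the pseudo-bounded case this is immediate from the definitions: a pseudo-bounded projective variety over $k$ is by definition pseudo-$n$-bounded over $k$ for every integer $n\geq 1$, and in particular for $n=1$. Hence $X$ is pseudo-$1$-bounded and Theorem \ref{thm:birs2} applies directly.

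For the pseudo-algebraically hyperbolic case, one chooses a proper closed subset $\Delta\subsetneq X$ such that $X$ is algebraically hyperbolic modulo $\Delta$ over $k$. By Proposition \ref{prop:trivs}.(1), the variety $X$ is then bounded modulo $\Delta$ over $k$; unwinding the definition of boundedness modulo $\Delta$ in the special case $n=1$, this says in particular that for every normal projective variety $Y$ of dimension at most one the scheme $\underline{\Hom}_k(Y,X)\setminus \underline{\Hom}_k(Y,\Delta)$ is of finite type over $k$, i.e.\ $X$ is $1$-bounded modulo $\Delta$. Therefore $X$ is pseudo-$1$-bounded over $k$, and the conclusion again follows from Theorem \ref{thm:birs2}.

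There is no genuine obstacle here: the entire content lies in Theorem \ref{thm:birs2} together with the implications among the various boundedness notions recorded in Section \ref{section:alg}. The only point requiring a moment's care is that ``bounded modulo $\Delta$'' incorporates the case $n=1$, so that the passage from algebraic hyperbolicity modulo $\Delta$ to $1$-boundedness modulo $\Delta$ is lossless; once this is noted, the proof is a one-line appeal to Theorem \ref{thm:birs2} in each case.
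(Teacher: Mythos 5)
Your proposal is correct and matches the paper's argument, which simply observes that in both cases $X$ is pseudo-$1$-bounded (the pseudo-bounded case by definition, the pseudo-algebraically hyperbolic case via the implication recorded in Proposition \ref{prop:trivs}) and then invokes Theorem \ref{thm:birs2}. Your spelled-out verification of the $n=1$ case of boundedness modulo $\Delta$ is just a more explicit rendering of what the paper calls ``obvious.''
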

 \begin{proof}
As $X$ is obviously pseudo-$1$-bounded over $k$, this follows from   Theorem \ref{thm:birs2}.
 \end{proof}

 \bibliography{refs_new}{}

\def\cprime{$'$}
\begin{thebibliography}{10}

\bibitem{Abr}
D.~Abramovich.
\newblock Uniformity of stably integral points on elliptic curves.
\newblock {\em Invent. Math.}, 127(2):307--317, 1997.

\bibitem{Amerik2011}
E.~Amerik.
\newblock Existence of non-preperiodic algebraic points for a rational self-map
  of infinite order.
\newblock {\em Math. Res. Lett.}, 18(2):251--256, 2011.

\bibitem{AscherTurchet}
K.~Ascher and A.~Turchet.
\newblock A fibered power theorem for pairs of log general type.
\newblock {\em Algebra Number Theory}, 10(7):1581--1600, 2016.

\bibitem{ATdV}
K.~Ascher, A.~Turchet, and K.~deVleming.
\newblock Hyperbolicity and uniformity of varieties of log general type.
\newblock {\em arXiv:1807.05946}.

\bibitem{Autissier1}
P.~Autissier.
\newblock G\'eom\'etries, points entiers et courbes enti\`eres.
\newblock {\em Ann. Sci. \'Ec. Norm. Sup\'er. (4)}, 42(2):221--239, 2009.

\bibitem{Autissier2}
P.~Autissier.
\newblock Sur la non-densit\'e des points entiers.
\newblock {\em Duke Math. J.}, 158(1):13--27, 2011.

\bibitem{BauerStoll}
I.~Bauer and M.~Stoll.
\newblock Geometry and arithmetic of primary {B}urniat surfaces.
\newblock {\em Math. Nachr.}, 290(14-15):2132--2153, 2017.

\bibitem{BellGhioca2}
J.~P. Bell and D.~Ghioca.
\newblock Periodic subvarieties of semiabelian varieties and annihilators of
  irreducible representations.
\newblock {\em Adv. Math.}, 349:459--487, 2019.

\bibitem{BellBook}
J.~P. Bell, D.~Ghioca, and T.~J. Tucker.
\newblock {\em The dynamical {M}ordell-{L}ang conjecture}, volume 210 of {\em
  Mathematical Surveys and Monographs}.
\newblock American Mathematical Society, Providence, RI, 2016.

\bibitem{Ben2012}
R.~L. Benedetto, D.~Ghioca, P.~Kurlberg, and T.~J. Tucker.
\newblock A case of the dynamical {M}ordell-{L}ang conjecture.
\newblock {\em Math. Ann.}, 352(1):1--26, 2012.
\newblock With an appendix by Umberto Zannier.

\bibitem{vBJK}
R.~van Bommel, A.~Javanpeykar, and L.~Kamenova.
\newblock Boundedness in families with applications to arithmetic
  hyperbolicity.
\newblock {\em arXiv:1907.11225}.

\bibitem{CampanaOr}
F.~Campana.
\newblock Orbifolds, special varieties and classification theory.
\newblock {\em Ann. Inst. Fourier (Grenoble)}, 54(3):499--630, 2004.

\bibitem{Campana}
F.~Campana.
\newblock Orbifoldes g\'eom\'etriques sp\'eciales et classification
  bim\'eromorphe des vari\'et\'es k\"ahl\'eriennes compactes.
\newblock {\em J. Inst. Math. Jussieu}, 10(4):809--934, 2011.

\bibitem{Cantat2}
S.~Cantat.
\newblock Morphisms between {C}remona groups, and characterization of rational
  varieties.
\newblock {\em Compos. Math.}, 150(7):1107--1124, 2014.

\bibitem{CHM}
L.~Caporaso, J.~Harris, and B.~Mazur.
\newblock Uniformity of rational points.
\newblock {\em J. Amer. Math. Soc.}, 10(1):1--35, 1997.

\bibitem{ConradChevalley}
B.~Conrad.
\newblock A modern proof of {C}hevalley's theorem on algebraic groups.
\newblock {\em J. Ramanujan Math. Soc.}, 17(1):1--18, 2002.

\bibitem{CornellSilverman}
G.~Cornell and J.~H. Silverman, editors.
\newblock {\em Arithmetic geometry}.
\newblock Springer-Verlag, New York, 1986.
\newblock Papers from the conference held at the University of Connecticut,
  Storrs, Connecticut, July 30--August 10, 1984.

\bibitem{CLZ}
P.~Corvaja, A.~Levin, and U.~Zannier.
\newblock Integral points on threefolds and other varieties.
\newblock {\em Tohoku Math. J. (2)}, 61(4):589--601, 2009.

\bibitem{Dan}
N.-B. Dang.
\newblock Degrees of iterates of rational maps on normal projective varieties.
\newblock {\em arXiv:1701.07760}.

\bibitem{Debarrebook1}
O.~Debarre.
\newblock {\em Higher-dimensional algebraic geometry}.
\newblock Universitext. Springer-Verlag, New York, 2001.

\bibitem{Demailly}
J.-P. Demailly.
\newblock Algebraic criteria for {K}obayashi hyperbolic projective varieties
  and jet differentials.
\newblock In {\em Algebraic geometry---{S}anta {C}ruz 1995}, volume~62 of {\em
  Proc. Sympos. Pure Math.}, pages 285--360. Amer. Math. Soc., Providence, RI,
  1997.

\bibitem{DeschampsBogomolov}
M.~Deschamps.
\newblock Courbes de genre g\'{e}om\'{e}trique born\'{e} sur une surface de
  type g\'{e}n\'{e}ral [d'apr\`es {F}. {A}. {B}ogomolov].
\newblock In {\em S\'{e}minaire {B}ourbaki, 30e ann\'{e}e (1977/78)}, volume
  710 of {\em Lecture Notes in Math.}, pages Exp. No. 519, pp. 233--247.
  Springer, Berlin, 1979.

\bibitem{Dimitrov}
M.~Dimitrov and D.~Ramakrishnan.
\newblock Arithmetic quotients of the complex ball and a conjecture of {L}ang.
\newblock {\em Doc. Math.}, 20:1185--1205, 2015.

\bibitem{DinhNguyen}
T.-C. Dinh and V.-A. Nguy\^{e}n.
\newblock The mixed {H}odge-{R}iemann bilinear relations for compact
  {K}\"{a}hler manifolds.
\newblock {\em Geom. Funct. Anal.}, 16(4):838--849, 2006.

\bibitem{DinhSibony1}
T.-C. Dinh and N.~Sibony.
\newblock Une borne sup\'{e}rieure pour l'entropie topologique d'une
  application rationnelle.
\newblock {\em Ann. of Math. (2)}, 161(3):1637--1644, 2005.

\bibitem{DinhSibony2}
T.-C. Dinh and N.~Sibony.
\newblock Equidistribution problems in complex dynamics of higher dimension.
\newblock {\em Internat. J. Math.}, 28(7):1750057, 31, 2017.

\bibitem{Fakhruddin}
N.~Fakhruddin.
\newblock The algebraic dynamics of generic endomorphisms of {$\Bbb{P}^n$}.
\newblock {\em Algebra Number Theory}, 8(3):587--608, 2014.

\bibitem{Faltings2}
G.~Faltings.
\newblock Endlichkeitss\"atze f\"ur abelsche {V}ariet\"aten \"uber
  {Z}ahlk\"orpern.
\newblock {\em Invent. Math.}, 73(3):349--366, 1983.

\bibitem{FaltingsComplements}
G.~Faltings.
\newblock Complements to {M}ordell.
\newblock In {\em Rational points ({B}onn, 1983/1984)}, Aspects Math., E6,
  pages 203--227. Vieweg, Braunschweig, 1984.

\bibitem{FaltingsLang}
G.~Faltings.
\newblock The general case of {S}. {L}ang's conjecture.
\newblock In {\em Barsotti {S}ymposium in {A}lgebraic {G}eometry ({A}bano
  {T}erme, 1991)}, volume~15 of {\em Perspect. Math.}, pages 175--182. Academic
  Press, San Diego, CA, 1994.

\bibitem{FreyJarden}
G.~Frey and M.~Jarden.
\newblock Approximation theory and the rank of abelian varieties over large
  algebraic fields.
\newblock {\em Proc. London Math. Soc. (3)}, 28:112--128, 1974.

\bibitem{GhiTuc2009}
D.~Ghioca and T.~J. Tucker.
\newblock Periodic points, linearizing maps, and the dynamical {M}ordell-{L}ang
  problem.
\newblock {\em J. Number Theory}, 129(6):1392--1403, 2009.

\bibitem{GreenGriffithsTwoApps}
M.~Green and P.~Griffiths.
\newblock Two applications of algebraic geometry to entire holomorphic
  mappings.
\newblock In {\em The {C}hern {S}ymposium 1979 ({P}roc. {I}nternat. {S}ympos.,
  {B}erkeley, {C}alif., 1979)}, pages 41--74. Springer, New York-Berlin, 1980.

\bibitem{HassettTschinkel}
B.~Hassett and Y.~Tschinkel.
\newblock Abelian fibrations and rational points on symmetric products.
\newblock {\em Internat. J. Math.}, 11(9):1163--1176, 2000.

\bibitem{HHH}
E.~Hrushovski.
\newblock The {E}lementary {T}heory of the {F}robenius {A}utomorphisms.
\newblock {\em arXiv:math/0406514}.

\bibitem{ShangZhang}
F.~Hu, S.~Meng, and D.-Q. Zhang.
\newblock Ampleness of canonical divisors of hyperbolic normal projective
  varieties.
\newblock {\em Math. Z.}, 278(3-4):1179--1193, 2014.

\bibitem{HuybrechtsBook}
D.~Huybrechts.
\newblock {\em Lectures on {K}3 surfaces}, volume 158 of {\em Cambridge Studies
  in Advanced Mathematics}.
\newblock Cambridge University Press, Cambridge, 2016.

\bibitem{HKP}
J.-M. Hwang, S.~Kebekus, and T.~Peternell.
\newblock Holomorphic maps onto varieties of non-negative {K}odaira dimension.
\newblock {\em J. Algebraic Geom.}, 15(3):551--561, 2006.

\bibitem{Iitaka}
S.~Iitaka.
\newblock {\em Algebraic geometry}, volume~76 of {\em Graduate Texts in
  Mathematics}.
\newblock Springer-Verlag, New York-Berlin, 1982.
\newblock An introduction to birational geometry of algebraic varieties,
  North-Holland Mathematical Library, 24.

\bibitem{JAut}
A.~Javanpeykar.
\newblock Arithmetic hyperbolicity: endomorphisms, automorphisms, hyperk\"ahler
  varieties, geometricity.
\newblock {\em arXiv:1809.06818}.

\bibitem{JBook}
A.~Javanpeykar.
\newblock The {L}ang-{V}ojta conjectures on projective pseudo-hyperbolic
  varieties.
\newblock {\em CRM Short Courses Springer, to appear. arXiv:2002.11981}.

\bibitem{JKa}
A.~Javanpeykar and L.~Kamenova.
\newblock Demailly's notion of algebraic hyperbolicity: geometricity,
  boundedness, moduli of maps.
\newblock {\em Math. Zeitschrift, to appear. arXiv:1807.03665}.

\bibitem{JKuch}
A.~Javanpeykar and R.~A. Kucharczyk.
\newblock Algebraicity of analytic maps to a hyperbolic variety.
\newblock {\em Math. Nachr. 293 (2020), no. 8 (August)}.

\bibitem{JLevin}
A.~Javanpeykar and A.~Levin.
\newblock Urata's theorem in the logarithmic case and applications to integral
  points.
\newblock {\em arXiv:2002.11709}.

\bibitem{JLitt}
A.~Javanpeykar and D.~Litt.
\newblock Integral points on algebraic subvarieties of period domains: from
  number fields to finitely generated fields.
\newblock {\em arXiv:1907.13536}.

\bibitem{JLalg}
A.~Javanpeykar and D.~Loughran.
\newblock Arithmetic hyperbolicity and a stacky {C}hevalley-{W}eil theorem.
\newblock {\em arXiv:1808.09876}.

\bibitem{JL}
A.~Javanpeykar and D.~Loughran.
\newblock Complete intersections: moduli, {T}orelli, and good reduction.
\newblock {\em Math. Ann.}, 368(3-4):1191--1225, 2017.

\bibitem{JLFano}
A.~Javanpeykar and D.~Loughran.
\newblock Good reduction of {F}ano threefolds and sextic surfaces.
\newblock {\em Ann. Sc. Norm. Super. Pisa Cl. Sci. (5)}, 18(2):509--535, 2018.

\bibitem{JSZ}
A.~Javanpeykar, R.~Sun, and K.~Zuo.
\newblock The {S}hafarevich conjecture revisited: Finiteness of pointed
  families of polarized varieties.
\newblock {\em arXiv:2005.05933}.

\bibitem{JVez}
A.~Javanpeykar and A.~Vezzani.
\newblock Non-archimedean hyperbolicity and applications.
\newblock {\em arXiv:1808.09880}.

\bibitem{Jelonek}
Z.~Jelonek.
\newblock On the group of automorphisms of a quasi-affine variety.
\newblock {\em Math. Ann.}, 362(1-2):569--578, 2015.

\bibitem{Kawamata}
Y.~Kawamata.
\newblock On {B}loch's conjecture.
\newblock {\em Invent. Math.}, 57(1):97--100, 1980.

\bibitem{KiernanKobayashi}
P.~Kiernan and S.~Kobayashi.
\newblock Satake compactification and extension of holomorphic mappings.
\newblock {\em Invent. Math.}, 16:237--248, 1972.

\bibitem{KobayashiBook}
S.~Kobayashi.
\newblock {\em Hyperbolic complex spaces}, volume 318 of {\em Grundlehren der
  Mathematischen Wissenschaften [Fundamental Principles of Mathematical
  Sciences]}.
\newblock Springer-Verlag, Berlin, 1998.

\bibitem{KollarMori}
J.~Koll\'{a}r and S.~Mori.
\newblock {\em Birational geometry of algebraic varieties}, volume 134 of {\em
  Cambridge Tracts in Mathematics}.
\newblock Cambridge University Press, Cambridge, 1998.
\newblock With the collaboration of C. H. Clemens and A. Corti, Translated from
  the 1998 Japanese original.

\bibitem{KovacsLieblich}
S.~Kov\'acs and M.~Lieblich.
\newblock Erratum for boundedness of families of canonically polarized
  manifolds: a higher-dimensional analogue of {S}hafarevich's conjecture.
\newblock {\em Ann. of Math. (2)}, 173(1):585--617, 2011.

\bibitem{KovacsSubs}
S.~J. Kov\'acs.
\newblock Subvarieties of moduli stacks of canonically polarized varieties:
  generalizations of {S}hafarevich's conjecture.
\newblock In {\em Algebraic geometry---{S}eattle 2005. {P}art 2}, volume~80 of
  {\em Proc. Sympos. Pure Math.}, pages 685--709. Amer. Math. Soc., Providence,
  RI, 2009.

\bibitem{Lang2}
S.~Lang.
\newblock Hyperbolic and {D}iophantine analysis.
\newblock {\em Bull. Amer. Math. Soc. (N.S.)}, 14(2):159--205, 1986.

\bibitem{Lazzie1}
R.~Lazarsfeld.
\newblock {\em Positivity in algebraic geometry. {I}}, volume~48 of {\em
  Ergebnisse der Mathematik und ihrer Grenzgebiete. 3. Folge. A Series of
  Modern Surveys in Mathematics [Results in Mathematics and Related Areas. 3rd
  Series. A Series of Modern Surveys in Mathematics]}.
\newblock Springer-Verlag, Berlin, 2004.
\newblock Classical setting: line bundles and linear series.

\bibitem{Lazzie2}
R.~Lazarsfeld.
\newblock {\em Positivity in algebraic geometry. {II}}, volume~49 of {\em
  Ergebnisse der Mathematik und ihrer Grenzgebiete. 3. Folge. A Series of
  Modern Surveys in Mathematics [Results in Mathematics and Related Areas. 3rd
  Series. A Series of Modern Surveys in Mathematics]}.
\newblock Springer-Verlag, Berlin, 2004.
\newblock Positivity for vector bundles, and multiplier ideals.

\bibitem{Levin}
A.~Levin.
\newblock Generalizations of {S}iegel's and {P}icard's theorems.
\newblock {\em Ann. of Math. (2)}, 170(2):609--655, 2009.

\bibitem{Liu2}
Q.~Liu.
\newblock {\em Algebraic geometry and arithmetic curves}, volume~6 of {\em
  Oxford Graduate Texts in Mathematics}.
\newblock Oxford University Press, Oxford, 2002.
\newblock Translated from the French by Reinie Ern{\'e}, Oxford Science
  Publications.

\bibitem{McQuillan}
M.~McQuillan.
\newblock Diophantine approximations and foliations.
\newblock {\em Inst. Hautes \'{E}tudes Sci. Publ. Math.}, (87):121--174, 1998.

\bibitem{Moriwaki}
A.~Moriwaki.
\newblock Remarks on rational points of varieties whose cotangent bundles are
  generated by global sections.
\newblock {\em Math. Res. Lett.}, 2(1):113--118, 1995.

\bibitem{Noguchi13}
J.~Noguchi.
\newblock Meromorphic mappings into compact hyperbolic complex spaces and
  geometric {D}iophantine problems.
\newblock {\em Internat. J. Math.}, 3(2):277--289, 1992.

\bibitem{OguisoDyn}
K.~Oguiso.
\newblock Some aspects of explicit birational geometry inspired by complex
  dynamics.
\newblock In {\em Proceedings of the {I}nternational {C}ongress of
  {M}athematicians---{S}eoul 2014. {V}ol. {II}}, pages 695--721. Kyung Moon Sa,
  Seoul, 2014.

\bibitem{PaRou}
G.~Pacienza and E.~Rousseau.
\newblock On the logarithmic {K}obayashi conjecture.
\newblock {\em J. Reine Angew. Math.}, 611:221--235, 2007.

\bibitem{Poonenqp}
B.~Poonen.
\newblock {$p$}-adic interpolation of iterates.
\newblock {\em Bull. Lond. Math. Soc.}, 46(3):525--527, 2014.

\bibitem{ProShramov2014}
Yuri Prokhorov and Constantin Shramov.
\newblock Jordan property for groups of birational selfmaps.
\newblock {\em Compos. Math.}, 150(12):2054--2072, 2014.

\bibitem{Rou1}
E.~Rousseau.
\newblock Hyperbolicity of geometric orbifolds.
\newblock {\em Trans. Amer. Math. Soc.}, 362(7):3799--3826, 2010.

\bibitem{Rou2}
E.~Rousseau.
\newblock Degeneracy of holomorphic maps via orbifolds.
\newblock {\em Bull. Soc. Math. France}, 140(4):459--484 (2013), 2012.

\bibitem{ARW}
E.~Rousseau, A.~Turchet, and J.~T.-H. Wang.
\newblock Nonspecial varieties and generalized lang-vojta conjectures.
\newblock {\em arXiv:2001.10229}.

\bibitem{Silverman}
J.~H. Silverman.
\newblock {\em The arithmetic of elliptic curves}, volume 106 of {\em Graduate
  Texts in Mathematics}.
\newblock Springer, Dordrecht, second edition, 2009.

\bibitem{stacks-project}
The {Stacks Project Authors}.
\newblock \emph{{S}tacks {P}roject}.
\newblock http://stacks.math.columbia.edu, 2015.

\bibitem{TTT}
T.~T. Truong.
\newblock Relative dynamical degrees of correspondences over a field of
  arbitrary characteristic.
\newblock {\em arXiv:1605.05049, Crelle, to appear.}

\bibitem{Ueno}
K.~Ueno.
\newblock {\em Classification theory of algebraic varieties and compact complex
  spaces}.
\newblock Lecture Notes in Mathematics, Vol. 439. Springer-Verlag, Berlin-New
  York, 1975.
\newblock Notes written in collaboration with P. Cherenack.

\bibitem{Urata}
T.~Urata.
\newblock Holomorphic mappings into taut complex analytic spaces.
\newblock {\em T\^{o}hoku Math. J. (2)}, 31(3):349--353, 1979.

\bibitem{Varshavsky}
Y.~Varshavsky.
\newblock Intersection of a correspondence with a graph of {F}robenius.
\newblock {\em J. Algebraic Geom.}, 27(1):1--20, 2018.

\bibitem{Vojta3}
P.~Vojta.
\newblock A higher-dimensional {M}ordell conjecture.
\newblock In {\em Arithmetic geometry ({S}torrs, {C}onn., 1984)}, pages
  341--353. Springer, New York, 1986.

\bibitem{VojtaSubb}
P.~Vojta.
\newblock A refinement of {S}chmidt's subspace theorem.
\newblock {\em Amer. J. Math.}, 111(3):489--518, 1989.

\bibitem{Vojta1a}
P.~Vojta.
\newblock Integral points on subvarieties of semiabelian varieties. {I}.
\newblock {\em Invent. Math.}, 126(1):133--181, 1996.

\bibitem{Vojta2b}
P.~Vojta.
\newblock Integral points on subvarieties of semiabelian varieties. {II}.
\newblock {\em Amer. J. Math.}, 121(2):283--313, 1999.

\bibitem{Vojta1}
P.~Vojta.
\newblock Diophantine approximation and {N}evanlinna theory.
\newblock In {\em Arithmetic geometry}, volume 2009 of {\em Lecture Notes in
  Math.}, pages 111--224. Springer, Berlin, 2011.

\bibitem{VojtaLangExc}
P.~Vojta.
\newblock A {L}ang exceptional set for integral points.
\newblock In {\em Geometry and analysis on manifolds}, volume 308 of {\em
  Progr. Math.}, pages 177--207. Birkh\"{a}user/Springer, Cham, 2015.

\bibitem{Xie0}
J.~Xie.
\newblock Periodic points of birational transformations on projective surfaces.
\newblock {\em Duke Math. J.}, 164(5):903--932, 2015.

\bibitem{Xie1}
J.~Xie.
\newblock The existence of {Z}ariski dense orbits for polynomial endomorphisms
  of the affine plane.
\newblock {\em Compos. Math.}, 153(8):1658--1672, 2017.

\bibitem{JinsongXu}
J.~Xu.
\newblock Finite $p$-groups of birational automorphisms and characterizations
  of rational varieties.
\newblock {\em arXiv:1809.09506}.

\bibitem{Yamanoi1}
K.~Yamanoi.
\newblock Holomorphic curves in algebraic varieties of maximal {A}lbanese
  dimension.
\newblock {\em Internat. J. Math.}, 26(6):1541006, 45, 2015.

\end{thebibliography}
\bibliographystyle{plain}

\end{document}